\newcommand{\Reflect}{\operatorname{Reflect}}
\newcommand{\FL}{\operatorname{FL}}
\newcommand{\PFL}{\operatorname{PFL}}
\newcommand{\Diam}{\operatorname{Diam}}
\newcommand{\Conv}{\operatorname{Conv}}
\newcommand{\Bd}{\operatorname{Bd}}
\newcommand{\Cl}{\operatorname{Cl}}
\newcommand{\Int}{\operatorname{Int}}
\renewcommand{\Re}{\operatorname{Re}}
\renewcommand{\Im}{\operatorname{Im}}
\newtheorem{thm}{Theorem}[section]
\newtheorem{cor}[thm]{Corollary}
\newtheorem{lem}[thm]{Lemma}
\newtheorem{Def}[thm]{Definition}
\newtheorem{prop}[thm]{Proposition}
\newtheorem{conj}[thm]{Conjecture}
\newtheorem{openq}[thm]{Open Question}
\title{The Dynamics of Twisted Tent Maps}% used by \maketitle
\author{Stephen Chamblee}		% used by \maketitle
\date{\today}					% used by \maketitle
\begin{document}
\maketitle	

\begin{abstract}
This paper is a study of the dynamics of a new family of maps from the complex plane to itself, which we call twisted tent maps.  A twisted tent map is a complex generalization of a real tent map.  The action of this map can be visualized as the complex scaling of the plane followed by folding the plane once.  Most of the time, scaling by a complex number will ``twist'' the plane, hence the name.  The ``folding'' both breaks analyticity (and even smoothness) and leads to interesting dynamics ranging from easily understood and highly geometric behavior to chaotic behavior and fractals.
\end{abstract}

\begin{section}{Introduction}

Real tent maps are piecewise-linear maps from the real line $\mathbb{R}$\label{symbol:R} to itself and have been extensively studied.  Real tent maps can be visualized as a real scaling followed by a folding.  Working on the complex plane $\mathbb{C}$\label{symbol:C} instead of $\mathbb{R}$ and replacing the real scaling by multiplication by a complex parameter $c$, we get a complex generalization of real tent maps.  Because multiplication by a complex number usually rotates (or twists) the complex plane, we call these new maps \emph{twisted tent maps} or TTM's.  This work attempts to both study and lay a foundation for the future study of TTM's.  

This subject has several inherent advantages for those who study TTM's.  The first is the simplicity of the map.  A twisted tent map sends a line segment to either a line segment or to a bent line segment.  This often gives rise to structures that can be studied using geometry, although quite often these structures are very complicated.  The second advantage is the ability to make pictures, which aid in intuition, understanding, and interest, since these pictures are often beautiful and pleasing on their own (see Figure \ref{fig:umbrella}).  The primary disadvantages of the subject are the lack of analyticity and smoothness due to folding.

\begin{figure}[htbp]
	\centering
		\includegraphics[width=0.80\textwidth]{./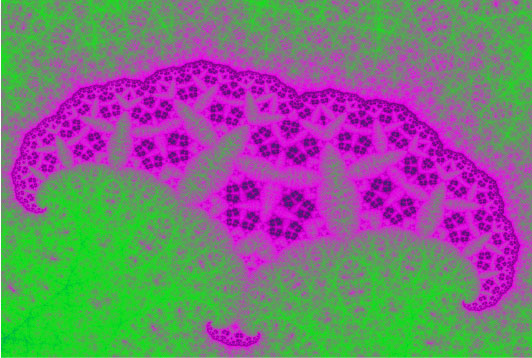}
	\caption{A structure in the coded-coloring of a filled-in Julia set.}
	\label{fig:umbrella}
\end{figure}

For us, the natural object of study is the filled-in Julia set $K=K(c)$, which is the set of all points whose trajectory stays bounded under forward iteration.  Frequently, a TTM restricted to a forward invariant subset of $K$ is conjugate (in the dynamical sense) to a real tent map.  Depending on the choice of $c$, $K(c)$ can be a line segment, a double spiral of line segments, a polygon, a fractal, or even a Cantor set.  In this work, necessary and sufficient conditions will be given for $K$ to be a polygon.  Figure \ref{fig:TypesOfK} shows a few examples of $K$.

\begin{figure}[htbp]
	\centering
		\includegraphics[width=1.00\textwidth]{./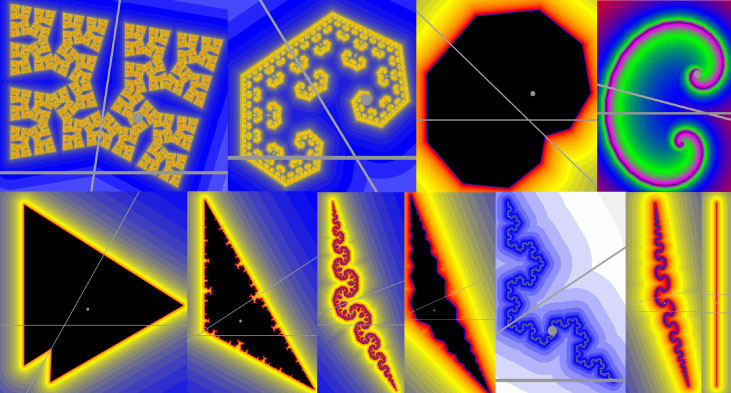}
	\caption{A few examples of filled-in Julia sets for TTM's.}
	\label{fig:TypesOfK}
\end{figure}

For the most interesting TTM's there are no attracting periodic points in $\mathbb{C}$.  Instead, there can be sets of points we call \emph{hungry sets} that attract neighboring points, ``eat'' them, and retain them.  These hungry sets can be simply connected, a topological annulus, or be comprised of periodic components.  There can be several distinct hungry sets in a filled-in Julia sets and some hungry sets can contain smaller hungry sets.

A very standard way to make pictures of filled-in Julia sets is to color pixels based on the escape-time algorithm (such as in Figure \ref{fig:TypesOfK}).  However, when $K$ has nonempty interior, the escape-time algorithm shows only black throughout the interior of $K$.  We introduce a new coloring algorithm called the \emph{coded-coloring algorithm} which makes it possible to see periodic structures and their preimages in the interior of $K$.  Some of these structures behave like periodic copies of filled-in Julia sets for a different choice of parameter (see Figure \ref{fig:TBcoloringDecomposition}).  

\begin{figure}[htbp]
	\centering
		\includegraphics[width=0.40\textwidth]{./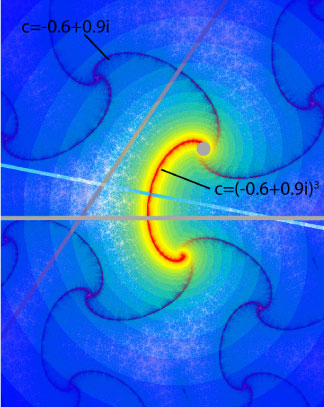}
	\caption{Highlighted is $K(c^3)$ which appears in the coded-coloring of $K(c)$.}
	\label{fig:TBcoloringDecomposition}
\end{figure}

The coded-coloring can also be used in making pictures of the parameter plane.  Any structures that appear near $c$ in the coded-coloring of the parameter plane are approximate previews of the structures found in the coded-coloring of $K(c)$ (see Figures \ref{fig:MandelbrotOutside} and \ref{fig:MandelbrotInside}).

\begin{figure}[htbp]
	\centering
		\includegraphics[width=1.0\textwidth]{./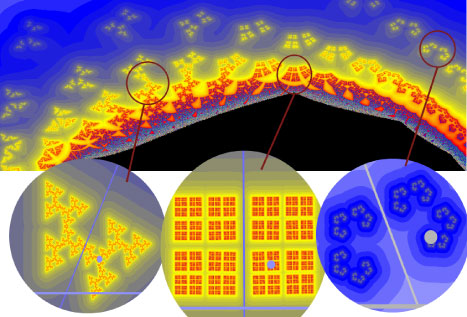}
	\caption[An escape-time coloring of the parameter plane.]{The escape-time algorithm of the parameter plane near $c$ gives a preview of the structure found in $K(c)$ when $K(c)$ has empty interior.}
	\label{fig:MandelbrotOutside}
\end{figure}

\begin{figure}[htbp]
	\centering
		\includegraphics[width=1.0\textwidth]{./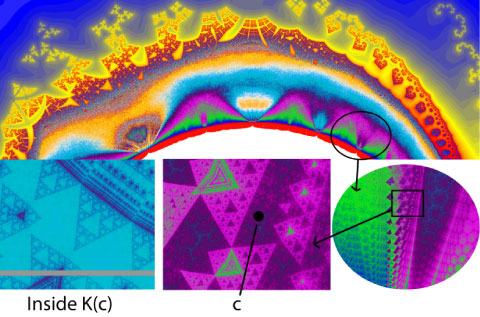}
	\caption[The coded-coloring of the parameter plane]{The coded-coloring of the parameter plane near $c$ gives a preview of structures found in $K(c)$.}
	\label{fig:MandelbrotInside}
\end{figure}
 
There have been several generalizations of real tent maps, such as the c-tent maps which are defined in \cite{polymodials}.  We believe that TTM's are also suitable subjects to study, due to the simplicity of the formula, the complexity of the dynamical behavior, and to the beauty of the pictures.  Further motivation for the study of TTM's include the following:

\begin{enumerate}
	\item This family of maps is new, interesting, and simple enough to ensure the successful completion of the requirement of a thesis.
	\item A similar map is studied in \cite{Arnold}.
	\item It is shown in \cite{antennae} that fractals (for example Koch curves) can be used as designs for frequency independent antennae.  Koch curves appear as filled-in Julia sets for certain TTM's.  Thus, it is possible that TTM's could be useful in finding fractals that have not yet been considered for this purpose.
\end{enumerate}

\begin{figure}[htbp]
	\centering
		\includegraphics[width=0.70\textwidth]{./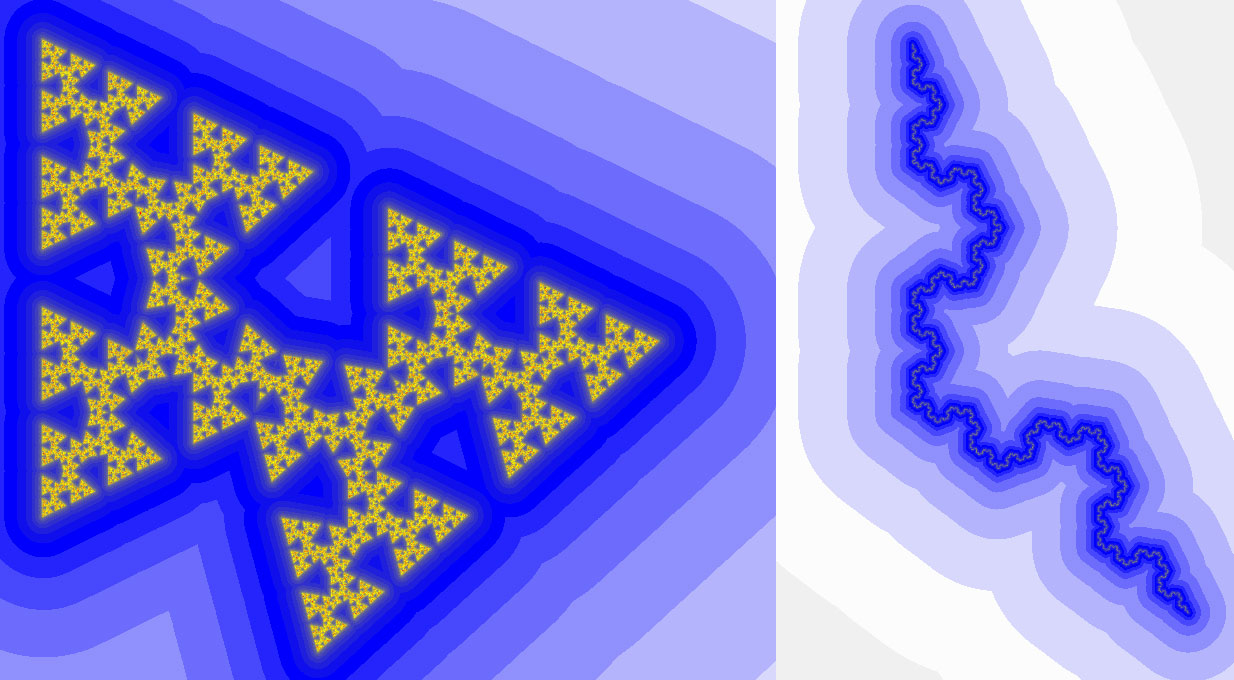}
	\caption[$K$ resembling fractal antenna designs.]{Examples of filled-in Julia sets for TTM's that resemble currently used fractal antenna designs.}
	\label{fig:antenna}
\end{figure}

This present work is organized in the following way:

Chapter \ref{Basic Cases} provides the formal definition of a TTM, some of the basic terms, and shows that due to conjugacy (both topological conjugacy and complex conjugation) there is a canonical subset of the family of twisted tent maps that are sufficient representatives of the entire family.  This subset consists of TTM's whose folding line is the set of points with imaginary part equal to $-1$ and whose parameter has non-negative imaginary part.  For a canonical TTM we show that if $|c|<1$ then the dynamics are trivial.  We then study the dynamics that can occur if $|c|=1$ and finish the chapter with a list of assumptions that will be held throughout the rest of the text.

Chapter \ref{The Filled-in Julia Set} describes $K$ when $c \in \mathbb{R}$.  In this chapter we also give some partial results about the connectivity of $K$ as well as several conjectures.  We also give a several different sufficient criteria for $K$ to be a Cantor set.

In Chapter \ref{The Perimeter Set $P$} we define and use the \emph{perimeter set} $P$, which is the primary tool we use to categorize $K$.  The definition of $P$ was chosen such that the following are true.

\begin{itemize}
	\item $P$ is compact,
	\item $P=K$ for some cases,
	\item $P$ always contains $K$,
	\item $P$ is easily calculated,
	\item $P$ gives us significant information about $K$.
\end{itemize}

In the process of defining $P$, we show that when $K$ is a polygon, then its boundary can be explicitly calculated.

Chapter \ref{Hungry Sets and Crises} lays down a framework for the study of hungry sets and gives the proofs for a few results.  The rest of this chapter is broken up into sections primarily consisting of experimental results.  These experimental results include examples where hungry sets swell in size as the modulus of the parameter is increased as well as examples illustrating that the tools of renormalization might be useful in studying TTM's.

In Chapter \ref{Partitioning the Parameter Space} describes what we know about different pictures of the parameter plane.  These pictures include the \emph{polygonal locus}, which is the set of parameters $c$ such that $K(c)$ is a polygon.  The coded-coloring of the parameter plane is discusses and several open problems and conjectures can be found in this discussion.  

Chapter \ref{Entropy} proves several results about the entropy of a TTM, such as Theorem \ref{strict entropy} which states that $h(f|_K) \leq \log \min(2,|c|^2)$ and that this inequality is sometimes strict.

Lastly, scattered throughout this text are conjectures and open questions to aid those who wish to continue to develop this theory.

\vspace{0.1in}

\noindent
{\bf Aknowledgements}\\
\noindent
This is my doctoral thesis for my Ph.D. in mathematics from IUPUI. I would like to thank Micha\l~Misiurewicz, my advisor, for patiently working with me for many hours to develop this theory, while sharing his love of mathematics with me. I would also like to thank the dynamics group at IUPUI for their many helpful suggestions. I am grateful to the U.S. Department of Education because this work was supported by the GAANN grant 42-945-00.

\end{section}

\begin{section}{Basic Cases}\label{Basic Cases}

Let $\FL$ denote a line in the complex plane.\label{symbol:FL}  ($\FL$ will later be called the \emph{folding line}.)  Then $\FL$ divides the plane into two closed half planes $\mathbb{H}^-$ and $\mathbb{H}^+$, where $0 \in \mathbb{H}^+$.  (If $0 \in \FL$ then the choice of $\mathbb{H}^+$ is arbitrary.)  Let $\Reflect(z,\FL)$\label{symbol:Reflect} be the reflection of the point $z$ about the line $\FL$.  Let $c \in \mathbb{C}$ and fix $\FL$.  We wish to study the dynamics of a family of functions $\{f_c\}$ where $f_c:\mathbb{C} \to \mathbb{C}$ is of the form:

\begin{equation}
\label{eq:Definition of f_c}
f_c(z) =
\begin{cases}
cz & \text{if } cz \in \mathbb{H}^+, \\
\Reflect(cz,\FL) & \text{if } cz \in \mathbb{H}^-. 
\end{cases}
\end{equation}

If $0 \in \FL$ then the dynamics are trivial.  Let $r=|c|$.

\begin{thm}
Assume $0\in \FL$.  Then every point is eventually mapped into the invariant sector $S$ with the negative real axis as its bottom edge, vertex 0, and with angle $\theta$\label{symbol:theta} measured clockwise from the negative real axis. 
\begin{enumerate}
	\item If $r<1$ then $0$ is an attracting fixed point whose basin of attraction is $\mathbb{C}$.
	\item If $r=1$, then for all $z \in S$, $f^2(z)=z$. 
	\item If $r>1$, then the orbit of every nonzero point diverges to infinity.
\end{enumerate}
\end{thm}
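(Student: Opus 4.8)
The whole theorem rests on one elementary fact. Each of the two branches of $f_c$ is either $z\mapsto cz$, which multiplies modulus by $r=|c|$, or a reflection in a line through the origin, which is a Euclidean isometry fixing $0$; hence $|f_c(z)|=r\,|z|$ for every $z$, and by induction $|f_c^{\,n}(z)|=r^{\,n}|z|$. I would state this as a preliminary lemma. Parts (1) and (3) are then immediate: $f_c(0)=0$ because $c\cdot 0=0\in\FL\subseteq\mathbb{H}^+$, so $0$ is fixed; if $r<1$ then $r^n|z|\to0$ for every $z$, so the basin of $0$ is all of $\mathbb{C}$, and if $r>1$ then $r^n|z|\to\infty$ for every $z\neq0$.

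For part (2), $r=1$ forces $f_c$ to preserve every circle $\{|z|=\rho\}$ (and to fix $0$), so it is enough to understand the induced map on arguments. Conjugating by a rotation $z\mapsto e^{i\gamma}z$ only replaces $\FL$ by $e^{-i\gamma}\FL$ and leaves $|c|$ unchanged, so I may take $\FL=\mathbb{R}$ and $\mathbb{H}^+=\{\Im z\ge0\}$; using complex conjugation together with the free choice of $\mathbb{H}^+$ when $0\in\FL$, I may also assume $\theta:=\arg c\in[0,\pi]$. Then reflection in $\FL$ is $w\mapsto\bar w$, and on arguments $f_c$ induces $g(\phi)=\phi+\theta$ when $\sin(\phi+\theta)\ge0$ and $g(\phi)=-(\phi+\theta)$ otherwise. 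Two observations follow straight from the branch definitions: after a single iteration every argument lies in $[0,\pi]$ (the image of $f_c$ is contained in $\mathbb{H}^+$); and on $[0,\pi]$ the map $g$ carries $[0,\pi-\theta]$ onto $[\theta,\pi]$ by $\phi\mapsto\phi+\theta$ and carries $[\pi-\theta,\pi]$ into itself by the affine involution $\phi\mapsto 2\pi-\theta-\phi$. The arc $[\pi-\theta,\pi]$ is exactly the argument description of the sector $S$ (vertex $0$, bottom edge the negative real axis, opening clockwise by angle $\theta$), and there $g^2=\operatorname{id}$, i.e.\ $f^2(z)=z$ for $z\in S$.

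Finally, every point enters $S$: after one step we are in $[0,\pi]$; a point already in $[\pi-\theta,\pi]$ stays there by the above; and a point of $[0,\pi-\theta)$ has its argument increased by exactly $\theta$ at each further step, so its orbit first meets $[\pi-\theta,\pi-\theta+\theta)=[\pi-\theta,\pi)$ after finitely many steps and is thereafter trapped in $S$. (When $\theta=0$, i.e.\ $c$ a positive real, $g$ is the identity on all of $\mathbb{H}^+$; this degenerate case is harmless and is in any case handled separately.) Reversing the normalizing conjugacies recovers the sector as stated.

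The modulus identity makes parts (1) and (3) one line apiece, and the only real work is the angle bookkeeping in part (2): keeping the two branches of $g$ straight, checking that the ``add $\theta$'' orbits cannot overshoot $[\pi-\theta,\pi]$, and --- the one point that actually needs care --- verifying that the invariant sub-arc on which $g$ is an involution is precisely the sector $S$ with one edge along $\FL$ and opening angle $\arg c$. I do not anticipate a genuine obstacle here; once the problem is reduced to ``rotate, then fold the plane onto a half-plane'' the dynamics are transparent.
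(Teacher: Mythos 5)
Your proof is correct and rests on exactly the same observation as the paper's one-line argument: since $0\in\FL$, reflection about $\FL$ is an isometry fixing the origin, so $|f_c(z)|=r|z|$, from which parts (1) and (3) are immediate. The additional angle bookkeeping you carry out (the induced map $g$ on arguments, the involution on $[\pi-\theta,\pi]$, and the absorption into $S$ — which, note, is independent of $r$ and so also justifies the invariant-sector claim in all three cases) is just a careful elaboration of what the paper dismisses as following ``easily,'' not a different approach.
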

\begin{proof}
Each of these follow easily after the observation that since $0 \in \FL$, then reflection about $\FL$ does not affect the modulus of a point. 
\end{proof}

From now on, we will assume that $0 \notin \FL$.  Theorem~\ref{The choice of FL is arbitrary} shows that all choices of $\FL$ that do not pass through the origin are equivalent.  This allows us to make a canonical choice for $\FL$ which simplifies things greatly.  

\begin{thm}\label{The choice of FL is arbitrary}
Let $\FL_F$ and $\FL_G$ be two different choices for $\FL$.  Let $F=\{f_c:c \in \mathbb{C}\}$ and $G=\{g_c:c\in \mathbb{C}\}$ be the resulting families of functions with those choices of folding lines, defined as in \eqref{eq:Definition of f_c}.  Then for every $c \in \mathbb{C}$, $f_c$ is conjugate to $g_c$.
\end{thm}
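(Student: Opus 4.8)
The plan is to exhibit an explicit conjugacy which is a complex-linear scaling--rotation $L(z)=az$ for a suitably chosen $a\in\mathbb{C}\setminus\{0\}$, and then to verify $L\circ f_c=g_c\circ L$ for every $c$. The reason to insist on a linear map rather than a general affine one is that $L$ must fix the origin: the first branch of \eqref{eq:Definition of f_c} is multiplication by $c$, which fixes $0$, and only a linear $L$ conjugates $z\mapsto cz$ to itself. Since we have assumed $0\notin\FL$, both $\FL_F$ and $\FL_G$ miss the origin, and this is exactly what makes such a linear $L$ available.

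First I would produce $a$. Let $p_F$ and $p_G$ be the feet of the perpendiculars dropped from $0$ to $\FL_F$ and $\FL_G$; since the lines miss the origin, $p_F,p_G\neq 0$. Set $a=p_G/p_F$ and $L(z)=az$. Then $L$ is a conformal similarity fixing $0$ with $L(p_F)=p_G$, and since $\FL_F$ is characterized as the line through $p_F$ perpendicular to the segment $[0,p_F]$, its image is the line through $L(p_F)=p_G$ perpendicular to $[L(0),L(p_F)]=[0,p_G]$, namely $\FL_G$. Moreover $L$ is a homeomorphism of $\mathbb{C}$ fixing $0$, so it carries the component of $\mathbb{C}\setminus\FL_F$ containing $0$ onto the component of $\mathbb{C}\setminus\FL_G$ containing $0$; hence $L(\mathbb{H}^+)=\mathbb{H}^+$ and $L(\mathbb{H}^-)=\mathbb{H}^-$ in the respective families.

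Next I would record two facts. (i) $L$ commutes with multiplication by $c$ because $\mathbb{C}$ is commutative; in particular $L(cz)=cL(z)$, so $cz$ lies in the ``$+$'' (resp.\ ``$-$'') half-plane for $\FL_F$ if and only if $cL(z)$ lies in the ``$+$'' (resp.\ ``$-$'') half-plane for $\FL_G$. (ii) $L\circ\Reflect(\cdot,\FL_F)\circ L^{-1}=\Reflect(\cdot,\FL_G)$: conjugating an orientation-reversing isometric involution by the similarity $L$ again yields an orientation-reversing isometric involution, i.e.\ a reflection in a line, and that line is the image under $L$ of the original fixed line, namely $L(\FL_F)=\FL_G$. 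With (i) and (ii) in hand, the identity $L\circ f_c=g_c\circ L$ follows by a two-case check according to whether $cz$ lies in the ``$+$'' or ``$-$'' half-plane for $\FL_F$ (the two cases agreeing on $\FL_F$ since reflection fixes that line): in the first case both sides equal $cL(z)=L(cz)$ by linearity and (i), and in the second case both sides equal $\Reflect(cL(z),\FL_G)=L(\Reflect(cz,\FL_F))$ by (i) together with (ii).

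I do not expect a serious obstacle here; the one step that needs care is fact (ii) --- that a scaling--rotation conjugates the reflection in one line to the reflection in its image line --- where it matters that $L$ is orientation-preserving and angle-preserving, so that ``reflect in $\FL_F$, then apply $L$'' coincides with ``apply $L$, then reflect in the $L$-image line.'' The other point to watch is the bookkeeping of the two half-planes through the case analysis, but this is automatic once fact (i) and the correspondence $L(\mathbb{H}^\pm)=\mathbb{H}^\pm$ are established.
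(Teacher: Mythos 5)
Your proposal is correct and takes essentially the same route as the paper: the feet of the perpendiculars $p_F,p_G$ are exactly the tangency points $z_F,z_G$ the paper uses, your $L(z)=\frac{p_G}{p_F}z$ is the paper's $\varphi(z)=\frac{z_G}{z_F}z$, and the same two-case verification on $\mathbb{H}^\pm$ concludes the argument. The only difference is that you spell out the justification that $L$ conjugates $\Reflect(\cdot,\FL_F)$ to $\Reflect(\cdot,\FL_G)$ (via the fact that conjugating an orientation-reversing isometric involution by a similarity yields the reflection in the image line), a property the paper simply lists among the facts that ``hold trivially.''
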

\begin{proof}
There exists a unique circle centered at the origin such that the line $\FL_F$ is tangent to it at a point $z_F$.  Similarly, we define $z_G$ to be the point where $\FL_G$ is tangent to some circle centered at the origin.  Note that the points $z_F$ and $z_G$ depend only on the choice of $\FL$ and not on $c$.  We denote the closed half planes on either side of $\FL_F$ by $\mathbb{H}_F^-$ and $\mathbb{H}_F^+$.  Likewise, the closed half planes that meet along $\FL_G$ are denoted by $\mathbb{H}_G^-$ and $\mathbb{H}_G^+$.  Now let $\varphi : \mathbb{C} \to \mathbb{C}$ be define by $\varphi(z)=\frac{z_G}{z_F}z$.  Since neither $\FL_F$ nor $\FL_G$ pass through the origin, then $z_F,z_G \neq 0$.  Thus $\varphi$ and its inverse are well defined.  The following hold trivially:

\begin{enumerate}
	\item $\varphi$ is a homeomorphism.
	\item $\varphi(cz)=c\varphi$ for every $c \in \mathbb{C}$.
	\item $\varphi(z_F)=z_G$
	\item $\varphi(FL_F)=\{\varphi(z):z\in \FL_F \}=FL_G$
	\item $\varphi(\mathbb{H}_F^+)=\mathbb{H}_G^+$
	\item $\varphi(\mathbb{H}_F^-)=\mathbb{H}_G^-$
	\item $\varphi(\Reflect(z,FL_F))=\Reflect(\varphi(z), \FL_G)$
\end{enumerate}

Thus, for every $c \in \mathbb{C}$, if $z \in \mathbb{H}_F^+$ then $\varphi(f_c(z))=\frac{z_G}{z_F}cz=c\left(\frac{z_G}{z_F}z \right)=g_c(\varphi(z)).$  Similarly, for every $c \in \mathbb{C}$, if $z \in \mathbb{H}_F^-$ then
\begin{equation}
\varphi(f_c(z))=\varphi(\Reflect(cz,\FL_F))=\Reflect(c\varphi(z),\FL_G)=g_c(\varphi(z)).
\end{equation}

Thus $\varphi(f_c(z))=g_c(\varphi(z))$ as desired.

\end{proof}

\begin{Def}\label{symbol:Im}\label{symbol:Re}
We will denote the real and imaginary parts of $z$ by $\Re(z)$ and $\Im(z)$ respectively.
\end{Def}

Theorem~\ref{The choice of FL is arbitrary} allows us to define $\FL=\{z:\Im(z)=-1\}$ as our canonical choice for $\FL$.  This will be our choice for $\FL$ throughout the rest of this paper and we will denote the family of functions induced by this choice of $\FL$ by $\mathcal{F} =\{f_c:c \in \mathbb{C}\}$.  Now that a specific family of functions has been chosen, we will write $\FL$ instead of $\FL_F$ and will also drop the family subscript for the half-planes and write $\mathbb{H}^+$ and $\mathbb{H}^-$.  Throughout this paper we will write $\overline{z}$ for the complex conjugate of $z$.

Now, for every $z \in \mathbb{C}$ we have $\Reflect(z,FL)=\overline{z+i}-i=\overline{z}-2i$.  Thus, for every $f_c \in F$ we have:

\begin{equation}
f_c(z)= 
\begin{cases}
cz & \text{if } \Im(cz) \geq -1, \\ 
\overline{cz} -2i & \text{if } \Im(cz) \geq -1. 
\end{cases}
\end{equation}

Each $f_c \in F$ scales and rotates the plane and then reflects $\mathbb{H}^-$ across $\FL$.  This family of maps is a generalization of the family of real tent maps.  

%The family of real tent maps is given by the formula...
%Much is known about these maps and they are interesting because...

Recall that the tent map acting on $\mathbb{R}$ scales the real line by a real constant and then folds it at a particular point on $\mathbb{R}$.  A twisted tent map scales $\mathbb{C}$ by a complex constant and then folds it across the $\FL$.  More specifically, a tent map scales and folds; a twisted tent map scales, rotates (twists), and folds.  This is why we call each $f_c$ a \textbf{twisted tent map}.   

If $f$ is a twisted tent map, then it is fairy common for a one dimensional forward invariant subset of the complex plane to have the dynamics of a tent map.  For example, when $c=2$ the line segment joining the origin and $-i$ has the same dynamics as the tent map $\mathcal{G}:[0,1] \to [0,1]$ defined by:

\begin{equation}
\mathcal{G}(x)=\left\{
\begin{array}{lrlr} 2x &, x < 0.5 \\ 
2-2x &, x \geq 0.5. 
\end{array}\right.
\end{equation}

The following proposition describes the dynamics of the simplest TTM's with canonical $\FL$.

\begin{prop}\label{mod c less than 1}
If $f_c$ is a twisted tent map with $|c|<1$ then every point in the complex plane is attracted to the origin.
\end{prop}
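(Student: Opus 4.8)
The plan is to show that $f_c$ contracts the modulus by a factor of at least $r=|c|<1$ at every point, so that iterating gives $|f_c^n(z)|\le r^n|z|\to 0$ and the orbit of every point converges to the fixed point $0$.

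First I would dispose of the easy branch: when $\Im(cz)\ge -1$ we have $f_c(z)=cz$, so $|f_c(z)|=r|z|$ outright. The only thing requiring an observation is the folding branch, where $cz\in\mathbb{H}^-$ and $f_c(z)=\Reflect(cz,\FL)=\overline{cz}-2i$. Here the key claim is that reflecting a point of $\mathbb{H}^-$ across $\FL$ never increases its modulus. This is a one-line computation: writing $w=a+bi$ with $b=\Im(w)\le -1$, its reflection is $\overline{w}-2i=a-(b+2)i$, so $|\Reflect(w,\FL)|^2-|w|^2=(b+2)^2-b^2=4(b+1)\le 0$. Geometrically this is exactly the statement that $\mathbb{H}^-$ is the half-plane \emph{not} containing the origin, so folding pushes its points to the origin's side of $\FL$; this is also the only place the standing hypothesis $0\notin\FL$ (more precisely, that $\FL$ separates $0$ from $\mathbb{H}^-$) is used.

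Combining the two cases yields $|f_c(z)|\le r|z|$ for every $z\in\mathbb{C}$, hence $|f_c^n(z)|\le r^n|z|$ for all $n$, and since $r<1$ this tends to $0$. Since $f_c(0)=0$, this says precisely that every point is attracted to the origin. I do not anticipate any real obstacle: once the folding-does-not-increase-modulus observation is in hand, the rest is a geometric-series estimate. The only minor care needed is the harmless overlap of the two cases on $\FL$ itself, where both formulas agree because $\Reflect(z,\FL)=z$ there, so the bound $|f_c(z)|\le r|z|$ holds regardless of which branch is taken.
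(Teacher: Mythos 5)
Your proposal is correct and follows essentially the same route as the paper: the paper likewise notes that $|\Reflect(z,\FL)|\leq |z|$ for $z\in\mathbb{H}^-$, concludes $|f(z)|\leq |c||z|$, and iterates to get $|f^n(z)|\leq |c|^n|z|\to 0$. Your explicit computation $(b+2)^2-b^2=4(b+1)\le 0$ merely spells out the reflection estimate that the paper states as an observation.
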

\begin{proof}
Noting that $|\Reflect(z,FL)|\leq |z|$ for all $z \in \mathbb{H}^-$ we see that $|f(z)|\leq |c||z|$ for all $z$.  Denoting the n$^{th}$ iterate of $f$ by $f^n$, we have $|f^n(z)|\leq |c|^n|z|$.  Since $|c|<1$ then $|f^n(z)|\to 0$ as $n \to \infty$. 
\end{proof}

The next proposition shows that $f_c$ is conjugate (in the dynamical sense) to $f_{\overline{c}}$.  Thus, without loss of generality we can focus our study on parameters $c=\alpha + \beta i$\label{symbol:alpha}\label{symbol:beta} where $\beta \geq 0$.  Written another way, if $c=re^{i \theta}$ then we may always assume that $\theta \in [0, \pi]$.

\begin{prop}\label{We only need consider beta>0}
$f_c$ is conjugate to $f_{\overline{c}}$.
\end{prop}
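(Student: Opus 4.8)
The plan is to produce an explicit conjugating homeomorphism. Since $f_{\overline c}$ twists the plane by the angle opposite to the one $f_c$ uses, while folding along the \emph{same} horizontal line $\FL=\{z:\Im(z)=-1\}$, the natural candidate is reflection across the imaginary axis, namely $\psi:\mathbb{C}\to\mathbb{C}$ given by $\psi(z)=-\overline{z}$. First I would record the easy structural facts: $\psi$ is a homeomorphism (indeed an involution, $\psi\circ\psi=\mathrm{id}$), and since $\Im(-\overline{z})=\Im(z)$ it preserves imaginary parts, so $\psi(\mathbb{H}^+)=\mathbb{H}^+$, $\psi(\mathbb{H}^-)=\mathbb{H}^-$, and $\psi(\FL)=\FL$.

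Next I would verify two intertwining identities. The first is $\psi(cz)=-\overline{cz}=\overline{c}\,(-\overline{z})=\overline{c}\,\psi(z)$ for all $c,z\in\mathbb{C}$, so $\psi$ carries multiplication by $c$ to multiplication by $\overline{c}$. The second is $\psi(\Reflect(z,\FL))=\Reflect(\psi(z),\FL)$, a one-line computation from $\Reflect(z,\FL)=\overline{z}-2i$: both sides equal $-z-2i$. Together with the observation that $\Im(\overline{c}\,\psi(z))=\Im(-\overline{cz})=\Im(cz)$, these identities ensure that the branch condition and the branch formulas of $f_c$ and $f_{\overline c}$ correspond under $\psi$.

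Then the conjugacy $\psi\circ f_c=f_{\overline c}\circ\psi$ follows by a two-case check. When $\Im(cz)\ge -1$, we have $\psi(f_c(z))=\psi(cz)=\overline{c}\,\psi(z)$, and since $\Im(\overline{c}\,\psi(z))=\Im(cz)\ge -1$ this equals $f_{\overline c}(\psi(z))$. When $\Im(cz)<-1$, we have $\psi(f_c(z))=\psi(\Reflect(cz,\FL))=\Reflect(\overline{c}\,\psi(z),\FL)$, and since $\Im(\overline{c}\,\psi(z))<-1$ this again equals $f_{\overline c}(\psi(z))$.

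There is no serious obstacle here; the only point requiring care is the boundary case $\Im(cz)=-1$, but then $cz\in\FL$, so $\Reflect(cz,\FL)=cz$ and the two branches of $f_c$ agree, making the ambiguity in the piecewise definition harmless (and likewise on the $f_{\overline c}$ side). The one genuinely creative step is choosing the right homeomorphism, and once one notes that the folding line is horizontal and fixed under all these maps, reflecting across the orthogonal axis is essentially forced. As stated after the proposition, a consequence is that we may henceforth restrict attention to $c=\alpha+\beta i$ with $\beta\ge 0$, equivalently $c=re^{i\theta}$ with $\theta\in[0,\pi]$.
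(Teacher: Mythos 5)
Your proof is correct and follows essentially the same route as the paper: the paper also conjugates by $\varphi(z)=-\overline{z}$, notes that $\varphi(cz)=\overline{c}\,\varphi(z)$ and that imaginary parts (hence the branch condition) are preserved, and finishes with the same two-case verification. Your explicit check that $\varphi$ commutes with $\Reflect(\cdot,\FL)$ and your remark about the harmless boundary case $\Im(cz)=-1$ are fine additions but do not change the argument.
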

\begin{proof}
Let $\varphi(z)=-\overline{z}$ so that $\varphi$ is reflection about the imaginary axis.  We will show that $\varphi(f_{c}(z))=f_{\overline{c}}(\varphi(z))$.  That is, we wish to show that the following diagram commutes.
\begin{equation}
\begin{CD}
\mathbb{C} @>f_c>> \mathbb{C} \\
@V \varphi VV 			@VV \varphi V\\
\mathbb{C} @>f_{\overline{c}}>> \mathbb{C}\\
\end{CD}
\end{equation}

We have that $\overline{c}\varphi(z)=\overline{c}(-\overline{z})=-\overline{cz}=\varphi(cz)$.  Since $\varphi$ fixes the imaginary part of its argument, this implies that $\Im(\overline{c}\varphi(z)=\Im(cz)$.  Thus, $cz \in \mathbb{H}^-$ exactly when $\overline{c}\varphi(z) \in \mathbb{H}^-$.  Because of this, we can show that $\varphi(f_{c}(z))=f_{\overline{c}}(\varphi(z))$ by proving only two cases.

Case 1:  $\Im(cz)\geq -1$
\begin{equation}
\varphi(f_{c}(z))= \varphi (cz)=-\overline{cz} = \overline{c}(-\overline{z})=f_{\overline{c}}(\varphi(z))\\
\end{equation}

Case 2:  $\Im(cz) < -1$
\begin{equation}
\varphi(f_c(z))=\varphi(\overline{cz}-2i)=-\overline{(\overline{cz}-2i)}=\overline{\overline{c}(-\overline{z})}-2i=f_{\overline{c}}(\varphi(z))\\
\end{equation}

This shows that $\varphi$ semi-conjugates $f_c$ with $f_{\overline{c}}$.  Lastly, since $\varphi$ is an isometry, then it conjugates $f_c$ and $f_{\overline{c}}$.
\end{proof}

The dynamics of a TTM become more interesting when $|c|=1$.  We need the following.

\begin{Def}\label{symbol:PFL}
We will use $ \PFL$ to denote the \textbf{pre-folding line}, which is the preimage of $\FL$ under $f$.  This is also equal to the set of points $\{z/c:z \in \FL\}$. 
\end{Def}

\begin{Def}\label{Pre-half planes}
The $ \PFL$ partitions the plane into two half planes.  We define the \textbf{pre-upper half plane} as $P\mathbb{H}^+=\{z: f(z)=cz\}$ and define the \textbf{pre-lower half plane} as $P\mathbb{H}^-=\{z: f(z)=\overline{cz}-2i\}$.  Note that both of the pre-half planes contain $ \PFL$.
\end{Def}

The $ \PFL$ plays an important role since it is an axis of symmetry.  That is, if $z'=\Reflect(z, \PFL)$, then $f(z')=f(z)$.  Also note that an alternative and equivalent way to define $f$ would be to reflect every point in $P\mathbb{H}^-$ about the $ \PFL$ and then multiply the result by $c$.  That is, fold then multiply.  This way of visualizing the action of $f$ is sometimes helpful.

\begin{Def}
If $c \notin \mathbb{R}$ then the intersection between the $\FL$ and the $ \PFL$ is unique.  In this case we define $\{\gamma_0\} = \FL \cap  \PFL$.\label{symbol:gamma}  
\end{Def}

\begin{lem}\label{Equation of gamma_0}
We have $\gamma_0=\frac{\alpha-1}{\beta}-i$.
\end{lem}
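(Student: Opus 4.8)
The plan is to use the two defining membership conditions for $\gamma_0$ and turn them into two real linear equations. Since $\gamma_0 \in \FL = \{z : \Im(z) = -1\}$, we may write $\gamma_0 = t - i$ for some $t \in \mathbb{R}$; the task is to pin down $t$. The second condition is that $\gamma_0 \in \PFL$, and by the Definition of $\PFL$ this means precisely that $c\gamma_0 \in \FL$, i.e. $\Im(c\gamma_0) = -1$. Note that $c \notin \mathbb{R}$ forces $\beta = \Im(c) \neq 0$, so division by $\beta$ below is legitimate, and this is also what guarantees $\FL \cap \PFL$ is a single point (the two lines are genuinely non-parallel), consistent with the Definition that introduced $\gamma_0$.

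The key computation is then just expanding $c\gamma_0$. Writing $c = \alpha + \beta i$, one computes
\begin{equation}
c\gamma_0 = (\alpha + \beta i)(t - i) = (\alpha t + \beta) + (\beta t - \alpha)i,
\end{equation}
so the condition $\Im(c\gamma_0) = -1$ becomes $\beta t - \alpha = -1$. Solving for $t$ gives $t = \frac{\alpha - 1}{\beta}$, and therefore $\gamma_0 = \frac{\alpha-1}{\beta} - i$, as claimed.

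There is no real obstacle here: the argument is a one-line linear solve once the defining conditions are written down. The only points worth stating carefully are (a) that $\gamma_0 \in \PFL \iff c\gamma_0 \in \FL$, which is immediate from $\PFL = \{z/c : z \in \FL\}$, and (b) that $\beta \neq 0$ so the expression is well defined — both of which are already available from the definitions preceding the lemma. One could optionally remark that uniqueness need not be re-proved, having been folded into the definition of $\gamma_0$, but the computation above in fact re-derives it since the linear equation $\beta t - \alpha = -1$ has a unique solution when $\beta \neq 0$.
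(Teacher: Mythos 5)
Your proof is correct and follows essentially the same route as the paper: both reduce the lemma to the condition $\Im(c z)=-1$ for a point $z$ on $\FL$ and carry out the same expansion of $c(t-i)$, the only difference being that you solve the resulting linear equation for $t$ while the paper verifies the stated value directly.
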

\begin{proof}
Clearly $\frac{\alpha-1}{\beta}-i \in \FL$.  Also, 
\begin{equation}
\begin{aligned}
f \left(\frac{\alpha-1}{\beta}-i\right)&=c \left(\frac{\alpha-1}{\beta}-i\right) = (\alpha + \beta i) \left(\frac{\alpha -1}{\beta}-i \right)\\ 
&= \frac{\alpha^2-\alpha}{\beta}-\alpha i +\alpha i -i +\beta\\
&= \frac{\alpha^2 +\beta^2 -\alpha}{\beta}-i = \frac{|c|^2-\alpha}{\beta}-i \in \FL\\
\end{aligned}
\end{equation}
\end{proof}

\begin{Def}\label{symbol:Disk}
We will write $\overline{\mathbb{D}}=\{z:|z| \leq 1\}$.
\end{Def}

\begin{lem}\label{If |c|=1 then |f(z)| is <= |z|}
If $|c|=1$, then $|f(z)| \leq |z|$ with equality exactly when $cz \in \mathbb{H}^+$.
\end{lem}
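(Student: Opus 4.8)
The plan is to reduce the statement to a one-variable algebraic inequality. Since $|c|=1$ we have $|cz|=|z|$, so it suffices to compare $|f(z)|$ with $|cz|$ and show that $|f(z)| \le |cz|$, with equality exactly when $cz \in \mathbb{H}^+$. I would then split into the two cases appearing in the definition \eqref{eq:Definition of f_c} of $f$.

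In the case $cz \in \mathbb{H}^+$ we have $f(z)=cz$, so $|f(z)|=|cz|=|z|$ and equality holds with nothing to prove. In the remaining case $cz \notin \mathbb{H}^+$, i.e. $\Im(cz)<-1$, we have $f(z)=\Reflect(cz,\FL)=\overline{cz}-2i$. Writing $cz=x+iy$ with $y<-1$, the reflection sends the imaginary part $y$ to $-(y+2)$, so a direct computation gives $|cz|^2-|f(z)|^2=(x^2+y^2)-(x^2+(y+2)^2)=-4(y+1)>0$. Hence $|f(z)|<|cz|=|z|$ strictly, which is exactly what is needed since in this case $cz \notin \mathbb{H}^+$.

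The only point requiring a word of care is the boundary value $\Im(cz)=-1$: there $cz$ lies on $\FL$, hence in $\mathbb{H}^+$, and reflection across $\FL$ fixes $cz$, so both branches of $f$ agree and equality holds, consistent with the claim. I do not expect any real obstacle here — the geometric content is simply that reflection across $\FL=\{\Im(z)=-1\}$ strictly decreases modulus for points strictly below $\FL$, precisely because $\FL$ separates such points from the origin, which lies at distance $1$ on the other side. (Alternatively one could invoke the inequality $|\Reflect(z,\FL)|\le|z|$ for $z\in\mathbb{H}^-$ already used in the proof of Proposition~\ref{mod c less than 1} and refine it to record exactly when equality occurs, namely when $z\in\FL$.)
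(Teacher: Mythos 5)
Your proof is correct and follows essentially the same route as the paper: split on whether $cz \in \mathbb{H}^+$, and in the remaining case show directly that reflection across $\FL$ strictly decreases the modulus of a point with $\Im(cz)<-1$. Your squared-modulus computation $|cz|^2-|f(z)|^2=-4(y+1)>0$ is just a cleaner version of the paper's comparison of $|\Im(f(z))|$ with $|\Im(cz)|$, and your handling of the boundary case $\Im(cz)=-1$ matches the stated equality condition.
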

\begin{proof}
If $cz \in \mathbb{H}^+$ then $|f(z)|=|cz|=|z|$ since by assumption $|c|=1$.  Now assume that $cz$ is in the interior of $\mathbb{H}^-$.  First note that $\Re(f(z))=\Re(cz)$.  Next, since $cz$ is in the interior of $\mathbb{H}^-$ then $\Im(cz) < -1$ and so $\varepsilon=min\{1,-1-\Im(cz)\}> 0$.  Noting that $\varepsilon$ is the smallest distance from $cz$ to $\FL$, we have $|\Im(f(z))|=|\Im(cz)+2 \varepsilon| < |\Im(cz)|$.  Thus, $|f(z)|=|\Re(cz)+i(\Im(cz)+2 \varepsilon)|< |\Re(cz)+i(\Im(cz))|=|cz|=|z|$. 
\end{proof}

\begin{figure}[htbp]
	\centering
		\includegraphics[width=1\textwidth]{./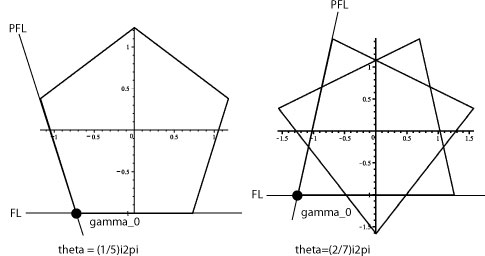}
	\caption{Construction of a convex polygon P for Theorem~\ref{mod c equals 1} case 3.}
	\label{fig:Modulus_of_c_is_1}
\end{figure}

\begin{thm}\label{mod c equals 1}
If $c=e^{i \theta}$ for some $\theta$ with $0 \leq \theta <2\pi$.  Then four cases can occur:

\begin{enumerate}
	\item If $\theta=0$ then $f^n(z)=f(z)\in \mathbb{H}^+$ for $n=1,2,3,...$.
	\item If $\theta = \pi$ then every point is eventually mapped into a horizontal strip of height 2.  For every point $z$ in this strip, $f^2(z)=z$.
	\item If $\theta=\frac{j}{k} 2\pi$, where
	\begin{itemize}
		\item $\frac{j}{k}$ is written in lowest terms,
		\item $j,k \in \{1,2,3,...\}$,
		\item $j < k\neq 2$,
	\end{itemize}
then there is a regular periodic polygon, centered at the origin, with period $k$, such that the orbit of every point eventually intersects the polygon. 
	\item If $\theta$ is an irrational multiple of $\pi$, then for every point $z$ there exists a point $q \in \overline{\mathbb{D}}$ such that $|f^n(z)-f^n(q)|$ goes to $0$ as $n$ goes to infinity.
\end{enumerate}
\end{thm}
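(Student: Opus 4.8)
The plan is to treat the four cases separately, using throughout that $f$ is continuous and piecewise isometric (each branch, $z\mapsto cz$ and $z\mapsto\Reflect(cz,\FL)=\overline{cz}-2i$, is a Euclidean isometry) and that, by Lemma~\ref{If |c|=1 then |f(z)| is <= |z|}, the moduli $|f^n(z)|$ decrease to some $\rho=\rho(z)\ge 0$, with equality $|f(w)|=|w|$ exactly when the branch $f(w)=cw$ is used. One small computation is used repeatedly: if $cw\in\Int\mathbb{H}^-$ and $d=\operatorname{dist}(cw,\FL)=-1-\Im(cw)>0$, then $|w|^2-|f(w)|^2=4d$, since reflection across $\FL$ changes only the imaginary part. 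Cases~1 and~2 are then direct: for $\theta=0$ one checks $f(z)\in\mathbb{H}^+$ for all $z$ (if $\Im z<-1$ then $\Im(\overline z-2i)=-\Im z-2>-1$), and $f$ fixes $\mathbb{H}^+$ pointwise, so $f^n(z)=f(z)$ for $n\ge1$; for $\theta=\pi$ the branch condition $\Im(-z)\ge-1$ is $\Im z\le1$, so on the strip $\{-1\le\Im z\le1\}$ we have $f(z)=-z$, making the strip forward invariant with $f^2=\operatorname{id}$ there, while a point with $\Im z>1$ has $f(z)=-\overline z-2i$ of imaginary part $\Im z-2$, so its imaginary part drops by $2$ each step until it enters the strip (and a point below the strip is sent in one step to $-z$, of imaginary part $-\Im z>1$).

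For Case~3 write $c=e^{2\pi i j/k}$, a primitive $k$-th root of unity with $k\ge3$, and set $P=\bigcap_{m=0}^{k-1}c^m\mathbb{H}^+$. The half-planes $c^m\mathbb{H}^+$ are at distance $1$ from $0$ with equally spaced outward normals, so $P$ is a regular $k$-gon centered at $0$ with inradius $1$; in particular $\overline{\mathbb{D}}\subseteq P$. Multiplication by $c$ permutes the half-planes cyclically, so $cP=P$; hence for $z\in P$ we have $cz\in cP=P\subseteq\mathbb{H}^+$, so $f(z)=cz$, giving $f(P)=P$ with $f|_P$ the rotation by $2\pi j/k$ of period exactly $k$. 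For the dynamical claim, suppose the orbit of $z$ avoids $P$; since $\overline{\mathbb{D}}\subseteq P$ this forces $|f^n(z)|>1$ for all $n$, so $\rho\ge1$. If the reflection branch is used only finitely often, the orbit is eventually $\{c^m w:m\ge0\}$, and being confined to $\mathbb{H}^+$ this forces $w\in\bigcap_m c^{-m}\mathbb{H}^+=P$, a contradiction; so reflections occur infinitely often, and the identity above with $|f^n(z)|^2\to\rho^2$ gives $\sum 4d_n=|z|^2-\rho^2<\infty$, hence $d_n\to0$. Let $\omega(z)$ be the set of limit points of the forward orbit (nonempty, compact, $f(\omega(z))\subseteq\omega(z)$). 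If some $w\in\omega(z)$ had $cw\in\Int\mathbb{H}^-$ then $|f(w)|<|w|=\rho$, impossible as $f(w)\in\omega(z)$ and $|\cdot|\equiv\rho$ there; so $f(w)=cw$ on $\omega(z)$, and iterating gives $\omega(z)\subseteq\bigcap_m c^m\mathbb{H}^+=P$, indeed $\omega(z)\subseteq\partial P\cap\{|w|=\rho\}$ (else the orbit enters $\Int P$). If $\rho$ exceeds the circumradius of $P$ this set is empty — contradiction; otherwise it is finite and invariant under $\times c$, and rotating a point of $\omega(z)$ by powers of $c$ (which, as $\gcd(j,k)=1$, visits every edge of $P$) produces a point $p^*\in\omega(z)$ on the edge contained in $\PFL=c^{-1}\FL$.

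If $p^*$ lies in the interior of that edge the argument closes: the orbit comes arbitrarily near $p^*$, and — unless it already meets $P$ — from strictly outside $P$, which near an interior point of the $\PFL$-edge means precisely $\Int(c^{-1}\mathbb{H}^-)=\Int(P\mathbb{H}^-)$; there $f$ uses the reflection branch and carries such a point close to $cp^*$, an interior point of the edge on $\FL$, with imaginary part $>-1$, hence into $\Int P$ — contradiction. The one remaining possibility is that $\omega(z)$ meets the $\PFL$-edge only at its endpoints, i.e.\ (again by $\times c$-invariance) $\omega(z)$ is exactly the vertex set of $P$ and $\rho$ equals the circumradius; I expect this vertex case to be the main obstacle of the theorem. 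It should succumb to the same ``the reflected image falls back into $P$'' mechanism, but now performed at a vertex $v$, where one must in addition control $cf^n(z)$ relative to the second edge-line through $cv$ and exploit $d_n\to0$; worked examples (e.g.\ $c=i$) show the orbit is kicked into $P$ within a bounded number of steps, but a clean general argument needs care.

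Finally, Case~4: $c$ is not a root of unity, so $z\mapsto cz$ is an irrational rotation, and since $\overline{\mathbb{D}}\subseteq\mathbb{H}^+$ the disk $\overline{\mathbb{D}}$ is forward invariant with $f|_{\overline{\mathbb{D}}}(z)=cz$. First I show $\rho\le1$: as in Case~3, $f$ acts on $\omega(z)$ by $\times c$, so $\omega(z)\subseteq\{|w|=\rho\}$ is closed and invariant under an irrational rotation and contains a full rotation-orbit, hence by minimality $\omega(z)=\{|w|=\rho\}$; since $\omega(z)\subseteq\mathbb{H}^+$, the whole circle of radius $\rho$ lies in $\mathbb{H}^+$, forcing $\rho\le1$. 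Now put $q_n=c^{-n}f^n(z)$; then $q_{n+1}=q_n$ except at reflection steps, where $|q_{n+1}-q_n|=2d_n$. Since $\sum 4d_n=|z|^2-\rho^2<\infty$, the sequence $(q_n)$ is Cauchy, $q_n\to q$ with $|q|=\rho\le1$, so $q\in\overline{\mathbb{D}}$; and $f^n(q)=c^nq$, so $|f^n(z)-f^n(q)|=|c^nq_n-c^nq|=|q_n-q|\to0$, which is the assertion. The hardest step overall is the vertex sub-case of Case~3 just described; everything else is routine once the piecewise-isometry viewpoint and the modulus-monotonicity of Lemma~\ref{If |c|=1 then |f(z)| is <= |z|} are in place.
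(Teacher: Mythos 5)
Your Cases 1, 2, and 4 are correct; in particular your Case 4 argument (telescoping the modulus drops $4d_n$ to show $\sum d_n<\infty$, so that $q_n=c^{-n}f^n(z)$ is Cauchy, plus the $\omega$-limit/irrational-rotation argument forcing $\rho\le 1$) is complete and is a cleaner route than the paper's, which instead extracts an accumulation point of $(f^n(z_0)/c^n)$ by compactness and then uses monotonicity of $|z_n-q|$ to upgrade subsequential convergence to convergence. The problem is Case 3, where the gap you flag yourself is genuine and is precisely the crux of the statement, so the proof of the theorem is not complete. Your one-step folding argument works only at interior points of the $\PFL$-edge: a point of $\mathbb{H}^+\setminus P$ accumulating near the vertex $\gamma_0=\FL\cap\PFL$ lies in the thin wedge of $\Int(P\mathbb{H}^-)$ between $\FL$ and $\PFL$, and after one application of $f$ its image lands just above $\FL$ near the next vertex $c\gamma_0$ --- where it can perfectly well violate the constraint coming from the edge-line $c\FL$ and so be outside $P$ again, near a vertex again. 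Thus the scenario ``$\omega(z)$ equals the vertex set, $\rho$ equals the circumradius'' cannot be dismissed by the same local mechanism, and ruling it out requires controlling several consecutive iterates, not one.

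This is exactly where the paper's proof does real work: instead of an $\omega$-limit analysis, it constructs an explicit neighborhood $U_P=\{(1+\varepsilon)z:z\in P\}$, partitions it by the lines through the segments of the $k$-pointed star (for $k\ge 4$) into pieces $A$, $B$, $P$, and tracks how multiplication by $c$ pushes components of $A$ and $B$ below $\FL$ so that folding sends $B$ into $P$ and $A$ into $P\cup B$, giving $f^{2k}(U_P)\subseteq P$ (with a separate picture-based argument giving $f^5(U_P)=P$ when $k=3$). This uniform statement handles accumulation anywhere on $\Bd(P)$, vertices included, and combined with the modulus-decrease/compactness argument (which you also have) finishes the case. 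To repair your write-up, either adopt such a neighborhood argument, or complete the vertex sub-case quantitatively, e.g.\ by showing that a point of $\mathbb{H}^+\setminus P$ within $\delta$ of a vertex and with modulus at least the circumradius must, within a bounded number of iterates (depending only on $k$), either enter $P$ or undergo a reflection whose drop $4d_n$ is bounded below by a fixed multiple of its distance to the vertex set --- which is incompatible with $d_n\to 0$ and $\omega(z)$ being the vertex set. As it stands, Case 3 is an announced intention rather than a proof.
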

\begin{proof}
Case 1:  Let $\theta =0$ so that $c =1$.  The first image of the complex plane is always $\mathbb{H}^+$ and since $c=1$ then $(f_c)$ restricted to $\mathbb{H}^+$ is the identity map.  

Case 2:  Let $\theta = \pi$ so that $c=-1$ and let $z=a+bi \in \mathbb{C}$ be given.  Since $f(\mathbb{C})=\mathbb{H}^+$ then it is enough to prove the claim only for points in $\mathbb{H}^+$.  Let $S=\{z:-1 \leq \Im(z) \leq 1 \}$.  If $z \in S \subset \mathbb{H}^+$ then $f(z)=cz=-z \in S$ and $f^2(z)=-(-z)=z$.  Thus, every point in $S$ is a periodic point of period 2.  Now for every $z \in (\mathbb{H}^+ \setminus S)$ we have that $z = a+bi, b>1$ and so $f(z)=\overline{cz}-2i=-a+bi-2i.$  Thus if $z \in (\mathbb{H}^+ \setminus S)$ then $f$ negates the real part of $z$ subtracts $2i$.  This means if the first $n-1$ iterates of $z$ are in $\mathbb{H}^+ \setminus S$, then $f^n(z)=(-1)^n a+bi-2ni.$  We now write $b=2m+r$ where $m$ is a non-negative integer and $0\leq r <2$.  Then $\Im(f^m(z))=b-2m=2m+r-2m=r<2$.  Now if $r \leq 1$ then $f^m(z) \in S$ and we are done.  Otherwise, $f^{m+1}(z) \in S$.

Case 3:  Let $\theta=\frac{j}{k} 2\pi$, where
	\begin{itemize}
		\item $\frac{j}{k}$ is written in lowest terms,
		\item $j,k \in \{1,2,3,...\}$,
		\item and $j < k$.
	\end{itemize}

(See Figure~\ref{fig:Modulus_of_c_is_1})  If $j\in \{1,k-1\}$, then the line segments $[\gamma_0, f(\gamma_0)]$, $[f(\gamma_0),f^2(\gamma_0)]$, ..., $[f^{k-1}(\gamma_0),\gamma_0]$ form the boundary of a simply connected convex polygon $P$ centered at $0$ with $k$ sides.  By definition $P$ is periodic with period $k$.  If $j \notin \{ 1, k-1\}$, then the line segments $[\gamma_0, c\gamma_0]$, $[c\gamma_0,c^2\gamma_0]$, ..., $[c^{k-1}\gamma_0,\gamma_0]$ form a $k$-pointed star which partitions the plane and has the origin as the center of the star.  By symmetry, the piece of the partition containing the origin is a regular polygon $P$ with $k$ sides.  By the definition of $\gamma_0 \in \FL$, we have $f(\gamma_0)=c\gamma_0 \in \FL$, and so the bottom-most edge of $P$ lies on $\FL$.  In both cases, it is evident that $P = \{z: cf^n(z) \in \mathbb{H}^+, n=0,1,2,...\}$ and so $f(P)=P$.  Note that $f^k(z)=z$ for all $z \in P$.

We now show that every point in the plane is eventually mapped into $P$.  Choose $z \in \mathbb{C}$ and let $D$ be a closed disk, centered at the origin, such that $z \in D$.  If $cz \notin \mathbb{H}^+$, then by Lemma~\ref{If |c|=1 then |f(z)| is <= |z|} $|f(z)|<|z|$.  It is easily seen that for every $z \notin P$ we have $cf^n(z) \notin \mathbb{H}^+$ at least once as $n$ takes on the values $1,2,3,...,k$.  Thus, every $k$ iterates, a point either lands in $P$ or gets closer to the origin.  Consider the sequence of iterates of $z$, $(z_n)$, $n=1,2,3,...$.  Then since $D$ is compact, and since the iterates of $z$ are contained in $D$, then this sequence has an accumulation point $q$ inside of $D$.  If $q$ is in the interior of $P$ then we are done.  If $q \notin P$ then by Lemma~\ref{If |c|=1 then |f(z)| is <= |z|} $|f^k (q)| < |q|$ which contradicts the assumption that $q$ was an accumulation point.  However, we still have the possibility that $q$ is on the boundary of $P$.

We now show that every point sufficiently close to $P$ gets mapped into $P$.  First consider the special case when $k=3$.  Then $j$ is either 1 or 2.  But Proposition~\ref{We only need consider beta>0} implies that it is sufficient to assume $j=1$.  (If $j=2$ then $c=e^{i \theta}$ where $\theta \notin [0,\pi]$.)  When $\frac{j}{k}=\frac{1}{3}$, $P$ is an equilateral triangle with the bottom edge on $\FL$.  Figure~\ref{fig:regular_polygons} shows a neighborhood $U_P$ of $P$ that has been partitioned by triangles congruent to $P$ and shows that $f^5(U_P)=P$.

\begin{figure}[htbp]
	\centering
		\includegraphics[width=0.75\textwidth]{./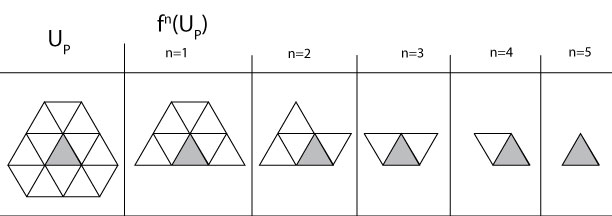}
	\caption[Partitions of $U_P$]{Every point sufficiently close to $P$ is eventually mapped into $P$.  These diagrams show the partitions of $U_P$ and show the first few iterates of $U_P$ in the case that $j=1$.  $P$ is shaded.}
	\label{fig:regular_polygons}
\end{figure}

Now assume that $k \geq 4$.  (See Figure~\ref{fig:Convex_Polygon_closeup})  Let $S$ be the union of lines collinear to the segments forming the $k-$pointed star in the construction of $P$.  We define $U_P=\{(1+\varepsilon)z:z \in P\}$ for $\varepsilon >0$ where $\varepsilon$ is small enough that for every $z \in U_P$ the line segment $[z,0]$ intersects at most 2 lines in $S$.  Note that $S$ partitions $U_P$ into the sets $A,$ $B,$ and $P$ as shown in Figure~\ref{fig:Convex_Polygon_closeup}.  During each iteration, multiplication by $c$ rotates one connected component of $B$ and two connected components of $A$ below $\FL$.  After this, folding maps the component of $B$ into $P$ and maps the two components of $A$ into $P \cup B$.  It is now easily seen that $f^k(B)\subset P$, $f^k(U_P)\subset (P \cup B)$, and thus $f^{2k}(U_P)=P$.

Case 4:  Let $\theta$ be an irrational multiple of $\pi$.  Fix $z_0 \in \mathbb{C}$ and let $D$ be a closed disk containing $\overline{\mathbb{D}}\cup \{z\}$ and centered at the origin.  It is easily seen that since $|c|=1$ then $|f(z)-f(w)| \leq |z-w|$ for all $z,w$.  In particular, when $w=0$ this means that $f$ cannot increase the modulus of any point.  For this reason and since $D$ is centered at the origin, then the iterates of $z$ are contained in $D$.  Since $D$ is compact, then the sequence of iterates $(z_n)=\left(\frac{f^n(z_0)}{c^n}\right)$, $n=1,2,...$ contains a subsequence $(z_{n_j})$ that converges to an accumulation point $q\in D$.  Then $|z_{n_j}-q|$ goes to $0$ as $j$ goes to infinity.  If $q \notin \overline{\mathbb{D}}$ then for some $k>0$ $f^k(q)\in P\mathbb{H}^-$ and by Lemma~\ref{If |c|=1 then |f(z)| is <= |z|} $|f^{k+1}(q)|<|q|$, which contradicts the assumption that $q$ is an accumulation point.  Thus, $q \in \overline{\mathbb{D}}$.  

\begin{figure}[htbp]
	\centering
		\includegraphics[width=1.0\textwidth]{./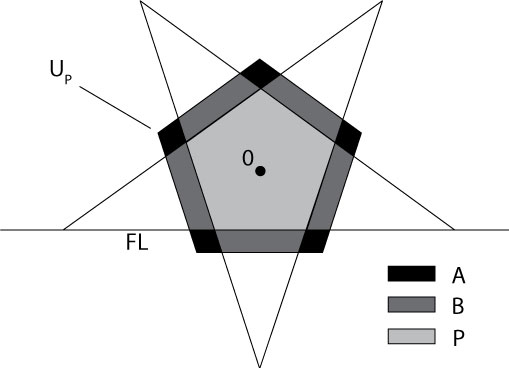}
	\caption{A partitioning of the neighborhood of $U_P$ when $k=5$.}
	\label{fig:Convex_Polygon_closeup}
\end{figure}

Now consider the sequence $(|z_n-q|)$ for $n=1,2,...$.  Note that since $f$ cannot increase the distance between two points, then $|f^{n+1}(z_0)-f^{n+1}(q)|\leq |f^n(z_0)-f^n(q)|$.  We have 
\begin{equation}
\begin{aligned}
|z_{n+1}-q|&=\left|\frac{f^{n+1}(z_0)}{c^{n+1}}-q\right|\\
&=\frac{1}{|c^{n+1}|}\left|f^{n+1}(z_0)-c^{n+1}q\right|\\
&=|f^{n+1}(z_0)-f^{n+1}(q)|\\
&\leq |f^n(z_0)-f^n(q)|\\
&=\frac{1}{|c^n|}|f^n(z_0)-c^nq|\\
&=\left|\frac{f^n(z_0)}{c^n}-q\right|=|z_n-q|.\\
\end{aligned}
\end{equation}

Thus, the sequence $(|z_n-q|)$ is a decreasing sequence of nonnegative numbers and (by the Monotone Convergence Theorem of real numbers) has a limit.  Since the subsequence $(|z_{n_j}-q|)$ goes to $0$ as $j$ goes to infinity, then this limit must be zero.

Lastly,
\begin{equation}
\begin{aligned}
|f^n(z_0)-f^n(q)|&=|f^n(z_0)-c^n q|\\
&=|c^n|\left|\frac{f^n(z_0)}{c^n}-q\right|\\
&=|z_n-q|\\
\end{aligned}
\end{equation}
which we just showed goes to zero as $n$ goes to infinity.
\end{proof}

As we will see, much more interesting dynamics occur when $|c|>1$.
  Thus, unless explicitly mentioned, for the rest of this paper we will assume that $|c|>1$.  The following list of assumptions is for the reader's reference.

\textbf{Standing Assumptions:}  We will use the following definitions and assumptions throughout this text.  They are listed here for the reader's convenience.
\begin{enumerate}
	\item $|c|>1$. 
	\item $\Im(c)=\beta > 0$.  (See Proposition~\ref{We only need consider beta>0})
	\item $c=\alpha + \beta i=|c|e^{i\theta},0\leq \theta<2 \pi$.
	\item $f_c=f$ is a twisted tent map with folding line consisting of points with imaginary part equal to $-1$.  We will only write $f_c$ if the choice of $c$ needs to be explicitly stated.
	\item $g_c(z)=g(z)=cz$.\label{symbol:g}
	\item $h_c(z)=h(z)=\overline{cz}-2i$.\label{symbol:h}
	\item We will use $ \PFL$ to denote the \textbf{pre-folding line}, which is the preimage of $\FL$ under $f$.
	\item We will denote the line segment between $z,w \in \mathbb{C}$ by $[z,w]=[w,z]$.
\end{enumerate}
\end{section}

\begin{section}{The Filled-in Julia Set}\label{The Filled-in Julia Set}
Some of the most notable objects of study in quadratic complex dynamics on the complex plane (or Riemann sphere) are the Julia set $J$, filled-in Julia set $K$, and the Fatou set $F$.  We now define these notions for Twisted Tent Maps, give examples, and note some of their key differences with their counterparts in quadratic dynamics.  

Note that each TTM can be extended to a map from the Riemann sphere to the Riemann sphere by defining $f(\infty)=\infty$.  These maps will also be called TTM's and it should be clear from context which definition of TTM's we are using.

\begin{Def}\label{symbol:K}
The \textbf{filled-in Julia set} is denoted by $K(f_c)=K(c)=K$ and is the set of points with bounded trajectory under iterations of $f$.  The \textbf{Fatou set} is denoted by $F(f_c)=F(c)=F$ and is the domain of equicontinuity in the Riemann sphere for the family of iterates of $f$.  The \textbf{Julia set}\label{symbol:J} is denoted by $J(f_c)=J(c)=J$ and is the complement of $F$.
\end{Def}

\begin{Def}
We denote the boundary of a set $X$ by $\Bd(X)$.\label{symbol:Bd}
\end{Def}

The following is a short list of dynamical properties of TTM's that are distinct from their rational counterparts.  
\begin{enumerate}
	\item The Julia set often does not equal $\Bd(K)$,
	\item The Julia set can have isolated points,
	\item The Filled-in Julia set can have disjoint connected components,
	\item The Filled-in Julia set can have disjoint components with nonempty interior,
	\item The Julia set can have nonempty interior.
\end{enumerate}

\begin{Def}
We will call a point \textbf{preperiodic} if it is strictly preperiodic.  We will call a point \textbf{eventually periodic} if it is either periodic or preperiodic.
\end{Def}

\begin{Def}\label{symbol:diameter}
The diameter of a set $X$ will be denoted $\Diam(X)$.
\end{Def}

\begin{prop}\label{noConvergence}
Let $U$ be a nonempty open subset of $K$.  If $\underset{m \to \infty}{\lim} \Diam(f^m(U))=0$, then $U$ contains no eventually periodic points.  
\end{prop}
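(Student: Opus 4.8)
Suppose toward a contradiction that $U$ contains an eventually periodic point $z_0$. My plan is to follow the forward orbit of such a point and argue that its eventual periodic cycle forces $f^m(U)$ to have diameter bounded below, contradicting the hypothesis. First I would handle the periodic case directly: if $z_0$ is periodic of period $p$, then $f^{mp}(z_0) = z_0$ for all $m$, so $z_0 \in f^{mp}(U)$ for every $m$; since $U$ is open and nonempty, I would pick a second point $w_0 \in U$ with $w_0 \neq z_0$, and the goal is to show $f^{m}(w_0)$ cannot get arbitrarily close to $f^m(z_0)$ along the subsequence $m = kp$. Here is where the key idea enters: a twisted tent map is, on each piece, either the map $g(z) = cz$ or $h(z) = \overline{cz} - 2i$, and both of these are (anti)conformal similarities with expansion factor $|c| > 1$ — that is, $|g(z) - g(w)| = |c|\,|z - w|$ and $|h(z) - h(w)| = |c|\,|z-w|$. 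So as long as $z_0$ and $w_0$ stay in the same piece ($P\mathbb{H}^+$ or $P\mathbb{H}^-$) at each step, distances are multiplied by $|c| > 1$ and $\Diam(f^m(U)) \geq |f^m(z_0) - f^m(w_0)| = |c|^m |z_0 - w_0| \to \infty$, the strongest possible contradiction.

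The subtlety is that $z_0$ and $w_0$ need not stay in the same piece: the folding line $\PFL$ may separate them, in which case the distance-expansion estimate fails for that step. The plan to handle this is to exploit that folding is also an isometry on each half-plane — $\Reflect(\cdot,\FL)$ does not change distances — so the only way $f$ can decrease the distance between $f^{j}(z_0)$ and $f^j(w_0)$ is by the two points landing on opposite sides of $\FL$ after multiplication by $c$, and even then $|f(z) - f(w)| \geq |\,\Re(cz) - \Re(cw)\,|$, i.e. the real parts of $cz, cw$ are preserved. A cleaner route: replace $w_0$ by a point on the segment $[z_0, w_0]\subseteq U$ chosen close enough to $z_0$ that the entire finite orbit segment of $z_0$ up to the point where it becomes periodic, together with one full period, stays within pieces whose interiors contain the corresponding orbit point of $z_0$ — this is possible because that orbit is a finite set and, generically, avoids $\PFL$; the degenerate case where some iterate of $z_0$ lands exactly on $\PFL$ needs a small separate argument, perturbing $w_0$ to the side on which $f$ acts as the local similarity. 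Once $z_0$ and a nearby $w_0 \in U$ are in "matching" pieces for all time, the expansion factor $|c|>1$ gives $\Diam(f^m(U)) \geq |c|^{m}|z_0 - w_0|$, contradicting $\Diam(f^m(U)) \to 0$.

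For the preperiodic case, if $z_0$ is strictly preperiodic then some iterate $f^{\ell}(z_0)$ is periodic; applying the periodic case to the open set $f^{\ell}(U)$ — noting $\Diam(f^m(f^\ell(U))) = \Diam(f^{m+\ell}(U)) \to 0$ as well, and $f^\ell(U)$ contains the periodic point $f^\ell(z_0)$ — reduces it immediately, provided $f^\ell(U)$ has nonempty interior. Since $f$ is an open map away from $\PFL$ (locally a similarity) and at worst a two-to-one fold along $\PFL$, $f^\ell(U)$ contains a nonempty open set, so the reduction goes through. The main obstacle I anticipate is precisely the bookkeeping around the folding line: ensuring that a point $w_0$ can be chosen in $U$ so that its orbit tracks $z_0$'s orbit on the same side of every relevant $\PFL$-translate, and cleanly dispatching the boundary case where an iterate of $z_0$ sits on $\PFL$. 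Everything else is the routine observation that each branch of $f$ multiplies distances by exactly $|c| > 1$.
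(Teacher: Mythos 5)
The mechanism you identify --- each branch $g(z)=cz$, $h(z)=\overline{cz}-2i$ is a similarity with ratio $|c|>1$, and separations can only shrink when the two points straddle $\PFL$ --- is the right one, and is essentially what the paper exploits. But your endgame has a genuine gap. Choosing $w_0$ close to $z_0$ buys matching pieces only for the finitely many steps you arrange (your construction covers the preperiod plus one period); it does not give ``matching pieces for all time.'' Indeed it cannot: at every matched step the separation is multiplied by $|c|>1$, so after finitely many periods it necessarily exceeds the distance from the periodic cycle to $\PFL$, and from then on the two orbit points may lie strictly on opposite sides of $\PFL$, where the fold can contract their separation and can even identify the two points; you cannot restart the ``choose $w_0$ closer'' step there, because the required closeness has been degraded by a factor $|c|^{p}$ per period. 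Moreover the conclusion you aim for, $\Diam(f^m(U))\ge |c|^m|z_0-w_0|\to\infty$, is impossible: $f^m(U)\subset K$ and $K$ is bounded (once $|z|$ is large, $|f(z)|\ge |c||z|-2>|z|$ and the orbit escapes), so $\Diam(f^m(U))\le \Diam(K)$ for all $m$. Since your argument never uses the hypothesis $\Diam(f^m(U))\to 0$ except as the thing contradicted, it would show that no open set in $K$ contains a periodic point at all, which is false when $K$ has interior; the failure point is precisely the ``for all time'' claim.

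What the expansion mechanism actually yields --- and what the paper's proof extracts, using short line segments emanating from the cycle and the least nonzero distance $\delta$ from the cycle to $\FL$ --- is a fixed positive threshold that the diameters must exceed at arbitrarily late times, which already contradicts $\Diam(f^m(U))\to 0$. Concretely, let $\delta''>0$ be the least positive distance from a point of the cycle to $\PFL$. If at some time $j$ past the preperiod the separation $|f^j(z_0)-f^j(w)|$ is positive and less than $\delta''$, then both points lie in a common closed pre-half plane, on which $f$ is a single similarity (note $g$ and $h$ agree on $\PFL$, so your ``perturb $w_0$ to one side'' digression for cycle points on $\PFL$ is unnecessary); hence the separation is multiplied by exactly $|c|$ and cannot stay below $\delta''$ forever. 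Now assume $\Diam(f^m(U))\to 0$, pick $m_0$ past the preperiod with $\Diam(f^m(U))<\delta''$ for all $m\ge m_0$, and pick a point of $f^{m_0}(U)$ distinct from the cycle point (possible since $f^{m_0}$ is a piecewise similarity and is not constant on $U$); the forced growth gives $\Diam(f^m(U))\ge\delta''$ for some $m\ge m_0$, a contradiction. Your preperiodic reduction also needs care: $f^{\ell}(U)$ need not be open, and the open set it contains need not contain the periodic point $f^{\ell}(z_0)$; with the threshold formulation this issue disappears, since one only needs $f^m(U)$ to contain the cycle point together with one other point.
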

\begin{proof}
Let $\underset{m \to \infty}{\lim} \Diam(f^m(U))=0$.  If $U$ contains an eventually periodic point $z_0$, then let $z$ be a periodic point of period $n>0$ in the orbit of $z_0$.  If each of the $n$ points in the periodic orbit of $z$ is on $\FL$, then every short line segment starting from from a point in this orbit increases in length under iteration, a contradiction.  Thus, at least one point of this orbit is not in $\FL$.  Let  $\delta=\min\{|\Im(f^n(z))-(-i)|\neq 0:n=1,2,\cdots,n\}.$  Then $\delta$ is the shortest non zero distance from a point in the orbit of $z$ to $\FL$.  Clearly, every line segment of length $\delta/2$ starting from a point in this orbit will increase in length under one application of $f$.  This also is a contradiction and we are done.
\end{proof}

\begin{prop}\label{One attracting periodic point}
The point at infinity is the only attracting periodic point.
\end{prop}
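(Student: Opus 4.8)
The plan is to prove the statement in two parts: first that $\infty$ is an attracting fixed point, and then that no point of $\mathbb{C}$ is an attracting periodic point. The first part is a one-line modulus estimate. For every $z$ we have $|f(z)| \ge |cz| - 2 = |c|\,|z| - 2$, and since $|c| > 1$ we may pick $\lambda \in (1,|c|)$ and $M > 0$ so large that $|c|\,|z| - 2 \ge \lambda |z|$ whenever $|z| \ge M$. Then $\{z : |z| > M\}\cup\{\infty\}$ is a forward-invariant neighborhood of $\infty$ on which $|f^{k}(z)| \ge \lambda^{k}|z| \to \infty$, so $\infty$ is attracting.

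For the second part I would argue by contradiction and reduce everything to Proposition~\ref{noConvergence}. Suppose $z_0 \in \mathbb{C}$ is an attracting periodic point of period $n$. From the definition of an attracting cycle there is a bounded open neighborhood $U$ of $z_0$, forward invariant under $f^{n}$ (shrink a piece of the immediate basin if necessary), lying in the basin of the cycle; since every orbit in the basin converges to the bounded cycle, each such orbit is bounded, so $U \subseteq K$. Because $z_0$ is attracting, the sets $\overline{f^{nk}(U)}$ for $k = 0,1,2,\dots$ form a nested sequence of compacta whose intersection is exactly $\{z_0\}$, and therefore $\Diam\!\big(f^{nk}(U)\big) \to 0$ as $k \to \infty$ (the diameters of a nested family of compacta decrease to the diameter of the intersection). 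To pass from the subsequence $m = nk$ to all $m$, write $m = nk + r$ with $0 \le r < n$; then $f^{m}(U) = f^{r}\big(f^{nk}(U)\big)$, and since $f^{nk}(U)$ eventually lies in a fixed compact neighborhood of $z_0$ on which $f^{r}$ is uniformly continuous, $\Diam\!\big(f^{m}(U)\big) \to 0$ as $m \to \infty$ as well. Now Proposition~\ref{noConvergence} applies to $U$ — a nonempty open subset of $K$ with $\Diam\!\big(f^{m}(U)\big) \to 0$ — and yields that $U$ contains no eventually periodic points, contradicting $z_0 \in U$. Hence $\infty$ is the only attracting periodic point.

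I expect the main obstacle to be bookkeeping rather than a deep idea: one must be careful about the exact meaning of ``attracting'' — in particular $f^{n}$ need not be differentiable at $z_0$, so one cannot simply invoke a multiplier condition $|(f^{n})'(z_0)| < 1$ — and extract from it the shrinking of $\Diam(f^{nk}(U))$; after that the argument is routine. An alternative, more hands-on route splits into the case where the cycle avoids $\PFL$, where $f^{n}$ is locally a similarity with ratio $|c|^{n} > 1$ and $z_0$ is outright repelling, and the case where the cycle meets $\PFL$, where one argues directly that the folding still cannot contract a one-sided neighborhood of the cycle; but this essentially reproves the mechanism already built into Proposition~\ref{noConvergence}, so the reduction above seems preferable.
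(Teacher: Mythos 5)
Your proposal is correct and takes essentially the same route as the paper: $\infty$ is shown attracting from the estimate $|f(z)|\ge |c||z|-2$, and a finite attracting cycle is ruled out by producing an open set $U\subset K$ inside its basin with $\Diam(f^m(U))\to 0$ and invoking Proposition~\ref{noConvergence}. The only difference is bookkeeping: the paper asserts the diameter decay directly from the definition of attracting, while you derive it from the nested compacta $\overline{f^{nk}(U)}$, which (as you implicitly use) requires the attraction to be uniform on a neighborhood of the cycle --- the sense in which the paper uses the word.
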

\begin{proof}
Since $|c|>1$ then multiplication by $c$ increases the modulus of a point.  Folding can only decrease the imaginary part of a point, and can decrease the modulus of a point by at most $2$.  Thus, $|f(z)|\geq |cz|-2$.  Let $k>2$ where $k \neq |c|$.  Then solving $|cz|-2 >k|z|$ for $|z|$, we get $|z|>\frac{2}{|c|-k}$.  Thus, for all $z$ such that $|z|>\frac{2}{|c|-k}$, we have $|f(z)|>k|z|>2|z|$.  It follows easily by induction that $|f^n(z)|>2^n|z|$ which goes to infinity exponentially as $n$ increases.  It then follows easily that infinity is an attracting fixed point.  

Now suppose that $K$ contains an attracting periodic point.  Then the basin of attraction of this point contains an open subset $U \subset K$ and we have
\begin{equation*}
\underset{m\to \infty}{\lim}\Diam(f^m(U))=0.\\
\end{equation*}
  By Proposition~\ref{noConvergence}, we are done.
\end{proof}

A likely consequence of Propositions~\ref{noConvergence} and~\ref{One attracting periodic point} is that the basin of attraction of infinity is the only Fatou domain.  However, there is still the possibility of the existence of wandering domains within $K$.  We make the following conjectures.

\begin{conj}\label{No wandering domains}
Wandering domains do not exist.
\end{conj}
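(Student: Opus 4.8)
The classical route to a no–wandering–domains theorem — Sullivan's quasiconformal-surgery argument in the space of analytic conjugates of the map — is unavailable here, since a twisted tent map is not even differentiable across $\PFL$. The plan is instead to exploit the piecewise-affine, at-most-two-to-one structure of $f$ through a two-dimensional Lebesgue-measure (area) estimate. It suffices to rule out a wandering domain $U$ contained in $K$: a hypothetical wandering domain whose orbit eventually leaves every compact set is a component of the basin of infinity, and its non-existence is part of the expectation that the basin of infinity is the only Fatou domain.

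So suppose $U\subseteq K$ is a wandering domain and put $U_n=f^n(U)$. By definition the $U_n$ are pairwise disjoint nonempty open sets, all contained in the bounded set $K$ (recall $K\subseteq P$ with $P$ compact). Hence $\sum_{n\ge 0}\operatorname{Area}(U_n)\le\operatorname{Area}(K)<\infty$, so $\operatorname{Area}(U_n)\to 0$, although $\operatorname{Area}(U_n)>0$ for every $n$. On each of $P\mathbb{H}^+$ and $P\mathbb{H}^-$ the map $f$ is affine ($z\mapsto cz$, respectively $z\mapsto\overline{cz}-2i$), so its real Jacobian has absolute value $|c|^2$ almost everywhere; and, as recorded in the discussion of $\PFL$, $f$ is at most two-to-one, the two preimages of a point being interchanged by $\Reflect(\,\cdot\,,\PFL)$. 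The area formula for Lipschitz maps then gives, for any Borel set $A$,
\[
|c|^2\,\operatorname{Area}(A)=\int\#\!\left(A\cap f^{-1}(y)\right)dy\le 2\,\operatorname{Area}\!\left(f(A)\right),
\]
so $\operatorname{Area}(f(A))\ge\tfrac{|c|^2}{2}\operatorname{Area}(A)$; with $A=U_n$ this reads $\operatorname{Area}(U_{n+1})\ge\tfrac{|c|^2}{2}\operatorname{Area}(U_n)$. When $|c|^2\ge 2$ the sequence $(\operatorname{Area}(U_n))$ is then non-decreasing and positive, contradicting $\operatorname{Area}(U_n)\to 0$; thus there are no wandering domains in $K$ once $|c|\ge\sqrt{2}$.

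The obstacle is the range $1<|c|<\sqrt{2}$, where $\tfrac{|c|^2}{2}<1$ and the inequality above is tight only when $f$ is genuinely two-to-one on almost all of $U_n$. Since the sole branching of $f$ is the fold across $\PFL$, this forces $U_n$ to be symmetric, up to a null set, under $\Reflect(\,\cdot\,,\PFL)$, and then $U_{n+1}=f(U_n)=c\,\bigl(U_n\cap P\mathbb{H}^+\bigr)$. To complete the proof one must show that this near-maximal cancellation of the $|c|^2$-expansion of area cannot recur along the orbit with high enough frequency: for instance, symmetry of $U_{n+1}$ about $\PFL$ would force $U_n\cap P\mathbb{H}^+$ to be symmetric about the line $c^{-1}\PFL$, which is distinct from $\PFL$ unless $c\in\mathbb{R}$, and one expects an infinite chain of such reflection constraints to be incompatible with the $U_n$ being bounded connected sets of summable area. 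Quantifying exactly how often the fold can undo the area expansion — presumably by coupling the estimate above with the linear and length bounds that become available once the perimeter set $P$ is in hand — is the main difficulty, and is why the statement is left here as a conjecture.
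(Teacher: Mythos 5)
This statement is posed in the paper as Conjecture~\ref{No wandering domains}; the paper contains no proof of it, so there is nothing to match your argument against, and the real question is whether your proposal settles the conjecture. It does not, and you say so yourself: the area estimate $\operatorname{Area}(f(A))\geq \tfrac{|c|^2}{2}\operatorname{Area}(A)$ (which is essentially the same ``folding halves the measure at worst'' estimate the paper uses to show $K$ has Lebesgue measure $0$ for $|c|>\sqrt{2}$, a fact that already forces $\Int(K)=\emptyset$ and hence no wandering domains inside $K$ in that range) only closes the case $|c|\geq\sqrt{2}$. For $1<|c|<\sqrt{2}$ the factor $\tfrac{|c|^2}{2}$ is less than $1$, and your proposed mechanism --- that near-equality forces $U_n$ to be symmetric about $\PFL$ up to a null set, and that an infinite chain of such reflection constraints (about $\PFL$, $c^{-1}\PFL$, \dots) is ``expected'' to be incompatible with summable areas --- is a heuristic, not an argument. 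Nothing you write quantifies how often the fold can cancel the $|c|^2$ expansion, and without that the contradiction with $\operatorname{Area}(U_n)\to 0$ never materializes; this is exactly the open content of the conjecture.

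Two smaller points deserve care even in the part you do treat. First, the reduction to $U\subseteq K$ is asserted rather than proved: you need that a Fatou component not contained in $K$ lies in the basin of $\infty$ (points off $K$ do diverge, by Theorem~\ref{K subset P} and Lemma~\ref{Expansion outside of P}), and then that components of that basin are eventually periodic rather than wandering; the latter is standard but is not ``part of an expectation,'' it is a step to write down. Second, $f$ is not an open map (the fold sends a disk centered on $\FL$ to a half-disk containing part of its boundary), so the sets $U_n=f^n(U)$ need not be open; they are still pairwise disjoint of positive area because distinct Fatou components are disjoint and your own estimate keeps the area positive, but the argument should be phrased that way. As it stands, your proposal is a reasonable partial result plus a program for the hard range, not a proof, and the conjecture remains open.
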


\begin{conj}
If $|c|>1$ then $K=J$.
\end{conj}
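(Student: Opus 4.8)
I would establish the two inclusions $J\subseteq K$ and $K\subseteq J$ separately, and the first is the easy one. By the estimate in the proof of Proposition~\ref{One attracting periodic point}, there is a radius $R$ with $|z|>R\implies|f^{n}(z)|>2^{n}|z|$ for all $n$; hence every unbounded orbit eventually enters $\{|z|>R\}$ and from then on diverges to $\infty$, so $\mathbb{C}\setminus K$ is precisely the basin of the attracting fixed point $\infty$. This basin is open and is contained in the Fatou set: on $\{|z|>R\}\cup\{\infty\}$ the iterates converge uniformly (in the spherical metric) to $\infty$, while every other escaping point is carried into $\{|z|>R\}$ by some $f^{n_{0}}$ and therefore has a neighbourhood of equicontinuity. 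Thus the complement of $K$ in the Riemann sphere lies in $F$, i.e. $J\subseteq K$.

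For $K\subseteq J$ it suffices, since Fatou components are open, to show that no nonempty open $U\subseteq K$ is contained in a Fatou component. Suppose $U$ were such a set. Because $K$ is bounded and forward invariant, every $f^{n}(U)$ is a bounded subset of $K$, and $\{f^{n}|_{U}\}$ is equicontinuous. The tool I would exploit is the piecewise expansion of $f$: off $\PFL$, the map $f$ agrees with one of the similarities $g(z)=cz$ or $h(z)=\overline{cz}-2i$, each of ratio $|c|>1$. Hence for any connected set $A$ one has $\Diam(f(A))\ge\tfrac{|c|}{2}\Diam(A)$: if $x,y\in A$ realise $\Diam(A)$ on opposite sides of $\PFL$, pick $m\in A\cap\PFL$ (possible as $A$ is connected and meets both $P\mathbb{H}^{+}$ and $P\mathbb{H}^{-}$), note $\max(|x-m|,|m-y|)\ge\tfrac12\Diam(A)$ by the triangle inequality, and use that $f$ restricted to each of $A\cap P\mathbb{H}^{\pm}$ is a $|c|$-similarity. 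A parallel bookkeeping for areas, using that $f$ is injective on each of $A\cap P\mathbb{H}^{\pm}$ with Jacobian determinant of modulus $|c|^{2}$ and that the two images overlap in area at most $\tfrac12|c|^{2}\operatorname{area}(A)$, gives $\operatorname{area}(f(A))\ge\tfrac{|c|^{2}}{2}\operatorname{area}(A)$.

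When $|c|>\sqrt{2}$ the area estimate forces $\operatorname{area}(f^{n}(B))\to\infty$ for any ball $B\subseteq U$, contradicting that $f^{n}(B)\subseteq K$ is bounded; so in that regime $K$ has empty interior and $K=J$ is immediate (the diameter estimate already yields this for $|c|>2$). The genuinely delicate range is $1<|c|\le\sqrt{2}$, where $K$ can have nonempty interior---for instance the polygonal examples appearing just above $|c|=1$---and where folding can be locally contracting. Here the plan would be to show that along the orbit of $U$ the forward images of a ball cross $\PFL$ and its iterated preimages with controlled frequency and depth, so that the uniform factor $|c|$ still dominates on average; this amounts to excluding both wandering behaviour for $U$ and a ``rotation-type'' component (an analogue of a Siegel disc or Herman ring), from which one would read off a contradiction with Proposition~\ref{noConvergence} and Proposition~\ref{One attracting periodic point}.

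I expect this last step to be the main obstacle. Because a twisted tent map is not holomorphic, neither Sullivan's no-wandering-domains theorem nor the Fatou classification of periodic Fatou components is available, so these possibilities must be excluded by hand; and unlike a genuinely expanding map, $f$ contributes real local contraction through folding that has to be beaten by the $|c|$-expansion. Making the heuristic ``expansion beats folding on open sets'' precise seems to require a quantitative transversality/non-recurrence statement controlling how often and how deeply the forward images of an open subset of $K$ meet $\PFL$---which is essentially the content of Conjecture~\ref{No wandering domains}. A complete proof would then combine such an estimate with the area/diameter expansion above, which disposes of the complementary large-$|c|$ regime.
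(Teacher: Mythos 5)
There is a genuine gap here, and you have named it yourself: the statement you were asked to prove is stated in the paper only as a conjecture (alongside Conjecture~\ref{No wandering domains}), so there is no proof in the paper to match, and your proposal does not close the open part either. What you do establish is sound in outline: the inclusion $J\subseteq K$ via the escape estimate from Proposition~\ref{One attracting periodic point} (so $\mathbb{C}\setminus K$ is the basin of $\infty$ and lies in $F$), the fact that boundary points of $K$, and more generally points of $K$ every neighborhood of which contains escaping points, belong to $J$, and the diameter and area expansion bounds $\Diam(f(A))\geq \tfrac{|c|}{2}\Diam(A)$ and $\operatorname{area}(f(A))\geq\tfrac{|c|^2}{2}\operatorname{area}(A)$, which match the paper's own arguments (the total disconnectedness result for $|c|>2$ and the Lebesgue-measure-zero lemma for $|c|>\sqrt{2}$) and give $K=J$ when $|c|>\sqrt{2}$.

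The missing step is exactly the case $1<|c|\leq\sqrt{2}$ with $\Int(K)\neq\emptyset$: you must show that no open subset of $K$ lies in the Fatou set, i.e.\ that there are no bounded Fatou components, whether wandering or of rotation type. Propositions~\ref{noConvergence} and~\ref{One attracting periodic point} only exclude components on which diameters of iterates shrink to zero; they say nothing about a component whose iterates keep bounded, non-shrinking diameter while remaining equicontinuous, which is precisely what a wandering or Siegel-type domain would look like. Your ``expansion beats folding on average'' heuristic is plausible but is not an argument: the folding is genuinely contracting in one real direction near $\PFL$, the map is not holomorphic (so Sullivan's theorem and the classification of periodic Fatou components are unavailable, as you note), and no quantitative control on how often and how deeply $f^n(U)$ meets the preimages of $\FL$ is supplied. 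Since that control is essentially Conjecture~\ref{No wandering domains}, your proposal is best read as a correct reduction of the conjecture $K=J$ (for $|c|>1$) to the no-wandering/no-rotation problem on $\Int(K)$, together with a complete treatment of the regime $|c|>\sqrt{2}$ --- a useful partial result, but not a proof of the statement.
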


If these conjectures are true, then the study of $K,$ $J,$ and $F$ are in many ways equivalent.  Since $K$ is the easiest to define, it is the one we choose to study.

We now wish to describe the simplest $K$ when $|c|>1$.  We begin by characterizing $K(c)$ for $c \in \mathbb{R}$; these $K$ are contained in the imaginary axis, and are line segments or Cantor sets.  We then describe $K(c)$ where $\Im(c)$ is small; these $K$ are double spirals made from line segments.  We conclude this chapter with a discussion on how continuously changing $c$ can lead to the discontinuous creation or deletion of isolated points or components of $K$.  

For this chapter, let $\mathcal{G}(x)$ be the real tent map defined by
\begin{equation}
\mathcal{G}(x)=
\begin{cases}
cx & \text{if } x < \frac{1}{2}, \\ 
c-cx & \text{if } x \geq \frac{1}{2}. 
\end{cases}
\end{equation}

\begin{lem}
If $c \in \mathbb{R}$ then $K(f)$ is a subset of the imaginary axis.
\end{lem}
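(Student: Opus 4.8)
The plan is to track what $f$ does to the real part of a point and show it is simply multiplied by $c$ at each iteration, so that a nonzero real part escapes to infinity.

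First I would record the elementary observation that for $c\in\mathbb{R}$ and any $z=x+iy$ we have $cz = cx + i(cy)$, hence $\Re(cz)=cx=c\,\Re(z)$. Next I would check that neither branch in the definition of $f$ changes the real part of $cz$: on $P\mathbb{H}^+$ we have $f(z)=cz$, and on $P\mathbb{H}^-$ we have $f(z)=\Reflect(cz,\FL)=\overline{cz}-2i$, and since $\FL$ is the horizontal line $\{\,\Im(z)=-1\,\}$, reflection about it (equivalently, the map $w\mapsto\overline{w}-2i$) fixes real parts: $\Re(\overline{cz}-2i)=\Re(cz)$. Therefore in \emph{both} cases
\begin{equation*}
\Re(f(z)) = \Re(cz) = c\,\Re(z).
\end{equation*}

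By a trivial induction this gives $\Re(f^n(z)) = c^n\,\Re(z)$ for all $n\ge 0$. Since $|c|>1$ (and $c\in\mathbb{R}$ forces $c>1$ or $c<-1$), if $\Re(z)\ne 0$ then $|\Re(f^n(z))| = |c|^n\,|\Re(z)|\to\infty$, so $|f^n(z)|\to\infty$ and $z\notin K$. Consequently every point of $K$ has vanishing real part, i.e. $K\subseteq\{\,z:\Re(z)=0\,\}$, the imaginary axis.

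I do not expect a real obstacle here: the only point requiring care is verifying that the folding branch leaves the real part of $cz$ untouched, and this is immediate from the canonical choice of $\FL$ as a horizontal line. (The argument uses only $|c|>1$; the degenerate cases $|c|\le 1$ with $c$ real are covered by Proposition~\ref{mod c less than 1} and Theorem~\ref{mod c equals 1}.)
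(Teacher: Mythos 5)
Your proposal is correct and follows essentially the same route as the paper: both arguments observe that, since $\FL$ is horizontal and $c$ is real, each application of $f$ multiplies the real part by $c$, so a nonzero real part grows like $|c|^n$ (using the standing assumption $|c|>1$) and the point escapes. You simply spell out the verification that the folding branch preserves real parts, which the paper leaves implicit.
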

\begin{proof}
Since $c \in \mathbb{R}$ then $|\Re(f(z))|= |c||\Re(z)|$.  Thus, if $\Re(z) \neq 0$, then the sequence $(|\Re(z)|,|f(\Re(z))|,|f^2(\Re(z))|,...)=(|\Re(z)|,|c||\Re(z)|,|c|^2|\Re(z)|,...)$ diverges and so $z \notin K$.
\end{proof}

\begin{lem}\label{conjugate to real tent map}
If $c \in \mathbb{R}$ then $f$ restricted to the imaginary axis is conjugate to the real tent map $\mathcal{G}$.
\end{lem}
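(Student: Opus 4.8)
The plan is to produce an explicit affine conjugacy from $\mathbb{R}$ (the domain of $\mathcal{G}$) onto the imaginary axis and to verify the conjugacy relation by a direct two-case computation. First I would record how $f$ acts on the imaginary axis: since $c\in\mathbb{R}$, for $z=it$ with $t\in\mathbb{R}$ we have $cz=(ct)i$, which is again purely imaginary with $\Im(cz)=ct$. Hence the imaginary axis is forward invariant under $f$, and, using $\Reflect(w,\FL)=\overline{w}-2i$,
\begin{equation*}
f(it)=\begin{cases}(ct)\,i & \text{if } ct\ge -1,\\ -(ct+2)\,i & \text{if } ct<-1.\end{cases}
\end{equation*}

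Next I would guess the conjugating map $\varphi\colon\mathbb{R}\to\{it:t\in\mathbb{R}\}$ defined by $\varphi(x)=-\tfrac{2}{c}\,ix$, which is a homeomorphism onto the imaginary axis since $|c|>1$ forces $c\ne0$. The motivation is that $\varphi$ must carry the fold point $x=\tfrac12$ of $\mathcal{G}$ to the fold point of $f$ on the imaginary axis, namely the point $-i/c=\PFL\cap\{it:t\in\mathbb{R}\}$, and indeed $\varphi(\tfrac12)=-i/c$; the scaling constant is then forced by matching the left branches. The key computation is $\Im\!\left(c\varphi(x)\right)=\Im(-2ix)=-2x$, which is $>-1$ precisely for $x<\tfrac12$ and $\le-1$ for $x\ge\tfrac12$; the boundary value $-1$ corresponds to the point $-i\in\FL$, where the two branches of $f$ agree, so the case split causes no trouble.

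It then remains to check $f(\varphi(x))=\varphi(\mathcal{G}(x))$ for all $x$. For $x<\tfrac12$ we have $\mathcal{G}(x)=cx$ and $\Im(c\varphi(x))=-2x>-1$, so $f(\varphi(x))=c\varphi(x)=-2ix=\varphi(cx)$. For $x\ge\tfrac12$ we have $\mathcal{G}(x)=c-cx$ and $\Im(c\varphi(x))=-2x\le-1$, so $f(\varphi(x))=\overline{c\varphi(x)}-2i=2ix-2i=-\tfrac{2}{c}i(c-cx)=\varphi(c-cx)$. Since $\varphi$ is a homeomorphism, this exhibits $f$ restricted to the imaginary axis as conjugate to $\mathcal{G}$. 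I do not anticipate any real obstacle here: the only point requiring care is getting the affine map (and in particular its orientation) right, and once the fold points are aligned each branch is a one-line verification; moreover the same computation is valid verbatim whether $c>1$ or $c<-1$, since it only ever involves $c\varphi(x)=-2ix$.
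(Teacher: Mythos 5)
Your proof is correct and follows essentially the same route as the paper: the paper conjugates via $\varphi(z)=\tfrac{ciz}{2}$ from the imaginary axis to $\mathbb{R}$, and your map $x\mapsto -\tfrac{2}{c}ix$ is exactly its inverse, with the same alignment of the fold points $\tfrac12\leftrightarrow -i/c$ and the same two-branch verification. The only difference is the direction in which the conjugacy is written, plus your (welcome) explicit remark that the branches of $f$ and $\mathcal{G}$ agree at the fold, so nothing further is needed.
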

\begin{proof}
Let $z=xi$ where $x \in \mathbb{R}$.  Then we claim that $\varphi(f(z))=\mathcal{G}(\varphi(z))$ where $\varphi(z)=\frac{ciz}{2}$.  We first note that the intersection of $ \PFL$ with the imaginary axis is the point $\frac{-i}{c}$, and that $\varphi(\frac{-i}{c})=\frac{1}{2}$.  It then follows easily that $z$ is a point in the intersection of $P\mathbb{H}^+$ and the imaginary axis if and only if $\varphi(z)\leq \frac{1}{2}$.  Thus, we have two cases.

Case 1:  If $z \in P\mathbb{H}^+$, then $\varphi(f(z))=\varphi(f(xi))=\varphi(cxi)=\frac{ci(cxi)}{2}=c\frac{ci(xi)}{2}=\mathcal{G}(\varphi(z))$.

Case 2:  If $z \in P\mathbb{H}^-$, then $\varphi(f(z))=\frac{ci(-cxi-2i)}{2}=c-c \frac{-cx}{2}=\mathcal{G}(\frac{-cx}{2})=\mathcal{G}(\frac{ciz}{2})=\mathcal{G}(\varphi(z))$.
\end{proof}

\begin{lem}\label{K(g) is a Cantor set}
Let $\mathcal{G}(x)$ be the real tent map.  Then if $|c|>2$ then the set of points on the real line whose trajectories do not diverge to infinity is a Cantor set.
\end{lem}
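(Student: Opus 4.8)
The plan is to identify the set $\Lambda$ of points whose $\mathcal{G}$-orbit stays bounded, show that it equals a nested intersection of finite unions of closed intervals with controlled lengths, and then verify the three topological properties that characterize a Cantor set. I will treat the case $c > 2$; the case $c < -2$ is handled by the same argument after noting that on $[0,1]$ the map $\mathcal{G}$ (with $c<0$) is the negative of the positive-slope tent map with slope $|c|$, so the non-escaping sets agree up to an affine change of coordinates.

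The first step is to confine everything to $[0,1]$. If $x < 0$ then $\mathcal{G}(x) = cx$ with $cx < x < 0$, so the orbit decreases to $-\infty$; if $x > 1$ then $\mathcal{G}(x) = c(1-x) < 0$, reducing to the previous case. If $x$ lies in the open middle interval $(1/c,\,1-1/c)$ — genuinely nonempty and strictly between $0$ and $1$ precisely because $c > 2$ — then using $\mathcal{G}(x) = cx$ on $(1/c,1/2)$ and $\mathcal{G}(x) = c(1-x)$ on $[1/2,\,1-1/c)$ one gets $\mathcal{G}(x) > 1$, which escapes by the previous sentence. Hence a bounded orbit must remain forever in $I_0 := [0,\,1/c]$ and $I_1 := [1-1/c,\,1]$, and $\Lambda = \{x : \mathcal{G}^n(x) \in [0,1]\ \text{for all}\ n \ge 0\}$.

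Next I would exploit the branch structure. The restriction $\mathcal{G}|_{I_0}$ is affine with slope $c$ and maps $I_0$ bijectively onto $[0,1]$, and $\mathcal{G}|_{I_1}$ is affine with slope $-c$ and also maps $I_1$ bijectively onto $[0,1]$. Writing $\Lambda_n := \{x \in [0,1] : \mathcal{G}^k(x) \in [0,1]\ \text{for}\ 0 \le k \le n\}$, one has $\Lambda_{n+1} = (\mathcal{G}|_{I_0})^{-1}(\Lambda_n) \cup (\mathcal{G}|_{I_1})^{-1}(\Lambda_n)$, a disjoint union of two copies of $\Lambda_n$, each scaled by $1/c$ and placed inside $I_0$ and $I_1$ respectively. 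An induction on $n$ then gives: $\Lambda_n$ is a disjoint union of $2^n$ closed intervals, each of length $c^{-n}$, with each component of $\Lambda_n$ containing exactly two components of $\Lambda_{n+1}$; and $\Lambda = \bigcap_{n \ge 0} \Lambda_n$ is a decreasing intersection of nonempty compact sets.

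Finally I would conclude that $\Lambda$ is a Cantor set. It is closed and bounded, hence compact. It is totally disconnected: any connected subset lies in a single component of $\Lambda_n$ and so has diameter at most $c^{-n}$ for every $n$, hence is a single point. It is perfect: given $x \in \Lambda$ and $n \ge 0$, the component of $\Lambda_n$ containing $x$ also contains a second component of $\Lambda_{n+1}$ not containing $x$, and that component meets $\Lambda$ by the finite intersection property, yielding $y \in \Lambda$ with $0 < |x - y| \le c^{-n}$. A compact, totally disconnected, perfect metric space is homeomorphic to the standard Cantor set. (Equivalently, the itinerary map $x \mapsto (s_n)_{n\ge 0}$ with $s_n = j$ iff $\mathcal{G}^n(x) \in I_j$ is a homeomorphism of $\Lambda$ onto $\{0,1\}^{\mathbb{N}}$: injective since $\operatorname{diam}\Lambda_n \to 0$, surjective by the finite intersection property, continuous, hence a homeomorphism of compacta.) The only mildly delicate point, and the one I would write out carefully, is the inductive bookkeeping that $\Lambda_n$ consists of exactly $2^n$ disjoint intervals of length $c^{-n}$ nesting properly; the genuine content behind it — that the two branches of $\mathcal{G}$ over $I_0$ and $I_1$ are expanding affine bijections onto $[0,1]$ — is exactly what forces the hypothesis $|c| > 2$, and beyond this I anticipate no real obstacle.
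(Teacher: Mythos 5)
Your proposal is correct, but note what the paper actually does here: its entire proof of this lemma is the sentence ``This is a well known result,'' i.e.\ it simply cites the classical fact that a tent map whose slope has modulus greater than $2$ has a Cantor set of non-escaping points. You instead reconstruct that fact from scratch by the standard nested-interval argument: escape of everything outside $[0,1]$ and of the middle gap $(1/c,\,1-1/c)$, the two expanding affine branches over $I_0=[0,1/c]$ and $I_1=[1-1/c,1]$ mapping bijectively onto $[0,1]$, the sets $\Lambda_n$ consisting of $2^n$ disjoint closed intervals of length $c^{-n}$, and then compactness, total disconnectedness and perfectness of $\Lambda=\bigcap_n\Lambda_n$, concluding via Brouwer's characterization (or the itinerary homeomorphism onto $\{0,1\}^{\mathbb{N}}$). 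This buys self-containedness at the cost of the length the paper avoids by citation; the substance is exactly the folklore the paper leans on, so mathematically nothing is lost either way.

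One point in your write-up deserves to be firmed up: the reduction of the case $c<-2$. For negative slope the non-escaping set is \emph{not} contained in $[0,1]$ (it lies in $[-1/(|c|-1),\,|c|/(|c|-1)]$), so the observation that $\mathcal{G}_c=-\mathcal{G}_{|c|}$ does not by itself justify restricting to $[0,1]$, and ``$T$ and $-T$ have affinely equivalent non-escaping sets'' is not a generality one can invoke without argument. The fix is one line: writing $a=|c|$ and $\mathcal{G}_a,\mathcal{G}_{-a}$ for the two tent maps, the affine homeomorphism $h(x)=\frac{a-(1+a)x}{a-1}$ of $\mathbb{R}$ satisfies $h\circ\mathcal{G}_a=\mathcal{G}_{-a}\circ h$ on each branch (it fixes $1/2$ and sends the fixed points $0\mapsto a/(a-1)$ and $a/(1+a)\mapsto 0$), so it carries the non-diverging set for slope $a$ onto the one for slope $-a$ and the Cantor property transfers. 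With that sentence added, your argument is complete.
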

\begin{proof}
This is a well known result.
\end{proof}

\begin{lem}\label{real Cantor sets}
If $c \in \mathbb{R}$ and $|c|>2$, then $K(f)$ is a Cantor set.
\end{lem}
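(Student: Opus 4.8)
The plan is to simply stitch together the three preceding lemmas. First I would invoke the lemma above stating that $K(f)\subset i\mathbb{R}$ when $c\in\mathbb{R}$, together with the observation that the imaginary axis is forward invariant under $f$ (for $z=xi$ and $c$ real, $cz=cxi$ is imaginary, so $f(z)$ is either $cxi$ or $\overline{cxi}-2i=(-cx-2)i$, both imaginary). Consequently, for $z\in i\mathbb{R}$ the $f$-orbit of $z$ is bounded in $\mathbb{C}$ if and only if it is bounded as a sequence inside $i\mathbb{R}$, so $K(f)$ depends only on the one-dimensional dynamics of $f|_{i\mathbb{R}}$.

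Next I would transport these dynamics to the real line via the map $\varphi(z)=\frac{ciz}{2}$ from Lemma~\ref{conjugate to real tent map}. Since $|c|>2$ forces $c\neq 0$, $\varphi$ is a linear similarity of $i\mathbb{R}$ onto $\mathbb{R}$, hence a homeomorphism that carries bounded sequences to bounded sequences and conversely. Combining this with the conjugacy $\varphi\circ f|_{i\mathbb{R}}=\mathcal{G}\circ\varphi$, a point $z\in i\mathbb{R}$ lies in $K(f)$ precisely when the $\mathcal{G}$-orbit of $\varphi(z)$ is bounded; that is, $K(f)=\varphi^{-1}\bigl(\{x\in\mathbb{R}:(\mathcal{G}^n(x))_{n\ge 0}\text{ is bounded}\}\bigr)$.

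Then I would identify the bounded set of $\mathcal{G}$ with its non-escaping set: for the real tent map with $|c|>2$, an orbit is bounded if and only if it does not diverge to infinity, since once a point leaves $[0,1]$ its iterates are negative and satisfy $|\mathcal{G}(x)|=|c|\,|x|$, so they march to $-\infty$ geometrically. With this identification, Lemma~\ref{K(g) is a Cantor set} gives that this set is a Cantor set. Finally, being a Cantor set (nonempty, compact, perfect, totally disconnected, metrizable) is a topological invariant, so its homeomorphic preimage $K(f)=\varphi^{-1}(\cdots)$ is a Cantor set as well.

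The whole argument is essentially bookkeeping; the only step that is not completely formal is the equivalence between ``bounded orbit'' and ``orbit not diverging to infinity'' for $\mathcal{G}$, which is needed so that the conclusion of Lemma~\ref{K(g) is a Cantor set} matches the definition of $K(f)$. This is elementary given the expansion estimate on $(-\infty,0)$, so I do not expect any real obstacle.
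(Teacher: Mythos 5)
Your proposal is correct and takes essentially the same route as the paper, whose proof of Lemma~\ref{real Cantor sets} simply cites Lemmas~\ref{conjugate to real tent map} and~\ref{K(g) is a Cantor set}; you have merely filled in the bookkeeping (invariance of the imaginary axis, transport of bounded orbits through the homeomorphism $\varphi$, and topological invariance of being a Cantor set). One tiny imprecision: your escape estimate (``iterates are negative'') is phrased for $c>2$, while for $c<-2$ one should instead note that $|\mathcal{G}(x)|\geq |c|\,|x|-|c|$ once $|x|$ is large, which still gives geometric divergence and so does not affect the equivalence of ``bounded'' and ``not diverging''.
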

\begin{proof}
This follows immediately from Lemmas~\ref{conjugate to real tent map} and~\ref{K(g) is a Cantor set}.
\end{proof}

\begin{lem}\label{1<|c| leq 2}
If $1< c \leq 2$ then $K=[-2i/c,0]$.  If $-2 \leq c < -1$ then $K=\left[\frac{-2i}{c(1+c)},\frac{-2i}{1+c}\right]$.
\end{lem}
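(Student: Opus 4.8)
The plan is to transport the problem to the real line using Lemma~\ref{conjugate to real tent map}. Recall that for $c \in \mathbb{R}$ the set $K(f)$ lies on the imaginary axis, and that $f$ restricted to the imaginary axis is conjugated to the real tent map $\mathcal{G}$ by $\varphi(z)=ciz/2$. Since $\varphi$ restricts to the affine bijection $xi \mapsto -cx/2$ of the imaginary axis onto $\mathbb{R}$, it carries bounded orbits to bounded orbits and back, so $K(f)=\varphi^{-1}(K(\mathcal{G}))$, where $\varphi^{-1}(w)=-2iw/c$ and $K(\mathcal{G})$ denotes the set of $x\in\mathbb{R}$ with bounded $\mathcal{G}$-orbit. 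Thus it suffices to show $K(\mathcal{G})=[0,1]$ when $1<c\le 2$ and $K(\mathcal{G})=\left[\frac{1}{1+c},\frac{c}{1+c}\right]$ when $-2\le c<-1$, and then apply $\varphi^{-1}$, which sends these intervals to $[-2i/c,0]$ and $\left[\frac{-2i}{c(1+c)},\frac{-2i}{1+c}\right]$ respectively.

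For $1<c\le 2$ this is the classical tent-map picture: $\mathcal{G}([0,1])\subseteq[0,c/2]\subseteq[0,1]$, so $[0,1]$ is forward invariant; and if $x\notin[0,1]$ then within one step the orbit enters $(-\infty,0)$ (for $x>1$, $\mathcal{G}(x)=c(1-x)<0$), where $\mathcal{G}$ acts as multiplication by $c$ and hence the orbit diverges. Therefore $K(\mathcal{G})=[0,1]$. (This may also simply be cited, as in Lemma~\ref{K(g) is a Cantor set}.)

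For $-2\le c<-1$, set $I=\left[\frac{1}{1+c},\frac{c}{1+c}\right]$; since $1+c\in[-1,0)$ the left endpoint is negative, the right endpoint is positive, and the turning point $1/2$ lies strictly between them. One checks that $\frac{c}{1+c}$ is the fixed point of $\mathcal{G}$ on its right branch and that $\mathcal{G}\!\left(\frac{1}{1+c}\right)=\frac{c}{1+c}$. Splitting $I$ at $1/2$: on $\left[\frac{1}{1+c},\,1/2\right)$, $\mathcal{G}(x)=cx$ decreases from $\frac{c}{1+c}$ to $c/2$; on $\left[1/2,\,\frac{c}{1+c}\right]$, $\mathcal{G}(x)=c(1-x)$ increases from $c/2$ to $\frac{c}{1+c}$; hence $\mathcal{G}(I)=\left[\frac{c}{2},\frac{c}{1+c}\right]$, and $\frac{c}{2}\ge\frac{1}{1+c}$ since this inequality is equivalent to $(c+2)(c-1)\le 0$, which holds for $c\in[-2,-1)$. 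So $I$ is forward invariant. Conversely, if $x>\frac{c}{1+c}$ then $\mathcal{G}(x)=c(1-x)>x$ with $\mathcal{G}(x)>\frac{c}{1+c}>1/2$, and since $\mathcal{G}$ is increasing to the right of $1/2$ the orbit grows without bound; and if $x<\frac{1}{1+c}$ then $\mathcal{G}(x)=cx>\frac{c}{1+c}$, which reduces to the previous case. Hence $K(\mathcal{G})=I$, and $\varphi^{-1}(I)$ is the asserted segment.

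I do not expect a serious obstacle; the one delicate point is the bookkeeping of inequalities in the case $-2\le c<-1$, since multiplying through by $1+c$ or by $c$ reverses them — and both inequalities that make the argument run, namely $\frac{c}{2}\ge\frac{1}{1+c}$ and $\frac{1}{1+c}<1/2<\frac{c}{1+c}$, reduce to the factorization $(c+2)(c-1)\le 0$. Alternatively one can bypass the conjugacy and argue directly with the map induced on the imaginary-axis coordinate $x$, namely $x\mapsto cx$ if $cx\ge -1$ and $x\mapsto -cx-2$ otherwise, running the same invariance-and-escape argument to obtain $[-2i/c,0]$ and $\left[\frac{-2i}{c(1+c)},\frac{-2i}{1+c}\right]$ directly.
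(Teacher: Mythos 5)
Your proposal is correct and follows the same route as the paper, which simply cites Lemma~\ref{conjugate to real tent map} together with well-known facts about real tent maps. The only difference is that you verify those facts explicitly (including the less standard negative-slope case $-2\le c<-1$, where the invariance and escape inequalities indeed reduce to $(c+2)(c-1)\le 0$), and your computations and the transport of the intervals by $\varphi^{-1}(w)=-2iw/c$ check out.
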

\begin{proof}
This follows from Lemma~\ref{conjugate to real tent map} and well known results in the study of the real tent map.
%We begin by noting that $f(-2i/c)=\overline{c(-2i/c)}-2i=0$.  Each point $z$ on the imaginary axis that is outside the interval $[-2i/c,0]$, has an image $f(z)=bi$ with $b>0$.  This means that $b =\Im(f(z))$ is exactly the smallest distance from $f(z)$ to the interval $[-2i/c,0]$.  This distance clearly increases by a factor of $c>1$ each iteration, and thus $z \notin K$.  Similarly, if $z = a+bi$ where $a \neq 0$, then the real part of a point in the orbit of $z$ increases by a factor of $c$ each iteration; thus, $K \subset [-2i/c,0]$.  Also, $f([-2i/c,0])=[-i,0] \subset [-2i/c,0]$ implying that $[-2i/c,0] \subset K$.  Thus $K=[-2i/c,0]$.
\end{proof}

For ease of reference, we state as a theorem the collection of the above results to describe $K$ for all $c \in \mathbb{R}$.

\begin{thm}\label{K for real c}
Let $c \in \mathbb{R}$.  
\begin{enumerate}
	\item If $|c|<1$ then $K=\mathbb{C}$.
	\item If $c=1$ then $f(\mathbb{C})=\mathbb{H}^+$ and $f$ restricted to $\mathbb{H}^+$ is the identity map.
	\item If $c=-1$ then every point is eventually mapped into the strip $\{z:-1 \leq \Im(z) \leq 1 \}$ and becomes periodic with period 2 thereafter.  (0 remains a fixed point.)
	\item If $1< c \leq 2$ then $K=[-2i/c,0]$.
	\item If $-2 \leq c < -1$ then $K=\left[\frac{-2i}{c(1+c)},\frac{-2i}{1+c}\right]$.
	\item If $2<|c|$ then $K$ is a Cantor set. 
\end{enumerate}
    
\end{thm}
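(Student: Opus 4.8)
The plan is to treat Theorem~\ref{K for real c} as an assembly of the results already proved in this section and the previous one, dispatching each of the six cases to its source. There is no single unified argument to construct; instead I would organize the proof as a short case analysis that points to the relevant earlier statement in each regime of $|c|$.

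First I would handle the cases with $|c|\le 1$. For case~1, Proposition~\ref{mod c less than 1} shows that for $|c|<1$ every point of $\mathbb{C}$ is attracted to the origin; the only thing to add is the one-line observation that a trajectory converging to $0$ is in particular bounded, so every point lies in $K$ and hence $K=\mathbb{C}$. Cases~2 and~3 are literally cases~1 and~2 of Theorem~\ref{mod c equals 1} specialized to $\theta=0$ and $\theta=\pi$; for case~3 I would also note explicitly that $0\in\mathbb{H}^+$, so $f(0)=c\cdot 0=0$ and the origin remains fixed, as the statement claims.

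Next I would handle $1<|c|\le 2$. Case~4 (the parameter $1<c\le 2$) and case~5 (the parameter $-2\le c<-1$) are exactly the two assertions of Lemma~\ref{1<|c| leq 2}, which in turn rests on Lemma~\ref{conjugate to real tent map} (conjugacy of $f$ restricted to the imaginary axis with the real tent map $\mathcal{G}$) together with standard facts about $\mathcal{G}$. Finally, case~6, $|c|>2$, is precisely Lemma~\ref{real Cantor sets}, obtained by combining the conjugacy of Lemma~\ref{conjugate to real tent map} with the classical statement that the non-escaping set of $\mathcal{G}$ is a Cantor set (Lemma~\ref{K(g) is a Cantor set}).

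Since this is purely a collation theorem, I do not expect a genuine obstacle: the "hard part" is only cosmetic, namely making sure the parameter ranges in the six cases exhaust $\mathbb{R}$ (they do: $|c|<1$, $c=1$, $c=-1$, $1<c\le 2$, $-2\le c<-1$, $|c|>2$) and that the normalization $\beta\ge 0$ used elsewhere does not interfere here, which it does not because $c\in\mathbb{R}$ means $f_c=f_{\overline c}$ trivially. Thus the proof amounts to writing "Case~$k$ is Theorem/Lemma~$\langle\text{ref}\rangle$" six times, with the single extra remark needed in case~1 noted above.
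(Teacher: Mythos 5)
Your proposal matches the paper's proof exactly: the paper likewise dispatches the cases $|c|<1$, $c=1$, $c=-1$ to Proposition~\ref{mod c less than 1} and Theorem~\ref{mod c equals 1}, and the remaining cases to Lemmas~\ref{1<|c| leq 2} and~\ref{real Cantor sets}. Your small added remarks (boundedness of converging orbits, the fixed point at $0$) are harmless elaborations of the same collation argument.
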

\begin{proof}
The cases when $|c|<1, c=1,$ and $c=-1$ follow from Proposition~\ref{mod c less than 1} and Theorem~\ref{mod c equals 1}.  The rest of the cases follow from Lemmas~\ref{1<|c| leq 2} and~\ref{real Cantor sets}.
\end{proof}

Things start to get more exciting when $\theta \neq 0,\pi$.  When $\Im(c)$ is small and nonzero, then a spiral made of line segments emanates from the origin.  The $ \PFL$ acts as an axis of symmetry and so $K$ is a double spiral.  As the parameter changes, the bottom spiral may collide with $\FL$.  When the intersection of the bottom spiral with $\mathbb{H}^+$ is disconnected, then $K$ will have isolated point and/or components which are preimages of the portion of the lower spiral that was introduced into $\mathbb{H}^+$.  An illustration of this process is given in Figure~\ref{fig:KcontinuousChange}.

\begin{figure}[htbp]
	\centering
		\includegraphics[width=1.00\textwidth]{./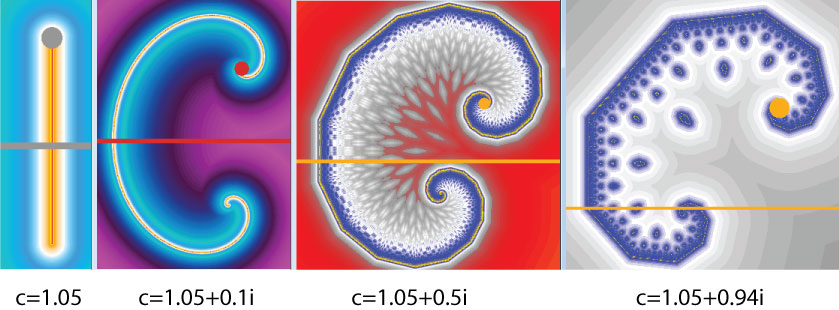}
	\caption[Creation of isolated components of $K$]{A continuous change of parameter resulting in the creation of isolated components of $K$.}
	\label{fig:KcontinuousChange}
\end{figure}

The creation of these new components of $K$ can be more fully understood when considering the inverse map.  The preimage under $f$ is found by removing the open lower half-plane, unfolding a copy of $\mathbb{H}^+$ onto $\mathbb{H}^-$ and then dividing by $c$.  Thus, if locally the intersection $K \cap \mathbb{H}^+$ consists of one point, then there will be infinitely many preimages of that point which are all isolated points of $K$. 

As an example, there exists a $K(c)$ that has both isolated points and disjoint connected components where $c=0.971+(0.851+\varepsilon)i$ and $0 < \varepsilon<0.001$.  (See Figure~\ref{fig:isolatedPoints})

\begin{figure}[htbp]
	\centering
		\includegraphics[width=0.70\textwidth]{./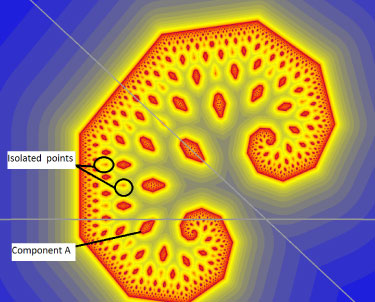}
	\caption[Isolated points of $K$]{The isolated points are preimages of component A intersected with the upper half plane.  In this case the intersection is one point.  }
	\label{fig:isolatedPoints}
\end{figure}

We formalize these ideas in the following results.

\begin{lem}\label{K is completely invariant}
$K$ is completely invariant.  Moreover, $f^{-1}(K \cap \mathbb{H}^+)=K$.
\end{lem}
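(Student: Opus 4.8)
The plan is to prove the two statements separately, deriving the "moreover" claim as the main content and then reading off complete invariance. Recall that $f(\mathbb{C}) = \mathbb{H}^+$, so after one iteration every orbit lies in $\mathbb{H}^+$; this is the observation that makes the refinement $f^{-1}(K \cap \mathbb{H}^+) = K$ plausible, since membership in $K$ is really only about what happens inside $\mathbb{H}^+$.

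First I would show $f(K) \subseteq K$ and $f^{-1}(K) \subseteq K$, which together give $f^{-1}(K) = K$ (complete invariance). The forward inclusion is immediate: if $z$ has bounded orbit, so does $f(z)$, whose orbit is a tail of the orbit of $z$. For the backward inclusion, suppose $f(z) \in K$; then $z, f(z), f^2(z), \dots$ is the point $z$ followed by the orbit of $f(z)$, hence bounded, so $z \in K$. This also shows $f^{-1}(K) \supseteq K$ is false in general only if... actually it shows $f(z)\in K \iff$ orbit of $z$ bounded past the first step; combined with the fact that a single point can't make a bounded sequence unbounded, we get $z \in K \iff f(z) \in K$, i.e. $f^{-1}(K) = K$.

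Next, for the sharper statement $f^{-1}(K \cap \mathbb{H}^+) = K$: the inclusion $f^{-1}(K \cap \mathbb{H}^+) \subseteq f^{-1}(K) = K$ is trivial. For the reverse, take $z \in K$. Then $f(z) \in f(\mathbb{C}) = \mathbb{H}^+$, and $f(z) \in f(K) \subseteq K$ by the forward invariance just established, so $f(z) \in K \cap \mathbb{H}^+$, i.e. $z \in f^{-1}(K \cap \mathbb{H}^+)$. Hence $K \subseteq f^{-1}(K \cap \mathbb{H}^+)$, giving equality.

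There is no real obstacle here; the only thing to be careful about is the direction of the set-theoretic inclusions and the fact that $f^{-1}$ of an intersection behaves well ($f^{-1}(A \cap B) = f^{-1}(A) \cap f^{-1}(B)$), together with $f^{-1}(\mathbb{H}^+) \supseteq \mathbb{C}$ being the wrong statement — rather one uses $f(\mathbb{C}) \subseteq \mathbb{H}^+$, equivalently $f^{-1}(\mathbb{H}^+) = \mathbb{C}$, so that $f^{-1}(K \cap \mathbb{H}^+) = f^{-1}(K) \cap f^{-1}(\mathbb{H}^+) = f^{-1}(K) \cap \mathbb{C} = f^{-1}(K) = K$. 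That one-line computation, once complete invariance is in hand, is arguably the cleanest route and is the step I would write out in full.
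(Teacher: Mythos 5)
Your proof is correct and follows essentially the same route as the paper: complete invariance is read off directly from the definition of $K$, and the refinement $f^{-1}(K \cap \mathbb{H}^+)=K$ comes from the observation that $f(\mathbb{C})\subseteq \mathbb{H}^+$, i.e.\ $f^{-1}(\mathbb{H}^+)=\mathbb{C}$, which is exactly the paper's remark that taking a preimage first discards the open lower half plane. You simply spell out the details that the paper calls trivial.
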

\begin{proof}
$K$ is completely invariant trivially.  Thus, $f^{-1}(K)=K$.  Since the first step in finding the preimage of $K$ is to remove the open lower half plane, then $f^{-1}(K \cap \mathbb{H}^+)=f^{-1}(K)=K$.
\end{proof}

\begin{prop}\label{K has uncountably many components}
If every point $z \in K$ has the property that $\Im(z)>-1$, then $K$ has uncountably many components and is a Cantor set.
\end{prop}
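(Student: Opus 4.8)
The plan is to prove that $K$ is homeomorphic to the one-sided full shift $\{+,-\}^{\mathbb{N}}$ on two symbols; since that space is a totally disconnected, perfect, uncountable compact set, it is a Cantor set whose components are its (uncountably many) singletons, and both assertions follow at once. The first step is to translate the hypothesis: "$\Im(z)>-1$ for all $z\in K$'' says exactly $K\cap\FL=\emptyset$, and from this I would deduce $K\cap\PFL=\emptyset$ as well. Indeed, if $z\in K\cap\PFL$ then $cz\in\FL$, so $f(z)=cz$ has imaginary part $-1$; but $f(z)\in K$ by complete invariance (Lemma~\ref{K is completely invariant}), contradicting the hypothesis. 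Hence the pre-folding line misses $K$, so the two pre-half-planes cut $K$ into a disjoint union $K=K^{+}\sqcup K^{-}$, where $K^{\pm}=K\cap P\mathbb{H}^{\pm}$; each is closed in the compact set $K$ (compactness of $K$ being immediate from the escape estimate in Proposition~\ref{One attracting periodic point}), hence each is clopen in $K$, and both are nonempty since $0\in K^{+}$ while $-2i/c\in K^{-}$ (note $f(-2i/c)=\overline{c\cdot(-2i/c)}-2i=0\in K$, and $-2i/c\in P\mathbb{H}^{-}$).

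Next I would show that $f|_{K^{+}}\colon K^{+}\to K$ and $f|_{K^{-}}\colon K^{+}\to K$ are homeomorphisms. On $P\mathbb{H}^{+}$ the map $f$ is $g(z)=cz$ and on $P\mathbb{H}^{-}$ it is $h(z)=\overline{cz}-2i$; both are injective and multiply every distance by the factor $|c|>1$. For surjectivity onto $K$: given $w\in K$, the point $w/c$ lies in $P\mathbb{H}^{+}$ and satisfies $f(w/c)=w\in K$, so $w/c\in K^{+}$; likewise $(\overline{w}-2i)/c$ lies in $P\mathbb{H}^{-}$ — here the hypothesis $\Im(w)>-1$ is precisely what guarantees $\Im(c\cdot(\overline{w}-2i)/c)=-\Im(w)-2\le-1$ — and it maps onto $w$, so it lies in $K^{-}$. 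A continuous bijection from a compact space to a Hausdorff space is a homeomorphism, so both restrictions are homeomorphisms; in particular every point of $K$ has exactly two $f$-preimages in $K$, one in each of $K^{\pm}$.

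Then I would set up the coding. For a finite word $s=s_{1}\cdots s_{n}$ over $\{+,-\}$ put $K_{s}=\{z\in K:f^{j-1}(z)\in K^{s_{j}}\ \text{for}\ 1\le j\le n\}$; these sets are compact, nested under extension of words, and nonempty by the surjectivity established above. Define the itinerary map $\iota\colon K\to\{+,-\}^{\mathbb{N}}$ by letting $\iota(z)_{n}$ record which of $K^{+},K^{-}$ contains $f^{n-1}(z)$. It is continuous (the $K^{\pm}$ are clopen), and surjective because $\bigcap_{n}K_{s_{1}\cdots s_{n}}\neq\emptyset$ for every infinite word. It is injective: if $\iota(z)=\iota(w)$ then $z,w\in K_{s_{1}\cdots s_{n}}$ for all $n$, and on each such set $f^{n}$ is a composition of the similarities $g,h$, hence multiplies distance by $|c|^{n}$, so $|c|^{n}|z-w|=|f^{n}(z)-f^{n}(w)|\le\Diam(K)<\infty$ for all $n$, forcing $z=w$. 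Thus $\iota$ is a homeomorphism onto $\{+,-\}^{\mathbb{N}}$, so $K$ is a Cantor set and has uncountably many components.

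I expect the only genuine subtleties to be the bookkeeping of Step~1 (verifying that $K^{+}$ and $K^{-}$ really are disjoint nonempty clopen pieces, which rests entirely on $K\cap\PFL=\emptyset$) and the observation in Step~2 that the hypothesis $\Im(z)>-1$ is exactly the condition making both branches of $f^{-1}$ send $K$ back into $K$, so that the symbolic coding is onto the \emph{full} shift. Once those are in hand, the expansion estimate $|f(z)-f(w)|=|c|\,|z-w|$ on each $K^{\pm}$ together with the standard "continuous bijection of compacta is a homeomorphism'' argument finishes everything; no delicate analysis of $\Bd(K)$ or of the spiral geometry is needed.
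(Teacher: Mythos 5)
Your proof is correct and follows essentially the same route as the paper's (which adapts Milnor's coding argument): cut $K$ along $\PFL$ into two clopen pieces, code points by their itineraries through $P\mathbb{H}^{+}$ and $P\mathbb{H}^{-}$, and use the expansion factor $|c|>1$ to force the cylinder sets down to single points, identifying $K$ with the full one-sided $2$-shift. You do make explicit two steps the paper leaves implicit --- that the hypothesis forces $K\cap\PFL=\emptyset$, and that both inverse branches send $K$ back into $K$ so the coding is onto the \emph{full} shift --- but this only fills in details of the same argument rather than giving a different one.
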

\begin{proof}
We will closely follow the proof in \cite[pg 99]{Mil}.
Assume that every point $z \in K$ has the property that $\Im(z)>-1$.  Then $ \PFL$ cuts the plane into two open half planes $V_0, V_1$, where $V_0 \subset P\mathbb{H}^+$.  Now let $K_0=K \cap V_0$ and $K_1=K \cap V_1$.  Then $f(K_0)=f(K_1)=K$.  Note that $K_0$ and $K_1$ are disjoint compact sets with $K_0 \cup K_1 = K$.  Similarly, we can split each $K_n$ into two disjoint compact subsets $K_{n0}=K_n \cap f^{-1}(K_0)$ and $K_{n1} = K_n \cap f^{-1}(K_1)$, with $f(K_{n\ell})=K_\ell$.  Continuing inductively, we split $K$ into $2^{p+1}$ disjoint compact sets
\begin{equation}
K_{n_0\cdots n_p} = K_{n_0} \cap f^{-1}(K_{n_1}) \cap \cdots \cap f^{-p}(K_{n_p}),\\
\end{equation}
with $f(K_{n_0\cdots n_p})=K_{n_1\cdots n_p}$.  Similarly, for any infinite sequence $n_0 n_1 n_2 \cdots$ of zeros and ones, let $K_{n_0 n_1 n_2 \cdots}$ be the intersection of the nested sequence
\begin{equation}
K_{n_0} \supset K_{n_0 n_1} \supset K_{n_0 n_1 n_2} \supset \cdots\\
\end{equation}

Each such intersection is compact and nonvacuous.  In this way, we obtain uncountably many disjoint nonvacuous subset with union $K$.  Every connected component of $K$ must be contained in exactly one of these, so $K$ has uncountably many components.  Each of the sets $K_{n_0 n_1 \cdots n_p}$ is disjoint from $\FL$ and since $|c|>1$ then by construction $\Diam(K_{n_0 n_1 \cdots n_p})=|c|\Diam(K_{n_0 n_1 \cdots n_p n_{p+1}}$ and thus the diameter of these sets go to zero.  Thus, $K$ is a Cantor set.
\end{proof}

It is worth noting that when $K$ is a Cantor set, then the dynamics on $K$ are conjugate to the one-sided 2-shift.

\begin{figure}
	\centering
		\includegraphics[width=0.90\textwidth]{./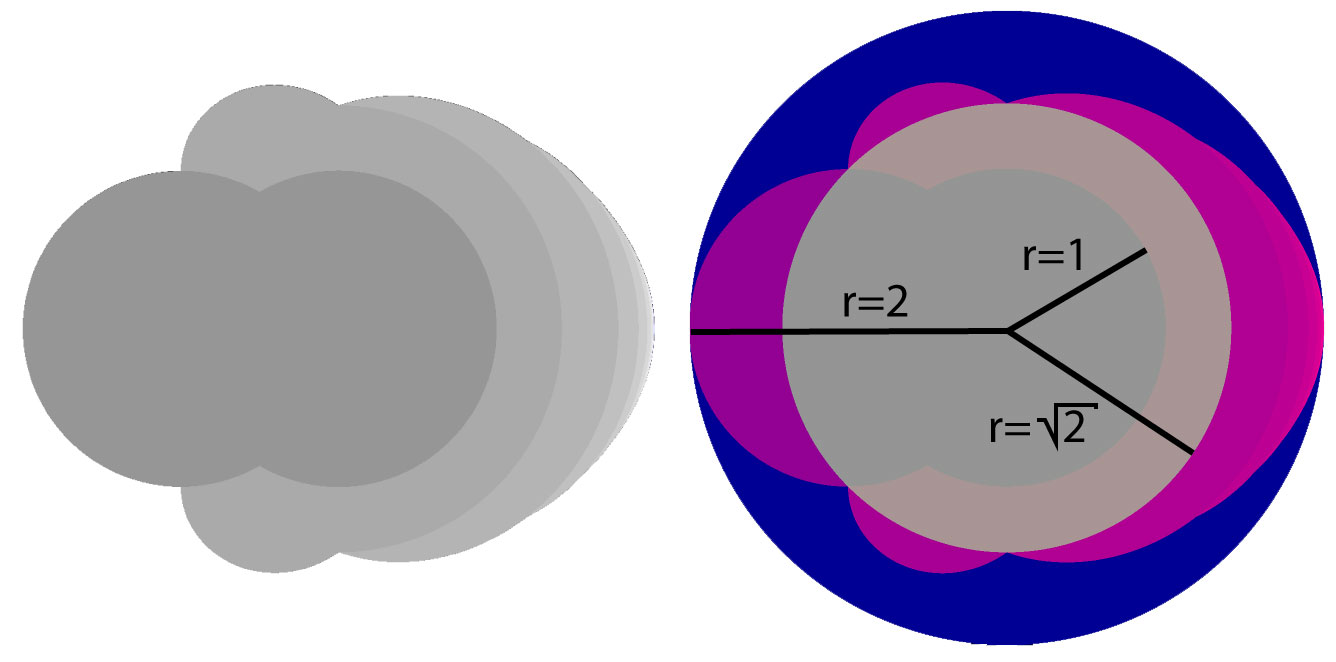}
	\caption[The set of parameters where $L \cap \FL \neq \emptyset$]{On the left is the set of parameters $c$ in the parameter plane such that $K(c)$ has at least one point in $\mathbb{H}^-$.  On the right is the same set along with 3 disks centered at the origin with labeled radii for reference.}
	\label{fig:ComplementOfCantorLocus}
\end{figure}

Figure~\ref{fig:ComplementOfCantorLocus} shows the region in the parameter plane where $K(c)$ has at least one point in $\mathbb{H}^-$.  (This is equivalent to the existence of $\ell_j \in \mathbb{H}^-$, $j\in \mathbb{Z}$.)  Outside of this region, $K(c)$ is contained in the open upper half plane and by Proposition~\ref{K has uncountably many components}, $K(c)$ is a Cantor set.  We conjecture that $K(c)$ is a Cantor set if and only if $\ell_j \notin \mathbb{H}^-$ for all $j\in \mathbb{Z}$.  (We define $\ell_j$ on page \pageref{symbol:ell}.)

In Chapter~\ref{The Perimeter Set $P$} we prove that $f$ has two fixed points $\ell_0,0$ where $f(0)=g(0)=0,$ $f(\ell_0)=h(\ell_0)=\ell_0$.  Since the image of every point is in $\mathbb{H}^+$ then these fixed points are also in $\mathbb{H}^+$.

\begin{lem}\label{K connected implies K touches FL}
If $K \cap \mathbb{H}^+$ is connected then $K \cap \mathbb{H}^+ \cap \FL \neq \emptyset$.
\end{lem}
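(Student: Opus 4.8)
The plan is to argue by contraposition, using the pre-folding line $\PFL$ as a separating line. So I would assume that $K \cap \mathbb{H}^+$ is connected but $K \cap \mathbb{H}^+ \cap \FL = \emptyset$, and derive a contradiction by splitting $K \cap \mathbb{H}^+$ along $\PFL$.

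First I would show the hypothesis forces $K \cap \PFL = \emptyset$. Indeed, if $z \in K \cap \PFL$, then $f(z) \in \FL$ because $\PFL$ is by definition the preimage of $\FL$ under $f$; but $f(z) \in K$ by complete invariance (Lemma~\ref{K is completely invariant}), and $\FL \subseteq \mathbb{H}^+$ since both half-planes contain $\FL$, so $f(z) \in K \cap \mathbb{H}^+ \cap \FL$, contradicting the assumption. Hence $K$, and in particular $K \cap \mathbb{H}^+$, is disjoint from the line $\PFL$. This line cuts $\mathbb{C}$ into two open half-planes, one contained in $P\mathbb{H}^+$ and one in $P\mathbb{H}^-$; call them $V_0$ and $V_1$.

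Next I would locate the two fixed points $0$ and $\ell_0$ of $f$ relative to $\PFL$; recall that both lie in $\mathbb{H}^+$ (as noted just before the statement, since the image of every point lies in $\mathbb{H}^+$). Since $f(0)=g(0)=0$ we have $0 \in P\mathbb{H}^+$, and since $\Im(c\cdot 0)=0\neq -1$ we have $0 \notin \PFL$, so $0 \in V_0$. Since $f(\ell_0)=h(\ell_0)=\ell_0$ we have $\ell_0 \in P\mathbb{H}^-$; moreover $\ell_0 \notin \PFL$, because $\ell_0 \in \PFL$ would give $f(\ell_0)=\ell_0 \in \FL$ and hence $\ell_0 \in K \cap \mathbb{H}^+ \cap \FL$, which is empty. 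Therefore $\ell_0 \in V_1$. Thus $0$ and $\ell_0$ are points of $K \cap \mathbb{H}^+$ lying strictly on opposite sides of $\PFL$.

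Finally, $(K \cap \mathbb{H}^+)\cap V_0$ and $(K \cap \mathbb{H}^+)\cap V_1$ are disjoint, nonempty (they contain $0$ and $\ell_0$ respectively), relatively open subsets of $K \cap \mathbb{H}^+$ whose union is all of $K \cap \mathbb{H}^+$, since $K \cap \mathbb{H}^+$ avoids $\PFL = \mathbb{C}\setminus(V_0\cup V_1)$. Hence $K \cap \mathbb{H}^+$ is disconnected, the desired contradiction. I do not anticipate a serious obstacle; the one place that needs care is checking that $0$ and $\ell_0$ sit strictly off $\PFL$ (and hence in opposite open half-planes) rather than on it, and this is exactly where the hypothesis $K \cap \mathbb{H}^+ \cap \FL = \emptyset$ is invoked a second time.
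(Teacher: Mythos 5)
Your argument is correct, but it runs in the opposite logical direction from the paper's and uses a slightly different decomposition. The paper argues directly: with $K_0$ the component of $K\cap\mathbb{H}^+$ containing both fixed points, it writes the preimage of $K_0$ as $g^{-1}(K_0)\cup h^{-1}(K_0)$ using complete invariance (Lemma~\ref{K is completely invariant}); since these two pieces lie on the two closed sides of $\PFL$ and their union is connected, they must meet on $\PFL$, and pushing that point forward lands it on $\FL$ inside $K_0$. You instead argue by contradiction that if $K\cap\mathbb{H}^+$ missed $\FL$, then $K$ would miss $\PFL$ (pulling the hypothesis back through $f^{-1}(\FL)=\PFL$), so the two open half-planes cut out by $\PFL$ would disconnect $K\cap\mathbb{H}^+$, with $0$ and $\ell_0$ witnessing nonemptiness of both pieces. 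The geometric core is the same — the two fixed points straddle $\PFL$, and connectivity forces a crossing of $\PFL$, whose image lies on $\FL$ — but your version needs only forward invariance of $K$ plus the fixed-point locations, and it sidesteps the preimage bookkeeping in the paper's step ``$f^{-1}(K_0)=A\cup B=K_0$'', which is the least tidy part of the original proof. One small simplification available to you: the strict inequality $\Im(c\ell_0)<-1$ of Lemma~\ref{Im(cz0)} already places $\ell_0$ in the open half-plane $V_1$, so your second invocation of the hypothesis to rule out $\ell_0\in\PFL$ is not needed.
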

\begin{proof}
If $K \cap \mathbb{H}^+$ is connected then both fixed points $\ell_0, 0$ are in the same component $K_0$ of $K \cap \mathbb{H}^+$.  Let $A=g^{-1}(K_0)=\{z/c:z \in K_0\}$ and $B=h^{-1}(K_0).$  Then by Lemma~\ref{K is completely invariant} $f^{-1}(K_0)=A \cup B=K_0$.  Thus $A, B$ are in the same component of $K \cap \mathbb{H}^+$ which implies $A \cap B \cap  \PFL \neq \emptyset$.  Thus $K_0 \cap \FL \neq \emptyset$.
\end{proof}

\begin{prop}
$K$ is connected if and only if $K \cap \mathbb{H}^+$ is connected.
\end{prop}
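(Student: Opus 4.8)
The plan is to relate $K$ and $K_0 := K\cap\mathbb{H}^+$ to each other using complete invariance (Lemma~\ref{K is completely invariant}) and the identity $f(\mathbb{C})=\mathbb{H}^+$. Two facts will do essentially all the work: on the one hand $K_0 = f(K)$, and on the other $K = f^{-1}(K_0) = g^{-1}(K_0)\cup h^{-1}(K_0)$, where the last equality holds because $K_0\subseteq\mathbb{H}^+$ forces $g^{-1}(K_0)\subseteq P\mathbb{H}^+$ and $h^{-1}(K_0)\subseteq P\mathbb{H}^-$.

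For the forward implication, assume $K$ is connected. The inclusion $f(K)\subseteq K_0$ is immediate, since $K$ is forward invariant and $f(\mathbb{C})=\mathbb{H}^+$; conversely, any $w\in K_0\subseteq\mathbb{H}^+=f(\mathbb{C})$ has a preimage $z$, and from $f(z)=w\in K$ together with $f^{-1}(K)=K$ we get $z\in K$, so $w\in f(K)$. Hence $K_0=f(K)$ is the continuous image of a connected set, and is therefore connected.

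For the reverse implication, assume $K_0$ is connected. First I would record the decomposition $K=g^{-1}(K_0)\cup h^{-1}(K_0)$: since every $z\in K_0$ lies in $\mathbb{H}^+$, we have $g^{-1}(K_0)=\{z/c:z\in K_0\}\subseteq P\mathbb{H}^+$, and, because reflection across $\PFL$ interchanges the pre-half-planes and satisfies $f(\Reflect(z,\PFL))=f(z)$, also $h^{-1}(K_0)=\Reflect(g^{-1}(K_0),\PFL)\subseteq P\mathbb{H}^-$; feeding this into the case split defining $f$ and using $f^{-1}(K_0)=K$ (Lemma~\ref{K is completely invariant}) gives the claimed equality. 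Both pieces are homeomorphic images of the connected set $K_0$, hence connected, so it only remains to show they meet. As the two pieces are reflections of one another across $\PFL$, it is enough that $g^{-1}(K_0)=\{z/c:z\in K_0\}$ meet $\PFL$, i.e. that $K_0$ meet $c\cdot\PFL=\FL$; and this is exactly the content of Lemma~\ref{K connected implies K touches FL}. Therefore $K$ is a union of two connected sets with a common point, so $K$ is connected.

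The only step requiring genuine attention is the clean decomposition $f^{-1}(K_0)=g^{-1}(K_0)\cup h^{-1}(K_0)$ — that there are no stray pieces and no surviving intersections with $P\mathbb{H}^{\pm}$. This hinges on the two containments $g^{-1}(K_0)\subseteq P\mathbb{H}^+$ and $h^{-1}(K_0)\subseteq P\mathbb{H}^-$ (both of which reduce to the trivial observation $K_0\subseteq\mathbb{H}^+$) together with the fact that $g$, $h$, and $f$ all coincide on $\PFL$. Everything else is a direct appeal to lemmas already established in this section.
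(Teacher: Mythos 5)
Your proof is correct and follows essentially the same route as the paper: complete invariance ($f^{-1}(K\cap\mathbb{H}^+)=K$, Lemma~\ref{K is completely invariant}) together with Lemma~\ref{K connected implies K touches FL}, which lets the two reflected preimage copies $g^{-1}(K\cap\mathbb{H}^+)$ and $h^{-1}(K\cap\mathbb{H}^+)$ be glued along $\PFL$. The only differences are cosmetic: you prove the forward direction directly via $K\cap\mathbb{H}^+=f(K)$ (continuous image of a connected set) where the paper argues by contradiction through preimages, and you spell out the unfolding/gluing argument that the paper compresses into its citation of the lemma.
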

\begin{proof}
If $K \cap \mathbb{H}^+$ is connected, then by Lemma~\ref{K connected implies K touches FL} $f^{-1}(K \cap \mathbb{H}^+)$ is connected.  But by Lemma~\ref{K is completely invariant} $f^{-1}(K \cap \mathbb{H}^+)=K$.  Thus $K$ is connected.

Now let $K$ be connected and (by way of contradiction) assume that $K \cap \mathbb{H}^+$ is disconnected.  But then $f^{-1}(K \cap \mathbb{H}^+)$ is disconnected.  By Lemma~\ref{K is completely invariant} $f^{-1}(K \cap \mathbb{H}^+)=K$, a contradiction.  Thus, $K \cap \mathbb{H}^+$ is connected.
\end{proof}

\begin{Def}
A component will be call a \textbf{trivial component} if it consists of a single point.
\end{Def}

\begin{prop}
If $K$ has no trivial components and if $K \cap \FL$ is connected, then $K$ is connected.
\end{prop}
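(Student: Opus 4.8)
The plan is to bootstrap from the preceding proposition and the folding symmetry of $f$. By that proposition it is enough to prove that $K_+:=K\cap\mathbb{H}^+$ is connected, so I would suppose it is disconnected and derive a contradiction. First a degenerate case: if $K\subseteq\Int\mathbb{H}^+$ then Proposition~\ref{K has uncountably many components} makes $K$ a Cantor set, hence totally disconnected; since $0\in K$ this would give $K$ a trivial component, against hypothesis. So $K$ meets $\mathbb{H}^-$.

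Next I would exploit the fold. Put $A:=K\cap P\mathbb{H}^+$ and $B:=K\cap P\mathbb{H}^-$. Lemma~\ref{K is completely invariant} together with $f(\mathbb{C})=\mathbb{H}^+$ gives $K=A\cup B$, with $g|_A$ and $h|_B$ homeomorphisms onto $K_+$, with $A\cap B=K\cap\PFL$ carried homeomorphically by $g$ onto $K\cap\FL$, and with $\Reflect(\cdot,\PFL)$ interchanging $A$ and $B$ while fixing $A\cap B$ pointwise. Consequently $f$ maps each component of $K$ \emph{onto} a component of $K_+$, and a component $C$ of $K_+$ pulls back to one connected component of $K$ when $C\cap\FL\ne\emptyset$, and to the two disjoint components $g^{-1}(C),h^{-1}(C)$ (each mapped homeomorphically onto $C$) when $C\cap\FL=\emptyset$. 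Two things follow: every component of $K_+$ is nondegenerate, being homeomorphic to a component of $K$; and, since $K\cap\FL$ is connected, at most one component of $K_+$ meets $\FL$.

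The engine of the argument is a diameter estimate. Because $\Reflect(\cdot,\FL)$ is an isometry, $|f(z)-f(w)|=|c|\,|z-w|$ whenever $z,w$ lie on the same side of $\PFL$, while $|f(z)-f(w)|\le|c|\,|z-w|$ always; hence a connected set not straddling $\PFL$ has its diameter multiplied by $|c|$, and a connected set straddling $\PFL$ has image meeting $\FL$. If $K\cap\FL=\emptyset$ then $K=A\sqcup B$ and no component of $K$ straddles $\PFL$, so a component of $K$ meeting $\Int\mathbb{H}^-$ (which exists, as $K$ meets $\mathbb{H}^-$) would be nondegenerate with $\Diam(f^n(\cdot))=|c|^n\Diam(\cdot)\to\infty$, impossible since $K$ is bounded. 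Thus $K\cap\FL\ne\emptyset$, so there is a unique component $C^*$ of $K_+$ meeting $\FL$. The same estimate shows every component $C$ of $K_+$ has a forward iterate equal to $C^*$: setting $C^{(0)}=C$ and letting $C^{(n+1)}$ be the component of $K_+$ containing $f(C^{(n)})$, if $C^{(1)},\dots,C^{(n)}\ne C^*$ then none of $C^{(0)},\dots,C^{(n-1)}$ straddles $\PFL$, so $\Diam(C^{(n)})\ge|c|^n\Diam(C)$, forcing $C^{(m)}=C^*$ for some $m$. Applying this to the component of $K_+$ through the fixed point $0$, which is forward invariant and hence its own iterate, gives $0\in C^*$, and likewise $\ell_0\in C^*$.

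It remains to show $K_+$ has no component besides $C^*$, and this is the step I expect to be the main obstacle. Since each component of $K_+$ is eventually carried into $C^*$, one has $K=\bigcup_{n\ge0}f^{-n}(C^*)$, an increasing union (as $f(C^*)\subseteq C^*$) whose first term beyond $C^*$ is $E^*:=f^{-1}(C^*)$, a single $\Reflect(\cdot,\PFL)$-invariant component of $K$ containing $C^*$ and both fixed points. If $K_+$ had a component $C'\ne C^*$, then replacing $C'$ by a suitable forward iterate we may assume $f(C')\subseteq C^*$, so $C'\subseteq E^*$; then $E^*$ is connected while $E^*\cap\mathbb{H}^+$ already meets two distinct components $C^*,C'$ of $K_+$, so $E^*$ joins them through $\Int\mathbb{H}^-$. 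The remaining task is to show that such an $E^*$ cannot coexist with ``$K$ has no trivial component'' and ``$K\cap\FL$ connected'': the submerged arc of $E^*$ joining $C'$ to $C^*$ is unfolded into $\mathbb{H}^+$ on passing to the next preimage, and one must argue that the resulting pieces are forced either to be isolated points — a trivial component — or to enlarge $K\cap\FL$ into a disconnected set. Making this precise requires careful tracking of which preimage pieces meet $\PFL$ (and so remain attached) versus which split off; everything preceding it is bookkeeping resting on Lemma~\ref{K is completely invariant}, the preceding proposition, Proposition~\ref{K has uncountably many components}, and the elementary diameter estimate.
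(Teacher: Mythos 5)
The parts of your argument that you actually carry out are sound, and they run parallel to the first half of the paper's proof: your diameter estimate forcing every component to reach the unique component meeting $\FL$, and the identification of that component via the fixed points $0$ and $\ell_0$, are exactly the paper's argument (done there directly on components of $K$ rather than of $K\cap\mathbb{H}^+$). But the proposal does not prove the proposition: you stop at the decisive step, the exclusion of a second component $C'\neq C^*$ of $K\cap\mathbb{H}^+$, and the plan you sketch for it is not just unfinished bookkeeping. Up to that point you have used the hypothesis that $K\cap\FL$ is connected only once, to get uniqueness of $C^*$; it must be used a second time, in the backward direction, and that second use is the real content of the proposition. Moreover the dichotomy you propose for closing ("the unfolded pieces are isolated points, or they disconnect $K\cap\FL$") is not exhaustive: if $C'\subset\Int\mathbb{H}^+$, its preimage pieces $g^{-1}(C')$ and $h^{-1}(C')$ are nondegenerate components of $K$ that need not meet $\FL$ or $\PFL$ at all, so neither horn applies and no contradiction comes out of that route as described.

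What is missing is the paper's closing move, a pullback of the distinguished component rather than a pushforward of the hypothetical extra one. Let $K_1$ be the component of $K$ containing $K\cap\FL$ (your $E^*$). Since $0\in K_1$ is fixed, $f(K_1)\subseteq K_1$, hence $K_1\subseteq f^{-1}(K_1)$. Now $f^{-1}(K_1)=g^{-1}(K_1\cap\mathbb{H}^+)\cup h^{-1}(K_1\cap\mathbb{H}^+)$, two copies interchanged by $\Reflect(\cdot,\PFL)$ and attached along $g^{-1}(K_1\cap\FL)\subset\PFL$, and $K_1\cap\FL=K\cap\FL$ is nonempty and connected; the paper uses precisely this, together with the maximality of $K_1$ as a component of $K$, to conclude $f^{-1}(K_1)=K_1$. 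Granting that, your own first half finishes the proof at once: every component $K_2$ of $K$ satisfies $f^{n}(K_2)\subseteq K_1$ for some $n$, hence $K_2\subseteq f^{-n}(K_1)=K_1$, so $K=K_1$ is connected (and then $K\cap\mathbb{H}^+$ is connected by the preceding proposition). So the gap in your proposal is exactly this backward-invariance statement $f^{-1}(K_1)=K_1$, or an equivalent substitute; without it, the configuration you set up ($C'\subseteq E^*$ with $E^*$ connected through $\Int\mathbb{H}^-$) is not by itself contradictory.
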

\begin{proof}
Assume $K$ has no trivial components.  Assume $K \cap \FL$ is connected.  Let $K_1$ be the unique component containing $K \cap \FL$.  Take any other component $K_2 \subset K$.  If for every $n>0$, $f^n(K_2) \not \subset K_1$, then $\Diam(f^n(K_2))$ goes to infinity as $n$ increases, since $f^n(K_2) \cap \FL = \emptyset$.  This is a contradiction.  Thus, for all $K_2$ there exists an $n$ such that $f^n(K_2) \subset K_1$.  Let $K_3$ be the component containing $0$.  Since $f(0)=0$, then for all $m>0$, $f^m(K_3) \subset K_3$.  So there exists an $n>0$ such that $f^n(K_3) \subset(K_3 \cap K_1)$.  Since $f^n(K_3) \neq \emptyset$ and $f^n(K_3) \subset(K_3 \cap K_1)$, then $K_3 \cap K_1 \neq \emptyset$.  Since $K_1$ and $K_3$ are components, then $K_3=K_1$.  Recalling that $f(K_3)\subset K_3$ we have $f(K_1) \subset K_1$.  Therefore $K_1 \subset f^{-1}(K_1)$ and since $K_1 \cap \FL$ is connected then $f^{-1}(K_1)=K_1$.  Thus $K=K_1$.
\end{proof}

Figure~\ref{fig:KconnectedKcapFLdisconnected} shows an example when $K$ can be connected even if $K \cap \FL$ is disconnected.  For this example $c \approx 1.191487884+1.191487884i$.  The exact value of $c$ has $\alpha=\beta=(1/6)(54+6\sqrt{33})^{1/3}+2/(54+6\sqrt{33})^{1/3}$.  For this choice of $c$, the bottom of $K$ is both a subset of $\FL$ and a Cantor set.  In particular $\ell_{-1}, \ell_{-2} \in \FL$ (see Chapter~\ref{The Perimeter Set $P$}).  The set $K$ can be seen to be connected by the following argument.  Cover $K$ with closed balls of radius $\varepsilon$ centered at every point of $K$ where $\varepsilon>0$ is chosen such that the union of these balls, $B$, is connected.  Since every point in $K$ is in $\mathbb{H}^+$, then removing the open lower half plane does not disconnect $B$ (this is the crux of the argument).  Since $\ell_{-2} \in \FL \cap B$ then $B \cup \Reflect(B,\FL)$ is a connected set.  Dividing by c reduces the diameters of the balls and rotates, but has no effect on connectivity.  Obviously, $f^{-1}(B)$ is a compact connected set and $K \subset f^{-1}(B) \subset B$.  It is now easily seen that $K$ is the intersection of a nested sequence of nonempty, compact, connected sets of the form $f^{-n}(B)$.

\begin{figure}[htbp]
	\centering
		\includegraphics[width=0.80\textwidth]{./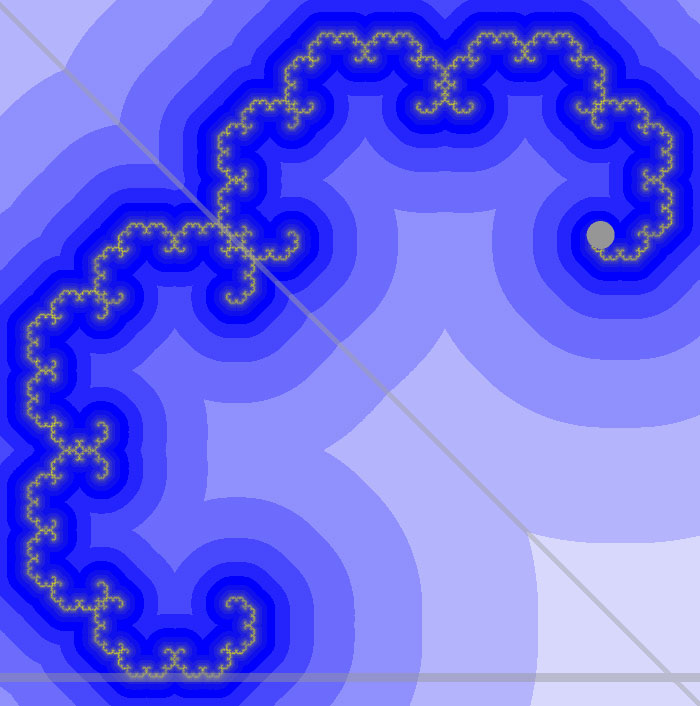}
	\caption{An example where $K$ is connected but $\FL \cap K$ is disconnected.}
	\label{fig:KconnectedKcapFLdisconnected}
\end{figure}

\begin{prop}
If $|c|>2$ then $K$ is totally disconnected.
\end{prop}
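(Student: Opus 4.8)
The plan is to prove the stronger‑looking but equivalent statement that every connected subset $X\subseteq K$ consists of a single point. The engine is an expansion estimate for the folding map: I claim that for \emph{every} connected set $Y\subseteq\mathbb C$ one has
\[
\Diam(f(Y))\ \ge\ \tfrac{|c|}{2}\,\Diam(Y).
\]
Granting this, the argument is short. First, $K$ is bounded: since $|c|>2$, the computation in the proof of Proposition~\ref{One attracting periodic point} with $k=2$ gives $|f(z)|>2|z|$ whenever $|z|>\frac{2}{|c|-2}$, so $K\subseteq\overline{B(0,\,2/(|c|-2))}$ and $D:=\Diam(K)<\infty$. Now let $X\subseteq K$ be connected. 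Each $f^n(X)$ is a connected subset of $K$, because $f(K)\subseteq K$ and continuous images of connected sets are connected, so $\Diam(f^n(X))\le D$ for all $n$. On the other hand, iterating the estimate gives $\Diam(f^n(X))\ge(|c|/2)^n\Diam(X)$, and since $|c|/2>1$ this forces $\Diam(X)=0$. Hence $K$ is totally disconnected.

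It remains to prove the estimate. Recall that $f$ agrees with $g(z)=cz$ on the pre‑half‑plane $P\mathbb H^+$ and with $h(z)=\overline{cz}-2i$ on $P\mathbb H^-$, that $g$ and $h$ are each similarities of ratio $|c|$, and that $\PFL$ is the common boundary of the two pre‑half‑planes (which are both closed). Write $Y_+=Y\cap P\mathbb H^+$ and $Y_-=Y\cap P\mathbb H^-$, so that $Y=Y_+\cup Y_-$ and $f(Y_+)$, $f(Y_-)$ are copies of $Y_+$, $Y_-$ scaled by $|c|$. Then
\[
\Diam(f(Y))\ \ge\ |c|\,\max\!\big(\Diam(Y_+),\Diam(Y_-)\big),
\]
so it suffices to show $\max(\Diam Y_+,\Diam Y_-)\ge\tfrac12\Diam(Y)$, i.e. $\Diam(Y)\le\Diam(Y_+)+\Diam(Y_-)$. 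If one of $Y_+,Y_-$ equals $Y$ this is immediate; otherwise $Y$ meets each of the two open pre‑half‑planes, and since $Y$ is connected it must meet $\PFL$, so $Y_+$ and $Y_-$ share a point, and the subadditivity of the diameter follows from the triangle inequality.

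I do not expect a real obstacle; the only point requiring care is the final step, where connectedness of $Y$ is invoked to produce a common point of $Y_+$ and $Y_-$ on $\PFL$ — and this is also exactly the place where $|c|>2$ (rather than merely $|c|>1$) is used, since it is what makes the contraction constant $|c|/2$ exceed $1$. One could go on to note that $K$ is in fact a Cantor set and that $f|_K$ is conjugate to the one‑sided $2$‑shift, but only total disconnectedness is asserted here.
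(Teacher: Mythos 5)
Your proof is correct, and its engine is the same as the paper's: folding can at worst halve the diameter of a connected set, so $f$ multiplies diameters of connected sets by at least $|c|/2>1$. Where you differ is in how the contradiction is extracted. The paper applies the estimate once to a nontrivial component $K_1$ of maximal diameter, getting $\Diam(f(K_1))\geq \frac{|c|}{2}\Diam(K_1)>\Diam(K_1)$ and contradicting maximality; you instead iterate the estimate on an arbitrary connected subset $X\subseteq K$ and play $(|c|/2)^n\Diam(X)$ against the boundedness of $K$. Your route buys two things: it does not require a component of maximal diameter to exist (the supremum of component diameters need not obviously be attained, a point the paper glosses over), and it supplies a proof of the diameter estimate that the paper asserts without justification --- your reduction to $\Diam(Y)\leq\Diam(Y_+)+\Diam(Y_-)$, using connectedness of $Y$ to force a common point of $Y_+$ and $Y_-$ on $\PFL$ and the fact that $g$ and $h$ are similarities of ratio $|c|$, is exactly the missing argument. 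The only extra ingredient you need is boundedness of $K$, which you get cheaply from the escape estimate (or could cite from $K\subset P$ with $P$ compact).
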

\begin{proof}
Let $|c|>2$ and assume that $K$ has nontrivial connected components.  Let $K_1$ be one of these components with maximal diameter.  Then $\Diam(K_1)=d>0$ and $\Diam(f(K_1))\geq \frac{d|c|}{2}>d.$  Since the continuous image of a connected set is connected, then $f(K_1)\subset K$ is connected with diameter strictly larger than $d$, a contradiction.  Thus, $K$ has only trivial components.
\end{proof}

\begin{conj}
If $|c|>2$ then $K$ is a Cantor set.
\end{conj}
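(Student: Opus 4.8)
The plan is to reduce the conjecture to the single assertion that $K$ has no isolated points. Since $K$ is nonempty (it contains the fixed point $0$) and compact, and the preceding proposition already gives that $K$ is totally disconnected for $|c|>2$, this suffices: a nonempty compact perfect totally disconnected metric space is a Cantor set. First I would clear away the far end of the range: by the estimate in the proof of Proposition~\ref{One attracting periodic point}, $K\subseteq\{z:|z|\leq 2/(|c|-2)\}$, so as soon as $|c|\geq 4$ we get $K\subseteq\{z:|z|\leq 1\}$, hence $K$ is disjoint from $\mathbb{H}^-$ and Proposition~\ref{K has uncountably many components} already finishes the proof. Thus the real content lies in the range $2<|c|<4$, exactly where $K$ may meet the folding line.

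Next I would use a self-similar structure of $K$ organized around its core $\hat K:=K\cap\mathbb{H}^+$. Since $f(\mathbb{C})=\mathbb{H}^+$ and $K$ is completely invariant (Lemma~\ref{K is completely invariant}), $K=f^{-1}(\hat K)$; writing out the two inverse branches of $f$, the points $z=w/c$ (with $w\in\hat K$; the $P\mathbb{H}^+$ branch) and $z=(\overline{w}-2i)/c$ (with $w\in\hat K$; the $P\mathbb{H}^-$ branch) exhaust $f^{-1}(w)$. Setting $A(w)=w/c$ and $B(w)=(\overline{w}-2i)/c$ --- homeomorphisms of $\mathbb{C}$ contracting every distance by the factor $1/|c|<1/2$ --- this yields
\begin{equation*}
K=A(\hat K)\cup B(\hat K),\qquad A(\hat K)\cap B(\hat K)=A(\hat K\cap\FL).
\end{equation*}
A homeomorphic image of a perfect set is perfect, and a union of two perfect sets is perfect, so it is enough to show that $\hat K$ is perfect. (The two pieces are disjoint exactly when $\hat K\cap\FL=\emptyset$; in that subcase one is back in the setting of Proposition~\ref{K has uncountably many components} and $K$ is coded homeomorphically by the one-sided $2$-shift. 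The whole difficulty is the case where $\hat K$ meets $\FL$.)

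To prove $\hat K$ is perfect I would trace points backward. First, $\hat K$ is infinite: for $|c|>2$ one has $|\ell_0|<|c|$, so every $\ell_0/c^{k}$ with $k\geq 1$ has modulus $<1$ and hence lies in $\mathbb{H}^+$; together with $\ell_0$ itself (a fixed point in $\mathbb{H}^+$), and since each $\ell_0/c^{k}$ equals $A(\ell_0/c^{k-1})$ and so is a preimage of its predecessor under $f$, complete invariance of $K$ places the entire infinite set $\{\ell_0/c^{k}:k\geq 0\}$ in $\hat K$. Second, using Lemma~\ref{Equation of gamma_0} one computes that a point $w\in\hat K$ has \emph{both} of its preimages strictly below $\FL$ only when $\Re(w)>\Re(f(\gamma_0))$; away from this ``far right'' region at least one preimage of $w$ lands back in $\hat K$. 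Now if $w_0\in\hat K$ were isolated in $\hat K$, then --- because $f$ maps into $\mathbb{H}^+$ and is a local homeomorphism off $\PFL$ and a fold along $\PFL$ --- any preimage of $w_0$ lying in $\hat K$ would again be isolated in $\hat K$; iterating yields an infinite backward tree of points of $\hat K$, with distinct labels and with each vertex obtained from its successor by one of the contractions $A,B$ of ratio $1/|c|<1/2$. Such a tree must have accumulation points inside $\hat K$, contradicting the isolation of $w_0$.

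The step I do not see how to make rigorous, and which I expect to be the heart of the matter, is exactly this control of folding: ensuring that an isolated point of $\hat K$ always has a preimage again in $\hat K$ (equivalently, ruling out the pinching phenomenon). The paper itself shows (Figure~\ref{fig:isolatedPoints}) that for small $|c|$, $K$ acquires isolated points precisely when $K\cap\mathbb{H}^+$ is ``pinched'' at $\FL$: a point of $\hat K$ isolated within $\hat K$ but carrying infinitely many preimages strictly below $\FL$, none of which continues the backward tree inside $\hat K$. The hypothesis $|c|>2$ must be used to preclude this; the natural route is to prove that the overlap $\hat K\cap\FL$ --- which controls everything through $A(\hat K)\cap B(\hat K)=A(\hat K\cap\FL)$ --- is itself empty or perfect when $|c|>2$, so that the two copies $A(\hat K)$ and $B(\hat K)$ are glued along a set with no isolated points. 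Turning the expansion rate $|c|>2$ into such a geometric statement about $K$ near $\FL$ is the obstacle, and is presumably why the statement is still a conjecture.
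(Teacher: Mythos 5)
The statement you are attempting is stated in the paper as a conjecture, with no proof: for $|c|>2$ the paper only establishes total disconnectedness of $K$ (the proposition immediately preceding the conjecture) and the Cantor property in the special case where every point of $K$ lies strictly above $\FL$ (Proposition~\ref{K has uncountably many components}). So there is no argument in the paper to compare yours against, and your proposal, as you yourself acknowledge, does not close the conjecture either. Your framing is sound as far as it goes: compactness plus total disconnectedness reduces everything to ruling out isolated points; the decomposition $K=A(\hat K)\cup B(\hat K)$ with $A(w)=w/c$, $B(w)=(\overline{w}-2i)/c$ is correct, since $f^{-1}(w)=\{A(w),B(w)\}$ for $w\in\mathbb{H}^+$ and $K=f^{-1}(K\cap\mathbb{H}^+)$ by Lemma~\ref{K is completely invariant}; and the range reduction via $K\subseteq\{z:|z|\leq 2/(|c|-2)\}$ is essentially right, except that for $|c|\geq 4$ you must still exclude the point $-i$ (the closed unit disk does meet $\mathbb{H}^-$ there) before Proposition~\ref{K has uncountably many components} applies, and your parenthetical claim that $\hat K\cap\FL=\emptyset$ puts you back in that proposition's setting is too quick: its hypothesis is $\Im(z)>-1$ for \emph{all} $z\in K$, and $K$ can contain points strictly below $\FL$ even when it misses $\FL$ itself.

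The genuine gap is the one you flag, and it is twofold. First, the premise that an isolated point of $\hat K$ always has a preimage again in $\hat K$ is precisely the pinching phenomenon the paper exhibits (Figure~\ref{fig:isolatedPoints}), and nothing in your argument converts the hypothesis $|c|>2$ into control of $K$ near $\FL$; the criterion $\Re(w)>\Re(f(\gamma_0))$ is asserted rather than proved, and even if proved it only locates where pinching could occur, not that it cannot. Second, even granting that premise, the concluding step does not produce a contradiction: an infinite backward tree of isolated points of $\hat K$ certainly accumulates somewhere in the compact set $\hat K$, and any accumulation point is a non-isolated point of $\hat K$ --- but that is compatible with $w_0$ being isolated, since you never show the backward orbit returns to accumulate at $w_0$ or at any vertex of the tree. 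So what you have is a reasonable reduction of the conjecture (to the perfectness of $\hat K$, equivalently to the absence of pinching along $\hat K\cap\FL$ when $2<|c|<4$), not a proof --- which is consistent with the statement's status as an open conjecture in the paper.
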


\begin{lem}
If $|c|>\sqrt{2}$ then $K$ has Lebesgue measure $0$.
\end{lem}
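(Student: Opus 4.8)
The plan is to exploit the fact that each of the two branches of $f$ expands planar area by the constant factor $|c|^2>2$, together with forward invariance of $K$. Throughout, write $m$ for Lebesgue measure on $\mathbb{R}^2\cong\mathbb{C}$.

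First I would verify that $m(K)<\infty$. Since folding changes the modulus by at most $2$, we have $|f(z)|\ge |c|\,|z|-2$; choosing $\lambda$ with $1<\lambda<|c|$ (possible as $|c|>\sqrt2>1$) gives $|f(z)|\ge\lambda|z|$ whenever $|z|\ge R:=2/(|c|-\lambda)$, so any point of modulus at least $R$ satisfies $|f^n(z)|\ge\lambda^n|z|\to\infty$ and hence lies outside $K$. Thus $K\subseteq\{|z|\le R\}$, and $K=\bigcap_{n\ge 0}f^{-n}(\{|z|\le R\})$ is closed; so $K$ is compact and $m(K)<\infty$. This is essentially the estimate already used in the proof of Proposition~\ref{One attracting periodic point}.

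Next, decompose $K=K_0\cup K_1$ with $K_0=K\cap P\mathbb{H}^+$ and $K_1=K\cap P\mathbb{H}^-$. On $K_0$ the map $f$ coincides with $g_c(z)=cz$, which as a real-linear map of the plane has Jacobian determinant $\alpha^2+\beta^2=|c|^2$; on $K_1$ it coincides with $h_c(z)=\overline{cz}-2i$, an affine map whose linear part $z\mapsto\overline{c}\,\overline{z}$ has Jacobian determinant $-|c|^2$. Both $g_c$ and $h_c$ are bijections of $\mathbb{C}$, so the change-of-variables formula yields $m(f(K_0))=|c|^2 m(K_0)$ and $m(f(K_1))=|c|^2 m(K_1)$. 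Since $K$ is forward invariant (the orbit of $f(z)$ is a tail of the orbit of $z$), we have $f(K_0)\subseteq K$ and $f(K_1)\subseteq K$, hence $|c|^2 m(K_0)\le m(K)$ and $|c|^2 m(K_1)\le m(K)$. Adding these, $m(K)\le m(K_0)+m(K_1)\le 2m(K)/|c|^2$; since $m(K)<\infty$ and $2/|c|^2<1$, this forces $m(K)=0$.

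I do not anticipate a genuine obstacle. The only points needing care are the finiteness $m(K)<\infty$ and the (routine) verification that the two branches are area-expanding bijections with constant exactly $|c|^2$, so that change of variables applies cleanly. It is worth noting that the argument never uses disjointness of $f(K_0)$ and $f(K_1)$ — only that each is contained in $K$ — which is why the estimate obtained is $m(K)\le 2m(K)/|c|^2$, and hence why the threshold here is $|c|=\sqrt2$ (two overlapping copies) rather than $|c|=2$.
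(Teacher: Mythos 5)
Your proof is correct and is essentially the paper's argument: your two-branch bookkeeping ($|c|^2 m(K_0)\le m(K)$ and $|c|^2 m(K_1)\le m(K)$, then summing) is just a restatement of the paper's observation that multiplication by $c$ scales area by $|c|^2>2$ while folding can cut it by at most half, combined with forward invariance of $K$. The only addition is your explicit check that $m(K)<\infty$ via the escape estimate, which the paper leaves implicit since $K$ is bounded.
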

\begin{proof}
Let $\mu(S)$ denote the Lebesgue measure of a set $S$ and for all $w \in \mathbb{C}$, let $wS=\{ws:s \in S\}$.  By definition $K$ is forward invariant.  Noting that multiplication by $c$ scales both dimensions by a factor of $|c|$, we see that if $|c|>\sqrt{2}$ then $\mu(cK)=|c|^2 \mu(K) \geq 2 \mu(K)$ with equality exactly when $\mu(K)=0$.  Since folding can reduce the measure of a set by at most half, then $\mu(f(K)) \geq \mu(K)$ with equality exactly when $\mu(K)=0$.  Thus, if $|c|>\sqrt{2}$ then $K$ is forward invariant only if $\mu(K)=0$.
\end{proof}

\begin{prop}
If $|c|>3$ then $K \subset B(0,1)$.
\end{prop}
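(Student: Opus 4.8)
The plan is to show that any point $z$ with $|z|\ge 1$ has an orbit that escapes to infinity, so that $z\notin K$; since this covers every $z$ outside the open unit ball, it gives $K\subset B(0,1)$. The one quantitative input I would use is the estimate $|f(z)|\ge |cz|-2=|c|\,|z|-2$, which was already established in the proof of Proposition~\ref{One attracting periodic point} and records the fact that folding can decrease the modulus of a point by at most $2$.

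First I would sharpen this estimate on the region $|z|\ge 1$. There we have $-2\ge -2|z|$, hence
\begin{equation}
|f(z)|\ \ge\ |c|\,|z|-2\ \ge\ |c|\,|z|-2|z|\ =\ (|c|-2)|z|.
\end{equation}
Writing $\lambda=|c|-2$, the hypothesis $|c|>3$ gives $\lambda>1$. In particular $|f(z)|\ge \lambda|z|\ge |z|\ge 1$, so the same inequality applies again to $f(z)$, then to $f^2(z)$, and so on. An easy induction then yields $|f^n(z)|\ge \lambda^n|z|\to\infty$ as $n\to\infty$, so $z$ has unbounded orbit and $z\notin K$. Therefore every point of $K$ satisfies $|z|<1$, i.e. $K\subset B(0,1)$.

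I do not expect a genuine obstacle here. The only point worth noting is that the raw bound $|f(z)|\ge |c|\,|z|-2$ by itself would only give $|f(z)|>|z|$ without a uniform multiplicative gap near $|z|=1$, and a merely increasing sequence of moduli need not diverge; the step that does the work is converting the additive constant $-2$ into the multiplicative factor $|c|-2$ using $|z|\ge 1$, after which the geometric growth is immediate.
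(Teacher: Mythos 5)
Your proof is correct and follows essentially the same route as the paper: both start from the bound $|f(z)|\ge |c|\,|z|-2$ (the fact that folding costs at most $2$ in modulus) and run an induction showing exponential escape for every $z$ with $|z|\ge 1$. The only difference is bookkeeping — you convert the additive loss into the multiplicative factor $|c|-2>1$, while the paper tracks the excess $\varepsilon=|z|-1$ growing by a factor $|c|$ — which is a cosmetic variation, not a different argument.
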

\begin{proof}
If $|z|=1$ then $|f(z)|\geq |c||z|-2>1=|z|$.  Now if $|z|=1 +\varepsilon$ for some $\varepsilon>0$, then $|f(z)|\geq |c|(1 +\varepsilon)-2>1+|c|\varepsilon=|z|.$  Now suppose that $|f^n(z)|>1+|c|^n\varepsilon$ for some $n>0$.  Then $|f^{n+1}(z)|>|c|(1+|c|^n\varepsilon)-2>1+|c|^{n+1}$.  Since $|c|>1$, then by induction we have that every point outside of the open unit disk has a trajectory that diverges.
\end{proof}
\end{section}

\begin{section}{The Perimeter Set $P$} \label{The Perimeter Set $P$}

In this chapter we introduce the \textbf{perimeter set, $P_c$}, which will be of primary importance throughout this paper.  We will write $P$ in stead of $P_c$ whenever possible.  Note that $\Reflect(z, \PFL)=\frac{1}{c}\left(\overline{cz}-2i\right)$.  We begin with some definitions.

\begin{Def}\label{symbol:ell}
(See Figure~\ref{fig:Introducing_ell})  We define, $\ell_0=\frac{2i(1-\overline{c})}{|c|^2-1}.$  We also define $\ell_k$, where $k \neq 0$, as
\begin{equation}
\ell_k=
\begin{cases}
\frac{\ell_0}{c^k} & \text{if } k>0,\\
\Reflect(\ell_{-k+1}, \PFL)=\frac{1}{c} \left( \overline{\frac{\ell_0}{c^{-k}}+2i }\right) & \text{if } k <0.\\ 
\end{cases}
\end{equation} 
\end{Def}

\begin{figure}[htbp]
	\centering
		\includegraphics[width=0.80\textwidth]{./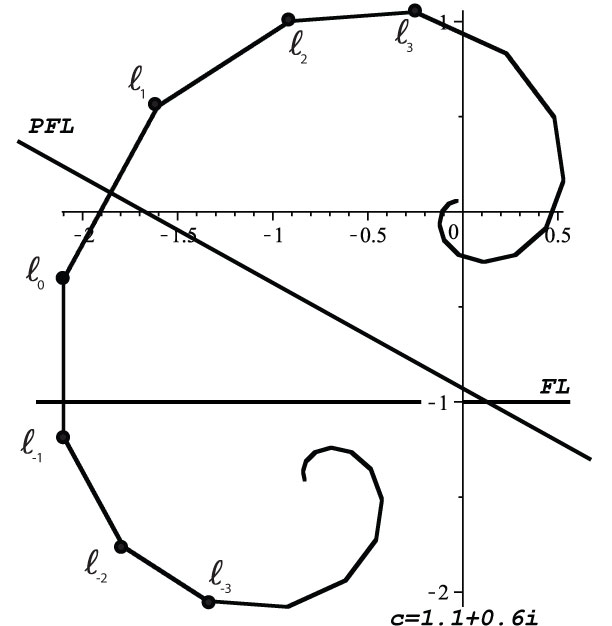}
	\caption{An example of various $\ell_k$.}
	\label{fig:Introducing_ell}
\end{figure}

We now show that the point $\ell_0$ is a fixed point of $f$.  First we need a lemma.

\begin{lem}\label{Im(cz0)}
We have $\Im(c\ell_0) < -1.$
\end{lem}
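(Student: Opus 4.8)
The plan is to compute $c\ell_0$ explicitly and simply read off its imaginary part. Starting from the definition $\ell_0=\frac{2i(1-\overline c)}{|c|^2-1}$ and using the identity $c\overline c=|c|^2$, I would write
\begin{equation}
c\ell_0=\frac{2i\,c(1-\overline c)}{|c|^2-1}=\frac{2i\,(c-|c|^2)}{|c|^2-1}.
\end{equation}
Then, writing $c=\alpha+\beta i$ so that $|c|^2=\alpha^2+\beta^2$, the numerator becomes $2i(\alpha-|c|^2+\beta i)=-2\beta+2i(\alpha-|c|^2)$, and hence
\begin{equation}
\Im(c\ell_0)=\frac{2(\alpha-|c|^2)}{|c|^2-1}.
\end{equation}

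Next I would invoke the standing assumption $|c|>1$, which guarantees the denominator $|c|^2-1$ is strictly positive. Multiplying through by this positive quantity, the claimed inequality $\Im(c\ell_0)<-1$ becomes equivalent to $2(\alpha-|c|^2)<-(|c|^2-1)$, which rearranges to $|c|^2>2\alpha-1$, i.e.\ to $\alpha^2-2\alpha+1+\beta^2>0$, that is $(\alpha-1)^2+\beta^2>0$. This final inequality is immediate, since by the standing assumptions $\beta>0$ (equivalently, since $|c|>1$ forces $c\neq 1$), so $(\alpha-1)^2+\beta^2\ge\beta^2>0$.

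The argument is a routine computation, so there is no genuine obstacle; the only points requiring a little care are the algebraic simplification $c(1-\overline c)=c-|c|^2$, keeping the sign straight when extracting the imaginary part after multiplying by $i$, and noting that the reduction to $(\alpha-1)^2+\beta^2>0$ uses the positivity of $|c|^2-1$ — which is exactly where the hypothesis $|c|>1$ is needed.
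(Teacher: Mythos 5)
Your proposal is correct and follows essentially the same route as the paper: compute $\Im(c\ell_0)=\frac{2\alpha-2|c|^2}{|c|^2-1}$, multiply through by the positive denominator (using $|c|>1$), and reduce the claim to $|c|^2-2\alpha+1>0$. Your substitution $|c|^2=\alpha^2+\beta^2$ giving $(\alpha-1)^2+\beta^2=|c-1|^2>0$ directly is a slightly cleaner finish than the paper's two-step estimate $|c|^2\geq\alpha^2$ followed by an equality-case analysis, but it is the same argument in substance (and, as you note, either $\beta>0$ or just $|c|>1\Rightarrow c\neq 1$ suffices for strictness).
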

\begin{proof}
\begin{equation}
\Im(c\ell_0) = \Im\left(\frac{2i(c-|c|^2)}{|c|^2-1}\right)=\frac{2\alpha-2|c|^2}{|c|^2-1}\\
\label{eq:1a}
\end{equation}

Since $|c|>1$ then $|c|^2-1>0$ and so the inequality $\Im(c\ell_0)<-1$ is true precisely when $2\alpha-2|c|^2<-|c|^2+1$, or equivalently when 

\begin{equation}
|c|^2-2\alpha+1>0
\label{eq:1b}
\end{equation}
where the inequality is strict.  Now since 
\begin{equation}
|c|^2 \geq \alpha^2
\label{eq:2a}
\end{equation}
(with equality only when $c$ is real) then 

\begin{equation}
|c|^2-2 \alpha+1 \geq \alpha^2-2 \alpha+1=(\alpha-1)^2 \geq 0
\label{eq:3a}
\end{equation}  
with equality only when $\alpha =1$.  Now if $c$ is real and $\alpha =1$ then $c=1$.  Since we assume that $|c|>1$ then at least one of the inequalities in \eqref{eq:2a} or \eqref{eq:3a} must be strict and \eqref{eq:1b} is satisfied.  Thus $\Im(c\ell_0)<-1.$
\end{proof}

\begin{lem}\label{Fixed point}
The map $f_c$ has exactly two fixed points $0$ and $\ell_0.$
\end{lem}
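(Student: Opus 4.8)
The plan is to use the pre-half planes of Definition~\ref{Pre-half planes}, which cover $\mathbb{C}$, so that any fixed point of $f_c$ lies in $P\mathbb{H}^+$ or in $P\mathbb{H}^-$; on each of these sets $f_c$ agrees with one of the two affine-conjugate-linear branches, and the fixed-point equation reduces to an equation in $z$ and $\overline{z}$ that can be solved outright. Then I would check in each case that the candidate solution actually lies in the relevant pre-half plane, so that it is a genuine fixed point and not an artifact of plugging into the wrong branch.

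For the branch $f_c(z)=cz$: the equation $cz=z$ is $(c-1)z=0$, and since $|c|>1$ forces $c\neq 1$, the unique solution is $z=0$; and $f_c(0)=c\cdot 0=0$, so $0$ genuinely belongs to $P\mathbb{H}^+$ and is a fixed point. For the branch $f_c(z)=\overline{cz}-2i$: write the equation as $\overline{c}\,\overline{z}-z=2i$, take complex conjugates to get $cz-\overline{z}=-2i$, hence $\overline{z}=cz+2i$, and substitute back to obtain $\overline{c}(cz+2i)-z=2i$, i.e. $(|c|^2-1)z=2i(1-\overline{c})$. Since $|c|^2-1\neq 0$, this has the single solution $z=\tfrac{2i(1-\overline{c})}{|c|^2-1}=\ell_0$. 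To confirm $\ell_0$ is honestly a fixed point I would invoke Lemma~\ref{Im(cz0)}: it gives $\Im(c\ell_0)<-1$, so $c\ell_0\in\mathbb{H}^-$, hence $\ell_0\in P\mathbb{H}^-$ and $f_c(\ell_0)=\overline{c\ell_0}-2i=\ell_0$.

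Putting the two cases together, the fixed-point set of $f_c$ is exactly $\{0,\ell_0\}$, and these are distinct because $\Im(c\ell_0)<-1<0=\Im(c\cdot 0)$. There is no real obstacle here: the only content beyond routine linear algebra over $\mathbb{C}$ is the branch-membership check for $\ell_0$, and that is precisely the content of Lemma~\ref{Im(cz0)}. The two points one must be careful about are that $P\mathbb{H}^+\cup P\mathbb{H}^-=\mathbb{C}$, so that no fixed point escapes the case analysis, and that $0\notin\PFL$, so there is no ambiguity about which branch governs the point $0$.
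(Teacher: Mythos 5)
Your proposal is correct and follows essentially the same route as the paper: split into the two branches, solve $cz=z$ to get $0$, identify $\ell_0$ as the unique fixed point of the folded branch, and invoke Lemma~\ref{Im(cz0)} to confirm $\Im(c\ell_0)<-1$ so that the correct branch actually applies at $\ell_0$. The only cosmetic difference is that you derive $\ell_0$ by solving the conjugate-linear system outright (which gives uniqueness for free), whereas the paper verifies the stated formula and gets uniqueness from the fact that $h(z)=\overline{cz}-2i$ scales all distances by $|c|>1$.
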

\begin{proof}
Suppose that $\Im(cz) \geq -1$.  Then since $|c|>1$ we see that $f(z)=cz=z$ has a unique solution $z=0.$  Now suppose that $\Im(cz)<-1$.  Then we want to solve $f(z)=\overline{cz}-2i=z$ for $z$.  We will show that $z=\ell_0$ is the desired solution.  This solution is unique since $h(z)=\overline{cz}-2i$ multiplies all distances by $|c|$.  By Lemma~\ref{Im(cz0)} we have: 
\begin{equation}
\begin{aligned}
f(\ell_0)&=\overline{c\ell_0}-2i \\
&=\frac{-2i \overline{c}(1-c)}{|c|^2 - 1}-2i \\
&=\frac{-2i \overline{c}+2i|c|^2+(-2i|c|^2+2i)}{|c|^2 - 1} \\
&=\frac{2i(- \overline{c}+1)}{|c|^2-1} \\
&=\ell_0.\\ 
\end{aligned} 
\end{equation}
\end{proof}

Lemma~\ref{Symmetry of ell_k} will show that $\ell_{k+1}$ and $\ell_{-k}$ are symmetric with respect to the $ \PFL$ for all $k \in \mathbb{Z}$. (See Figure~\ref{fig:Introducing_ell})

\begin{lem}\label{Symmetry of ell_k}
For all $k \in \mathbb{Z}$ we have $\Reflect(\ell_k, \PFL)=\ell_{-k+1}$.
\end{lem}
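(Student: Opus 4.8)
The plan is to deduce the identity $\Reflect(\ell_k,\PFL)=\ell_{-k+1}$ separately for each sign of $k$, exploiting the fact that reflection about a line is an involution. This makes the statement for nonnegative indices follow formally from its counterpart for negative indices, which is essentially built into the definition of $\ell_k$; the only genuine input is the single boundary identity $\Reflect(\ell_0,\PFL)=\ell_1$.

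First I would record the two ingredients. One: $\Reflect(z,\PFL)=\frac{1}{c}\bigl(\overline{cz}-2i\bigr)$, noted at the start of this chapter, so that for $z\in P\mathbb{H}^-$ we have $\Reflect(z,\PFL)=\frac1c f(z)$. Two: $\Reflect(\cdot,\PFL)$ is its own inverse. Then I would prove the boundary case: by Lemma~\ref{Im(cz0)}, $\Im(c\ell_0)<-1$, so $\ell_0\in P\mathbb{H}^-$, whence $\Reflect(\ell_0,\PFL)=\frac1c f(\ell_0)=\frac1c \ell_0=\ell_1$ using that $\ell_0$ is a fixed point (Lemma~\ref{Fixed point}). (Alternatively one checks directly from the explicit formula for $\ell_0$ that $\overline{\ell_0}-2i=c\ell_0$.)

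Next I would split into cases on $k$. For $k<0$ the definition gives $\ell_k=\Reflect(\ell_{-k+1},\PFL)$; applying $\Reflect(\cdot,\PFL)$ and using involution yields $\Reflect(\ell_k,\PFL)=\ell_{-k+1}$. For $k=0$ the claim is precisely $\Reflect(\ell_0,\PFL)=\ell_1$, already shown. For $k=1$, apply $\Reflect(\cdot,\PFL)$ to the boundary identity to get $\Reflect(\ell_1,\PFL)=\ell_0=\ell_{-1+1}$. Finally, for $k\ge 2$ the index $-k+1$ satisfies $-k+1\le -1<0$, so the negative-index branch of the definition applies to it, giving $\ell_{-k+1}=\Reflect(\ell_{-(-k+1)+1},\PFL)=\Reflect(\ell_k,\PFL)$, which is the claim.

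I do not expect a real obstacle: beyond careful bookkeeping with the piecewise definition of $\ell_k$, the only substantive step is the identification $\Reflect(\ell_0,\PFL)=\ell_1$, which is immediate from Lemmas~\ref{Im(cz0)} and~\ref{Fixed point}. A purely computational route — writing $\Reflect(\ell_k,\PFL)=\frac1c\bigl(\overline{c\ell_k}-2i\bigr)$, substituting $\ell_k=\ell_0/c^k$ and the explicit value of $\ell_0$, and matching against the defining formula for $\ell_{-k+1}$ — also works, but it is messier and less transparent, so I would relegate it to a remark at most.
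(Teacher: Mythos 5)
Your proof is correct and takes essentially the same route as the paper: the same case split on the sign of $k$, the definitional/involution argument for $k<0$, and the boundary identity $\Reflect(\ell_0,\PFL)=\ell_1$ coming from Lemmas~\ref{Im(cz0)} and~\ref{Fixed point}. The only (cosmetic) difference is in the positive case: the paper substitutes $\ell_k=\ell_0/c^k$ into $\frac{1}{c}\left(\overline{c\ell_k}-2i\right)$ and matches the formula for $\ell_{-k+1}$, whereas you read the negative-index branch of the definition backwards for $k\ge 2$ and apply the involution to the $k=0$ identity for $k=1$, which if anything treats the $k=1$ case slightly more explicitly.
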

\begin{proof}
We have three cases.  

Case 1:  If $k < 0$ then $\ell_k = \Reflect(\ell_{-k+1}, \PFL)$ by definition and taking $\Reflect$ of both sides we get $\Reflect(\ell_k, \PFL)=\ell_{-k+1}$.

Case 2:  If $k=0$ then
\begin{equation}
\Reflect(\ell_0, \PFL)=\frac{1}{c}\left(\overline{c\ell_0}-2i\right)=\frac{1}{c}(\ell_0)=\ell_{1}.
\end{equation}

Case 3:  If $k>0$ then \begin{equation}
\Reflect(\ell_k, \PFL)=\frac{1}{c}\left(\overline{c\ell_k}-2i\right)=\frac{1}{c}\left(\frac{\overline{\ell_0}}{\overline{c^{-(-k+1)}}}-2i \right)=\ell_{-k+1}.
\end{equation}
\end{proof}

In quadratic complex dynamics, every connected filled-in Julia set has diameter less than or equal to 4.  This is useful since it allows one to choose a fixed bailout value for all computer pictures of connected filled-in Julia sets.  We now show that for TTM's, the diameter of $K$ can be arbitrarily large when $|c|$ is close to 1.  Thus, the choice of our bailout value will need to depend on $c$.

\begin{lem}\label{unbounded Escape radius}
Let $c=a\lambda$, $a>1,$ $|\lambda|=1$.  Then, for a fixed $\lambda \neq 1$ we have $\underset{a \searrow 1}{\lim}\Diam(K(c))=\infty$.
\end{lem}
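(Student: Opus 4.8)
The plan is to bound $\Diam(K(c))$ from below by the distance between the two fixed points of $f_c$ and then show that this distance diverges as $a\searrow 1$. By Lemma~\ref{Fixed point}, $f_c$ has exactly the two fixed points $0$ and $\ell_0$; since $f_c(0)=c\cdot 0=0$ as well, both points have constant (hence bounded) orbits, so $0,\ell_0\in K(c)$. Consequently
\[
\Diam(K(c))\ \ge\ |\ell_0-0|\ =\ |\ell_0|.
\]

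Next I would compute $|\ell_0|$ directly from Definition~\ref{symbol:ell}. Writing $c=a\lambda$ with $a>1$ and $|\lambda|=1$, we have $|c|^2-1=a^2-1>0$, so
\[
|\ell_0|\ =\ \left|\frac{2i(1-\overline{c})}{|c|^2-1}\right|\ =\ \frac{2\,\bigl|1-a\overline{\lambda}\bigr|}{a^2-1}.
\]
Letting $a\searrow 1$, the numerator tends to $2\,|1-\overline{\lambda}|=2\,|1-\lambda|$, which is strictly positive precisely because $\lambda\neq 1$, while the denominator $a^2-1$ tends to $0^+$. Hence $|\ell_0|\to\infty$, and combining this with the displayed inequality gives $\lim_{a\searrow 1}\Diam(K(c))=\infty$.

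I do not expect a genuine obstacle here; the argument is a one-line estimate plus an elementary limit. The only point that needs care is the role of the hypothesis $\lambda\neq 1$: it is exactly what keeps the numerator bounded away from $0$ so that the quotient blows up. It is worth remarking (and I would include this remark) that the hypothesis cannot be dropped, since for $\lambda=1$, i.e.\ $c=a\in\mathbb{R}$, Theorem~\ref{K for real c} gives $K=[-2i/c,0]$, whose diameter $2/c$ remains bounded as $a\searrow 1$.
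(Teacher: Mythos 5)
Your proof is correct and follows essentially the same route as the paper: both bound $\Diam(K(c))$ below by $|\ell_0|=\frac{2|1-\overline{c}|}{a^2-1}$ and observe that for fixed $\lambda\neq 1$ this quantity blows up as $a\searrow 1$. Your added justification that $0,\ell_0\in K(c)$ and the closing remark about $\lambda=1$ are fine but not a different method.
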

\begin{proof}
We have $\Diam(K(c)) \geq |\ell_0|=\frac{2|a \lambda-1|}{a^2-1}>\frac{2|\lambda-1|}{a^2-1}.$
\end{proof}

%The estimate given by Lemma~\ref{Escape radius} shows that for all $c$ with $|c|>3$ the $\Diam(K)$ is less than $2$.
%
%\begin{lem}\label{Escape radius}
%Suppose $|c|>3$.  If $\lambda = min\{3,|c|-2\}$ then for all $z$ such that $|z|>1$ we have $|f(z)| \geq \lambda |z|>|z|$.
%\end{lem}
%\begin{proof}
%If $\Im(cz) \geq -1$ then $|f(z)|=|cz|=|c||z|>3|z|\geq \lambda|z|$.  If $\Im(cz)<-1$ then $|f(z)|=|\overline
%{cz}-2i|\geq |c||z|-2=(|c|-2)|z|+2|z|-2>(|c|-2)|z|\geq \lambda |z|$.  It then follows easily by induction that every point outside of a ball of radius $1$ centered at the origin has unbounded trajectory and thus is not an element of $K$.
%\end{proof}

%\begin{Def}
%Let $z,w \in \mathbb{C}$.  We will denote the line segment joining them by $[z,w]=[w,z]$.
%\end{Def}

\begin{Def}\label{$L_j$}\label{symbol:L}
(See Figure~\ref{fig:Introducing_L})  Let $L_j=[\ell_j, \ell_{j+1}]$ for $j \in \mathbb{Z}$ and let $L= \underset{j=1}{\overset{\infty}{\bigcup}}L_j$.  (See Figure~\ref{fig:Introducing_L})
\end{Def}

\begin{figure}[htbp]
	\centering
		\includegraphics[width=0.50\textwidth]{./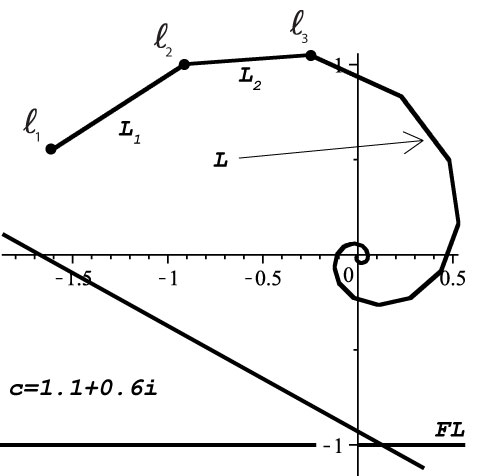}
	\caption{An example of $L$.}
	\label{fig:Introducing_L}
\end{figure}

\begin{Def}\label{symbol:Conv}
We will denote the convex hull of a set $X$ by $\Conv(X)$.
\end{Def}

\begin{lem}\label{PFL is perp to L0}
The $ \PFL$ is the perpendicular bisector of $L_0$.
\end{lem}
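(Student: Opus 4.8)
The plan is to derive this immediately from the two facts already established about $\ell_0$, using only the elementary observation that a line is the perpendicular bisector of any segment joining a point off that line to its mirror image across it.

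First I would apply Lemma~\ref{Symmetry of ell_k} with $k=0$, which gives $\Reflect(\ell_0,\PFL)=\ell_1$. Since by definition $L_0=[\ell_0,\ell_1]$, the segment $L_0$ is exactly the segment joining $\ell_0$ to its reflection across $\PFL$. It remains only to check that $L_0$ is nondegenerate, i.e.\ that $\ell_0\notin\PFL$; granting that, $\Reflect(\cdot,\PFL)$ moves $\ell_0$ to a distinct point, and by the defining property of reflection $\PFL$ is the perpendicular bisector of $[\ell_0,\Reflect(\ell_0,\PFL)]=L_0$. To see $\ell_0\notin\PFL$: since $\PFL=\{z:cz\in\FL\}$ and $\FL=\{w:\Im(w)=-1\}$, membership $\ell_0\in\PFL$ would force $\Im(c\ell_0)=-1$, contradicting Lemma~\ref{Im(cz0)}, which gives $\Im(c\ell_0)<-1$.

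I do not anticipate any real obstacle; the whole content sits in Lemmas~\ref{Im(cz0)} and~\ref{Symmetry of ell_k}. A reader who prefers a direct computation can instead parametrize $\PFL$ as $\{(t-i)/c:t\in\mathbb{R}\}$, check that the midpoint $(\ell_0+\ell_1)/2=\ell_0(c+1)/(2c)$ satisfies $\Im\big(\tfrac{1}{2}c(\ell_0+\ell_1)\big)=-1$ (using $(1-\overline c)(1+c)=(1-|c|^2)+2\beta i$), and that the edge direction $\ell_1-\ell_0=\ell_0(1-c)/c$ is a purely imaginary multiple of the direction $1/c$ of $\PFL$ (using that $(1-\overline c)(1-c)$ is real); but the symmetry argument is shorter and essentially calculation-free.
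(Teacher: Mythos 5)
Your argument is correct and is essentially the paper's own proof: both rest on Lemma~\ref{Symmetry of ell_k} (case $k=0$), which says the endpoints $\ell_0$ and $\ell_1$ of $L_0$ are reflections of each other across the $\PFL$, so the $\PFL$ is the perpendicular bisector of the segment joining them. Your extra check that $\ell_0\notin\PFL$ via Lemma~\ref{Im(cz0)} is a small welcome addition (ruling out a degenerate segment) but does not change the route.
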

\begin{proof}
By Lemma~\ref{Symmetry of ell_k}, the endpoints of $L_0$ are reflections of each other across $ \PFL$.
\end{proof}

\begin{cor}\label{L sub minus 1 is vertical}
$L_{-1}$ is a vertical line segment.
\end{cor}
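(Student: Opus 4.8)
The plan is to obtain this as a short angle computation, feeding Lemmas~\ref{Symmetry of ell_k} and~\ref{PFL is perp to L0} into the observation that the $\ell_k$ with $k\ge 0$ are rescaled copies of one another.

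First I would record three facts about the segments adjacent to $\ell_0$. Since $L_j=[\ell_j,\ell_{j+1}]$, we have $L_{-1}=[\ell_{-1},\ell_0]$, $L_0=[\ell_0,\ell_1]$, and $L_1=[\ell_1,\ell_2]$. By Definition~\ref{symbol:ell}, $\ell_1=\ell_0/c$ and $\ell_2=\ell_0/c^2$, so $L_1=\frac{1}{c}L_0$; hence, writing $\theta=\arg c$, the segment $L_1$ is $L_0$ rotated by $-\theta$ (and scaled by $1/|c|$, which is irrelevant to inclination). By Lemma~\ref{Symmetry of ell_k}, $\Reflect(\ell_1,\PFL)=\ell_0$ and $\Reflect(\ell_2,\PFL)=\ell_{-1}$, so $L_{-1}=\Reflect(L_1,\PFL)$.

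Next I would track inclinations. The $\PFL$ is $\{z/c:\Im z=-1\}$, a line in the direction $1/c$, so it makes angle $-\theta$ with the real axis. By Lemma~\ref{PFL is perp to L0}, $L_0\perp\PFL$, so $L_0$ has inclination $\frac{\pi}{2}-\theta$; by the previous paragraph $L_1$ has inclination $\frac{\pi}{2}-2\theta$. Reflecting a line of inclination $\varphi$ across a line of inclination $\psi$ yields a line of inclination $2\psi-\varphi$, so $L_{-1}=\Reflect(L_1,\PFL)$ has inclination $2(-\theta)-\left(\frac{\pi}{2}-2\theta\right)=-\frac{\pi}{2}$. A segment of inclination $-\frac{\pi}{2}$ is parallel to the imaginary axis, i.e.\ vertical.

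There is essentially no obstacle here; the only thing requiring care is handling the $L_j$ consistently as (un)directed segments under the similarity $z\mapsto z/c$ and under the reflection, so that inclinations are well defined mod $\pi$. As a sanity check — or as an alternative self-contained proof — one can instead compute coordinates directly: starting from $\ell_0=\frac{2i(1-\overline c)}{|c|^2-1}$ and applying Definition~\ref{symbol:ell} (conjugating and dividing by powers of $c$) gives $\Re(\ell_{-1})=\Re(\ell_0)=\frac{-2\beta}{|c|^2-1}$, which again says precisely that $L_{-1}=[\ell_{-1},\ell_0]$ is vertical.
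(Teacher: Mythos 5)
Your proof is correct, but it runs in the opposite direction from the paper's. The paper's proof pushes forward: it combines Lemma~\ref{PFL is perp to L0} ($L_0\perp\PFL$) with the fact that $f$ restricted to $P\mathbb{H}^+$ is multiplication by $c$, hence angle-preserving, and carries $\PFL$ onto the horizontal line $\FL$; so the image of $L_0$ under $f$ is perpendicular to $\FL$, i.e.\ vertical, and this image lies along $L_{-1}$. You instead pull back: you identify $L_{-1}=\Reflect(L_1,\PFL)$ via Lemma~\ref{Symmetry of ell_k}, note $L_1=\frac{1}{c}L_0$, and do the inclination bookkeeping $2(-\theta)-(\frac{\pi}{2}-2\theta)=-\frac{\pi}{2}$. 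The ingredients are the same (perpendicularity of $L_0$ to $\PFL$ plus angle preservation under the similarity $z\mapsto z/c$ and the reflection), but your mechanics are different and in one respect tighter: the paper's phrase ``maps $L_0$ to $L_{-1}$'' is literally only collinearity, since $f(L_0)=[cm_0,\ell_0]$ and $L_{-1}=[\ell_{-1},\ell_0]$ generally differ as segments (compare Proposition~\ref{Im(ell_{-1})} and Figure~\ref{fig:triangular_P}), a point the paper leaves implicit and which your route never needs. Your closing coordinate check $\Re(\ell_{-1})=\Re(\ell_0)=\frac{-2\beta}{|c|^2-1}$ is also a valid self-contained proof on its own, consistent with the expression for $\ell_{-1}$ computed in the proof of Proposition~\ref{Im(ell_{-1})}. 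What the paper's version buys is brevity and the statement it actually uses later (Lemma~\ref{S_0 subset of P^c} needs that $f(L_0)$ is vertical); what yours buys is an argument in which the segment being analyzed is exactly $L_{-1}$.
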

\begin{proof}
Lemma~\ref{PFL is perp to L0} gives us that the $ \PFL$ is perpendicular to $L_0$.  Also, $f$ restricted to $P\mathbb{H}^+$ maps $ \PFL$ onto $\FL$, maps $L_0$ to $L_{-1}$, and preserves angles.
\end{proof}

The next proposition states that if $L$ self-intersects, then $L$ intersects $\FL$ before it intersects itself.
\begin{prop}\label{LcapFL}
If $L_n\cap L_m \neq \emptyset$ for some $0<m<n$, then there exists $k$ such that $0<k<m$ and $L_k \cap \FL\neq \emptyset$.
\end{prop}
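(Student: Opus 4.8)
The plan is to prove the contrapositive by exploiting the shift dynamics of $f$ on the spiral $L$. The engine is the identity $c\ell_j=\ell_{j-1}$, immediate from $\ell_j=\ell_0/c^{\,j}$ for $j\geq 1$: multiplication by $c$ carries $L_j$ onto $L_{j-1}$, so whenever $L_j\subset P\mathbb{H}^+$ we have $f|_{L_j}=g|_{L_j}$ and hence $f(L_j)=L_{j-1}$. Moreover $L_j\subset P\mathbb{H}^+$ holds exactly when $cL_j=L_{j-1}\subset\mathbb{H}^+$, since $P\mathbb{H}^+=\{z:cz\in\mathbb{H}^+\}$. Two boundary facts will be used: the proof of Lemma~\ref{Im(cz0)} gives $|c|^2-2\alpha+1>0$, so $\ell_0\in\Int\mathbb{H}^+$; and by Corollary~\ref{L sub minus 1 is vertical} together with the symmetry in Lemma~\ref{Symmetry of ell_k}, $L_{-1}=[\ell_{-1},\ell_0]$ is the vertical segment hanging from $\ell_{-1}\in\FL$ up into $\mathbb{H}^+$, so $L_{-1}\subset\mathbb{H}^+$ and therefore $L_0\subset P\mathbb{H}^+$ unconditionally.

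Now suppose $L_n\cap L_m\neq\emptyset$ for some $0<m<n$, where the intersection is genuine (not merely the shared endpoint $\ell_{m+1}$ when $n=m+1$), and suppose toward a contradiction that $L_k\cap\FL=\emptyset$ for every $k$ with $0<k<m$. I would pick such a self-intersection with $m$ minimal, so that $L_1\cup\cdots\cup L_{m-1}$ is a simple arc which the rest of the spiral meets only at $\ell_m$. The first substantive step is to upgrade ``$L_k\cap\FL=\emptyset$'' to ``$L_k\subset\Int\mathbb{H}^+$'' for $0\le k\le m-1$: each such $L_k$ (for $k\geq 1$ by hypothesis; for $k=0$ because, if $L_0$ met $\FL$, then $\ell_1$ would lie on or below $\FL$ and that configuration already forces the desired conclusion at a shifted index) is disjoint from $\FL$, hence lies wholly in one open half-plane, and since the connected tail $L_k\cup L_{k+1}\cup\cdots$ limits on $0\in\Int\mathbb{H}^+$ it must be on the $\mathbb{H}^+$ side. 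By the equivalence above this gives $L_1,\dots,L_m\subset P\mathbb{H}^+$.

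With the inner chain in $P\mathbb{H}^+$ (and a parallel argument, or the minimality of $m$, certifying that the outer segments $L_n,L_{n-1},\dots$ also lie in $P\mathbb{H}^+$ down to the index we need), I would pull the intersection back. Taking $p\in L_n\cap L_m$ and applying $f$, which acts as the index shift on both chains, yields $f^{\,i}(p)\in L_{m-i}\cap L_{n-i}$ for $i=1,\dots,m$. For $i=m-1$ this is $L_1\cap L_{n-m+1}\neq\emptyset$ with $n-m+1\geq 2$; for $i=m$ it is $L_0\cap L_{n-m}\neq\emptyset$; and one more application of $f$ (legitimate because $L_0\subset P\mathbb{H}^+$ always) gives $L_{-1}\cap L_{n-m-1}\neq\emptyset$. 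But $L_{-1}$ touches $\FL$ only at the single point $\ell_{-1}$, while $L_{n-m-1}$ is a piece of the spiral that Step~2 has placed inside $\Int\mathbb{H}^+$; tracing this meeting back up through the shift forces some $L_k$ with $0<k<m$ to touch $\FL$, contradicting the assumption. (Alternatively, the relation $L_1\cap L_{n-m+1}\neq\emptyset$ with $n-m+1\geq 3$ already contradicts the minimality of $m$.) Either way, no such genuine self-intersection can exist, which is the contrapositive of the proposition.

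I expect the main obstacle to be the middle of the argument: converting ``disjoint from $\FL$'' into ``contained in $\Int\mathbb{H}^+$'', cleanly disposing of the exceptional segments $L_0$ and $L_{-1}$ and the shared endpoints, and above all justifying that the outer chain $L_n,L_{n-1},\dots$ stays in $P\mathbb{H}^+$ far enough for the pull-back to run — the standing hypothesis says nothing directly about segments of index $\geq m$, so this likely needs a separate connectedness/limit argument anchored at $0$ and at $\ell_0$. Once the pull-back is licensed, the index bookkeeping and the final contradiction are routine.
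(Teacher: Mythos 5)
Your pull‑back rests on a geometric claim that is false, and it is the very point where all the difficulty of the proposition lives. You assert that $L_0\subset P\mathbb{H}^+$ ``unconditionally,'' that $cL_0=L_{-1}$, and that $\ell_{-1}\in\FL$; none of these hold. The identity $c\ell_j=\ell_{j-1}$ is valid only for $j\ge 1$ (for negative indices $\ell_k$ is defined by reflection in $\PFL$, not by division by $c$), and Lemma~\ref{Im(cz0)} --- the very lemma you invoke --- gives $\Im(c\ell_0)<-1$, so $\ell_0$ lies strictly in $P\mathbb{H}^-$; by Lemma~\ref{PFL is perp to L0} the $\PFL$ is the perpendicular bisector of $L_0$, so the half $[\ell_0,m_0]$ of $L_0$ sits in $P\mathbb{H}^-$ and is folded by $f$. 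Consequently $f(L_0)=[m,\ell_0]$ is a vertical segment only collinear with $L_{-1}$, not $L_{-1}$ itself, and $\ell_{-1}\in\FL$ holds only when $|c|=\sqrt{2}$ (Proposition~\ref{Im(ell_{-1})}). So the step ``one more application of $f$ gives $L_{-1}\cap L_{n-m-1}\neq\emptyset$'' and the endgame based on ``$L_{-1}$ touches $\FL$ only at $\ell_{-1}$'' are unjustified. This is not a patchable technicality: the folding of $L_0$ across $\PFL$ is precisely the mechanism the paper uses (for $\alpha\ge -1$ it shows via Proposition~\ref{Real part of gamma_0} that $f(L_0)\subset P\mathbb{H}^-$, which forces the other segment through the image of the crossing point to meet $\PFL$ and hence its successor to meet $\FL$), so an argument that treats $L_0$ as unfolded cannot reach the conclusion.

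The two remaining supports are also gapped. Your upgrade from ``$L_k\cap\FL=\emptyset$'' to ``$L_k\subset\Int(\mathbb{H}^+)$'' uses the connected tail $L_k\cup L_{k+1}\cup\cdots$ limiting on $0$, but that tail contains segments of index $\ge m$ about which the contrapositive hypothesis says nothing, so connectivity does not pin $L_k$ to the upper side. And the fallback ``contradicts the minimality of $m$'' is empty: multiplication by $c^{m-1}$ is a similarity carrying $L_m$ onto $L_1$ and $L_n$ onto $L_{n-m+1}$, so any self-intersection automatically produces one with inner index $1$; choosing $m$ minimal therefore collapses to $m=1$, where your hypothesis ``$L_k\cap\FL=\emptyset$ for $0<k<m$'' is vacuous and the machinery yields neither a contradiction nor a $k$ with $0<k<m$ and $L_k\cap\FL\neq\emptyset$. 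The paper uses the similarity reduction only to place the first self-intersection on $L_0$; the real content is the geometric case analysis that follows --- the $\alpha\ge -1$ case described above, and the $\alpha<-1$ case, where the only candidate first intersections are $L_0\cap L_2$ (handled by the isosceles triangle bisected by $\PFL/c$) and $L_0\cap L_3$ (excluded by a worst-case length estimate using $|c|\ge 2$ and $\omega=\pi/3$). None of that geometry appears in your proposal, so as written the proof does not go through.
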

\begin{proof}
Suppose $L_n\cap L_m \neq \emptyset$ for integers $0<m<n$.  Since $|c|>1$ then $L_n\cap L_m$ consists of a single point which we will call $x_m$, since $x_m \in L_m$.  It follows that $f^m(L_n)\cap f^m(L_m)=L_{n-m}\cap L_0=f^m(x_m)$ and we call this point $x_0$.  Then $x_0$ is seen to be the first intersection of $L\cup L_0$ with itself.  

Assume that $\alpha \geq -1$.  Later in Proposition~\ref{Real part of gamma_0} we will show that $\Re(\gamma_0)<\Re(\ell_0)$ if and only if $\alpha<-1$.  Thus, if $\alpha\geq -1$, then $ \PFL$ either does not intersect $cL_0$ or $ \PFL$ intersects $cL_0$ on/below $\FL$.  That is, if $ \PFL \cap cL_0\neq \emptyset$ then $\Im(\PFL \cap cL_0)\leq -1.$  Thus, if $\alpha \geq -1$ then $f(L_0) \subset P\mathbb{H}^-$.  Since $x_0 \in L_0$ then $f(x_0)\in f(L_0)$ and is the intersection between $cL_0$ and $L_{n-m-1}$.  Since $f(x_0)\in P\mathbb{H}^-$ then $L_{n-m-1}$ intersects $\PFL$ and we conclude that $L_{n-m-2} \cap \FL \neq \emptyset$ and we are done.

Now assume that $\alpha < -1$.  Let $\omega=\pi-\theta$.  Then $\omega$ is the smallest angle between any two consecutive $L_k$ and since $\alpha < -1$ then $\pi/2<\theta<\pi$.  Since $4 \theta>2\pi$ and from the self-similarity of the $L_k$, it is easily seen that if $L_0 \cap L_n = \emptyset, n=2,3$ then $L_0 \cap L_n = \emptyset, n>3$.  Thus, we have only two more cases.

Case 1:  (See Figure~\ref{fig:L0capL2})  Assume $L_0 \cap L_2 \neq \emptyset$.  Then $L_0 \cup L_1 \cup L_2$ bound an isosceles triangle $T$ where at least two of the interior angles are $\omega.$  By Lemma~\ref{PFL is perp to L0}, $\PFL$ is the perpendicular bisector of $L_0$, which implies  that $\PFL/c$ is the perpendicular bisector of $L_1$.  Thus, $\PFL/c$ bisects $T$ and so $\{x_0\}=(L_0 \cap L_2) \in \PFL/c$.  But this implies that $f^2(x_0)\in \FL \cap L_0$ and we are done.

\begin{figure}[htbp]
	\centering
		\includegraphics[width=0.70\textwidth]{./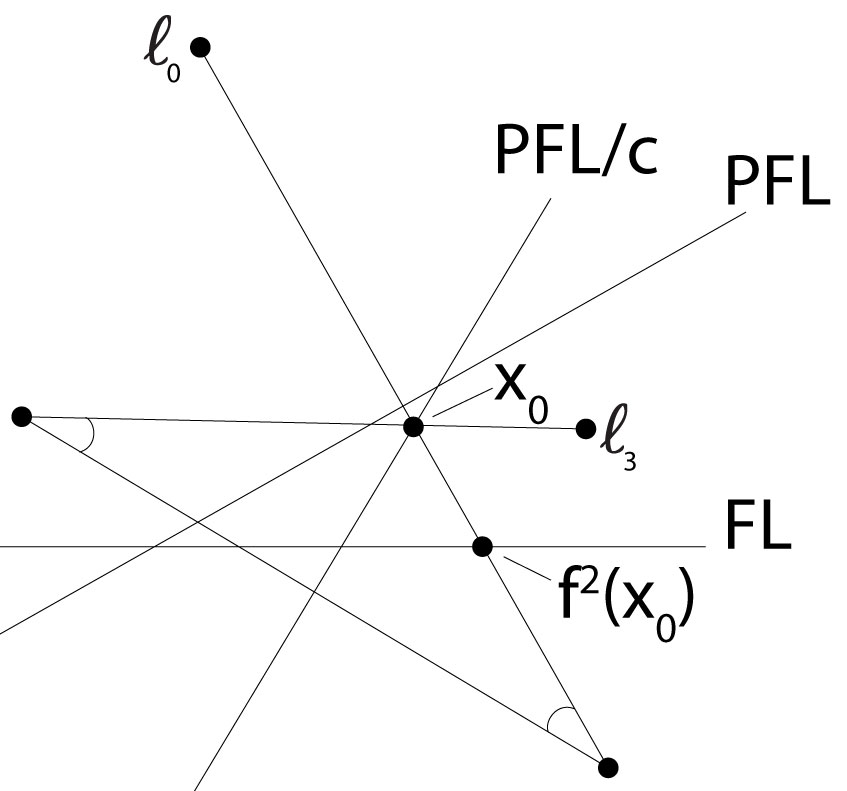}
	\caption{Proposition~\ref{LcapFL} when $\alpha <-1$ and $L_0 \cap L_2 \neq \emptyset$.}
	\label{fig:L0capL2}
\end{figure}

Case 2:  Lastly, assume $L_0 \cap L_3 \neq \emptyset$ but $L_0 \cap L_2 = \emptyset$.  A necessary condition for this is $\frac{\pi}{2}<\theta<\frac{2\pi}{3}.$  The parameter with the smallest modulus satisfying $c=re^{i\theta}$ and $\Re(c)\leq-1$ is $-1+i\sqrt{3}$ which has modulus $2$.  Thus, we may assume that $|c|\geq 2$.  To increase the chances of $L_0$ intersecting $L_3$ we clearly want to minimize both $|c|$ and the angle between $L_0$ and $L_1$.  Thus, we assume that $|c|=2$ and $\omega=\frac{\pi}{3}$.  Denote the length of $L_0$ by $d$.  Now, plotting $L_n$, $n=0,1,2,3$ under these conditions, treating vertices $\ell_2, \ell_3$ as flexible joints by straightening out $L_2,L_3$ towards $L_0$, we obtain the simplified diagram shown in Figure~\ref{fig:NoSelfIntersections}.  Since $1.5d<\sqrt{3}d$, then $L_3 \cap L_0=\emptyset$.  Since everything that was done increased the chance of an intersection between $L_0$ and $L_3$, then if $\alpha \leq -1$ and $(L_1 \cup L_2 \cup L_3)\cap \FL = \emptyset$, then $L_0 \cap L_3 = \emptyset$.
\end{proof}

\begin{figure}[htbp]
	\centering
		\includegraphics[width=0.70\textwidth]{./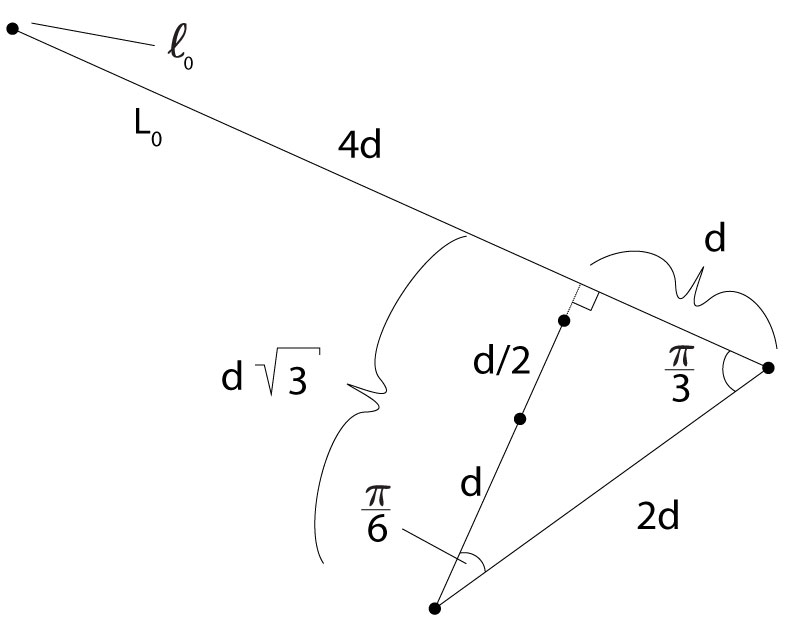}
	\caption[A worst case]{This figure shows $L_n$, $n=0,1,2,3$ where $\theta =\frac{\pi}{3}$ and after $L_2,L_3$ have been moved as close to $L_0$ as is possible under certain worst case conditions.}
	\label{fig:NoSelfIntersections}
\end{figure}

\begin{Def}\label{symbol:z'}
\begin{itemize}
	\item Let $d(z,w)=|z-w|$ and let $d(z,S)=min\{d(z,w):w\in~S\}$ for any compact set $S$.
	\item We will denote the complement of a set $S$ by $S^c$.
	\item We will denote $\Reflect(z,\PFL)$ by $z'$ and will write $X'=\{z':z\in~X\}=\Reflect(X,\PFL)$ for any set $X\subset \mathbb{C}$.
	\item We will denote the midpoint of $L_k$ by $m_k=\frac{\ell_k+\ell_{k+1}}{2}$ for $k \geq 0$.  We will also write $m=f(m_0)$.\label{symbol:m}
	\item We will denote the boundary of a set $X$ by $\Bd(X)$.
\end{itemize}
\end{Def}

\begin{figure}[htbp]
	\centering
		\includegraphics[width=0.75\textwidth]{./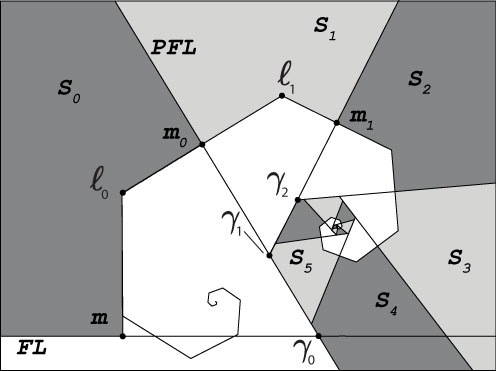}
	\caption{The construction of $P$.}
	\label{fig:Outside_of_P}
\end{figure}

\begin{Def}\label{symbol:S}
(See Figure~\ref{fig:Outside_of_P})  Let $S_1$ be the simply connected closed region satisfying the following.
\begin{enumerate}
	\item $S_1$ is one of two regions bounded on four sides by $\PFL$, $[m_0,\ell_1]$, $[\ell_1,m_1]$, and $\PFL/c$. 
	\item $S_1$ is the region containing points of arbitrarily large modulus.
	\item $S_1$ is closed.
\end{enumerate}

We also define $S_k$ recursively by $S_{k}=\frac{S_{k-1}}{c}\cap P\mathbb{H}^+,k=2,3,...$.  Let $S= \underset{k=1}{\overset{\infty}{\bigcup}}S_k$.  For $k>0$ we define $S_{-k}=S_k '$.  We also define $S_0=f(S_1)$. 
\end{Def}

\begin{Def}\label{symbol:Int}
We denote the interior of a set $U$ by $\Int(U)=U\setminus \Bd(U)$.
\end{Def}

\begin{Def}\label{symbol:P}
We define $P=(\Int(S \cup S'))^c$.
\end{Def}

Some examples of $P$'s are given in Figures \ref{fig:Examples_of_K} and \ref{fig:ExamplesOfP}.

\begin{figure}[htbp]
	\centering
		\includegraphics[width=1.00\textwidth]{./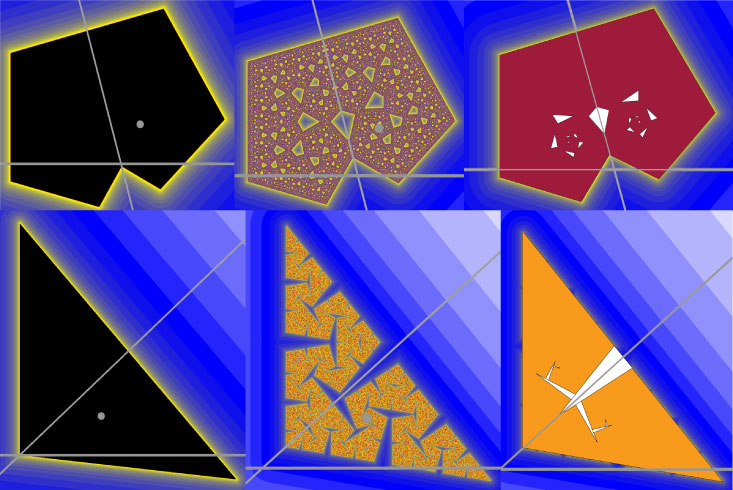}
	\caption[Examples of $P$ and $K$]{The left-most images show examples where $P=K$ and $K$ is a polygon.  Changing the parameters slightly results in filled-in Julia sets that are clearly not polygons.  The right-most pictures show the perimeter sets $P$ for the middle-most pictures. }
	\label{fig:Examples_of_K}
\end{figure}

\begin{figure}[htbp]
	\centering
		\includegraphics[width=0.90\textwidth]{./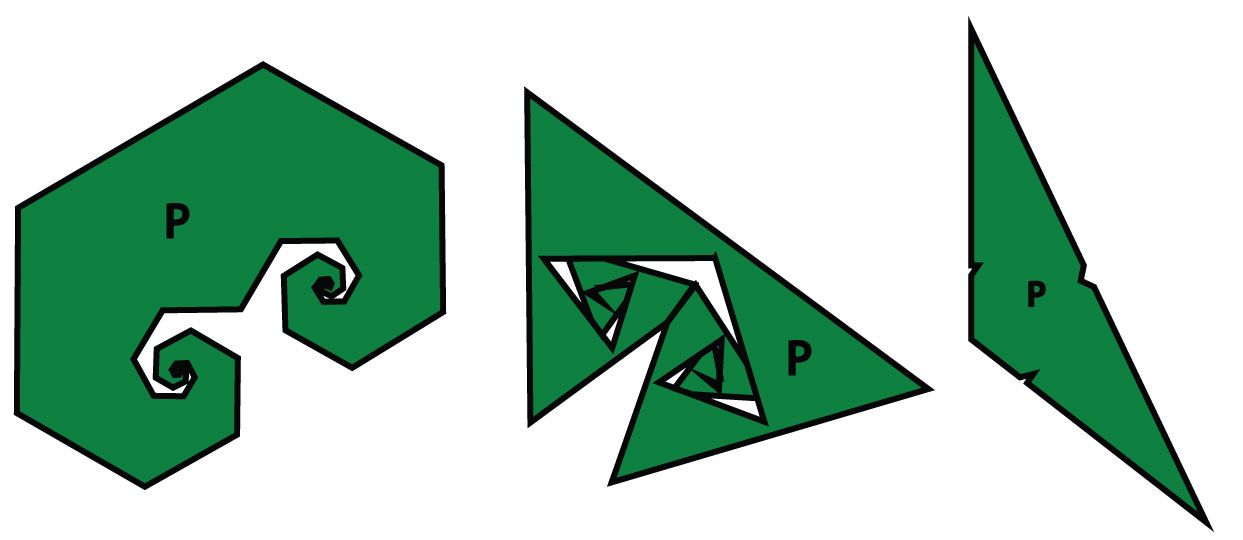}
	\caption{Some examples of $P$.}
	\label{fig:ExamplesOfP}
\end{figure}

\begin{cor}\label{P is symmetric with respect to the PFL}
$P$ is symmetric with respect to the $\PFL$.  Furthermore, $L_j$ and $L_{-j}$ are symmetric about the $\PFL$.
\end{cor}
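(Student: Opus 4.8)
The plan is to exploit that $\Reflect(\,\cdot\,,\PFL)$ is an isometric involution of $\mathbb{C}$, hence a homeomorphism that commutes with the topological operations of taking interiors, complements, and unions, combined with the symmetry of the points $\ell_k$ already established in Lemma~\ref{Symmetry of ell_k}. Both assertions then reduce to short bookkeeping with indices.

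For the claim about the segments, I would compute $L_j'$ directly. Since $\Reflect(\,\cdot\,,\PFL)$ is an isometry, it carries the segment $[\ell_j,\ell_{j+1}]$ onto the segment joining $\ell_j'$ and $\ell_{j+1}'$. By Lemma~\ref{Symmetry of ell_k}, $\ell_k' = \ell_{-k+1}$, so $\ell_j' = \ell_{-j+1}$ and $\ell_{j+1}' = \ell_{-j}$. Hence $L_j' = [\ell_{-j+1},\ell_{-j}] = [\ell_{-j},\ell_{-j+1}] = L_{-j}$, using $[z,w]=[w,z]$; this is precisely the asserted symmetry of $L_j$ and $L_{-j}$ about the $\PFL$.

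For the symmetry of $P$, I would start from the definition $P=(\Int(S\cup S'))^c$ and apply $\Reflect(\,\cdot\,,\PFL)$ to it. Because reflection is a homeomorphism, $(\Int X)' = \Int(X')$ and $(X^c)' = (X')^c$ for every set $X$, so $P' = (\Int((S\cup S')'))^c = (\Int(S'\cup S''))^c$. Since reflection about $\PFL$ is an involution, $S'' = S$, whence $S'\cup S'' = S\cup S'$ and therefore $P' = (\Int(S\cup S'))^c = P$.

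I do not expect any genuine obstacle here: the only points that need to be recorded are that reflection commutes with interior, complement, and union (all immediate from its being a homeomorphism) and that it is an involution, after which both statements follow from Lemma~\ref{Symmetry of ell_k} and the definition of $P$.
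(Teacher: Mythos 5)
Your proposal is correct and follows the same route as the paper, which simply asserts that the corollary is immediate from the definition of $P$ and Lemma~\ref{Symmetry of ell_k}; you have merely written out the routine bookkeeping (reflection is an isometric involution commuting with $\Int$, complement, and union, and $\ell_k'=\ell_{-k+1}$ gives $L_j'=L_{-j}$) that the paper leaves implicit.
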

\begin{proof}
This follows immediately from the definition of $P$ and from Lemma~\ref{Symmetry of ell_k}.
\end{proof}

\begin{lem}\label{f(S_k) subset S_{k-1}}
$f(S_k) \subset S_{k-1}$ for $k=1,2,...$.
\end{lem}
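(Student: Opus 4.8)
The plan is to reduce the statement to the recursive definition of $S_k$ together with the fact, from Definition~\ref{Pre-half planes}, that $f$ coincides with multiplication by $c$ on the pre-upper half plane $P\mathbb{H}^+$. I would treat the base case $k=1$ separately from the recursive cases $k\ge 2$.

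For $k=1$ there is essentially nothing to prove: by Definition~\ref{symbol:S} we \emph{set} $S_0=f(S_1)$, so $f(S_1)=S_0\subseteq S_0$ holds trivially (in fact with equality).

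For $k\ge 2$, recall that $S_k=\frac{S_{k-1}}{c}\cap P\mathbb{H}^+$, so in particular $S_k\subseteq P\mathbb{H}^+$. By Definition~\ref{Pre-half planes}, $P\mathbb{H}^+=\{z:f(z)=cz\}$, hence $f$ restricted to $S_k$ is just $z\mapsto cz=g(z)$, and therefore $f(S_k)=c\,S_k$. Since also $S_k\subseteq\frac{S_{k-1}}{c}$, multiplying both sides by $c$ gives $f(S_k)=c\,S_k\subseteq c\cdot\frac{S_{k-1}}{c}=S_{k-1}$, which is the desired inclusion.

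I do not anticipate any genuine obstacle: the content of the lemma is already packaged into the recursion defining the $S_k$, and the only substantive observation is that intersecting with $P\mathbb{H}^+$ in that recursion is precisely what guarantees $f$ acts there as the linear map $g$. The one point requiring mild care is the index bookkeeping between the base case $k=1$ and the recursive step, and explicitly noting that $S_k\subseteq P\mathbb{H}^+$ is what licenses replacing $f$ by $g$ on $S_k$ before pushing the inclusion $S_k\subseteq S_{k-1}/c$ forward by $c$.
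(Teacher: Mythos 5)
Your proof is correct and follows essentially the same route as the paper's: both arguments rest on the recursion $S_k=\frac{S_{k-1}}{c}\cap P\mathbb{H}^+$ together with the fact that $f$ acts as multiplication by $c$ on $P\mathbb{H}^+$. Your separate treatment of $k=1$ via $S_0=f(S_1)$ is in fact slightly more careful than the paper, which applies the recursive description uniformly even though it is only stated for $k\ge 2$.
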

\begin{proof}
Let $z\in S_k$ for some $k\in \{1,2,...\}$.  Then by the definition of $S_k$ we have $z \in \left ( \frac{S_{k-1}}{c}\cap P\mathbb{H}^+ \right )$.  Since $z \in P\mathbb{H}^+$ then $f(z)=cz\in c \left ( \frac{S_{k-1}}{c} \right ) =S_{k-1}$.  Since $z$ was arbitrary, then $f(S_k) \subset S_{k-1}$ for $k=1,2,...$.
\end{proof}

\begin{lem}\label{S_0 subset of P^c}
$\Int(S_0) \subset P^c$.
\end{lem}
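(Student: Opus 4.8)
The plan is to prove the equivalent statement $\Int(S_0)\subseteq P^c=\Int(S\cup S')$; since $\Int(S_0)$ is open, it suffices to show $\Int(S_0)\subseteq S\cup S'$ as sets.

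The first step is to identify $S_0=f(S_1)$ explicitly. Because $S_1$ lies in $P\mathbb{H}^+$ --- its boundary contains the ray of $\PFL$ issuing from $m_0$, so $S_1$ sits on one side of $\PFL$, and the construction in Figure~\ref{fig:Outside_of_P} places it on the $P\mathbb{H}^+$ side --- the restriction $f|_{S_1}$ equals $g$, multiplication by $c$. Applying $g$ to the four sides of $S_1$: $g$ carries the bounding ray of $\PFL$ to the corresponding ray of $\FL$ (since $g(\PFL)=\FL$); it carries the bounding ray of $\PFL/c$ to a ray of $\PFL$; it carries $[m_0,\ell_1]$ to $[m,\ell_0]$, using $g(m_0)=f(m_0)=m$ and $c\ell_1=\ell_0$; and it carries $[\ell_1,m_1]$ to $[\ell_0,m_0]$, because $cm_1=\tfrac{c\ell_1+c\ell_2}{2}=\tfrac{\ell_0+\ell_1}{2}=m_0$. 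Hence $S_0$ is the unbounded closed region bounded by a ray of $\FL$ from $m$, the segment $[m,\ell_0]$, the segment $[\ell_0,m_0]$, and a ray of $\PFL$ from $m_0$, lying on the side containing points of arbitrarily large modulus.

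The second step is to place $S_0$ inside $S\cup S'$. The natural candidate is the reflected outermost arm $S_1'=S_{-1}=\Reflect(S_1,\PFL)$: by Lemma~\ref{PFL is perp to L0} the $\PFL$ bisects $L_0$ perpendicularly, and since $\Reflect(\ell_1,\PFL)=\ell_0$ the segment $[\ell_0,m_0]$ bounding $S_0$ is exactly the reflection of the segment $[m_0,\ell_1]$ bounding $S_1$, hence bounds $S_1'$; moreover the $\PFL$-ray from $m_0$ bounds $S_0$, $S_1$ and $S_1'$ simultaneously, and one checks that $S_0$ and $S_1'$ lie on the same (the $P\mathbb{H}^-$) side of it. Near infinity this is confirmed by an angular computation: $S_1$ subtends the sector between the asymptotic directions of $\PFL$ and $\PFL/c$, which differ by $\theta=\arg c$; reflecting shows $S_1\cup S_1'$ subtends a sector of opening $2\theta$ symmetric about $\PFL$; and $S_0=cS_1$ is the $\theta$-rotation of $S_1$, hence subtends a $\theta$-sector that coincides with that of $S_1'$ and so lies inside the $2\theta$-sector. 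Finally, for the bounded core of $S_0$ near $\gamma_0$ (bounded by $[m,\ell_0]$ and the $\FL$-ray from $m$) I would invoke Corollary~\ref{P is symmetric with respect to the PFL} and the self-intersection estimates of Proposition~\ref{LcapFL}, together with the deeper arms $S_k,S_k'$ of the spiral, to show that $S\cup S'$ fills in everything between $S_0$'s core boundary and $S_1'$.

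The main obstacle is precisely this last point: one must rule out that $\Int(S_0)$ pokes out of $S\cup S'$ near $\gamma_0$, i.e.\ check the sidedness of $[m,\ell_0]=f(L_0)$ against the arcs $\partial S_1,\partial S_1'$ and the inner arms --- this is exactly what Figure~\ref{fig:Outside_of_P} encodes. Concretely I would locate $m=cm_0=h(m_0)\in\FL$, so that $[m,\ell_0]=f(L_0)$ runs along the image of half of $L_0$, and use $|c|>1$ and $\beta>0$ (which force the arcs $L_k$ to wind around the origin) to see that the spiral $S$ genuinely wraps $S_0$. Once the sidedness is pinned down, $\Int(S_0)\subseteq S\cup S'$ follows, and openness of $\Int(S_0)$ upgrades it to $\Int(S_0)\subseteq\Int(S\cup S')=P^c$.
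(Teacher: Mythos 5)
Your setup is correct and it matches the ingredients the paper actually uses: you identify $S_0=cS_1$ by its four boundary pieces (the $\FL$-ray from $m$, the vertical segment $[m,\ell_0]=f(L_0)$, the segment $[\ell_0,m_0]$, and the $\PFL$-ray from $m_0$), and you compare it with the reflected arm $S_{-1}=S_1'$ using the symmetry $S_{-k}=S_k'$ together with Corollary~\ref{L sub minus 1 is vertical} and the collinearity of $f(L_0)$ with $L_{-1}$ --- which is exactly what the paper's two-sentence proof appeals to. The problem is that your argument stops at the decisive point and you say so yourself: the shared sides and the asymptotic sector computation only give containment of $S_0$ in $S\cup S'$ away from $\FL$, and the remaining strip is left to an unexecuted plan (``I would invoke \dots to show that $S\cup S'$ fills in everything''). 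That step is not a formality, because $\Int(S_0)\subset S_{-1}$ genuinely fails in general: $S_{-1}$ is bounded below by the horizontal ray through $\Reflect(m_1,\PFL)=\tfrac{\ell_{-1}+\ell_0}{2}$, and whenever this point lies above $\FL$ (already for $c=1.2i$, where its imaginary part is about $0.69$) the set $S_0$ contains a strip between that ray and $\FL$, on the far side of the vertical line through $\ell_0$, which your comparison does not touch. (Incidentally this strip sits along the $\FL$-ray from $m$, i.e.\ away from $\gamma_0$, not near it as you suggest.)

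What has to be checked there --- and what the paper's ``follows immediately from symmetry'' compresses --- is that the deeper reflected arms absorb this strip: $S_{-2}=S_2'$ has on its boundary the reflection of $[m_1,\ell_2]$, namely the lower half of $L_{-1}$, which is again vertical and collinear with $f(L_0)$, so when $\Im(\ell_{-1})\leq -1$ (i.e.\ $|c|\leq\sqrt{2}$, Proposition~\ref{Im(ell_{-1})}) the strip lies in $S_{-2}$, while for $|c|>\sqrt{2}$ one must continue down $S_{-3},S_{-4},\dots$ and verify the covering persists. Your proposal neither performs this case analysis nor supplies a substitute, and the tool you name for it, Proposition~\ref{LcapFL}, is not the relevant statement: it controls self-intersections of $L$ and is what the paper uses later for Proposition~\ref{P is right of ell_0}, not for placing $\Int(S_0)$ inside $\Int(S\cup S')$. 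So the verdict is: right decomposition, same key observations as the paper, but a genuine gap exactly where the proof has to be made.
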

\begin{proof}
This follows immediately from symmetry and Corollary~\ref{L sub minus 1 is vertical}, which says that $f(L_0)$ is a vertical line segment.  It is easy to see that $f(L_0)$ is collinear with $L_{-1}$.
\end{proof}

\begin{prop}\label{P^c is invariant}
$P^c$ is forward invariant.
\end{prop}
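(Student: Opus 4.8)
The plan is to show that if $z \in P^c$ then $f(z) \in P^c$, by exploiting the recursive structure $S_k = \frac{S_{k-1}}{c} \cap P\mathbb{H}^+$ together with the symmetry of $P$ about the $\PFL$ (Corollary~\ref{P is symmetric with respect to the PFL}). Since $P = (\Int(S \cup S'))^c$, we have $P^c = \Int(S \cup S') = \Int(S) \cup \Int(S')$, and because $\PFL$ is an axis of symmetry for $f$ (i.e. $f(z') = f(z)$) it suffices to handle a point $z \in \Int(S)$, for then the case $z \in \Int(S')$ follows by reflecting: $f(z) = f(z')$ where $z' \in \Int(S)$. So I would reduce immediately to: $z \in \Int(S_k)$ for some $k \geq 1$ implies $f(z) \in P^c$.

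First I would dispatch the case $k \geq 2$. If $z \in \Int(S_k)$ with $k \geq 2$, then by definition $z \in \Int\!\big(\frac{S_{k-1}}{c} \cap P\mathbb{H}^+\big) \subset P\mathbb{H}^+$, so $f(z) = cz \in \Int(S_{k-1}) \subset \Int(S) \subset P^c$ by Lemma~\ref{f(S_k) subset S_{k-1}} (taking interiors, which is harmless since multiplication by $c$ is a homeomorphism and $z$ lies in the pre-upper half plane where $f$ acts as this homeomorphism). The only subtlety is making sure that the image stays in $\Int(S)$ rather than merely in $S$; this is where I would be slightly careful, but since $f|_{P\mathbb{H}^+}$ is an open map and $z$ is an interior point of $S_k$, $f(z)$ is an interior point of $f(S_k) \subseteq S_{k-1}$.

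The remaining and genuinely delicate case is $k = 1$, i.e. $z \in \Int(S_1)$. Here one must split according to whether $z \in P\mathbb{H}^+$ or $z \in P\mathbb{H}^-$. If $z \in \Int(S_1) \cap P\mathbb{H}^+$, then $f(z) = cz \in c\,S_1$, and I would argue that $c\,S_1$ lands inside $S_0 = f(S_1)$ by construction, so $f(z) \in \Int(S_0) \subset P^c$ by Lemma~\ref{S_0 subset of P^c}. If instead $z \in \Int(S_1) \cap P\mathbb{H}^-$, then $f(z) = \Reflect(cz, \FL)$; I would use that $S_1$ is, by Definition~\ref{symbol:S}, bounded on four sides by $\PFL$, $[m_0,\ell_1]$, $[\ell_1,m_1]$, and $\PFL/c$, together with the fact (Corollary~\ref{L sub minus 1 is vertical} and Lemma~\ref{S_0 subset of P^c}) that $f(L_0)$ is vertical and collinear with $L_{-1}$, to check that folding carries the portion of $S_1$ below $\FL$ back into $S_0$ (or into $S_0'$). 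The key geometric point — and what I expect to be the main obstacle — is verifying that the boundary pieces of $S_1$ map correctly under $f$: that $f$ sends $[m_0,\ell_1]$ and $[\ell_1,m_1]$ onto the appropriate boundary segments of $S_0$, and that $S_1$ contains no points whose images escape the region $S \cup S'$. This is essentially a careful bookkeeping argument about which side of $\FL$ and $\PFL$ each piece of $S_1$ lies on, and it relies on the choice of $S_1$ as the unbounded-modulus region in Definition~\ref{symbol:S}; once the boundary correspondence is pinned down, forward invariance of $P^c$ follows since $P^c = \Int(S \cup S')$ is an open set swept out by the $S_k$ and their reflections, all of which are carried into one another by $f$.
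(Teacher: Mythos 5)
Your skeleton is the same as the paper's: reduce by the symmetry $f(z')=f(z)$ about $\PFL$ (Corollary~\ref{P is symmetric with respect to the PFL}) to points of $S$, push $\Int(S_k)$ into $\Int(S_{k-1})$ by multiplication by $c$ for $k\ge 2$, and handle the outermost piece via $cS_1=S_0$ together with Lemma~\ref{S_0 subset of P^c}. Those parts are fine. The difference is that the paper never meets your ``genuinely delicate case'': after the symmetry reduction it treats every point of $P^c\cap P\mathbb{H}^+$ as a point where $f$ is just multiplication by $c$, and the whole proof is that one set computation. You, by contrast, contemplate points $z\in\Int(S_1)\cap P\mathbb{H}^-$ and then defer the analysis --- ``careful bookkeeping about which side of $\FL$ and $\PFL$ each piece of $S_1$ lies on'' --- without carrying it out. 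As written, that is a genuine gap, and it sits exactly at the step you yourself flag as the main obstacle.

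The missing observation is that your delicate case is vacuous: $S_1\subset P\mathbb{H}^+$, so $\Int(S_1)$, being open and contained in that closed half-plane, is disjoint from $P\mathbb{H}^-$. Indeed, by Definition~\ref{symbol:S} the set $S_1$ lies in the closed sector of the two lines $\PFL$ and $\PFL/c$ that contains the polyline $[m_0,\ell_1]\cup[\ell_1,m_1]$ (recall $m_0\in\PFL$ and $m_1\in\PFL/c$, these lines being the perpendicular bisectors of $L_0$ and $L_1$; see Lemma~\ref{PFL is perp to L0}), and $\ell_1\in P\mathbb{H}^+$ always, since $\Im(c\ell_1)=\Im(\ell_0)=\frac{2(1-\alpha)}{|c|^2-1}\ge -1$ reduces to $(\alpha-1)^2+\beta^2\ge 0$. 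Hence that sector, and with it every $S_k$, lies in $P\mathbb{H}^+$, so no folding ever has to be analyzed; this is precisely what legitimizes the paper's one-line argument (and is already implicit in the lemma asserting $f(S_k)\subset S_{k-1}$, whose proof treats points of $S_1$ as lying in $P\mathbb{H}^+$). So the fix is not the folding/boundary-correspondence analysis you sketch, but the short verification that $S_1\subset P\mathbb{H}^+$. A lesser point: your identity $P^c=\Int(S)\cup\Int(S')$ is not literally correct, since points of $\PFL$ interior to $S\cup S'$ belong to neither term; the paper's own proof is equally casual about such boundary points, so I weight this lightly, but if you want a fully airtight argument those points on $\PFL$ also deserve a sentence.
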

\begin{proof}

Since $f$ restricted to $P\mathbb{H}^+$ is multiplication by $c$, then for any set $U \subset P\mathbb{H}^+$ we have $f(\Int(U))=c\Int(U)=\Int(cU)$.  Since $P^c$ is symmetric with respect to $\PFL$, then $f(P^c)=f(P^c \cap P\mathbb{H}^+)=cInt\left(\underset{k=1}{\overset{\infty}{\bigcup}}S_k\right)=Int\left(\underset{k=1}{\overset{\infty}{\bigcup}}cS_k\right)\subset Int\left(\underset{k=1}{\overset{\infty}{\bigcup}}S_{k-1}\right)$.  Lastly, Lemma~\ref{S_0 subset of P^c} gives us that $Int\left(\underset{k=1}{\overset{\infty}{\bigcup}}S_{k-1}\right) \subset P^c.$

%Let $z \in P^c$.  Since $P^c$ is the interior of a set, then $P^c$ is open.  This means there is a neighborhood $U$ of $z$ such that $U \subset P^c$.  Due to symmetry we have $U \bigcup U' \subset P^c$.  Let $V=((U\bigcup U')\bigcap P\mathbb{H}^+$).  Then $V \subset P^c$ and so by definition there is a...Then $f(z) \in f(V)=cV $f(U)\subset f(U\bigcup U')=f((U\bigcup U') \bigcap P\mathbb{H}^+)=f(V)=\{cv:v\in V\}Then necessarily there exists $V \subset P^c$ such that $U=\frac{V}{c}\bigcap P\mathbb{H}^+$.  Thus, $f(U)\subset V \subset P^c$.
%
%
%Let $z \in P^c$.  Since $f(z)=f(z')$ then we may assume that $z \in P\mathbb{H}^+$.  Then $z \in S_k$ for some $k>0$ and $f(z)=cz \in S_{k-1}$.  there is a neighborhood $U$ of $z$ completely contained in $P^c$.  Thus $V=U \bigcap P\mathbb{H}^+ \subset P^c$.  Now $f(V)=cV=\{cz:z \in V\}$ and so if there was a $z \in V$ such that $f(z) \notin P^c$ then $cz$
%
%$P^c=S \bigcup S'$ and so $f(P^c)=f(S \bigcup S')$.  By symmetry we have $f(S')=f(S)$ and so $f(S \bigcup S')=f(S)$.  Now $f(S)=\underset{k=1}{\overset{\infty}{\bigcup}}f(S_k) \subset S_0 \bigcup S$ since $f(S_k)\subset S_{k-1}$ for $k=1,2,3,...$.  Putting this all together we have
%\begin{equation}
%f(P^c)=f(S \bigcup S')=f(S)= \underset{k=1}{\overset{\infty}{\bigcup}}f(S_k) \subset S_0 \bigcup S \subset S \bigcup S' = P^c.
%\end{equation}
\end{proof}

\begin{cor}\label{P subset of f(P)}
$P\cap \mathbb{H}^+ \subset f(P)$.
\end{cor}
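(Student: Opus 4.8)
The plan is to derive the corollary from Proposition~\ref{P^c is invariant} together with the elementary fact that $f$ maps the whole plane \emph{onto} $\mathbb{H}^+$. First I would record that $f(\mathbb{C})=\mathbb{H}^+$: by construction the image of every point lies in $\mathbb{H}^+$, and conversely any $w\in\mathbb{H}^+$ is attained, since $c\cdot(w/c)=w\in\mathbb{H}^+$ forces $f(w/c)=w$. Consequently $\mathbb{H}^+=f(\mathbb{C})=f(P\cup P^c)=f(P)\cup f(P^c)$, using only that images commute with unions and that $P\cup P^c=\mathbb{C}$.

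Next I would invoke Proposition~\ref{P^c is invariant}, which asserts that $P^c$ is forward invariant, i.e.\ $f(P^c)\subset P^c$. Since $P$ and $P^c$ are complementary, this is precisely the statement $f(P^c)\cap P=\emptyset$. Now take any $z\in P\cap\mathbb{H}^+$. From the previous paragraph $z\in\mathbb{H}^+=f(P)\cup f(P^c)$, while $z\in P$ excludes the possibility $z\in f(P^c)$; hence $z\in f(P)$. As $z$ was arbitrary, $P\cap\mathbb{H}^+\subset f(P)$, which is the claim.

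I do not expect a real obstacle here — this is a short set-theoretic consequence of the two facts just listed. The only points needing a moment's care are the surjectivity $f(\mathbb{C})=\mathbb{H}^+$ (immediate from the defining formula for $f$) and the observation that "forward invariance of $P^c$'' unpacks to the disjointness $f(P^c)\cap P=\emptyset$, which is what makes the partition-of-$\mathbb{H}^+$ argument go through.
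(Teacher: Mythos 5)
Your argument is correct and is essentially the paper's own: both deduce the claim from Proposition~\ref{P^c is invariant} (forward invariance of $P^c$) together with the fact that every point of $\mathbb{H}^+$ has an $f$-preimage, arguing by contradiction that this preimage must lie in $P$. Your only addition is to make the surjectivity $f(\mathbb{C})=\mathbb{H}^+$ explicit, which the paper leaves implicit when it writes ``let $f(z)\in P\cap\mathbb{H}^+$.''
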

\begin{proof}
Let $f(z) \in P \cap \mathbb{H}^+$.  Then $z \in P$ since otherwise $z \in P^c$ and then by Proposition~\ref{P^c is invariant} we would have $f(z) \in P^c$.
\end{proof}

\begin{lem}\label{P is compact}
$P$ is compact.
\end{lem}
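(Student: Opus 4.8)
The plan is to show that $P$ is closed and bounded. Since $P = (\Int(S \cup S'))^c$ is by definition the complement of an open set, $P$ is automatically closed, so the entire content of the lemma is that $P$ is bounded. Equivalently, I must show that $\Int(S \cup S')$ contains the complement of some large disk; by the symmetry from Corollary~\ref{P is symmetric with respect to the PFL} it suffices to control $S$ in the half-plane $P\mathbb{H}^+$ and then reflect.

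First I would examine $S_1$ directly. By Definition~\ref{symbol:S}, $S_1$ is the region bounded by $\PFL$, the segments $[m_0,\ell_1]$, $[\ell_1,m_1]$, and $\PFL/c$, chosen to be the side containing points of arbitrarily large modulus. The key geometric observation is that $\PFL$ and $\PFL/c$ are two distinct lines (they are parallel only if $c \in \mathbb{R}$, which is excluded by our standing assumption $\Im(c) = \beta > 0$, so they meet at a single point), and $S_1$ is the closed region ``outside'' the finite polygonal cap formed near $\ell_1$. Thus $S_1$ contains an entire angular sector minus a bounded set; in particular the complement of $S_1 \cup S_1'$ within a suitable double-sector is bounded. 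Then I would track how the recursion $S_k = (S_{k-1}/c) \cap P\mathbb{H}^+$ builds up $S$: each division by $c$ scales toward the origin and rotates by $\theta$, and intersecting with $P\mathbb{H}^+$ discards the part below $\PFL$. The union $\bigcup_k S_k$ together with its reflection should be shown to cover everything outside a fixed disk $B(0,R)$, because the angular sectors produced by the first few $S_k$ (and their reflections $S_{-k}$), rotated by successive multiples of $\theta$, wrap all the way around the origin — exactly analogously to the star/polygon covering argument used in the proof of Theorem~\ref{mod c equals 1}, case 3. Once $S \cup S'$ contains $\mathbb{C} \setminus B(0,R)$, its interior contains $\mathbb{C} \setminus \Cl(B(0,R'))$ for a slightly larger $R'$, hence $P \subset \Cl(B(0,R'))$ is bounded.

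A cleaner alternative, which I would use to avoid a delicate finite-sector bookkeeping, is to invoke Proposition~\ref{P^c is invariant}: $P^c = \Int(S \cup S')$ is forward invariant, and $S_0 = f(S_1)$ has $\Int(S_0) \subset P^c$ by Lemma~\ref{S_0 subset of P^c}. Combined with Proposition~\ref{One attracting periodic point} — every point of sufficiently large modulus escapes to infinity, and more precisely $|f^n(z)| > 2^n|z|$ once $|z| > \tfrac{2}{|c|-k}$ — one argues that any point of large enough modulus eventually lands in the region already known to be in $P^c$ (the interior of the initial sector $S_1$, which visibly contains a punctured neighborhood of infinity along its axis), so by invariance of $P^c$ under $f$ read backwards... more carefully: since $P$ is forward invariant under $f$ as well (because $P^c$ is), and the orbit of any $z$ with $|z| > \tfrac{2}{|c|-k}$ leaves every bounded set, while $P$ contains the bounded set $K$ and all its iterates stay in $P$, I would instead show $P \cap \{|z| > R\} = \emptyset$ directly: if $z \in P$ with $|z|$ huge, then $cz \in \mathbb{H}^-$ fails to help and a short computation with $S_1$'s defining half-planes shows $z \in \Int(S_1) \subset P^c$, a contradiction.

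The main obstacle I anticipate is making the phrase ``$S_1$ is the region containing points of arbitrarily large modulus'' quantitative: I need an explicit large-modulus description of $\Int(S_1)$ (an honest open sector, or open half-plane minus bounded set, cut out by $\PFL$, $\PFL/c$, and the two finite segments through $\ell_1$), together with the verification that rotating this sector and its reflection by the increments coming from repeated division by $c$ sweeps out a full punctured neighborhood of $\infty$. Handling the borderline case where $\PFL$ is nearly parallel to $\PFL/c$ (i.e.\ $\beta$ small) will require noting that even a thin sector, once iterated under $z \mapsto z/c$ enough times to rotate past $2\pi$, still covers all directions — the angular deficit is overcome after finitely many steps precisely because $\theta \neq 0$. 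This is the same mechanism as in Theorem~\ref{mod c equals 1}, so I would model the write-up on that argument.
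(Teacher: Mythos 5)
Your closedness step is the same as the paper's ($P$ is by definition the complement of the open set $\Int(S\cup S')$). For boundedness you take a genuinely different route: you want to show directly that $\Int(S\cup S')$ contains a punctured neighborhood of $\infty$, using the fact that $S_1$ is an unbounded region containing a sector with vertex at $\gamma_1=\PFL\cap(\PFL/c)$ (these lines are indeed non-parallel since $\beta>0$), whose arc of directions at infinity has width $\theta$ and is rotated by $-\theta$ at each division by $c$, so that finitely many $S_k$ together with their reflections $S_{-k}$ sweep out every direction. The paper argues differently and more cheaply: since $|\ell_k|=|\ell_0|/|c|^k$, the spiral $L$ lies in any disk $D$ containing $L_0$, $L'\subset D'$, and by construction $\Conv(P)=\Conv(L\cup L'\cup\{\gamma_1\})$, so $P$ sits inside an explicit bounded convex set. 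Your plan is workable, but as written it is a plan: the covering of a neighborhood of $\infty$, which you yourself flag as the main obstacle, is exactly the entire content of boundedness and is not carried out. If you do carry it out, note one correction to the picture you describe: the intersection with $P\mathbb{H}^+$ in the recursion $S_k=(S_{k-1}/c)\cap P\mathbb{H}^+$ truncates each rotated sector at $\PFL$, so the sets $S_k$ alone never ``wrap all the way around'' --- they cover (far from the origin) only the half-circle of directions belonging to $P\mathbb{H}^+$, and after finitely many steps the $S_k$ become bounded and spiral toward the origin; the remaining half-circle is supplied entirely by $S'$. The analogy with Theorem~\ref{mod c equals 1}, case 3, is therefore only loose.

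Your ``cleaner alternative'' does not work as stated, for two reasons. First, its final step claims that any $z\in P$ of sufficiently large modulus must lie in $\Int(S_1)$; but $S_1$ meets a neighborhood of $\infty$ only in an angular sector of width roughly $\theta$ (bounded by the directions of the $\PFL$-ray at $m_0$ and the $\PFL/c$-ray at $m_1$), so when $\theta$ is small almost every far-away direction misses $S_1$ entirely, and no computation with $S_1$'s defining half-planes alone can give the contradiction --- you are forced back to the multi-sector covering of the first route. Second, the parenthetical ``$P$ is forward invariant under $f$ as well (because $P^c$ is)'' is false: forward invariance of $P^c$ (Proposition~\ref{P^c is invariant}) is equivalent to $f^{-1}(P)\subset P$, not $f(P)\subset P$, and indeed the paper shows $f(P)\not\subset P$ in general (when $\zeta$ does not exist one has $\gamma_1\in P$ but $f(\gamma_1)=\gamma_0\notin P$, as in Theorem~\ref{thm:noZeta}); $f(P)\subset P$ holds only in the polygonal case of Theorem~\ref{thm:zetaLow}. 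So drop the alternative and complete the sector-covering argument, or switch to the paper's convex-hull bound, which avoids the bookkeeping altogether.
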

\begin{proof}
$P^c=\Int(S \cup S')$ and is so it is open.  Thus, $P$ is closed.
To show that $P$ is bounded we begin with a disk $D$ centered at the origin and containing the line segment $L_0=[\ell_0,\ell_1]$. Now for all $k>1$ we have that $|\ell_k| \leq \frac{|\ell_0|}{|c|}$.  Thus $L_k \subset D,k \geq 0$.  Similarly, $L' \subset D'$.  Let $\gamma_1=\gamma_0/c$.  Then by construction $\Conv(P)=\Conv((L \cup L' \cup \{\gamma_1\})$.  Therefore, any disk centered at the origin containing $D \cup D' \cup \{\gamma_1\}$ will contain $\Conv(P)$ and necessarily also $P$.  Thus, $P$ is a closed and bounded subset of the complex plane, and is therefore compact.
\end{proof}

\begin{lem}\label{Expansion outside of P}
If $z \in P^c$ then $d(f(z),P)\geq|c|d(z,P)$.
\end{lem}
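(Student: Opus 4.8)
The plan is to reduce the claim to the behavior of $f$ on the sets $S_k$ and their reflections, using that $f$ restricted to $P\mathbb{H}^+$ is just multiplication by $c$, which scales all distances by $|c|$. First I would observe that by Corollary~\ref{P is symmetric with respect to the PFL}, $P$ is symmetric about the $\PFL$ and $P^c = \Int(S \cup S')$; moreover $f(z') = f(z)$ for every $z$, where $z' = \Reflect(z,\PFL)$, and reflection about the $\PFL$ fixes $d(\cdot,P)$ (since $P$ is symmetric). Hence without loss of generality I may assume $z \in P^c \cap P\mathbb{H}^+ = \Int\left(\bigcup_{k=1}^{\infty} S_k\right)$, so that $z \in \Int(S_k)$ for some $k \geq 1$ and $f(z) = cz$.

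The key step is then to show that for $z \in \Int(S_k)$ we have $d(cz, P) \geq |c|\,d(z,P)$. Since $f$ on $P\mathbb{H}^+$ is multiplication by $c$, it is a similarity with ratio $|c|$, so $d(cz, cP) = |c|\,d(z,P)$. Thus it suffices to prove $d(cz, P) \geq d(cz, cP)$, i.e. that the nearest point of $P$ to $cz$ is no closer than the nearest point of $cP$; equivalently, I want to find a point $w \in P$ with $|cz - w| \leq |c|\,d(z,P)$ impossible to beat — more precisely, I will argue that $cP \supset P \cap (\text{relevant neighborhood of } cz)$ in the region where $cz$ lives, or directly that any path from $cz$ to $P$ must pass through $cP$. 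The clean way: take $w_0 \in P$ realizing $d(z,P) = |z - w_0|$; then $cw_0 \in cP$. I claim $cP \subset \Cl(P^c)^c \cup \ldots$ — rather, the honest route is to use Lemma~\ref{f(S_k) subset S_{k-1}}: since $f(S_k) \subset S_{k-1}$ for $k \geq 2$ and $f(S_1) = S_0 = f(S_1)$ with $\Int(S_0) \subset P^c$ by Lemma~\ref{S_0 subset of P^c}, the image $cz = f(z)$ again lies in $\Int(S_{k-1}) \subset P^c$ (or in $\Int(S_0)\subset P^c$ when $k=1$), so $f$ maps $P^c \cap P\mathbb{H}^+$ into $P^c$, consistent with Proposition~\ref{P^c is invariant}. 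The quantitative statement needs the geometry: the boundary of $P$ near $cz$ consists of images under multiplication by $c$ of pieces of $\Bd(P)$ near $z$, because $S_{k-1} = c(S_k \cap P\mathbb{H}^+)$ up to the fold, so locally $\Bd(P)$ near $cz$ equals $c$ times $\Bd(P)$ near $z$, giving $d(cz,P) = |c|\,d(z,P)$ exactly when the nearest boundary point stays on the $c$-scaled copy, and $\geq$ in general.

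The main obstacle I anticipate is handling the portion of $\Bd(P)$ coming from the "fold," i.e. from $\FL$ and the seam where $S_1$ and $S_1'$ meet the rest — that is, points $z \in \Int(S_1)$ whose nearest point of $P$ is on the part of $\Bd(P)$ lying along $\PFL$ or along $[m_0,\ell_1]\cup[\ell_1,m_1]$, since there the image under $f$ of a nearby boundary piece of $P$ need not be a boundary piece of $P$ (the fold can bring $cP$ and $P$ into different configurations). To deal with this I would split into cases according to which face of $\Bd(P)$ the foot $w_0$ of the perpendicular from $z$ lies on: if $w_0$ is on the $c$-scaled copy of some $L_j$ or on $\PFL/c$-type edges, the similarity argument applies directly; if $w_0 \in \PFL$, I use Lemma~\ref{PFL is perp to L0} and Corollary~\ref{L sub minus 1 is vertical} together with the symmetry $f(z)=f(z')$ to transfer the estimate to the reflected side, where the foot lands on a $c$-scaled $L_j$ and the first case applies. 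In all cases the ratio picked up is exactly $|c|$ because multiplication by $c$ is a similarity of ratio $|c|$ and reflection in the $\PFL$ is an isometry preserving $d(\cdot,P)$, which yields $d(f(z),P) \geq |c|\,d(z,P)$.
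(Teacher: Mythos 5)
Your reduction to $z \in P\mathbb{H}^+$ via the symmetry of $P$ about the $\PFL$ and the identity $f(z)=f(z')$ is exactly the paper's first step, and the observation that multiplication by $c$ is a similarity (so $d(cz,cP)=|c|\,d(z,P)$, and likewise $d(f(z),f(P))=|c|\,d(z,P)$ once one notes that a nearest point $w\in P$ to $z$ must itself lie in $P\mathbb{H}^+$) is also correct. But the heart of the lemma is precisely the step you leave open: why can no \emph{other} piece of $P$ lie closer to $f(z)$ than $|c|\,d(z,P)$? Your proposed reduction ``it suffices to show $d(cz,P)\geq d(cz,cP)$'' amounts to a containment of the type $P\subset cP$, which you never establish, and your substitute argument --- that ``locally $\Bd(P)$ near $cz$ equals $c$ times $\Bd(P)$ near $z$'' --- is both unproved and beside the point: $d(f(z),P)$ is controlled by the globally nearest point of $P$, which could a priori sit on an entirely different part of $P$ (a segment $L_j$ with large $|j|$, or a piece of boundary produced by the fold), and nothing in your argument excludes such a piece from entering the disk of radius $|c|\,d(z,P)$ about $cz$. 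Invoking Lemma~\ref{f(S_k) subset S_{k-1}} and Proposition~\ref{P^c is invariant} only re-derives the qualitative fact $f(P^c)\subset P^c$; it yields no lower bound on distances, and the closing case analysis on where the foot of the perpendicular from $z$ lands does not help, because the issue concerns the nearest point of $P$ to $f(z)$, not to $z$.

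The missing ingredient, and the way the paper closes exactly this gap, is Corollary~\ref{P subset of f(P)}: $P\cap\mathbb{H}^+\subset f(P)$, which is itself a direct consequence of Proposition~\ref{P^c is invariant}. Since $f(z)\in\mathbb{H}^+$ and every point of $P$ relevant to the estimate is contained in $f(P)$, the infimum over the larger set gives $d(f(z),P)\geq d(f(z),f(P))$, and then $d(f(z),f(P))=|c|\,d(z,P)$ by the similarity argument you already have. In other words, the correct comparison set is $f(P)=c\,(P\cap P\mathbb{H}^+)$ rather than $cP$, and the inequality comes from a previously established global containment, not from any local self-similarity of $\Bd(P)$. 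To repair your proposal, replace the local-geometry claim by an appeal to (or a proof of) Corollary~\ref{P subset of f(P)}.
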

\begin{proof}
Let $z \in P^c$ be given.  Since $P$ is symmetric with respect to the $\PFL$ then $d(z,P)=d(z',P)$.  Also, since $f(z)=f(z')$ then  $d(f(z),P)=d(f(z'),P)$.  Thus, $d(f(z),P)\geq|c|d(z,P)$ if and only if $d(f(z'),P)\geq|c|d(z',P)$, and so we may assume that $z \in P\mathbb{H}^+$.  

Since $P$ is compact, then there exists $w \in P$ such that $d(z,w)=d(z,P)$.  Clearly, $w \in P\mathbb{H}^+$, for otherwise $d(z,w')<d(z,w)$ which would contradict $d(z,w)=d(z,P)$.  Noting that $f$ restricted to $P\mathbb{H}^+$ is multiplication by $c$, we have that $d(f(z),f(P))=d(f(z),f(w))=d(cz,cw)=|cz-cw|=|c||z-w|=|c|d(z,P)$.  Now by Corollary~\ref{P subset of f(P)} $P\cap \mathbb{H}^+ \subset f(P)$, and so for every point $q \in P\cap \mathbb{H}^+$ we have $d(q,P) \geq d(q,f(P)).$  Setting $q=f(z)$ this becomes $d(f(z),P) \geq d(f(z),f(P)).$  Thus if $z \in P^c$ then $d(f(z),P)\geq d(f(z),f(P))=|c|d(z,P)$.  
\end{proof}

\begin{thm}\label{K subset P}
$K \subset P$.  
\end{thm}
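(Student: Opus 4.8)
The goal is to show $K \subset P$, i.e., every point with bounded forward orbit lies in $P$. The natural strategy is contrapositive: show that if $z \in P^c$, then the forward orbit of $z$ is unbounded, so $z \notin K$. The key tool is Lemma~\ref{Expansion outside of P}, which says $d(f(z),P) \ge |c| \, d(z,P)$ for $z \in P^c$. Since $|c| > 1$, one wants to iterate this to get $d(f^n(z),P) \ge |c|^n \, d(z,P)$, which blows up.

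The plan is as follows. First I would take $z \in P^c$, so $d(z,P) = \varepsilon > 0$ since $P$ is compact (hence closed) by Lemma~\ref{P is compact}. The subtlety is that Lemma~\ref{Expansion outside of P} only gives expansion at points that are themselves in $P^c$; to iterate, I need to know that $f(z) \in P^c$ as well. This is exactly what Proposition~\ref{P^c is invariant} provides: $P^c$ is forward invariant, so $f^n(z) \in P^c$ for all $n \ge 0$. Therefore I can apply Lemma~\ref{Expansion outside of P} at each iterate: $d(f^{n+1}(z),P) = d(f(f^n(z)),P) \ge |c| \, d(f^n(z),P)$. By induction, $d(f^n(z),P) \ge |c|^n \varepsilon \to \infty$ as $n \to \infty$. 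Since $P$ is bounded (again by Lemma~\ref{P is compact}), a sequence whose distance to $P$ tends to infinity must itself be unbounded; hence the orbit of $z$ is unbounded, so $z \notin K$. This shows $P^c \subset K^c$, equivalently $K \subset P$.

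The main obstacle is really just making sure the hypotheses of Lemma~\ref{Expansion outside of P} stay satisfied along the orbit, and that is handled cleanly by the forward-invariance of $P^c$; there is no genuine difficulty once Propositions~\ref{P^c is invariant} and Lemma~\ref{Expansion outside of P} and Lemma~\ref{P is compact} are in hand. One minor point to state carefully: "distance to $P$ goes to infinity" plus "$P$ bounded" implies "$|f^n(z)| \to \infty$", which follows from the triangle inequality, $|f^n(z)| \ge d(f^n(z),P) - \operatorname{Diam}(P) - |p_0|$ for any fixed $p_0 \in P$. I would write out the induction and this final estimate explicitly, since they are short, and otherwise cite the three earlier results.
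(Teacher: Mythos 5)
Your proposal is correct and follows essentially the same argument as the paper: apply Lemma~\ref{Expansion outside of P} iteratively, using Proposition~\ref{P^c is invariant} to keep the orbit in $P^c$, so that $d(f^n(z),P)\geq |c|^n d(z,P)$ forces divergence. Your added care about $d(z,P)>0$ (from $P$ closed) and the final triangle-inequality step (from $P$ bounded) only makes explicit details the paper leaves implicit.
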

\begin{proof}
Let $z \in P^c$.  Then by Lemma~\ref{Expansion outside of P}, $d(f(z),P) \geq |c|d(z,P)$.  By Proposition~\ref{P^c is invariant} we have that $f(z)\in P^c$.  Thus, $d(f^n(z),P)\geq |c|^n d(z,P)$ ans so $d(f^n(z),P)$ grows exponentially as $n$ increases.  This shows that $z \notin K$.  Since $z\in P^c$ was arbitrary, then $K \cap P^c = \emptyset$ and thus $K \subset P$.
\end{proof}

\begin{Def}\label{symbol:zeta}
If $L \cap \PFL \neq \emptyset$, then there exists a smallest positive integer $n$ such that $L_n \cap \PFL$.  In this case, we define $\{\zeta\}=L_n \cap \PFL$.
\end{Def}

We now define a curve $\Gamma$ which is a subset of the preimages of $\PFL$.  $\Gamma$ is constructed in the same way as $L$.  
Recall that $\{\gamma_0\}=\{\frac{\alpha-1}{\beta}-i\}= \FL \cap \PFL$.

\begin{Def}\label{Gamma}
We make the following definitions.

\begin{enumerate}
	\item We define $\gamma_k= \frac{\gamma_0}{c^k}, k \in \mathbb{Z}$.\label{symbol:gamman}
	\item We define $\Gamma_k$ to be the line segment $[\gamma_k, \gamma_{k+1}]$ for all $k \in \mathbb{Z}$.
	\item We also define $\Gamma=\underset{k=1}{\overset{\infty}{\bigcup}}\Gamma_k$.\label{symbol:Gamma}
\end{enumerate}
\end{Def}

Note that $\Gamma_0 \cap \Gamma= \{\gamma_1\}$ in the same way as $L_0 \cap L= \{\ell_1\}$.

\begin{Def}
If $\zeta$ exists and $n$ is the smallest positive integer such that $\zeta \in L_n$, then we define the \textbf{outer-most boundary of $P$} to be 

\begin{equation}
[\ell_n,\zeta]\cup[\ell_{-(n-1)},\zeta]\cup \left(\underset{|k|\leq n-1}{\bigcup}L_k\right).\\
\end{equation}

If $\zeta$ does not exist then the \textbf{outer-most boundary of $P$} is $L' \cup L_0 \cup L$ and we additionally define the \textbf{inner-most boundary of $P$} to be $\Gamma \cup \Gamma'$ (see Figure \ref{fig:inner_most_boundary}).
\end{Def}

\begin{figure}
	\centering
		\includegraphics[width=1.0\textwidth]{./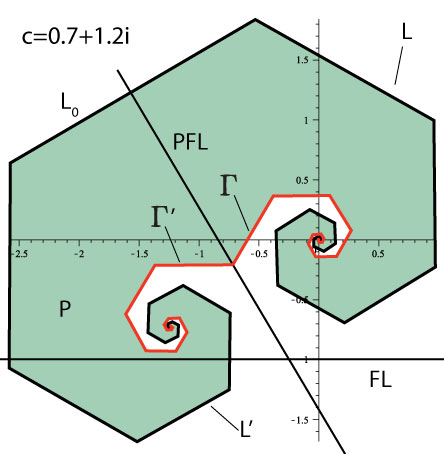}
	\caption[The outer-most and inner-most boundaries of $P$]{On the left is an example of the outer-most and inner-most boundaries of $P$.  Shown on the right is $K$.}
	\label{fig:inner_most_boundary}
\end{figure}

When $P$ is a polygon then the outer-most boundary of $P$ is just the boundary of $P$.  This is not true when $P$ is not a polygon.  Figure~\ref{fig:outermostBoundary} shows an example where $K=P$, $K$ has nonempty interior, and yet the outer-most boundary of $P$ is not the boundary of $P$.

\begin{figure}[htbp]
	\centering
		\includegraphics[width=1.00\textwidth]{./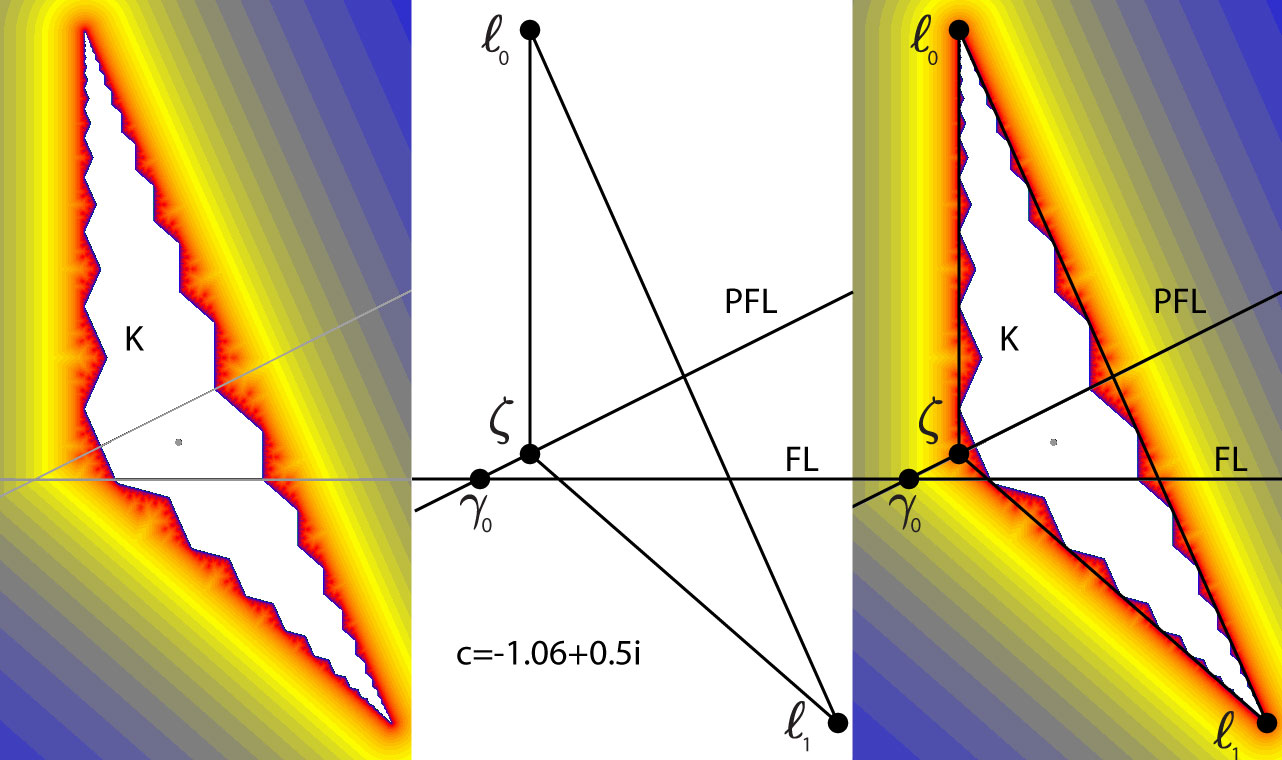}
	\caption[The outer-most boundary]{In the middle is the outer-most boundary of $P$ for $c=-1.06+0.5i$.  For this choice of $c$, the outer-most boundary is a triangle.  However, $K$ is a polygon with infinitely many sides.  }
	\label{fig:outermostBoundary}
\end{figure}

\begin{prop}\label{P is right of ell_0}
If $z \in P$ then $\Re(z)\geq \Re(\ell_0)$.  
\end{prop}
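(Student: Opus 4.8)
The plan is to reduce the statement to a claim about a countable collection of points by convexity. By the boundedness argument in the proof of Lemma~\ref{P is compact}, $P\subseteq\Conv(P)=\Conv\big(L\cup L'\cup\{\gamma_1\}\big)$; since $z\mapsto\Re(z)$ is an $\mathbb R$-linear functional, its minimum over a convex hull is attained at a generating point, and the generators here are the $\ell_k$ $(k\in\mathbb Z)$, which are the vertices of $L\cup L'$, together with $\gamma_1$. So it suffices to prove that $\Re(\ell_k)\ge\Re(\ell_0)$ for every $k\in\mathbb Z$ and that $\Re(\gamma_1)\ge\Re(\ell_0)$.

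The elementary ingredients come first. From Definition~\ref{symbol:ell}, $\Re(\ell_0)=-2\beta/(|c|^{2}-1)<0$; and since $\ell_0=f(\ell_0)=\overline{c\ell_0}-2i$ by Lemma~\ref{Fixed point}, taking real parts gives $\Re(c\ell_0)=\Re(\ell_0)$, which together with Corollary~\ref{L sub minus 1 is vertical} places the vertical segment $L_{-1}$ on the line $\mathcal V:=\{z:\Re(z)=\Re(\ell_0)\}$; this $\mathcal V$ is the ``left wall'' described by the proposition. Next, using $\Reflect(z,\PFL)=\frac1c(\overline{cz}-2i)$ one checks, for $j\ge1$, that $\Re(\ell_{-j})\ge\Re(\ell_0)$ is equivalent to $\Re(c\ell_j)=\Re(\ell_{j-1})\ge\Re(\ell_0)$, so the negative indices reduce to the non-negative ones. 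Among the latter, $k=0$ is trivial and $k=1$ boils down, after writing out $\ell_1=\ell_0/c$, to $(\alpha-1)^2+\beta^2\ge0$. Finally $\Re(\gamma_1)\ge\Re(\ell_0)$ is an explicit rational inequality in $\alpha,\beta$ which, after clearing denominators and substituting $\beta^{2}=|c|^{2}-\alpha^{2}$, reduces to $|c|^{4}+|c|^{2}(1-\alpha)+\alpha(1-2\alpha)\ge0$, provable by the same kind of casework as in Lemma~\ref{Im(cz0)}.

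The cases $k\ge 2$ are the heart of the matter. Writing $\ell_k=\dfrac{2i(1-\overline c)}{(|c|^{2}-1)c^{k}}$ and separating real from imaginary parts, $\Re(\ell_k)\ge\Re(\ell_0)$ is equivalent to
\begin{equation*}
\Im\big(c^{k}(c-1)\big)\le\beta\,|c|^{2k},\qquad\text{i.e.}\qquad \sin\!\big(k\theta+\arg(c-1)\big)\le|c|^{k}\sin\!\big(\arg(c-1)\big).
\end{equation*}
This is where the work lies, and it is the main obstacle: the naive bound $\big|\Im(c^{k}(c-1))\big|\le|c|^{k}|c-1|$ is too weak, and in fact $\Re(\ell_k)<\Re(\ell_0)$ can occur (for instance this happens for $k=2$ when $c$ is close to $-1$). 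So the displayed inequality does not hold unconditionally, and the convex-hull reduction of the first paragraph is only legitimate in the regime where $L$ does not meet $\PFL$. In that regime the inequality must be extracted from the geometric constraint that the spiral $L$ stays off $\PFL$; translating that constraint into the trigonometric estimate above, presumably by an induction on $k$ seeded by the $k=0,1$ cases, is the technical core I would have to supply.

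When $\zeta$ does exist (Definition~\ref{symbol:zeta}), say $L_n\cap\PFL=\{\zeta\}$ with $n$ minimal, one argues instead that $P$ is contained in the region enclosed by the outermost boundary $[\ell_n,\zeta]\cup[\ell_{-(n-1)},\zeta]\cup\bigcup_{|k|\le n-1}L_k$, and then shows that every segment of this boundary lies in the closed half-plane $\{z:\Re(z)\ge\Re(\ell_0)\}$: the indices $|k|\le n-1$ are exactly the range in which the estimate of the previous paragraph remains available (this is where minimality of $n$ enters), while $\Re(\zeta)\ge\Re(\ell_0)$ and $\Re(\ell_n)\ge\Re(\ell_0)$ need a short separate argument exploiting $\zeta\in\PFL$ and $\zeta=\Reflect(\zeta,\PFL)\in L_{-n}$. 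I expect the bookkeeping in this last case — in particular making precise that ``$|k|\le n-1$ is the safe range'' — to be the fiddliest part of the write-up.
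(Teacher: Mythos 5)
Your proposal is not yet a proof: the step you yourself flag as ``the technical core I would have to supply'' is exactly the content of the proposition, and it is missing. The convex-hull reduction (via $P\subseteq\Conv(L\cup L'\cup\{\gamma_1\})$), the reduction of negative indices to $\Re(\ell_{j-1})\ge\Re(\ell_0)$, and the cases $k=0,1$ and $\gamma_1$ all check out, but the inequality $\Re(\ell_k)\ge\Re(\ell_0)$ for $k\ge 2$ is, as you correctly observe, false in general (e.g.\ $\Re(\ell_2)<\Re(\ell_0)$ for $c$ near $-1$ with $|c|$ slightly above $1$). So the whole argument hinges on extracting that inequality from the hypothesis $L\cap\PFL=\emptyset$, and you only gesture at this (``presumably by an induction on $k$'') without giving the induction or even its inductive statement. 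The same is true of the case where $\zeta$ exists: the claims that $\Re(\zeta)\ge\Re(\ell_0)$, that $\Re(\ell_n)\ge\Re(\ell_0)$, and that $|k|\le n-1$ is ``the safe range'' are precisely what needs proving there, and they are deferred to ``a short separate argument'' and ``bookkeeping.'' What you have is a correct diagnosis of where the difficulty sits, not a proof of the statement.

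For comparison, the paper does not estimate the vertices at all. It observes that the points of $P$ of smallest real part lie on the outer-most boundary of $P$, that $L_{-1}$ is the vertical segment through $\ell_0$ (your ``left wall''), and that for the outward spiral $L'$ to carry part of $P$ to the left of that wall it would have to self-intersect; Proposition~\ref{LcapFL} then forces a crossing of $\FL$ first, so $\zeta$ exists and $P$ is contained in the polygon bounded by the outer-most boundary, which lies in $\{\Re(z)\ge\Re(\ell_0)\}$. If you want to salvage your vertex-by-vertex route, the missing induction would in effect be a quantitative version of that geometric dichotomy (no self-intersection of the spiral versus early crossing of $\FL$/$\PFL$), so you should either supply it explicitly or switch to the self-intersection argument, where Proposition~\ref{LcapFL} does the heavy lifting for you.
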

\begin{proof}
It is easily seen that the points of $P$ with the smallest real part belong to the outer-most boundary of $P$.  Due to this and the spiraling of $L'$, for there to be a point $z \in P$ with $\Re(z)<\Re(\ell_0)$, then $L'$ must have a self-intersection.  Then by Proposition \ref{LcapFL}, $\zeta$ must exist and by definition, $P$ is contained in the polygon bounded by the outer-most boundary of $P$.  It is clear that every point in this polygon has real part greater than or equal to $\Re(\ell_0)$.
\end{proof}

\begin{cor}\label{K is right of ell_0}
If $z \in P$, then $\Re(z) \geq \Re(\ell_0)$.
\end{cor}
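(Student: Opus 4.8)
The statement to be proved is, word for word, the conclusion of Proposition~\ref{P is right of ell_0}: ``if $z\in P$ then $\Re(z)\ge\Re(\ell_0)$.'' So the plan is essentially to invoke that proposition — the corollary just records the bound in the form in which it will later be applied (in particular to points of $K$, which lie in $P$ by Theorem~\ref{K subset P}). If a self-contained derivation is wanted instead of a back-reference, I would simply re-run the proof of Proposition~\ref{P is right of ell_0}, which I sketch below.

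First I would note that, by Definition~\ref{symbol:P}, $P$ is the complement of $\Int(S\cup S')$, so the points of $P$ of smallest real part must lie on the ``outer-most boundary of $P$'' — the pieces $S_k$ and $S_k'$ all sit to the right of the curves $L$, $L_0$, $L'$ (and of $\gamma_1$, when $\zeta$ does not exist) that make it up. Then I would split into the two cases that determine the shape of $P$. If $\zeta$ exists, then $L'$ self-intersects, so by Proposition~\ref{LcapFL} some $L_k$ meets $\FL$ first and $P$ is contained in the polygon cut out by its outer-most boundary, whose vertices lie among the $\ell_k$ together with $\zeta$; since the $\ell_k$ with $k\neq 0$ spiral inward (for $k\ge 1$, $|\ell_k|=|\ell_0|/|c|^k$) and the whole picture is symmetric about the $\PFL$ by Corollary~\ref{P is symmetric with respect to the PFL}, the left-most point of this polygon is $\ell_0$, whence every point of $P$ has real part $\ge\Re(\ell_0)$. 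If $\zeta$ does not exist, then $L'$ has no self-intersection, the outer spiral $L'$ never crosses the vertical line $\Re=\Re(\ell_0)$ for the same spiraling reason, and $\Conv(P)=\Conv(L\cup L'\cup\{\gamma_1\})$ (as used in the proof of Lemma~\ref{P is compact}) again lies in $\{\Re\ge\Re(\ell_0)\}$.

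The only non-formal input here is the claim that, absent a self-intersection of $L'$, the spiral $L'$ stays weakly to the right of $\ell_0$; this is precisely what Proposition~\ref{LcapFL}, together with the geometric decay of $|\ell_k|$ in $|c|>1$, is set up to deliver. Consequently I expect this corollary to be a one- or two-line deduction, and the ``main obstacle'' is merely the observation that the corollary and Proposition~\ref{P is right of ell_0} assert the same thing; should the intended statement instead be the companion assertion about $K$, it follows immediately by combining Proposition~\ref{P is right of ell_0} with the inclusion $K\subset P$ of Theorem~\ref{K subset P}.
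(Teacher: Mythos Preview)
Your proposal is correct and matches the paper's approach. You rightly observed that the statement as printed is identical to Proposition~\ref{P is right of ell_0} (making it a bare restatement), while the label ``$K$ is right of $\ell_0$'' and the paper's own proof --- which cites Theorem~\ref{K subset P} together with Proposition~\ref{P is right of ell_0} --- indicate the intended hypothesis is $z\in K$; your final sentence gives exactly that one-line deduction.
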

\begin{proof}
This follows immediately from Theorem \ref{K subset P} and Proposition \ref{P is right of ell_0}.
\end{proof}

\begin{lem}\label{Re(gamma_0)<Re(ell_0)}
If $\Re(\gamma_0) < \Re(\ell_0)$ then $m \notin P$.
\end{lem}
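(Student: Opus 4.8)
The first step is to pin down the location of $m$ exactly. Since $m_0=\tfrac12(\ell_0+\ell_1)$ is the midpoint of $L_0$, Lemma~\ref{PFL is perp to L0} puts $m_0$ on $\PFL$, so $f(m_0)=cm_0$. Using $\ell_1=\ell_0/c$ and $\ell_0=\frac{2i(1-\overline c)}{|c|^2-1}$, a direct computation gives
\[
m=f(m_0)=\frac{c(\ell_0+\ell_1)}{2}=\frac{(c+1)\ell_0}{2}
 =\frac{2i(1-\overline c)(c+1)}{2(|c|^2-1)}=\frac{-2\beta}{|c|^2-1}-i .
\]
In particular $\Re(m)=\Re(\ell_0)$ and $\Im(m)=-1$, so $m\in\FL$ and $m$ lies on the vertical line $V=\{\,z:\Re z=\Re(\ell_0)\,\}$.

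Next I would reduce the claim to a statement about how far down $P$ reaches along $V$. The hypothesis $\Re(\gamma_0)<\Re(\ell_0)$ is equivalent to $\alpha<-1$: this is Proposition~\ref{Real part of gamma_0}, and it can also be checked by hand, since with $\Re(\gamma_0)=\frac{\alpha-1}{\beta}$ and $\Re(\ell_0)=\frac{-2\beta}{|c|^2-1}$, clearing the (positive) denominators and substituting $\beta^2=|c|^2-\alpha^2$ turns the inequality into $(\alpha+1)(|c|^2-2\alpha+1)<0$, whose second factor is positive by inequality~\eqref{eq:1b} in Lemma~\ref{Im(cz0)}. Hence $\Im(\ell_0)=\frac{2(1-\alpha)}{|c|^2-1}>0$, so $m$ lies strictly below $\ell_0$ on $V$. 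By Proposition~\ref{P is right of ell_0}, $V$ is a supporting line of $P$ meeting $P$ only in its ``leftmost'' points, all of which lie on the outer-most boundary of $P$. Thus it suffices to prove: every $z\in P$ with $\Re z=\Re(\ell_0)$ satisfies $\Im z>-1$; then $m\notin P$.

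Finally I would establish this using the structure of the outer-most boundary. Since $\alpha<-1$, the spiral $L'$ self-intersects, so $\zeta$ exists and (as in the proof of Proposition~\ref{P is right of ell_0}) $P$ is contained in the polygon $Q$ cut off by the outer-most boundary of $P$; $Q$ has $\ell_0$ as its top-left vertex, and its extreme-left portion is a \emph{vertical} edge contained in $V$ descending from $\ell_0$ to another vertex (a $\zeta$, an $\ell_{-(n-1)}$, or an $\ell_k$). The remaining point is to show that this lower endpoint has imaginary part $>-1$, a finite computation which I expect to again come down to $\alpha<-1$; granting it, $Q\cap V$ stays strictly above $\FL$, so $m\notin Q\supseteq P$. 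The main obstacle is precisely this last step: controlling the downward extent of the extreme-left edge of $P$. The subtlety is that $L_{-1}$ is itself vertical and lies in $V$ (Corollary~\ref{L sub minus 1 is vertical}), so one must argue that the hypothesis $\Re(\gamma_0)<\Re(\ell_0)$ either keeps $\ell_{-1}$ above $\FL$ or forces $\zeta$ to truncate $L_{-1}$ out of the outer-most boundary; either way the left edge of $P$ never reaches the height $\Im z=-1$ occupied by $m$.
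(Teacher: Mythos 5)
Your computation of $m=\frac{(c+1)\ell_0}{2}=\Re(\ell_0)-i$ is correct and matches what the paper uses, and the reduction ``every $z\in P$ with $\Re z=\Re(\ell_0)$ has $\Im z>-1$ implies $m\notin P$'' is sound. But there are two genuine gaps. First, the step ``since $\alpha<-1$, the spiral $L'$ self-intersects, so $\zeta$ exists'' is false: $\zeta$ exists only when $L\cap\PFL\neq\emptyset$, and the hypothesis $\Re(\gamma_0)<\Re(\ell_0)$ (equivalently $\alpha<-1$) is perfectly compatible with $\zeta$ not existing. For instance, for $c=-10+0.1i$ one has $|\ell_0|=\frac{2|1-\overline{c}|}{|c|^2-1}<1$, so every point of $L$ has modulus at most $|\ell_1|=|\ell_0|/|c|<1/|c|=d(0,\PFL)$ and $L$ never reaches $\PFL$; this is exactly the ``ram's head'' situation of Theorem~\ref{thm:noZeta}, and your polygon $Q$ with a vertical left edge does not exist there, so that whole case is uncovered. (The inference you seem to draw from the proof of Proposition~\ref{P is right of ell_0} goes the wrong way: a self-intersection of $L'$ is needed for $P$ to contain points left of $\ell_0$, not for $\gamma_0$ to lie left of $\ell_0$.) Second, even in the case you do treat, the decisive claim --- that the lower endpoint of the left vertical edge ($\ell_{-1}$, $\ell_{-(n-1)}$, or $\zeta$) lies strictly above $\FL$ --- is exactly the content of the lemma and is left unproved; you flag it yourself as ``the main obstacle.'' It is not a routine finite computation: when $\alpha<-1$ and $|c|\leq\sqrt{2}$, Proposition~\ref{Im(ell_{-1})} gives $\Im(\ell_{-1})\leq-1$, so $L_{-1}$ itself does reach $\FL$ and one must show that $\PFL$ truncates it (via Lemma~\ref{L-1capPFL}) at a point $\zeta$ with $\Im(\zeta)>-1$; controlling $\Im(\zeta)$ in terms of $\alpha$ is essentially the substance of the later and much longer Proposition~\ref{Im(zeta) less than -1 implies alpha geq -1 and |c| leq sqrt(2)}.

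The paper avoids all of this by arguing directly from the definition of $P$ rather than through the outer-most boundary: $m$ is the foot on $\FL$ of the vertical segment $f(L_0)$, the hypothesis $\Re(\gamma_0)<\Re(\ell_0)$ means precisely that $\PFL$ crosses $f(L_0)$ strictly above $\FL$, and then the symmetry of the construction of $P$ about $\PFL$ (the sets $S_k$, $S_k'$ and $S_0=f(S_1)$) places the portion of $f(L_0)$ strictly below $\PFL$ --- in particular $m$ --- inside $\Int(S\cup S')=P^c$. That argument needs no existence of $\zeta$ and no case split, so it applies uniformly to the polygonal and ram's-head cases. To salvage your route you would have to (i) handle the case where $\zeta$ does not exist separately and (ii) actually prove the positivity of $\Im$ at the lower end of the left edge, at which point you would have reproduced a large part of Proposition~\ref{Im(zeta) less than -1 implies alpha geq -1 and |c| leq sqrt(2)} just to obtain this short lemma.
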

\begin{proof}
(See Figure~\ref{fig:Outside_of_P}) If $\Re(\gamma_0) < \Re(\ell_0)$ then the $\PFL$ crosses $f(L_0)$ above the $\FL$ and so by the definition and symmetry of $P$ the point $m=\Re(\ell_0)-i \notin P$.  
\end{proof}

\begin{prop}\label{Real part of gamma_0}
$\Re(\gamma_0) < \Re(\ell_0)$ if and only if $\alpha <-1$.  Furthermore, if $\alpha <-1$ then $\gamma_0 \notin K$.
\end{prop}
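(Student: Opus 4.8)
The plan is to reduce the equivalence to an elementary inequality in $\alpha$ and $\beta$ by computing the two real parts explicitly, and then to read off $\gamma_0 \notin K$ from $K \subset P$.

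First I would record both real parts. Lemma~\ref{Equation of gamma_0} gives $\gamma_0 = \frac{\alpha-1}{\beta} - i$, so $\Re(\gamma_0) = \frac{\alpha-1}{\beta}$. Writing $c = \alpha + \beta i$ in the definition $\ell_0 = \frac{2i(1-\overline c)}{|c|^2-1}$, one has $2i(1-\overline c) = -2\beta + 2(1-\alpha)i$, hence $\Re(\ell_0) = \frac{-2\beta}{|c|^2-1}$.

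Next, since $\beta > 0$ and $|c|^2 - 1 > 0$ by the standing assumption $|c| > 1$, multiplying the inequality $\Re(\gamma_0) < \Re(\ell_0)$ through by the positive number $\beta(|c|^2-1)$ shows it is equivalent to $(\alpha-1)(|c|^2-1) + 2\beta^2 < 0$. Substituting $|c|^2 = \alpha^2 + \beta^2$ and factoring, I expect the identity
\[
(\alpha-1)(|c|^2-1) + 2\beta^2 = (\alpha+1)\bigl[(\alpha-1)^2 + \beta^2\bigr],
\]
whose bracketed factor is strictly positive because $\beta > 0$. Hence the left-hand side is negative exactly when $\alpha + 1 < 0$, i.e. when $\alpha < -1$, which is the claimed equivalence.

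Finally, for the ``furthermore'' clause: when $\alpha < -1$ the equivalence just proved gives $\Re(\gamma_0) < \Re(\ell_0)$, so $\gamma_0 \notin P$ by the contrapositive of Proposition~\ref{P is right of ell_0}, and $K \subset P$ from Theorem~\ref{K subset P} then yields $\gamma_0 \notin K$. The only step that is not pure bookkeeping is spotting the factorization above, so I do not anticipate any genuine obstacle.
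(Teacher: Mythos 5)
Your proposal is correct and follows essentially the same route as the paper: compute $\Re(\gamma_0)=\frac{\alpha-1}{\beta}$ and $\Re(\ell_0)=\frac{-2\beta}{|c|^2-1}$, clear the positive denominators, and factor the resulting expression as $(\alpha+1)\bigl[(\alpha-1)^2+\beta^2\bigr]$ to obtain the equivalence with $\alpha<-1$. The ``furthermore'' step is also the paper's: $\Re(\gamma_0)<\Re(\ell_0)$ forces $\gamma_0\notin P$ by Proposition~\ref{P is right of ell_0}, and then $\gamma_0\notin K$ by Theorem~\ref{K subset P}.
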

\begin{proof}
$\Re(\gamma_0)= \frac{\alpha-1}{\beta}$ and $\Re(\ell_0)=\frac{-2 \beta}{\alpha^2+\beta^2-1}$.  Since $\beta > 0$ and $|c|=\alpha^2+\beta^2>1$, then both denominators are positive.  We have
\[
\Re(\gamma_0)<\Re(\ell_0)
\]
is equivalent to
\[
\frac{\alpha-1}{\beta}<\frac{-2\beta}{\alpha^2+\beta^2-1}
\]
is equilvalent to
\[
(\alpha-1)(\alpha^2+\beta^2-1)<-2\beta^2
\]
is equivalent to
\[
(\alpha-1)(\alpha^2-1)+\beta^2(\alpha+1)<0
\]
is equivalent to
\[
((\alpha-1)^2+\beta^2)(\alpha+1)<0
\]
is equivalent to $\alpha<-1$.  Thus $\alpha <-1$ if and only if $\Re(\gamma_0)<\Re(\ell_0)$.  As a result, if $\alpha <-1$ then $\Re(\gamma_0) < \Re(\ell_0)$ and so $\gamma_0 \notin P$.  Theorem~\ref{K subset P} then gives us that $\gamma_0 \notin K$.

%  We are now ready to show that if $\alpha <-1$ then $\Re(\gamma_0)<\Re(\ell_0)$.  
%
%Assume that $\alpha=-\varepsilon -1 <-1$ for some $\varepsilon>0$ then: 
%\begin{equation}
%\begin{aligned}
%\Re(\gamma_0)&= \frac{\alpha-1}{\beta}\\
%&=\frac{-\varepsilon -2}{\beta}<\frac{-2}{\beta}<\frac{-2\beta}{\beta^2}<\frac{-2\beta}{2\varepsilon+ \varepsilon^2+\beta^2}\\
%&=\frac{-2 \beta}{1+2\varepsilon + \varepsilon^2 + \beta^2 -1}\\
%&=\frac{-2\beta}{(-1-\varepsilon)^2+\beta^2-1}\\
%&=\frac{-2\beta}{\alpha^2+\beta^2-1}=\Re(\ell_0)\\
%\end{aligned}
%\end{equation}
%

%
%Now assume that $\Re(\gamma_0)<\Re(\ell_0)$.  Then $\frac{\alpha-1}{\beta} < \frac{-2\beta}{\alpha^2+\beta^2-1}$.  If $\alpha =-1$ then $\Re(\gamma_0)= \frac{-2}{\beta}=\Re(\ell_0)$, a contradiction.  If instead $\alpha >-1$ then 
%\begin{equation}
%\begin{aligned}
%&\frac{\alpha-1}{\beta}<\frac{-2 \beta}{\alpha^2 +\beta^2 -1}\\
%&\Rightarrow (\alpha-1)^2(\alpha+1)+\beta^2(\alpha-1)<-2\beta^2\\
%&\Rightarrow (\alpha-1)^2(\alpha+1)<\beta^2(-(1+\alpha))\\
%&\Rightarrow -(\alpha-1)^2>\beta^2\\
%\end{aligned}
%\end{equation}
%which is a contradiction since $\beta >0$.  Thus if $\Re(\gamma_0)<\Re(\ell_0)$ then $\alpha <-1$.  
\end{proof}

\begin{cor}\label{Re(gamma_0)=Re(ell_0)}
$\Re(\gamma_0)=\Re(\ell_0)$ if and only if $\alpha =-1$.
\end{cor}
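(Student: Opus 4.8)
The plan is to re-run the chain of equivalences from the proof of Proposition~\ref{Real part of gamma_0} verbatim, but with every strict inequality replaced by an equality. First I would record, exactly as there, that $\Re(\gamma_0) = \frac{\alpha-1}{\beta}$ and $\Re(\ell_0) = \frac{-2\beta}{\alpha^2+\beta^2-1}$, and that both denominators are strictly positive because the standing assumptions give $\beta > 0$ and $|c|^2 = \alpha^2+\beta^2 > 1$. Consequently, clearing denominators preserves equality, and $\Re(\gamma_0) = \Re(\ell_0)$ is equivalent, in turn, to $(\alpha-1)(\alpha^2+\beta^2-1) = -2\beta^2$, then to $(\alpha-1)(\alpha^2-1) + \beta^2(\alpha+1) = 0$, and finally to $\bigl((\alpha-1)^2 + \beta^2\bigr)(\alpha+1) = 0$.

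Then I would close the argument by observing that the first factor is strictly positive: since $\beta > 0$ we have $(\alpha-1)^2 + \beta^2 \geq \beta^2 > 0$. Hence the product vanishes if and only if $\alpha + 1 = 0$, i.e. $\alpha = -1$, which is the claim.

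An equally short alternative is to deduce the corollary directly from Proposition~\ref{Real part of gamma_0} together with its mirror-image statement. Proposition~\ref{Real part of gamma_0} already gives $\Re(\gamma_0) < \Re(\ell_0) \iff \alpha < -1$; the identical chain of equivalences with the inequality reversed gives $\Re(\gamma_0) > \Re(\ell_0) \iff \alpha > -1$. Since exactly one of $<$, $=$, $>$ holds for the pair $(\Re(\gamma_0),\Re(\ell_0))$ and exactly one of $\alpha < -1$, $\alpha = -1$, $\alpha > -1$ holds, trichotomy forces $\Re(\gamma_0) = \Re(\ell_0) \iff \alpha = -1$.

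I do not anticipate any genuine obstacle here; the only point requiring the slightest care is making sure that multiplying through by $\beta$ and by $\alpha^2+\beta^2-1$ is legitimate (both are positive by the standing assumptions) and that the quadratic factor $(\alpha-1)^2+\beta^2$ cannot itself vanish, which is again exactly where $\beta>0$ is used.
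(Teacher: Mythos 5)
Your argument is correct and matches the paper's intent: the corollary is stated there without proof precisely because it follows from the same chain of equivalences used in Proposition~\ref{Real part of gamma_0}, with equality in place of strict inequality (or, equivalently, by the trichotomy argument you give), the key point being that $(\alpha-1)^2+\beta^2>0$ since $\beta>0$. Nothing further is needed.
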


\begin{lem}\label{If L_0 is a subset of K then alpha geq -1}
If $L_0 \subset K$ then $\alpha \geq -1$.
\end{lem}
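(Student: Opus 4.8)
The plan is to prove the contrapositive statement: if $\alpha < -1$, then $L_0 \not\subset K$. The whole argument is a short chain of already-established facts, the key idea being that the image under $f$ of the midpoint of $L_0$ lands outside $P$, hence outside $K$.

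First I would apply Proposition~\ref{Real part of gamma_0}, which states that $\Re(\gamma_0) < \Re(\ell_0)$ exactly when $\alpha < -1$; so the hypothesis $\alpha<-1$ gives $\Re(\gamma_0) < \Re(\ell_0)$. Lemma~\ref{Re(gamma_0)<Re(ell_0)} then tells us that $m \notin P$, and since $K \subset P$ by Theorem~\ref{K subset P}, we conclude $m \notin K$.

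Next, recall from Definition~\ref{symbol:z'} that $m = f(m_0)$, where $m_0 = \frac{\ell_0 + \ell_1}{2}$ is the midpoint of $L_0 = [\ell_0, \ell_1]$; in particular $m_0 \in L_0$. Because $K$ is completely invariant (Lemma~\ref{K is completely invariant}), if $m_0$ belonged to $K$ then $f(m_0) = m$ would too, contradicting $m \notin K$. Hence $m_0 \notin K$, so $L_0$ fails to be contained in $K$. Taking the contrapositive gives the lemma.

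I do not expect any genuine obstacle here; the only points worth verifying carefully are that the symbol $m$ appearing in Lemma~\ref{Re(gamma_0)<Re(ell_0)} is precisely $f(m_0)$ with $m_0 \in L_0$, and that the boundary case $\alpha = -1$ (where $\Re(\gamma_0) = \Re(\ell_0)$ by Corollary~\ref{Re(gamma_0)=Re(ell_0)}) is harmless, since it is already permitted by the conclusion $\alpha \geq -1$ and plays no role in the contradiction above.
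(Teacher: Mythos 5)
Your proposal is correct and follows essentially the same route as the paper: assume $\alpha<-1$, use Proposition~\ref{Real part of gamma_0} and Lemma~\ref{Re(gamma_0)<Re(ell_0)} to get $m\notin P$, hence $m\notin K$ by Theorem~\ref{K subset P}, and contradict this with $m=f(m_0)\in f(L_0)\subset K$ via forward invariance of $K$. The paper phrases the last step simply as $m\in f(L_0)\subset K$, which is the same use of invariance you make explicitly through Lemma~\ref{K is completely invariant}.
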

\begin{proof}
We will show that if $\alpha < -1$ then $L_0 \not \subset K$.  Now assume that $\alpha < -1$.  By Proposition~\ref{Real part of gamma_0} $\Re(\gamma_0)<\Re(\ell_0)$.  By Lemma~\ref{Re(gamma_0)<Re(ell_0)} the point $m \notin P$.  Thus by Theorem~\ref{K subset P} we have that $m \notin K$.  This is a contradiction since $m \in f(L_0) \subset K$.   Thus $\alpha \geq -1$.
\end{proof}

\begin{lem}\label{f(L_{-1}) in L_0}
$f(L_{-1}\cap P \mathbb{H}^-) \subset L_0$.
\end{lem}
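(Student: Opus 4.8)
The plan is to deduce the statement purely from the symmetry of the $\ell_k$ (hence of the $L_j$) about the $\PFL$, together with the fact that once a point has been reflected into $P\mathbb{H}^+$ the map $f$ is just multiplication by $c$.

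First I would record the elementary identity $cL_1=L_0$. By Definition~\ref{symbol:ell} and the convention $L_j=[\ell_j,\ell_{j+1}]$, we have $L_1=[\ell_1,\ell_2]$ and $L_0=[\ell_0,\ell_1]$ with $\ell_1=\ell_0/c$ and $\ell_2=\ell_0/c^2$. Since $z\mapsto cz$ is affine and sends $\ell_1\mapsto\ell_0$ and $\ell_2\mapsto\ell_1$, it carries the segment $L_1$ onto the segment $L_0$, i.e. $cL_1=L_0$.

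Next I would invoke Corollary~\ref{P is symmetric with respect to the PFL}, which gives that $L_1$ and $L_{-1}$ are symmetric about the $\PFL$, that is, $\Reflect(L_{-1},\PFL)=L_1$. Because the $\PFL$ is the common boundary of the two pre-half-planes (Definition~\ref{Pre-half planes}), reflection across it is an involution interchanging $P\mathbb{H}^+$ and $P\mathbb{H}^-$; hence it carries $L_{-1}\cap P\mathbb{H}^-$ onto $L_1\cap P\mathbb{H}^+$.

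Finally I would combine the two observations. Given $z\in L_{-1}\cap P\mathbb{H}^-$, put $w=\Reflect(z,\PFL)$, so that $w\in L_1\cap P\mathbb{H}^+$ by the previous step. Because the $\PFL$ is an axis of symmetry for $f$ (the remark following Definition~\ref{Pre-half planes}) we have $f(z)=f(w)$, and because $w\in P\mathbb{H}^+$ we have $f(w)=cw$. Therefore $f(z)=cw\in cL_1=L_0$, which is exactly the claim. I do not expect any real obstacle here: the whole content lies in unwinding the definitions of $\ell_{\pm1}$, $L_{\pm1}$, and the pre-half-planes and in keeping the two reflections straight. As a sanity check, $\ell_0\in L_{-1}\cap P\mathbb{H}^-$ (it is interior to $P\mathbb{H}^-$ by Lemma~\ref{Im(cz0)}) and indeed $f(\ell_0)=\ell_0\in L_0$, so the inclusion is neither vacuous nor degenerate.
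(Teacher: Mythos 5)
Your proposal is correct and is essentially the paper's own argument, just written out in full: the paper's one-line proof $f(L_{-1}\cap P\mathbb{H}^-)=f(L_1\cap P\mathbb{H}^+)\subset g(L_1)=L_0$ uses exactly your two ingredients, namely the symmetry of $L_{\pm 1}$ about the $\PFL$ together with $f(z)=f(\Reflect(z,\PFL))$, and the fact that $f=g$ (multiplication by $c$) on $P\mathbb{H}^+$ sends $L_1$ onto $L_0$. No gaps; your unwinding of the reflections and the sanity check with $\ell_0$ are consistent with Lemma~\ref{Symmetry of ell_k} and Lemma~\ref{Im(cz0)}.
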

\begin{proof} 
$f(L_{-1}\cap P \mathbb{H}^-)=f(L_1\cap P \mathbb{H}^+) \subset g(L_1) = L_0$.
\end{proof}

It is important to notice that there are times when the full line segment $[\ell_{-1},\ell_0] = L_{-1}$ extends past a side of $P$.  One such case is illustrated in Figure~\ref{fig:triangular_P}.

\begin{figure}[htbp]
	\centering
		\includegraphics[width=0.50\textwidth]{./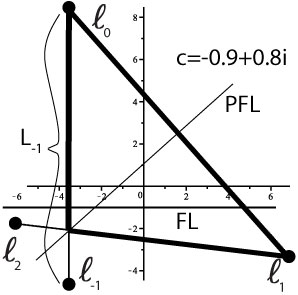}
	\caption{An example where $L_{-1}$ extends past a side of $P$.}
	\label{fig:triangular_P}
\end{figure}

\begin{prop}\label{Im(ell_{-1})}
$\Im(\ell_{-1})>-1$ if and only if $|c| > \sqrt{2}$.
\end{prop}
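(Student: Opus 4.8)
The plan is to compute $\ell_{-1}$ in closed form from Definition~\ref{symbol:ell}, read off $\Im(\ell_{-1})$, and reduce the inequality $\Im(\ell_{-1})>-1$ to a polynomial inequality in $|c|$ and $\alpha$ that factors transparently.

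First I would use $\ell_{-1}=\Reflect(\ell_2,\PFL)$ (the $k=-1$ case of Definition~\ref{symbol:ell}, where $-k+1=2$), the identity $\Reflect(z,\PFL)=\frac{1}{c}(\overline{cz}-2i)$, and $\ell_2=\ell_0/c^2$, to obtain
\begin{equation}
\ell_{-1}=\frac{1}{c}\left(\overline{c\ell_2}-2i\right)=\frac{1}{c}\left(\frac{\overline{\ell_0}}{\overline c}-2i\right).
\end{equation}
Since $\Im(w/c)=\Im(\overline c\,w)/|c|^2$ for any $w\in\mathbb{C}$, taking $w=\overline{\ell_0}/\overline c-2i$ gives $\overline c\,w=\overline{\ell_0}-2i\overline c$, whence
\begin{equation}
\Im(\ell_{-1})=\frac{1}{|c|^2}\left(-\Im(\ell_0)-2\alpha\right).
\end{equation}
Plugging in $\Im(\ell_0)=\dfrac{2(1-\alpha)}{|c|^2-1}$ (the imaginary part of the explicit $\ell_0$ of Definition~\ref{symbol:ell}) and simplifying yields
\begin{equation}
\Im(\ell_{-1})=\frac{2\bigl(2\alpha-1-\alpha|c|^2\bigr)}{|c|^2\bigl(|c|^2-1\bigr)}.
\end{equation}
(As a consistency check, the same computation applied to the real part gives $\Re(\ell_{-1})=\dfrac{-2\beta}{|c|^2-1}=\Re(\ell_0)$, in agreement with Corollary~\ref{L sub minus 1 is vertical}.)

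Next, because $|c|>1$ the denominator $|c|^2(|c|^2-1)$ is positive, so clearing it shows that $\Im(\ell_{-1})>-1$ is equivalent to
\begin{equation}
|c|^4-(1+2\alpha)|c|^2+4\alpha-2>0.
\end{equation}
Treating the left side as a quadratic in $|c|^2$, its discriminant is $(2\alpha-3)^2$ and its roots are $|c|^2=2$ and $|c|^2=2\alpha-1$, so it factors as
\begin{equation}
\bigl(|c|^2-2\bigr)\bigl(|c|^2-2\alpha+1\bigr)=\bigl(|c|^2-2\bigr)\bigl((\alpha-1)^2+\beta^2\bigr).
\end{equation}
Under the standing assumption $\beta>0$ (indeed it already suffices that $c\neq 1$, which holds since $|c|>1$), the factor $(\alpha-1)^2+\beta^2$ is strictly positive, so the product is positive if and only if $|c|^2-2>0$, i.e.\ $|c|>\sqrt 2$. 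That establishes the equivalence.

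The only genuine work is the middle computation, and the identity $\Im(w/c)=\Im(\overline c\,w)/|c|^2$ keeps it short; the substantive point is recognizing the factorization — in particular that $|c|^2=2$ is a root of the quadratic and that the cofactor is exactly the manifestly nonnegative quantity $(\alpha-1)^2+\beta^2$. I expect no real obstacle beyond this bookkeeping.
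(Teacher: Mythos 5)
Your proposal is correct and follows essentially the same route as the paper: compute $\Im(\ell_{-1})$ explicitly from $\ell_{-1}=\frac{1}{c}\bigl(\overline{\ell_0}/\overline{c}-2i\bigr)$, reduce $\Im(\ell_{-1})>-1$ to a polynomial inequality over the positive denominator $|c|^2(|c|^2-1)$, and factor it as $\bigl(|c|^2-2\bigr)\bigl((\alpha-1)^2+\beta^2\bigr)$, exactly the factorization the paper uses. The only cosmetic difference is that you extract the imaginary part via $\Im(w/c)=\Im(\overline{c}\,w)/|c|^2$ and locate the roots of the quadratic in $|c|^2$, whereas the paper simplifies $\ell_{-1}$ as a full complex number first; the substance is identical.
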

\begin{proof}
We first note that $\ell_0=\frac{2i(1-\overline{c})}{|c|^2-1}$.  Now
\begin{equation}
\begin{aligned}
\ell_{-1}&=\frac{1}{c}\left(\frac{\overline{\ell_0}}{\overline{c}}-2i\right)\\
&=\frac{1}{c}\left(\frac{-2i(1-c)}{\overline{c}(|c|^2-1)}-2i \right)\\
&=\frac{-2i(1-c)-2i\overline{c}(|c|^2-1)}{|c|^2(|c|^2-1)}\\
&=\frac{-2i(1-c-\overline{c}+\overline{c}|c|^2)}{|c|^2(|c|^2-1)}.\\
\end{aligned}
\end{equation}

So
\begin{equation}
\begin{aligned}
\Im(\ell_{-1})&=\frac{-2(1-2\alpha+\alpha|c|^2}{|c|^2(|c|^2-1)}\\ &=\frac{-2+4\alpha-2\alpha|c|^2}{|c|^2(|c|^2-1)}\\
&=\frac{-2+4\alpha-2\alpha\delta}{\delta(\delta-1)}.
\end{aligned}
\end{equation}
where $\delta=|c|^2=\alpha^2+\beta^2$.  Now the claim that $\Im(\ell_{-1})>-1$ is equivalent to the claim that $\Im(\ell_{-1})+1>0$.  We have:
\begin{equation}
\begin{aligned}
\Im(\ell_{-1})+1 &=\frac{-2+4\alpha-2\alpha\delta}{\delta(\delta-1)}+1\\
&=\frac{-2+4\alpha-2\alpha\delta+\delta^2-\delta}{\delta(\delta-1)}\\
&=\frac{(\delta-2)(\delta+1-2\alpha)}{\delta(\delta-1)}\\
&=(\delta-2)\frac{(\alpha-1)^2+\beta^2}{\delta(\delta-1)}
\end{aligned}
\end{equation}
which is greater than $0$ exactly when $\delta=|c|^2>2$.
\end{proof}

\begin{cor}\label{Im(ell_{-1})=-1}
If $|c|=\sqrt{2}$ then $\Im(\ell_{-1})=-1$.
\end{cor}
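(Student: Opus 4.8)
The plan is to read this off directly from the computation already carried out in the proof of Proposition~\ref{Im(ell_{-1})}. There, writing $\delta = |c|^2 = \alpha^2 + \beta^2$, we obtained the closed form
\begin{equation}
\Im(\ell_{-1}) + 1 = (\delta - 2)\,\frac{(\alpha-1)^2 + \beta^2}{\delta(\delta - 1)}.
\end{equation}
All that remains is to substitute the hypothesis $|c| = \sqrt{2}$, i.e. $\delta = 2$. Then the factor $(\delta - 2)$ vanishes, so the right-hand side is $0$, and hence $\Im(\ell_{-1}) = -1$.

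One small point worth spelling out in the write-up: before concluding, note that the denominator $\delta(\delta-1) = 2\cdot 1 = 2$ is nonzero (and indeed positive, consistent with the standing assumption $|c| > 1$), so the displayed expression is genuinely defined at $\delta = 2$ and the argument is not vacuous. No further case analysis is needed, since the formula holds for all admissible $c$ and we are simply evaluating it at a point of the parameter curve $|c| = \sqrt{2}$.

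If one preferred a self-contained argument not invoking the proposition's derivation, the alternative is to recompute $\ell_{-1}$ from Definition~\ref{symbol:ell} (as in the proposition's proof: $\ell_{-1} = \tfrac{1}{c}\bigl(\tfrac{\overline{\ell_0}}{\overline{c}} - 2i\bigr)$ with $\ell_0 = \tfrac{2i(1-\overline{c})}{|c|^2 - 1}$), extract $\Im(\ell_{-1})$, and then simplify using $\alpha^2 + \beta^2 = 2$; this is a routine algebraic check. I expect no real obstacle here — the corollary is an immediate specialization of the preceding proposition, and the only thing to be careful about is confirming the well-definedness of the quotient at $\delta = 2$.
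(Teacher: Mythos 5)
Your proposal is correct and matches the paper's intent: the corollary is stated without proof precisely because it follows by substituting $\delta=|c|^2=2$ into the identity $\Im(\ell_{-1})+1=(\delta-2)\frac{(\alpha-1)^2+\beta^2}{\delta(\delta-1)}$ established in the proof of Proposition~\ref{Im(ell_{-1})}, which is exactly what you do. Your remark on the nonvanishing denominator is a harmless extra check.
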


\begin{lem}\label{If L_0 in K then the mod(c)<sqrt(2)}
If $L_0 \subset K$ then $|c| \leq \sqrt{2}$.
\end{lem}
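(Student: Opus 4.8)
The plan is to argue by contradiction, tracking the forward orbit of $L_0$ and using that it must stay inside $P$. Assume $L_0\subset K$. By Lemma~\ref{If L_0 is a subset of K then alpha geq -1} this forces $\alpha\geq -1$, and by Proposition~\ref{Im(ell_{-1})} the desired conclusion $|c|\leq\sqrt 2$ is equivalent to $\Im(\ell_{-1})\leq -1$; so suppose instead $|c|>\sqrt 2$, i.e.\ $|c|^2>2$. Since $K$ is completely invariant (Lemma~\ref{K is completely invariant}) we have $f(K)=K$, hence $f^n(L_0)\subset K\subset P$ for every $n$, the last inclusion by Theorem~\ref{K subset P}. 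I will locate a point of $f^3(L_0)$ that cannot lie in $P$.

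First I would pin down the first images near the fixed point $\ell_0$. The midpoint $m_0$ of $L_0$ lies on $\PFL$ (the perpendicular bisector of $L_0$), so $f(m_0)=c m_0=\ell_0\frac{c+1}{2}=\Re(\ell_0)-i=:m\in\FL$, and splitting $L_0$ at $m_0$ gives $f(L_0)=[\ell_0,m]$, the vertical segment dropping straight down from $\ell_0$ to $\FL$ (this is the segment of Corollary~\ref{L sub minus 1 is vertical} and Lemma~\ref{S_0 subset of P^c}; its length $1+\Im(\ell_0)$ is positive by Lemma~\ref{Im(cz0)}). Because $\alpha\geq -1$, the proof of Proposition~\ref{LcapFL} shows $f(L_0)\subset P\mathbb H^-$, and since $h$ fixes $\ell_0$ we get that $f^2(L_0)=h([\ell_0,m])=[\ell_0,h(m)]$ is again a single segment issuing from $\ell_0$, of length $|c|\,(1+\Im(\ell_0))$ and direction $\tfrac{\pi}{2}-\theta$ (from $h(z)-\ell_0=\overline c\,\overline{(z-\ell_0)}$). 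Now two cases. If $f^2(L_0)\subset P\mathbb H^-$, then $f^3(L_0)=h([\ell_0,h(m)])=[\ell_0,h^2(m)]$ points straight down from $\ell_0$ (reflection in $h$ sends direction $\tfrac{\pi}{2}-\theta$ to $-\tfrac{\pi}{2}$) and has length $|c|^2(1+\Im(\ell_0))>1+\Im(\ell_0)$, so $f^3(L_0)$ reaches strictly below $\FL$, contradicting $f^3(L_0)\subset K\subset\mathbb H^+$. Otherwise $f^2(L_0)=[\ell_0,h(m)]$ crosses $\PFL$ at a point $r$, and then $f^3(L_0)\supset g([r,h(m)])=c\,[r,h(m)]$, so $c\cdot h(m)\in f^3(L_0)\subset K$.

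To finish I would show $c\cdot h(m)\notin P$, contradicting $K\subset P$. Writing $c\cdot h(m)=|c|^2\overline m-2ci$ and using $\Re(\ell_0)=\tfrac{-2\beta}{|c|^2-1}$, a short computation gives $\Re(c\cdot h(m))=\Re(\ell_0)$ and $\Im(c\cdot h(m))=|c|^2-2\alpha$; moreover $\Im(c\cdot h(m))-\Im(\ell_0)$ has the same sign as $(|c|^2-2)(|c|^2-2\alpha+1)$, which is positive since $|c|^2>2$ and $|c|^2-2\alpha+1>0$ (Lemma~\ref{Im(cz0)}). Thus $c\cdot h(m)$ lies on the vertical line $\{\Re z=\Re(\ell_0)\}$ strictly above $\ell_0$. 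But $P\subset\{\Re z\geq\Re(\ell_0)\}$ by Proposition~\ref{P is right of ell_0}, so any point of $P$ on that line lies on $\Bd(P)$, and $\ell_0$ should be the highest such point: at $\ell_0$ the boundary of $P$ consists of $L_0$ (which runs into $\{\Re z\geq\Re(\ell_0)\}$ with $\Re(\ell_1)\geq\Re(\ell_0)$) and the vertical edge issuing straight downward from $\ell_0$ (Corollary~\ref{L sub minus 1 is vertical}), so along $\{\Re z=\Re(\ell_0)\}$ the set $P$ never rises above height $\Im(\ell_0)$. Hence $c\cdot h(m)\notin P$, the contradiction, and therefore $|c|\leq\sqrt 2$.

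The main obstacle is exactly this last step: turning ``near $\ell_0$ the boundary of $P$ turns downward'' into a clean proof that $\ell_0$ is genuinely the topmost point of $P$ on the line $\{\Re z=\Re(\ell_0)\}$. This calls for a direct look at the defining sets $S_k$ of $P$ in a neighborhood of $\ell_0$ and at the direction of $L_0$ there, rather than only the facts already recorded; it ought to be routine but it is where the real work lies. A secondary point worth checking, to see that the statement is sharp, is that when $|c|\leq\sqrt 2$ the first branch of the case split cannot occur, i.e.\ that $f^2(L_0)$ then meets $\PFL$, so that the argument produces no spurious contradiction.
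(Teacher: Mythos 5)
Your computations are correct as far as they go: $m=\Re(\ell_0)-i$, $f(L_0)=[\ell_0,m]$ vertical, $f^2(L_0)=[\ell_0,h(m)]$ with direction $\tfrac{\pi}{2}-\theta$, and in your second case $c\,h(m)\in f^3(L_0)\subset K$ with $\Re(c\,h(m))=\Re(\ell_0)$ and $\Im(c\,h(m))-\Im(\ell_0)$ of the same sign as $(|c|^2-2)(|c|^2-2\alpha+1)$ all check out. But the decisive step --- that $P$ meets the line $\{\Re z=\Re(\ell_0)\}$ only at heights $\leq\Im(\ell_0)$, so that $c\,h(m)\notin P$ --- is precisely what you do not prove, and nothing you cite gives it: Proposition~\ref{P is right of ell_0} only says $P\subset\{\Re z\geq\Re(\ell_0)\}$, and Corollary~\ref{L sub minus 1 is vertical} only identifies the one vertical edge below $\ell_0$; neither rules out another piece of $\Bd(P)$ (a segment $L_{-k}$ with $k\geq 2$, the edges through $\zeta$, or, when $\zeta$ does not exist, the inner boundary $\Gamma\cup\Gamma'$) touching that line above $\ell_0$. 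Making "the boundary turns downward at $\ell_0$" into a proof requires showing the upward ray from $\ell_0$, at least up to height $|c|^2-2\alpha$, lies in $\Int(S\cup S')$, i.e.\ a genuine analysis of the sets $S_k,S_k'$; you flag this yourself as "where the real work lies," and since the whole contradiction rests on it, the argument is incomplete. A smaller slip: in your first case you invoke $f^3(L_0)\subset K\subset\mathbb{H}^+$, but $K\subset\mathbb{H}^+$ is false in general (for real $1<c<2$, $K=[-2i/c,0]$ dips below $\FL$); what you actually need is only that $f^3(L_0)=h(f^2(L_0))\subset\mathbb{H}^+$ because $h(P\mathbb{H}^-)\subset\mathbb{H}^+$ --- and that observation shows your Case 1 can in fact never occur for any $|c|>1$, so all the content sits in Case 2.

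The paper closes the argument much more cheaply, and you could too: since $L_0\subset K\subset P$ (Theorem~\ref{K subset P}) and $f(L_0)=[m,\ell_0]$ is vertical, one argues $f(L_0)\subset L_{-1}\cap P\mathbb{H}^-$ (otherwise part of $f(L_0)$ leaves $P$), and then Lemma~\ref{f(L_{-1}) in L_0} gives $f^2(L_0)\subset f(L_{-1}\cap P\mathbb{H}^-)\subset L_0$. Comparing lengths, $f^2(L_0)$ is a segment of length $\tfrac{|c|^2\lambda}{2}$ where $\lambda$ is the length of $L_0$, so $\tfrac{|c|^2\lambda}{2}\leq\lambda$, i.e.\ $|c|\leq\sqrt2$. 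In other words, a containment of $f^2(L_0)$ back inside $L_0$ plus a one-line length count replaces your "topmost point of $P$" claim; if you want to keep your scheme, proving that containment is the most economical way to fill the hole, rather than chasing the point $c\,h(m)$ out of $P$.
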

\begin{proof}
Since $L_0 \subset K$ then $f(L_0) =[m,\ell_0]\subset (L_{-1}\cap P \mathbb{H}^-)$.  (Otherwise part of $f(L_0)$ would be outside of $P$ and by Theorem~\ref{K subset P} $L_0 \not \subset K$.)  By Lemma~\ref{f(L_{-1}) in L_0} we have that $f(L_{-1}\cap P \mathbb{H}^-) \subset L_0$.  Thus $f^2(L_0) \subset L_0$.  Let $\lambda=|\ell_0-\ell_1|$ be the length of $L_0$.  Then the length of $f(L_0)$ is equal to $\frac{|c| \lambda}{2}$.  Thus, the length of the line segment $f^2(L_0)=\frac{|c|^2\lambda}{2}$.  Since $f^2(L_0) \subset L_0$ then $\frac{|c|^2\lambda}{2}<\lambda$, which implies that $|c| \leq \sqrt{2}$.
\end{proof}

\begin{thm}\label{L_0 is a subset of K}
$L_0 \subset K$ if and only if $\alpha \geq -1$ and $|c| \leq \sqrt{2}.$
\end{thm}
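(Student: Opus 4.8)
The ``only if'' implication is already in hand: combining Lemma~\ref{If L_0 is a subset of K then alpha geq -1} and Lemma~\ref{If L_0 in K then the mod(c)<sqrt(2)}, if $L_0 \subset K$ then $\alpha \geq -1$ and $|c| \leq \sqrt{2}$. So the work is the converse. Assume $\alpha \geq -1$ and $|c| \leq \sqrt{2}$; the plan is to show that the forward orbit of $L_0$ is contained in the compact set $L_0 \cup f(L_0)$, which immediately gives $L_0 \subset K$.

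First I would pin down $f(L_0)$. By Lemma~\ref{PFL is perp to L0} the $\PFL$ bisects $L_0$ perpendicularly at $m_0$, so $L_0 = [\ell_0,m_0] \cup [m_0,\ell_1]$. Here $\ell_0 \in P\mathbb{H}^-$ since $\Im(c\ell_0) < -1$ (Lemma~\ref{Im(cz0)}), while $\ell_1 \in P\mathbb{H}^+$ since $c\ell_1 = \ell_0$ and $\Im(\ell_0) > -1$ (the latter because $\ell_0 = f(\ell_0) \in \mathbb{H}^+$ and $\Im(\ell_0) = -1$ would force $c=1$, contradicting $|c|>1$). Using that $f$ agrees with $g$ on $P\mathbb{H}^+$ and with $h$ on $P\mathbb{H}^-$, that $f(\ell_0)=\ell_0$ (Lemma~\ref{Fixed point}), and that $f(m_0) = m = \Re(\ell_0) - i \in \FL$ (a one-line computation, since $m_0 \in \PFL$ so both branches send it to $cm_0$), one checks $f([\ell_0,m_0]) = [\ell_0,m]$ and $f([m_0,\ell_1]) = [m,\ell_0]$, hence $f(L_0) = [m,\ell_0]$ --- a vertical segment lying on the same vertical line as $L_{-1} = [\ell_{-1},\ell_0]$ (which is vertical by Corollary~\ref{L sub minus 1 is vertical}).

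Next I would show $f(L_0) = [m,\ell_0] \subseteq L_{-1} \cap P\mathbb{H}^-$. Since $[m,\ell_0]$ and $[\ell_{-1},\ell_0]$ share the vertical line $\Re(z) = \Re(\ell_0)$ and the endpoint $\ell_0$, and since $\Im(m) = -1 < \Im(\ell_0)$ while $\Im(\ell_{-1}) \leq -1$ by Proposition~\ref{Im(ell_{-1})} together with Corollary~\ref{Im(ell_{-1})=-1} --- this is exactly where the hypothesis $|c| \leq \sqrt{2}$ is used --- we get $[m,\ell_0] \subseteq [\ell_{-1},\ell_0]$. For membership in $P\mathbb{H}^-$: $\ell_0 \in P\mathbb{H}^-$ already, and a direct computation of $\Im(cm)$ reduces the inequality $\Im(cm) \leq -1$ to $(1+\alpha)\bigl((1-\alpha)^2 + \beta^2\bigr) \geq 0$, which holds precisely because $\alpha \geq -1$ --- this is where the hypothesis $\alpha \geq -1$ is used --- so $m \in P\mathbb{H}^-$ and hence, by convexity of the closed half-plane, $[m,\ell_0] \subseteq P\mathbb{H}^-$.

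With $f(L_0) \subseteq L_{-1} \cap P\mathbb{H}^-$ established, Lemma~\ref{f(L_{-1}) in L_0} gives $f^2(L_0) = f(f(L_0)) \subseteq f(L_{-1} \cap P\mathbb{H}^-) \subseteq L_0$. An easy induction then yields $f^{2k}(L_0) \subseteq L_0$ and $f^{2k+1}(L_0) \subseteq f(L_0)$ for every $k \geq 0$, so the whole forward orbit of $L_0$ lies in the compact set $L_0 \cup f(L_0)$; thus every point of $L_0$ has bounded orbit, i.e. $L_0 \subseteq K$, completing the converse. The main obstacle is the sign bookkeeping in the previous paragraph --- verifying $\Im(\ell_{-1}) \leq -1 \iff |c| \leq \sqrt{2}$ and $\Im(cm) \leq -1 \iff \alpha \geq -1$ --- together with keeping straight which branch of $f$ acts on each subsegment and handling the degenerate boundary cases $|c| = \sqrt{2}$ (where $m = \ell_{-1}$) and $\alpha = -1$ (where $m = \gamma_0$); once these are settled the dynamical conclusion is routine.
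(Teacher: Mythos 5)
Your proposal is correct and follows essentially the same route as the paper: the forward direction via Lemmas~\ref{If L_0 is a subset of K then alpha geq -1} and~\ref{If L_0 in K then the mod(c)<sqrt(2)}, and the converse by showing $f(L_0)=[m,\ell_0]\subset L_{-1}\cap P\mathbb{H}^-$ (using $|c|\leq\sqrt{2}$ via Proposition~\ref{Im(ell_{-1})}) and then $f^2(L_0)\subset L_0$ via Lemma~\ref{f(L_{-1}) in L_0}. The only departure is cosmetic: where the paper invokes Proposition~\ref{Real part of gamma_0} to see that $\PFL$ cannot cross $L_{-1}$ above $\FL$, you verify $\Im(cm)\leq -1$ directly (your reduction to $(1+\alpha)\bigl((1-\alpha)^2+\beta^2\bigr)\geq 0$ checks out) and conclude by convexity of the closed half-plane.
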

\begin{proof}

Assume that $L_0 \subset K$.  Then by Lemma~\ref{If L_0 is a subset of K then alpha geq -1} and Lemma~\ref{If L_0 in K then the mod(c)<sqrt(2)} we have that $\alpha \geq -1$ and that $|c| \leq \sqrt{2}$.

Now assume that $\alpha \geq -1$ and $|c| \leq \sqrt{2}.$  We will show that $L_0 \subset K$ by proving that $f^2(L_0) \subset L_0$.  It is always true that $f(L_0)=[m,\ell_0]$.  Also, since $|c|\leq \sqrt{2}$, then Proposition~\ref{Im(ell_{-1})} gives us that $\Im(\ell_{-1})\leq -1$ and so $m \in L_{-1}=[\ell_{-1},\ell_0]$.  Since $\alpha \geq -1$ then by Proposition~\ref{Real part of gamma_0} $\Re(\gamma_0)\geq \Re(\ell_0)$.  This means that $\PFL$ cannot cross $L_{-1}$ above $\FL$ and so $[m,\ell_0]\subset (L_{-1}\cap P\mathbb{H}^-)$.  Thus $f^2(L_0)=f([m,\ell_0])\subset f(L_{-1}\cap P\mathbb{H}^-)$ which by Lemma~\ref{f(L_{-1}) in L_0} is a subset of $L_0$.  Thus, $f^2(L_0) \subset L_0$ and so $L_0 \subset K$.
\end{proof}

%\begin{Def}
%We define $\Omega$ to be the immediate basin of attraction of infinity.
%\end{Def}
%
%
%\begin{prop}
%$\Omega^c$ is a polygon if and only if there exists an $\ell_k$ with $k>0$ and $\Im(\ell_k)\leq -1$.
%\end{prop}
%\begin{proof}
%If $\Omega^c$ is a polygon, then $L \bigcap \PFL \neq \emptyset$.  (Otherwise $\Omega^c$ would have infinitely many sides and thus not a polygon.)  Now $L= \underset{j=1}{\overset{\infty}{\bigcup}}L_j$, and so for some $j>0$ we have $L_j \bigcap \PFL\neq \emptyset$.  Thus $L_{j-1} \bigcap FL \neq \emptyset$.  One of the endpoints of $L_j$ is the desired $\ell_k$.
%
%Now suppose that there exists $\ell_k$ with $k>0$ and $\Im(\ell_k)\leq -1$.  Then $L_k \bigcap FL \neq \emptyset$ and so $L_{k-1} \bigcap \PFL \neq \emptyset$.
%\end{proof}

\begin{lem}\label{Prefolding lines are dense in K}
Let $z \in K$ and let $U$ be an open neighborhood of $z$.  Then there exists an $n>0$ such that $f^n(U) \cap \FL \neq \emptyset$.
\end{lem}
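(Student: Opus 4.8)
\emph{Proposal.} The plan is to argue by contradiction, after first reducing to the case where $U$ is a round disk centered at $z$. Since an arbitrary open neighborhood $U$ of $z$ contains some disk $B(z,\rho)$, and $f^n(B(z,\rho)) \subseteq f^n(U)$, it suffices to prove the statement when $U = B(z,\rho)$. So assume $U = B(z,\rho)$, and suppose toward a contradiction that $f^n(U) \cap \FL = \emptyset$ for every $n > 0$.

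The first real step is to convert this into a statement about $\PFL$. Because $\PFL = f^{-1}(\FL)$, for every $n \ge 0$ we have $f^n(U) \cap \PFL \ne \emptyset$ if and only if $f^{n+1}(U) \cap \FL \ne \emptyset$; hence the contradiction hypothesis forces $f^n(U) \cap \PFL = \emptyset$ for \emph{all} $n \ge 0$. In particular $U$ misses $\PFL$, and since $U$ is connected it lies entirely in $P\mathbb{H}^+$ or entirely in $P\mathbb{H}^-$; therefore $f|_U$ agrees with one of the two similarities $g(w)=cw$ or $h(w)=\overline{cw}-2i$. Its image $f(U)$ is again connected and again misses $\PFL$, so $f|_{f(U)}$ is again one of $g,h$, and iterating, $f^n|_U$ is a composition of $n$ maps each taken from $\{g,h\}$. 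Each of $g$ and $h$ is a similarity (orientation-preserving for $g$, orientation-reversing for $h$) that multiplies every distance by $|c|$ and sends the center of a disk to the center of its image disk. Consequently $f^n|_U$ is a similarity of ratio $|c|^n$, and $f^n(U) = B\bigl(f^n(z),\,|c|^n\rho\bigr)$ is a genuine round disk centered at $f^n(z)$.

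The final step uses $z \in K$: its forward orbit is bounded, say $|f^n(z)| \le R$ for all $n$. Since $|c| > 1$, choose $n$ with $|c|^n\rho > R+1$. Then $B\bigl(0,\,|c|^n\rho - R\bigr) \subseteq B\bigl(f^n(z),\,|c|^n\rho\bigr) = f^n(U)$, and this disk contains the point $-i \in \FL$, so $f^n(U) \cap \FL \ne \emptyset$, contradicting the hypothesis. This completes the argument.

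I expect the only delicate points to be the two reductions at the beginning: that it is legitimate to replace $U$ by a disk centered at $z$, and the observation that the branch maps $g$ and $h$ are similarities carrying centers of disks to centers of disks. The latter is exactly what keeps $f^n(U)$ a disk \emph{centered at the bounded point} $f^n(z)$ rather than an amorphous expanding region that might keep dodging $\FL$; pairing the unbounded expansion ($|c|^n\rho \to \infty$) with the bounded orbit of a point of $K$ is the crux, and once this is set up the conclusion is immediate.
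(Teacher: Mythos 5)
Your proof is correct and follows essentially the same route as the paper's: shrink $U$ to a disk, observe that if the images never meet $\FL$ then each iterate acts as one of the similarities $g,h$, so the image stays a round disk of radius $|c|^n\rho$ centered at $f^n(z)$, and the exponential growth contradicts the boundedness of the orbit of $z \in K$. Your write-up is in fact more careful than the paper's (which passes silently over the step that avoiding $\FL$ forces the images to avoid $\PFL$, so that the pieces really are similarities), but the underlying argument is the same.
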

\begin{proof}
Let $B(\varepsilon,z)\subset U$ be a closed ball of radius $\varepsilon>0$ centered at $z\in K$. If the images of $U$ do not intersect $\FL$, then $f^n(B(\varepsilon,z))=B(c^n\varepsilon,f^n(z))$. Since $K$ is bounded, this is impossible, since for a sufficiently large $n$ the largest distance from a point of $K$ to $\FL$ is smaller than $c^n\varepsilon$.
\end{proof}

\begin{lem}\label{L-1capPFL}
If $L_{-1}\cap \PFL\neq \emptyset$ then $\{\zeta\}=L_{-1}\cap \PFL.$
\end{lem}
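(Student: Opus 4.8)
The plan is to reduce the statement about $L_{-1}$ to the already-understood situation for $L_1$, exploiting the mirror symmetry across $\PFL$. First I would record that $L_{-1}=L_1'$, i.e. $L_{-1}=\Reflect(L_1,\PFL)$; this is exactly Corollary~\ref{P is symmetric with respect to the PFL} (and also follows directly from Lemma~\ref{Symmetry of ell_k}, since $\Reflect(\ell_1,\PFL)=\ell_0$ and $\Reflect(\ell_2,\PFL)=\ell_{-1}$, so $[\ell_1,\ell_2]$ reflects to $[\ell_0,\ell_{-1}]=L_{-1}$).

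Next, suppose $p\in L_{-1}\cap\PFL$. Because $p\in\PFL$, reflection about $\PFL$ fixes $p$, so $p=p'$. Because $p\in L_{-1}=L_1'$, reflecting gives $p'\in L_1$, hence $p\in L_1$. Thus $p\in L_1\cap\PFL$, and we conclude $L_{-1}\cap\PFL\subseteq L_1\cap\PFL$. In particular the hypothesis $L_{-1}\cap\PFL\neq\emptyset$ forces $L_1\cap\PFL\neq\emptyset$, so $L\cap\PFL\neq\emptyset$ and $\zeta$ is defined. To pin down $\zeta$: by Definition~\ref{symbol:zeta}, $\{\zeta\}=L_n\cap\PFL$ where $n$ is the least \emph{positive} integer with $L_n\cap\PFL\neq\emptyset$; since $L_1\cap\PFL\neq\emptyset$, necessarily $n=1$, so $\{\zeta\}=L_1\cap\PFL$. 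Combining with the inclusion above, $L_{-1}\cap\PFL\subseteq\{\zeta\}$, and since this set is nonempty by hypothesis, it equals $\{\zeta\}$, which is the claim.

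There is essentially no obstacle here; the only points that need a moment's care are that the reduction forces $n=1$ (immediate, as $n$ is a positive integer and $L_1$ already meets $\PFL$) and that $L_1\cap\PFL$ is genuinely a single point rather than a whole segment — but $\ell_0\notin\PFL$ by Lemma~\ref{Im(cz0)} (so $L_1$ is not contained in $\PFL$), and anyway Definition~\ref{symbol:zeta} already builds in that $L_n\cap\PFL$ is a singleton. So the whole content of the lemma is the symmetry reduction just described.
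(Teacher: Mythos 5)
Your argument is correct and is essentially the paper's proof made explicit: the paper likewise deduces the claim from the symmetry of $L'\cup L_0\cup L$ about $\PFL$ (so that a point of $L_{-1}\cap\PFL$, being fixed by the reflection, also lies on $L_1$, forcing $n=1$ in the definition of $\zeta$). Your extra remarks on the singleton issue are fine, since $L_1\subset\PFL$ would force $\ell_0\in L_{-1}=L_1\subset\PFL$, contradicting Lemma~\ref{Im(cz0)}.
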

\begin{proof}
Since $L_{-1} \cap \PFL \neq \emptyset$ then by the symmetry of $L' \cup L_0 \cup L$ about $\PFL$ we have that the outermost boundary of $P$ has 3 sides and $\{\zeta\} = L_{-1} \cap \PFL$.  (See Figure~\ref{fig:Lminus1capPFL})
\end{proof}

\begin{figure}[htbp]
	\centering
		\includegraphics[width=0.40\textwidth]{./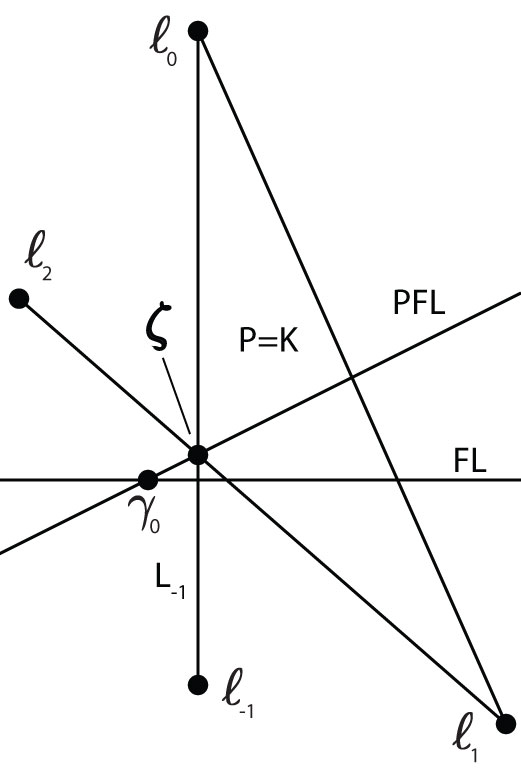}
	\caption[If $L_{-1}\cap \PFL\neq \emptyset$ then $\{\zeta\}=L_{-1}\cap \PFL$]{An example (when $c=-1.06+0.5i$) showing that if $L_{-1}\cap \PFL\neq \emptyset$ then $\{\zeta\}=L_{-1}\cap \PFL$.}
	\label{fig:Lminus1capPFL}
\end{figure}

\begin{lem}\label{numSides}
If $\zeta$ exists, then the outer-most boundary of $P$ has an odd number of sides.
\end{lem}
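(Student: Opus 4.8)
The plan is a direct count organized around the reflection symmetry of the construction in $\PFL$, together with the minimality of $n$. Write $B$ for the outer-most boundary; by definition, when $\zeta\in L_n$ with $n\ge 1$ minimal,
\[
B=[\ell_n,\zeta]\cup[\ell_{-(n-1)},\zeta]\cup\Big(\bigcup_{|k|\le n-1}L_k\Big).
\]
The indexed union $\bigcup_{|k|\le n-1}L_k$ consists of the $2(n-1)+1=2n-1$ segments $L_{-(n-1)},\dots,L_{-1},L_0,L_1,\dots,L_{n-1}$, and the definition adjoins the two further segments $[\ell_n,\zeta]$ and $[\ell_{-(n-1)},\zeta]$; so $B$ is presented as a union of $2n+1$ segments. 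The whole point is then to see that these $2n+1$ segments really are the sides of $B$, i.e. that $B$ is a simple closed polygonal curve with no two consecutive segments collinear, from which $2n+1$ odd is the assertion.

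That $B$ is a simple closed curve I would get from the symmetry relations $\Reflect(L_k,\PFL)=L_{-k}$ and $\Reflect(\ell_n,\PFL)=\ell_{-(n-1)}$ of Corollary~\ref{P is symmetric with respect to the PFL}, together with $\zeta\in\PFL$: these make $[\ell_{-(n-1)},\zeta]$ the mirror image of $[\ell_n,\zeta]$ and show that, traversed in order, $B$ runs $\zeta\to\ell_n\to\ell_{n-1}\to\dots\to\ell_0\to\ell_{-1}\to\dots\to\ell_{-(n-1)}\to\zeta$, meeting $\PFL$ only in the two points $m_0$ (the midpoint of $L_0$, since $\PFL$ is its perpendicular bisector by Lemma~\ref{PFL is perp to L0}) and $\zeta$ — the segments $L_k$ with $1\le |k|\le n-1$ avoiding $\PFL$ by minimality of $n$ and symmetry. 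For the non-collinearity of consecutive segments: two consecutive $L_k,L_{k+1}$ meet at $\ell_{k+1}$, and since $L_k=c^{-k}L_0$ the direction turns by $\arg c=\theta\in(0,\pi)$ (standing assumption $\beta>0$), so they are never collinear; the same computation applies where $[\ell_n,\zeta]\subset L_n$ abuts $L_{n-1}$, and by symmetry where $[\ell_{-(n-1)},\zeta]$ abuts $L_{-(n-1)}$.

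The one delicate adjacency, which I expect to be the main obstacle, is the one at $\zeta$: there $[\ell_n,\zeta]\subset L_n$ meets its own mirror image $[\ell_{-(n-1)},\zeta]\subset L_{-n}$ along the mirror line $\PFL$, so the two are collinear exactly when $L_n\perp\PFL$, i.e. (via Lemma~\ref{PFL is perp to L0}) when $L_n\parallel L_0$, i.e. when $n\theta\equiv 0\pmod\pi$. To exclude this I would lean on minimality of $n$: writing $n\theta=m\pi$ and $d=\gcd(m,n)$, if $d>1$ then $L_{n/d}$ is already parallel to $L_0$, hence perpendicular to $\PFL$, and comparing $\operatorname{dist}(m_{n/d},\PFL)$ with $\tfrac12|L_{n/d}|$ — an elementary estimate in the spirit of Lemma~\ref{Im(cz0)}, using $L_{n/d}=(-1)^{m/d}|c|^{-n/d}L_0$ — would force $L_{n/d}\cap\PFL\ne\emptyset$, contradicting minimality; for $d=1$ one argues directly from the position of $\zeta$ on the segment $L_n$, the case $n=1$ being immediate from Lemma~\ref{PFL is perp to L0}.

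Granting that $\zeta$ is an honest vertex, $B$ has exactly $2n+1$ sides, an odd number, and we are done. Equivalently, the last step is a parity count: a simple closed polygon symmetric about a line meeting it in exactly two points has a number of sides congruent mod $2$ to the number of those two points that are vertices, and here that number is $1$ — $m_0$ lies in the interior of the side $L_0$, while $\zeta$ is a vertex. I expect everything except the exclusion of the degenerate collinearity at $\zeta$ to be routine bookkeeping.
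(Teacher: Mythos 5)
Your overall count ($2n-1$ mirror-paired segments $L_{-(n-1)},\dots,L_{n-1}$ plus the two terminal segments at $\zeta$, giving $2n+1$) is the same symmetry argument the paper uses: sides off $\PFL$ come in mirror pairs and $L_0$ is the single unpaired side, so the total is odd. But the step you single out as the main obstacle --- excluding the possibility that $[\ell_n,\zeta]$ and $[\ell_{-(n-1)},\zeta]$ are collinear at $\zeta$, so that $\zeta$ is an ``honest vertex'' --- is not just hard, it is false. Take $c$ purely imaginary with $1<|c|\leq\sqrt{2}$, say $c=1.2i$. Then $\theta=\pi/2$, $\PFL$ is the vertical line $\Re(z)=-1/|c|$, $L_1$ is vertical and misses $\PFL$, and $L_2$ is horizontal and crosses it, so the minimal index is $n=2$ with $n\theta=\pi$: exactly your degenerate case, and the two terminal segments lie on one straight line (the left edge of a rectangle). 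Your proposed fix via $d=\gcd(m,n)$ does not help here: $m=1$, $n=2$, so $d=1$, and the ``direct argument from the position of $\zeta$'' cannot exist because the configuration genuinely occurs. The paper acknowledges this case in the remark immediately following the lemma: when $c$ is purely imaginary and $1<|c|\leq\sqrt{2}$, $P$ looks like a rectangle, but the sides $L_2$ and $L_{-2}$ are counted separately, so the outer-most boundary has $5$ sides.

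The repair is not to exclude the degeneracy but to drop it from the argument: the ``sides'' of the outer-most boundary are, by the paper's convention, the segments appearing in its defining decomposition, counted separately even when two adjacent ones happen to be collinear. With that convention your own parity bookkeeping already finishes the proof --- the segments off $\PFL$ pair up under the reflection of Corollary~\ref{P is symmetric with respect to the PFL}, $L_0$ is the lone self-symmetric segment (its midpoint $m_0$ lies on $\PFL$ by Lemma~\ref{PFL is perp to L0}), and the two terminal segments at $\zeta$ form one more mirror pair, so the count is $2n+1$, odd, with no need for the collinearity analysis, the simplicity of the curve, or the number-theoretic digression.
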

\begin{proof}
If $\zeta$ exists, then the outer-most boundary of $P$ is a polygon.  Due to symmetry and the fact that $L_0$ is shared by by both $P\mathbb{H}^+$ and $P\mathbb{H}^-$, it is clear that the outer-most boundary of $P$ will have an odd number of sides.  
\end{proof}

Note that when $c$ is purely imaginary and $1<|c|\leq \sqrt{2}$, then $P$ looks like a rectangle, but the sides $L_2$ and $L_{-2}$ are counted separately. In this case, the outer-most boundary of $P$ has 5 sides.

\begin{lem}\label{3or5orInfinity}
If $\zeta$ exists and $\alpha\leq 0$, then the outer-most boundary of $P$ has either 3 or 5 sides.
\end{lem}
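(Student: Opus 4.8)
The plan is to reduce the statement to a concrete inequality about the imaginary parts of the vertices $\ell_k$, and then dispose of that inequality by an elementary estimate. When $\zeta$ exists the outer-most boundary of $P$ is, by definition, $[\ell_n,\zeta]\cup[\ell_{-(n-1)},\zeta]\cup\bigcup_{|k|\le n-1}L_k$, where $n\ge 1$ is least with $L_n\cap\PFL\ne\emptyset$; the $2n-1$ segments $L_{-(n-1)},\dots,L_{n-1}$ together with $[\ell_n,\zeta]$ and its mirror $[\ell_{-(n-1)},\zeta]$ make $2n+1$ sides (odd, as forced by Lemma~\ref{numSides}), so it suffices to show $n\le 2$. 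Applying the bijection $g(z)=cz$, which by $c\ell_k=\ell_{k-1}$ satisfies $g(L_k)=L_{k-1}$ for $k\ge 1$ and $g(\PFL)=c(\FL/c)=\FL$, one gets $L_k\cap\PFL\ne\emptyset\iff L_{k-1}\cap\FL\ne\emptyset$. Since $\Im$ is affine on each $L_{k-1}=[\ell_{k-1},\ell_k]$ and $\Im(\ell_0)=\frac{2(1-\alpha)}{|c|^2-1}>0$ (as $\alpha\le 0$), the first segment meeting $\FL$ is $L_{k-1}$ where $\ell_k$ is the first vertex with $\Im(\ell_k)\le -1$. Hence $n=\min\{k\ge 1:\Im(\ell_k)\le -1\}$ (this minimum exists precisely when $\zeta$ does), so it is enough to prove: if $\alpha\le 0$, $\Im(\ell_1)>-1$ and $\Im(\ell_2)>-1$, then $\Im(\ell_k)>-1$ for every $k\ge 1$ — for then $n\ge 3$ is impossible.

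Next I would make the $\Im(\ell_k)$ explicit. Write $c=re^{i\theta}$; the hypotheses $\alpha\le 0$, $\beta>0$ give $\theta\in[\pi/2,\pi)$, and $\ell_0=\frac{2i(1-\overline c)}{r^2-1}$ has modulus $|\ell_0|=\frac{2|1-c|}{r^2-1}$ and argument $\frac{\pi}{2}+\phi$ with $\phi:=\arg(1-\overline c)\in(0,\frac{\pi}{2})$ (because $\Re(1-\overline c)=1-r\cos\theta\ge 1>0$ and $\Im(1-\overline c)=r\sin\theta>0$). Then $\ell_k=\ell_0/c^k$ gives
\begin{equation}
\Im(\ell_k)=|\ell_0|\,r^{-k}\cos(k\theta-\phi),\qquad |\ell_0|=\frac{2|1-c|}{r^2-1}\le\frac{2}{r-1},
\end{equation}
the last inequality being the triangle inequality $|1-c|\le 1+r$. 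In particular $\Im(\ell_k)\ge -|\ell_0|\,r^{-k}$, so $\Im(\ell_k)>-1$ automatically whenever $r^k>|\ell_0|$. This already settles every $k\ge 3$ unless $|\ell_0|\ge r^3$; but then $r^3\le|\ell_0|\le\frac{2}{r-1}$ forces $r^3(r-1)\le 2$, so $r$ stays bounded (indeed $r<2$), and only finitely many indices $k$ — those with $r^k\le|\ell_0|$ — remain.

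For these remaining $k$ the two hypotheses become effective: $\Im(\ell_1)>-1$ and $\Im(\ell_2)>-1$ read $\cos(\theta-\phi)>-r/|\ell_0|$ and $\cos(2\theta-\phi)>-r^2/|\ell_0|$, which, since $|\ell_0|$ is now large compared with $r$ and $r^2$, confine $\theta-\phi$ and $2\theta-\phi$ (mod $2\pi$) to arcs on which the cosine is only slightly negative; together with $\theta\ge\pi/2$ and $\phi\in(0,\frac{\pi}{2})$ this essentially pins down the position of the spiral, and then expanding $\cos(k\theta-\phi)$ by angle addition and using $\cos\theta\le 0$, $\sin\theta>0$ yields $\cos(k\theta-\phi)\ge -r^k/|\ell_0|$ for each relevant $k$, i.e. $\Im(\ell_k)>-1$. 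The tightest situation is $\theta=\pi/2$, i.e. $\alpha=0$: there the spiral is axis-aligned, $\Im(\ell_2)=\Im(\ell_3)=\frac{-2}{\beta^2(\beta^2-1)}$ is the minimum of $\Im$ over the whole spiral, so whenever $\zeta$ exists one has $\Im(\ell_2)\le -1$, the hypothesis fails, and $n=2$ — in agreement with the rectangle remark following Lemma~\ref{numSides}.

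The geometric content is just that a turning angle $\theta\ge\pi/2$ makes the spiral $L_1\cup L_2\cup\cdots$ wind and shrink quickly enough that it can dip below $\FL$ only on its first two arms, after which it is caught inside the hook $L_0\cup L_1$; once $L_2,L_3$ are checked, the tail $L_k$ with $k\ge 4$ follows by the same self-similarity argument used in the $\alpha<-1$ case of Proposition~\ref{LcapFL} (where $4\theta\ge 2\pi$). I expect the main obstacle to be precisely the bounded-$r$ case above: the reduction to $n\le 2$ and the large-$r$ subcase are immediate, but checking that the constraints imposed by $\ell_1$ and $\ell_2$ genuinely propagate to every later vertex is an elementary though slightly fussy trigonometric case analysis, and the equality case $\theta=\pi/2$ has to be folded into it.
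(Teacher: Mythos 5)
Your reduction is sound, and cleaner than the paper's setup: translating $L_n\cap\PFL\neq\emptyset$ into $L_{n-1}\cap\FL\neq\emptyset$ via $g(z)=cz$, and then into ``$\ell_n$ is the first vertex with $\Im(\ell_n)\leq -1$'' (legitimate because $\Im$ is affine on each $L_k$ and $\Im(\ell_0)>0$ when $\alpha\leq 0$), correctly recasts the lemma as: $\alpha\leq 0$, $\Im(\ell_1)>-1$, $\Im(\ell_2)>-1$ imply $\Im(\ell_k)>-1$ for all $k\geq 1$. The polar formula $\Im(\ell_k)=|\ell_0|r^{-k}\cos(k\theta-\phi)$ and the automatic case $r^k>|\ell_0|$ are fine. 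But the heart of the lemma is precisely the remaining case, and there your argument stops being a proof. The claim that the two hypotheses confine $\theta-\phi$ and $2\theta-\phi$ to arcs where the cosine is ``only slightly negative'' is not justified: in the surviving regime you only know $|\ell_0|\geq r^3$ with $r^3(r-1)\leq 2$, so for $r$ near the top of that range the hypotheses give only $\cos(\theta-\phi)>-r/|\ell_0|$ with $r/|\ell_0|$ possibly as large as about $1/r^2\approx 0.4$--$0.5$ --- not slightly negative at all. And the key assertion, that angle addition together with $\cos\theta\leq 0$, $\sin\theta>0$ ``yields'' $\cos(k\theta-\phi)\geq -r^k/|\ell_0|$ for every relevant $k\geq 3$, is never carried out: no induction hypothesis is formulated, and the natural two-term recursion for $\cos(k\theta-\phi)$ has mixed signs, so it is not clear any simple induction closes. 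You concede this yourself, deferring it as a ``slightly fussy trigonometric case analysis''; that case analysis \emph{is} the lemma, so the proposal has a genuine gap. Only the axis-aligned case $\alpha=0$ is actually verified.

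For comparison, the paper closes exactly this step geometrically rather than trigonometrically: if $L_1$ and $L_2$ miss $\PFL$ then $\ell_3\in P\mathbb{H}^+$, and since $\alpha\leq 0$ forces $\theta\geq\pi/2$, the segment $L_3$ slopes away from $\PFL$, so $L_3\cap\PFL=\emptyset$ as well; a later arm of the spiral could then reach $\PFL$ only after $L$ self-intersects, and Proposition~\ref{LcapFL} shows a self-intersection would force an earlier crossing of $\FL$ by $L_1$ or $L_2$, i.e.\ a crossing of $\PFL$ by $L_2$ or $L_3$, a contradiction. If you want to keep your analytic framing you must supply a genuine inductive estimate on $\cos(k\theta-\phi)$ in the bounded-$r$ case (or import the paper's trapping/self-intersection argument); as written, you prove the easy subcases and assert the hard one.
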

\begin{proof}
Suppose $\zeta\in L_n,$ for some $n>0$.  If $n=1,2$, then the outer-most boundary of $P$ has 3 or 5 sides respectively, and we are done.  Thus we may assume that $n\geq 3$ and that $(L_1 \cup L_2) \cap \PFL= \emptyset$.  In particular this means that $\ell_3\in P\mathbb{H}^+$.  Then $\alpha \leq 0$ implies $\theta \geq \frac{\pi}{2}$ and so $L_3$ starts at $\ell_3$ and slopes away from $\PFL$.  Thus, $L_3\cap \PFL=\emptyset$.  It is now easy to see that for $\zeta \in L_n$ to be true, it is neccessary for $L$ to have a self-intersection along at least one of $L_1, L_2, L_3$.  However, Proposition~\ref{LcapFL} would then imply that $(L_1\cup L_2 )\cap \FL \neq \emptyset$.  But this means that $(L_1 \cup L_2 \cup L_3)\cap \PFL \neq \emptyset$, a contradiction.
\end{proof}

\begin{prop}\label{Im(zeta) less than -1 implies alpha geq -1 and |c| leq sqrt(2)}
If $\Im(\zeta)\leq -1$ then $\alpha \geq -1$ and $|c|\leq \sqrt{2}$.
\end{prop}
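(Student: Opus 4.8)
The plan is to prove the two conclusions $\alpha\ge -1$ and $|c|\le\sqrt2$ separately. The one global property of $\zeta$ both arguments rely on is that, since $\zeta$ exists, $\zeta$ is a vertex of the outer-most boundary of $P$ (it is the common endpoint of $[\ell_n,\zeta]$ and $[\ell_{-(n-1)},\zeta]$, where $\zeta\in L_n$); by Lemma~\ref{numSides} that boundary is a polygon, and as recorded in the proof of Proposition~\ref{P is right of ell_0} every point of the region it bounds has real part $\ge\Re(\ell_0)$. In particular $\Re(\zeta)\ge\Re(\ell_0)$.

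\textbf{The bound $\alpha\ge -1$.} Since $\gamma_0\in\PFL$ and $1/c$ is a direction vector of $\PFL=\{w/c:\Im(w)=-1\}$, write $\zeta=\gamma_0+t_0/c$. Then $\Im(\zeta)=-1-t_0\,\beta/|c|^2$, so the hypothesis $\Im(\zeta)\le -1$ forces $t_0\ge 0$ (recall $\beta>0$), and hence $\Re(\zeta)=\Re(\gamma_0)+t_0\,\alpha/|c|^2$. Suppose, for contradiction, that $\alpha<-1$. Then $\alpha<0$ together with $t_0\ge 0$ gives $\Re(\zeta)\le\Re(\gamma_0)$, while Proposition~\ref{Real part of gamma_0} gives $\Re(\gamma_0)<\Re(\ell_0)$; combining, $\Re(\zeta)<\Re(\ell_0)$, contradicting $\Re(\zeta)\ge\Re(\ell_0)$. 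Therefore $\alpha\ge -1$.

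\textbf{The bound $|c|\le\sqrt2$.} I argue the contrapositive: assume $|c|>\sqrt2$ and show $\Im(\zeta)>-1$, so that the hypothesis cannot hold. By Proposition~\ref{Im(ell_{-1})}, $\Im(\ell_{-1})>-1$, and $\Im(\ell_0)>-1$ always (this is inequality~\eqref{eq:1b}); since $\mathbb{H}^+$ is convex, every point of $L_{-1}=[\ell_{-1},\ell_0]$ has imaginary part $>-1$. If the outer-most boundary of $P$ has three sides, then $L_{-1}\cap\PFL\ne\emptyset$, so Lemma~\ref{L-1capPFL} gives $\{\zeta\}=L_{-1}\cap\PFL\subset L_{-1}$ and $\Im(\zeta)>-1$, as desired. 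Otherwise $\zeta\in L_n$ with $n\ge 2$; since $L_1,\dots,L_{n-1}$ miss $\PFL$ and $\ell_1\in P\mathbb{H}^+$ (because $\Im(c\ell_1)=\Im(\ell_0)>-1$), the connected arc $L_1\cup\dots\cup L_{n-1}\cup[\ell_n,\zeta]$ lies in $P\mathbb{H}^+$, on which $f$ is the map $z\mapsto cz$; its image is therefore $L_0\cup\dots\cup L_{n-2}\cup[\ell_{n-1},c\zeta]$, which lies in $c\cdot P\mathbb{H}^+=\mathbb{H}^+$, and minimality of $n$ forces $L_j\cap\FL=\emptyset$ for $0\le j\le n-2$, so $L_0,\dots,L_{n-2}$ lie strictly above $\FL$ while $c\zeta=f(\zeta)\in\FL\cap L_{n-1}$. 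The remaining task is to turn this into a contradiction: I would push the forward orbit of $\zeta$ one more step, using that $f(L_0)=[m,\ell_0]$ is the vertical segment collinear with $L_{-1}$ (Corollary~\ref{L sub minus 1 is vertical}), and compare the resulting point of $\FL$ with $\ell_{-1}$, which $|c|>\sqrt2$ keeps strictly above $\FL$; this should force $\Im(\zeta)>-1$. A possibly cleaner alternative for this direction is to show directly that $\Im(\zeta)\le -1$ implies $f^2(L_0)\subset L_0$, hence $L_0\subset K$, and then invoke Theorem~\ref{L_0 is a subset of K}, which yields both $\alpha\ge -1$ and $|c|\le\sqrt2$ at once.

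\textbf{Main obstacle.} The $\alpha\ge -1$ half is a short computation on the line $\PFL$. The genuine difficulty is the case $\zeta\in L_n$ with $n\ge 2$ in the second half: one must rule out a point of $\FL$ appearing on $L_{n-1}$ once $|c|>\sqrt2$ has pushed $L_0,\dots,L_{n-2}$ (and $L_{-1}$) strictly above $\FL$, and this appears to require careful bookkeeping of the iterates $c^{j}\zeta$ against both $\FL$ and $\PFL$ rather than a one-line appeal to an earlier result.
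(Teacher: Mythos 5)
Your first half is correct and genuinely different from the paper's argument: parametrizing $\PFL$ through $\gamma_0$ with direction $1/c$, so that $\Im(\zeta)\leq -1$ forces $t_0\geq 0$ and hence $\Re(\zeta)\leq\Re(\gamma_0)$ once $\alpha<0$, combines neatly with Proposition~\ref{Real part of gamma_0} to give $\alpha\geq -1$ (the paper instead argues through $L_{-1}$ and its intersections with $\FL$ and $\PFL$). Be aware, though, that you are leaning on $\Re(\zeta)\geq\Re(\ell_0)$, which in the paper is only the unargued sentence ``every point in this polygon has real part greater than or equal to $\Re(\ell_0)$'' inside the proof of Proposition~\ref{P is right of ell_0}; since the configuration you are excluding ($\zeta$ strictly left of $\ell_0$) is exactly a potential counterexample to that sentence, you should at least flag the dependence. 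Your three-sided case of the second half is also fine: $|c|>\sqrt{2}$ puts $\ell_{-1}$ (Proposition~\ref{Im(ell_{-1})}) and $\ell_0$ strictly above $\FL$, so by Lemma~\ref{L-1capPFL} the point $\zeta\in L_{-1}$ has $\Im(\zeta)>-1$.

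The genuine gap is the case $\zeta\in L_n$ with $n\geq 2$, which you yourself leave at ``the remaining task is to turn this into a contradiction'' and ``this should force $\Im(\zeta)>-1$''; that remaining task is where essentially all of the content of the proposition sits. The paper's proof devotes Cases 2--4 to it: with 5 sides one compares $\Im(\zeta)$ with $\Im(\ell_{-1})$ separately according to whether $\theta$ is greater than, equal to, or less than $\pi/2$; with 7 sides one needs the explicit condition $\Re(c^4\gamma_0)\geq\Re(\ell_0)$ and a polynomial/partial-derivative analysis for $0<\alpha\leq 3/4$, plus the estimate $|\ell_3|<1$ for $\alpha>3/4$ under $|c|\geq\sqrt{2}$; with 9 or more sides one shows $|\ell_4|<1$, contradicting $\Im(\ell_k)\leq -1$ (which forces $|\ell_k|\geq 1$). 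Your ``push the orbit one more step and compare with $\ell_{-1}$'' idea cannot reach the large-$n$ cases, because there the obstruction is not the position of $\ell_{-1}$ at all but the quantitative fact that the spiral has shrunk below modulus $1$ before the segment carrying $\zeta$ could meet $\FL$. Your proposed alternative---show $\Im(\zeta)\leq -1$ implies $f^2(L_0)\subset L_0$ and then quote Theorem~\ref{L_0 is a subset of K}---is not easier: the paper's verification of $f^2(L_0)\subset L_0$ uses $m\in L_{-1}$ (i.e.\ $|c|\leq\sqrt{2}$, via Proposition~\ref{Im(ell_{-1})}) and $\Re(\gamma_0)\geq\Re(\ell_0)$ (i.e.\ $\alpha\geq -1$), the very conclusions you are trying to prove, so this route needs an independent geometric argument from $\Im(\zeta)\leq -1$ that you have not supplied. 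As it stands the proof is incomplete.
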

\begin{proof}
By Lemma~\ref{numSides} we may assume the outer-most boundary of $P$ has an odd number of sides.  

Let $\Im(\zeta) \leq -1$ and assume that $\alpha < -1$.  Then by Proposition~\ref{Re(gamma_0)<Re(ell_0)} $\Re(\gamma_0) < \Re(\ell_0)$.  This means that if $L_{-1}\cap \FL\neq \emptyset$ then $L_{-1} \cap \PFL =\{\zeta\}$.  Clearly, in this case $\Im(\zeta)>-1$, a contradiction.  On the other hand, if $L_{-1}\cap \FL = \emptyset$ then $\Im(\ell_{-1})>-1$ and by Proposition~\ref{Im(ell_{-1})} $|c|>\sqrt{2}$.

Thus, if $\Im(\zeta) \leq -1$ then $\alpha \geq -1$.

We now assume that $\Im(\zeta)\leq -1$ and that $\alpha \geq -1$ and show that for every possible number of sides that the outer-most boundary of $P$ can have, that $|c|\leq \sqrt{2}$.

Case 1:  Assume that the outer-most boundary of $P$ has 3 sides as shown in Figure~\ref{fig:3sides}.  Then $\{\zeta\}=L_{-1} \cap \PFL$ which implies that $\Im(\ell_{-1}) \leq \Im(\zeta)\leq -1$.  By Proposition~\ref{Im(ell_{-1})} we have that $|c|\leq \sqrt{2}$.
\begin{figure}[htbp]
	\centering
		\includegraphics[width=0.50\textwidth]{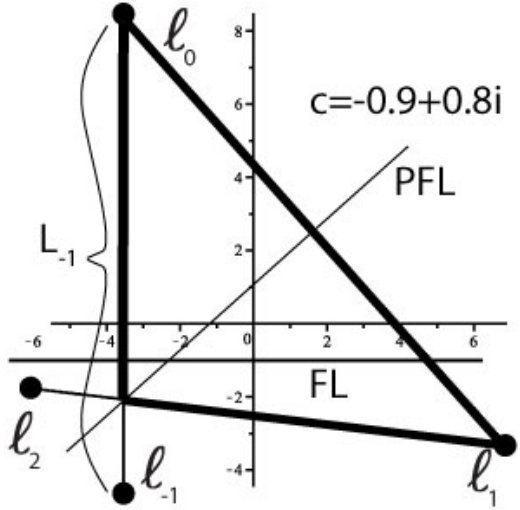}
		
	\caption{An example where the outer-most boundary of $P$ has 3 sides.}
	\label{fig:3sides}
\end{figure}

\begin{figure}[htbp]
	\centering
		\includegraphics[width=0.70\textwidth]{./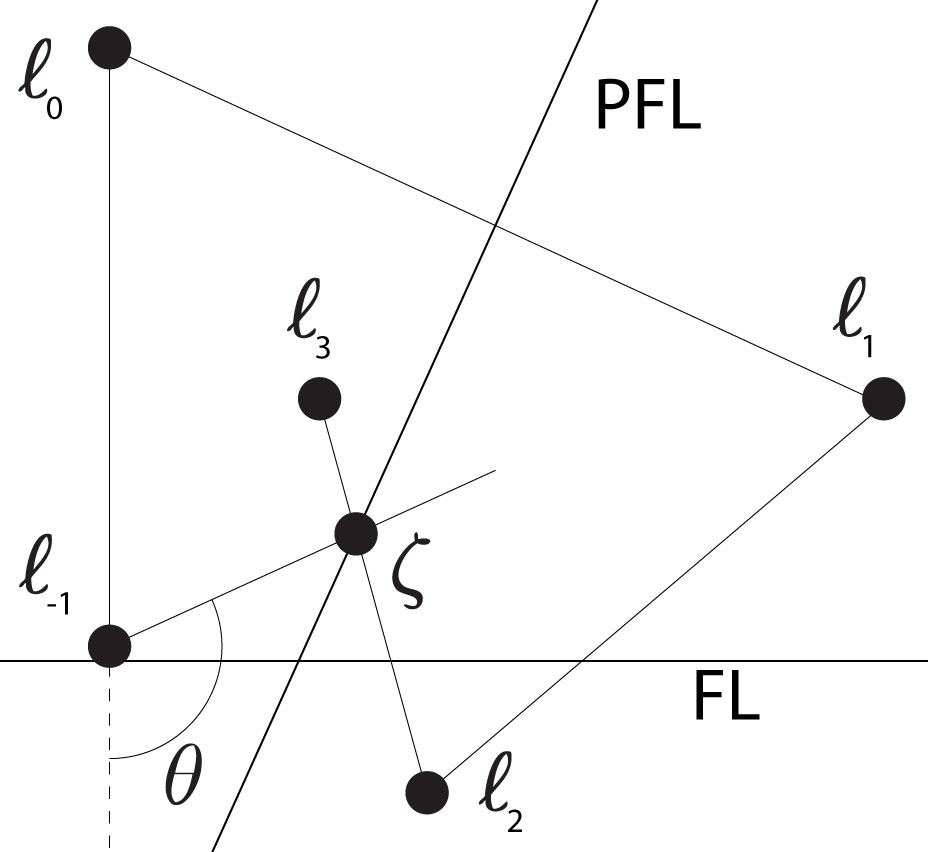}
	\caption[Example where the outer-most boundary of $P$ has 5 sides and $\theta > \pi/2$.]{An example where the outer-most boundary of $P$ has 5 sides and $\theta > \pi/2$.  This illustrates that if $\theta > \pi/2$ then $\Im(\zeta)>\Im(\ell_{-1})$.}
	\label{fig:thetaLargerThanHalfPi}
\end{figure}

Case 2:  Assume that the outer-most boundary of $P$ has 5 sides.  We have 3 subcases.
\begin{enumerate}
	\item If $\theta > \pi/2$, as shown in Figure~\ref{fig:thetaLargerThanHalfPi}, then $\Im(\zeta)>\Im(\ell_{-1})$ so that $\Im(\ell_{-1})\leq -1$. 
	\item If $\theta = \pi/2$, as shown in Figure~\ref{fig:thetaEqualsPiHalves}, then $\Im(\zeta)=\Im(\ell_{-1})$ so that $\Im(\ell_{-1})\leq -1$.
	\item If $\theta < \pi/2$, as shown in Figure~\ref{fig:5sides}, then $\zeta \in L_2\cap \PFL$ implies that $L_1 \cap \FL \neq \emptyset$.  Thus, $\Im(\ell_2)\leq -1$.  Since $\theta < \pi/2$ then the tilt of $\PFL$ guarantees that $\Im(\ell_{-1})<\Im(\ell_2)\leq -1$. 
\end{enumerate}
In each of these cases, Proposition~\ref{Im(ell_{-1})} implies that $|c|\leq \sqrt{2}$.

\begin{figure}[htbp]
	\centering
		\includegraphics[width=0.70\textwidth]{./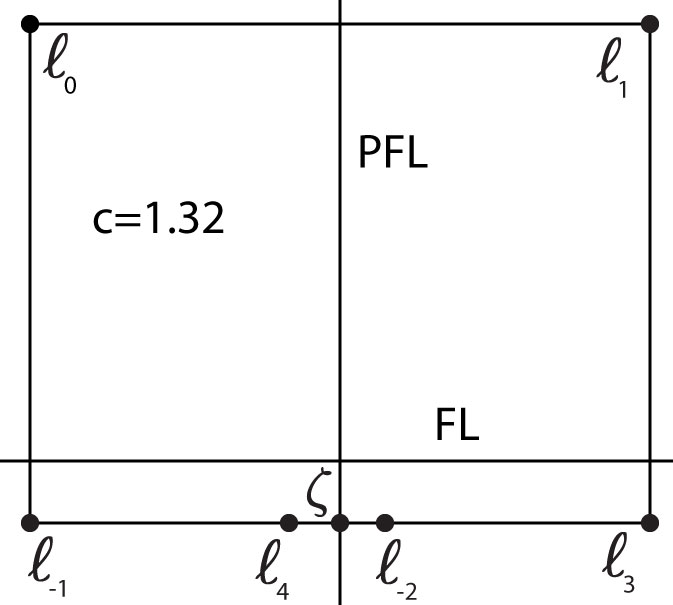}
	\caption[Example where $P$ is a rectangle but still has 5 sides.]{An example where $P$ is a rectangle but still has 5 sides.}
	\label{fig:thetaEqualsPiHalves}
\end{figure}

\begin{figure}[htbp]
	\centering
		\includegraphics[width=.70\textwidth]{./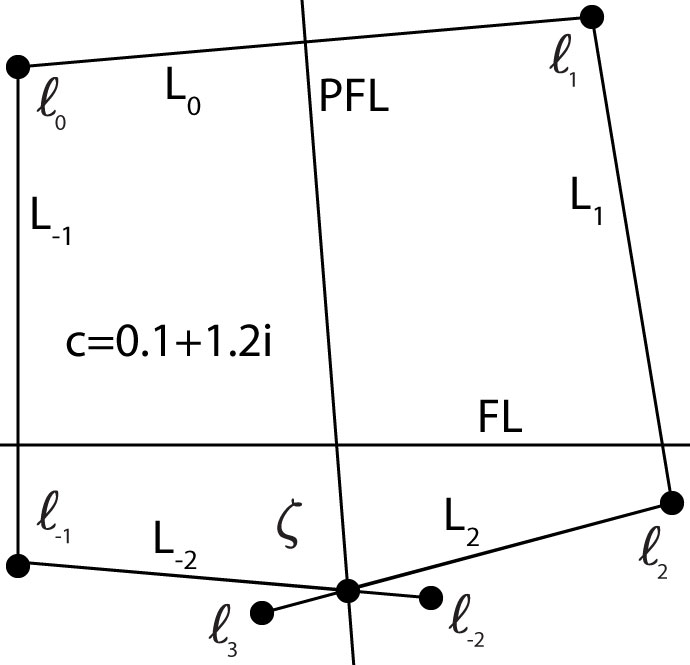}
	\caption[Example where the outer-most boundary of $P$ has 5 sides and $\theta <\pi/2$.]{An example where the outer-most boundary of $P$ has 5 sides and $\theta <\pi/2$.}
	\label{fig:5sides}
\end{figure}

Case 3:  Assume that the outer-most boundary of $P$ has 7 sides.  This implies that $\zeta \in L_3\cap \PFL$ and so $L_2 \cap \FL \neq \emptyset$.  The assumption that $\Im(\zeta)\leq -1$ then implies that $L_1 \cap \FL = \emptyset.$  We now look at three subcases.
\begin{enumerate}
	\item By Lemma~\ref{3or5orInfinity} $\alpha$ can not be less than 0.
	\item Assume $0 < \alpha \leq 3/4$.  As shown in Figure~\ref{fig:SevenSides}, $\Im(\zeta)\leq -1$ if and only if $\Re(\ell_0)\leq \Re(c^4\gamma_0)$.  We have:
	
\begin{equation}
\Re(c^4\gamma_0)= \left( {\alpha}^{4}-6\,{\alpha}^{2}{\beta}^{2}+{\beta}^{4} \right) 
 \left( {\frac {\alpha-1}{\beta}} \right) +4\,{\alpha}^{3}
\beta-4\,\alpha\,{\beta}^{3},\\
\end{equation}

and $\Re(\ell_0)=\frac{-2\beta}{\alpha^2+\beta^2-1}.$  Now $\Re(\ell_0)\leq \Re(c^4\gamma_0)$ is equivalent to $0\leq \beta(\alpha^2+\beta^2-1)( \Re(c^4\gamma_0)-\Re(\ell_0))$, which is equivalent to:

\begin{equation}
0\leq  \left( {\beta}^{2}+1-2\,\alpha+{\alpha}^{2} \right)  \left( {
\alpha}^{5}+{\alpha}^{4}-2\,{\alpha}^{3}{\beta}^{2}-3\,\alpha\,{\beta}
^{4}+4\,\alpha\,{\beta}^{2}+2\,{\beta}^{2}-{\beta}^{4} \right) \\
\label{eq:SevenSidedStar}
\end{equation}

We now make the change of variables $\beta^2=t-\alpha^2$ in \eqref{eq:SevenSidedStar} and get:

\begin{equation}
0\leq \left( t+1-2\,\alpha \right)  \left( 4\,{\alpha}^{3}t-3\,
\alpha\,{t}^{2}+4\,t\alpha-4\,{\alpha}^{3}+2\,t-2\,{\alpha}^{2}-{t}^{2
}+2\,t{\alpha}^{2} \right)\\
\label{eq:SevenSidedStarWithT}
\end{equation}

\begin{figure}[htbp]
	\centering
		\includegraphics[width=1.00\textwidth]{./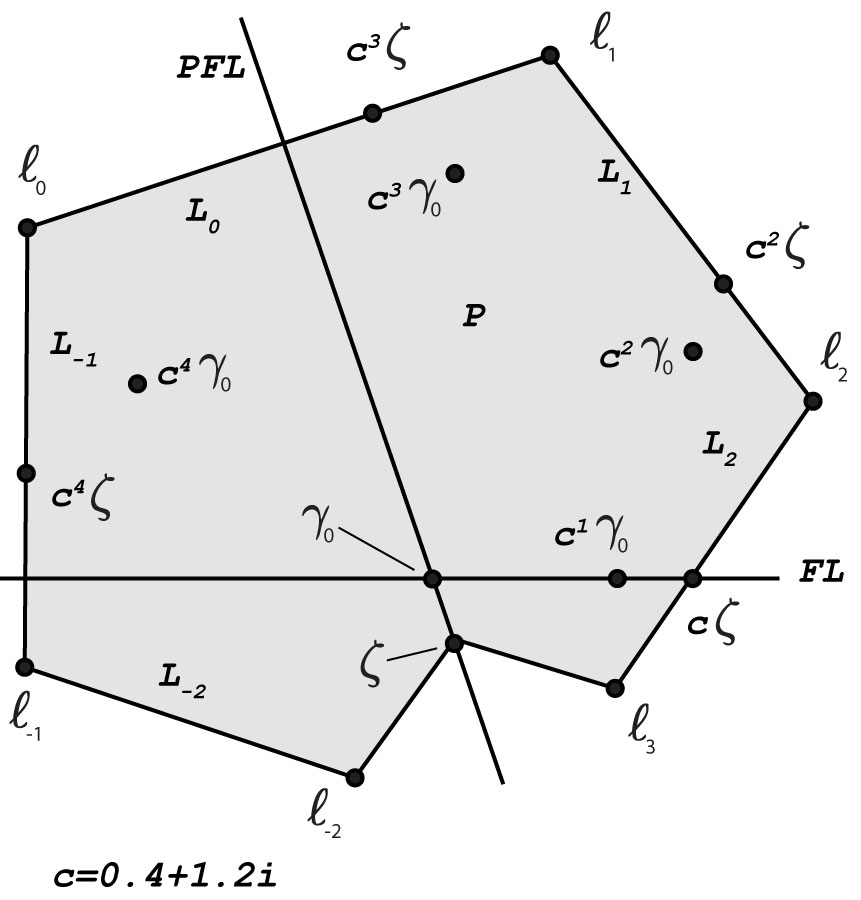}
	\caption[Outer-most boundary when $P$ has 7 sides and $\alpha>0$]{An illustration showing that if $\alpha>0$ and if the outer-most boundary of $P$ has 7 sides, then $\Im(\zeta)\leq -1$ if and only if $\Re(c^4\gamma_0) \geq \Re(\ell_0)$.}
	\label{fig:SevenSides}
\end{figure}

We wish to show that when $0< \alpha\leq 3/4$ \eqref{eq:SevenSidedStarWithT} is not true for $t \geq 2$, which corresponds to $|c|\geq \sqrt{2}$.  Now $t+1-2\alpha>0$ when $0< \alpha\leq 3/4$ and $t \geq 2$.  Thus, to get our contradiction, we need to show:

\begin{equation} 4\,{\alpha}^{3}t-3\,\alpha\,{t}^{2}+4\,t\alpha-4\,{\alpha}^{3}+2\,t-2
\,{\alpha}^{2}-{t}^{2}+2\,t{\alpha}^{2}<0.\\
\label{eq:SevenSidesFactor}
\end{equation}  

For $t=2$, \eqref{eq:SevenSidesFactor} becomes:

\begin{equation}
4\,{\alpha}^{3}-4\,\alpha+2\,{\alpha}^{2}=2\,\alpha\, \left( 2\,{\alpha}^{2}-2+\alpha \right) <0.\\
\label{eq:SevenSidesT2}
\end{equation}

Now $\left( 2\,{\alpha}^{2}-2+\alpha \right)$ has exactly one positive root, which is $\frac{\sqrt{17}-1}{4}>\frac{3}{4}$.  Thus, \eqref{eq:SevenSidesT2} is easily seen to be true for $0<\alpha\leq 3/4$.

Now we need to show that \eqref{eq:SevenSidesFactor} is still true for $t>2$, keeping $0< \alpha \leq 3/4$.  For this it suffices to show that the partial derivative of the left hand side of \eqref{eq:SevenSidesFactor} is negative for $t>2$ and $0<\alpha \leq 3/4$.  We have:

\begin{equation}
\begin{aligned}
 \frac{\partial}{\partial t}& \left(4\,{\alpha}^{3}t-3\,\alpha\,{t}^{2}+4\,t\alpha-4\,{\alpha}^{3}+2\,t-2
\,{\alpha}^{2}-{t}^{2}+2\,t{\alpha}^{2}\right)\\
=& 4\,{\alpha}^{3}-6\,t\alpha+4\,\alpha+2-2\,t+2\,{\alpha}^{2}\\
=& \alpha(4\alpha^2-6t+4+2\alpha)+2-2t.\\
\end{aligned}
\label{eq:SevenSidesPartial}
\end{equation}

By our choices of $t,\alpha$ we have $\alpha>0$ and $2-2t<0$.  Thus we have only to show that $4\alpha^2-6t+4+2\alpha<0$.  Now
\begin{equation}
4\alpha^2-6t+4+2\alpha \leq \frac{9}{4}-12+4+\frac{3}{2}=\frac{-17}{4}<0.\\
\end{equation}

This completes the proof that if the outer-most boundary of $P$ has 7 sides, and $0<\alpha\leq 3/4$, then $\Im(\zeta)\leq -1$ implies $|c|\leq \sqrt{2}$.

	\item  Assume that $\alpha>3/4$.  We will assume that the outer-most boundary of $P$ has 7 or more sides, which means that $\Im(\ell_{3+n})\leq -1$ for some $n=0,1,2,...$.  Then assuming $|c|\geq \sqrt{2}$ we show that $|\ell_3|<1$ by proving that $|\ell_3|^2<1$, a contradiction.

We will need the following estimate:
\begin{equation}
\begin{aligned}
|1-\overline{c}|^2&=|(1-\alpha)+\beta i|^2\\
&=1-2\alpha+\alpha^2+\beta^2\\
&\leq \frac{-1}{2}+|c|^2.\\
\end{aligned}
\label{eq:alphaBigBound}
\end{equation}

Applying the estimate given in \eqref{eq:alphaBigBound} and using the assumption that $|c|\geq \sqrt{2}$, we now show that $|\ell_3|^2<1$.

\begin{equation}
\begin{aligned}
|\ell_3|^2&=\left|\frac{\ell_0^2}{c^6}\right|\\
&=\frac{4|1-\overline{c}|^2}{|c|^6(|c|^2-1)^2}\\
&\leq \frac{4\left(\frac{-1}{2}+|c|^2\right)}{|c|^6(|c|^2-1)^2}\\
&\leq\frac{-2}{|c|^6}+ \frac{4}{|c|^4}< 1.\\
\end{aligned}
\label{eq:alphaBig}
\end{equation}
\end{enumerate}

Case 4:  Assume the outer-most boundary of $P$ has 9 or more sides.  Lemma~\ref{3or5orInfinity} implies this cannot happen if $\alpha \leq 0$.  Thus we may assume $0<\alpha$.  Assume also that $\Im(\zeta)\leq -1$.  Then if $\zeta \in L_k$ then $L_{k-1}$ is the first segment of $L$ to intersect $ \FL$.  Thus $\Im(\ell_k)\leq -1$ and so $|\ell_k|\geq 1$.  We will now show this is impossible when $|c|\geq \sqrt{2}$, proceeding very much as before.

We will need the following estimate, which relies on $\alpha>0$:
\begin{equation}
\begin{aligned}
|1-\overline{c}|^2&=|(1-\alpha)+\beta i|^2\\
&=1-2\alpha+\alpha^2+\beta^2\\
&=1-2\alpha+|c|^2\\
&\leq 1+|c|^2.\\
\end{aligned}
\end{equation}

Now we show that if $|c|\geq \sqrt{2}$ then $|\ell_4|^2<1$.

\begin{equation}
\begin{aligned}
|\ell_4|^2&=\left|\frac{\ell_0^2}{c^8}\right|\\
&=\frac{4|1-\overline{c}|^2}{|c|^8(|c|^2-1)^2}\\
&\leq \frac{4\left(1+|c|^2\right)}{|c|^8(|c|^2-1)^2}\\
&\leq\frac{4}{|c|^8}+\frac{4}{|c|^6}\\
&\leq \frac{7}{8}< 1.\\
\end{aligned}
\end{equation}

This completes the proof.
\end{proof}

\begin{figure}[htbp]
	\centering
		\includegraphics[width=1.0\textwidth]{./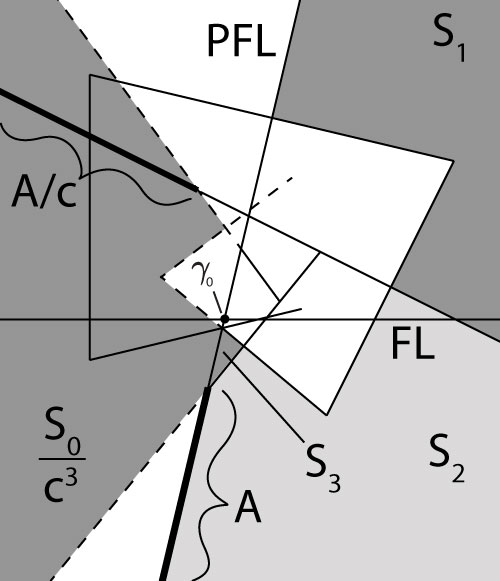}
	\caption{An example where $S_3=\frac{S_0}{c^3}\cap P\mathbb{H}^+$.}
	\label{fig:S_k_and_S_0}
\end{figure}

\begin{lem}\label{S_k_and_S_0}
$S_2=\frac{S_0}{c^2}\cap P\mathbb{H}^+$.  Also, for every integer $k \geq 3$, if $S_{k-1} = \emptyset$, then $S_k=\emptyset$.  Furthermore, if $S_{k-1}\neq \emptyset$ and if $\gamma_0 \notin \underset{n=2}{\overset{k-1}{\bigcup}}S_n$ then $S_{k}=\frac{S_0}{c^{k}}\cap P\mathbb{H}^+$.
\end{lem}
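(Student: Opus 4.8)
The plan is to take the three assertions in order of increasing difficulty. The emptiness statement is immediate from the recursion $S_k=\frac{S_{k-1}}{c}\cap P\mathbb{H}^+$: if $S_{k-1}=\emptyset$ then $\frac{S_{k-1}}{c}=\emptyset$, hence $S_k=\emptyset$ (this in fact holds for every $k\ge 2$). Iterating, if some $S_j$ with $2\le j\le k-1$ were empty then $S_{k-1}$ would be empty, so the hypothesis $S_{k-1}\neq\emptyset$ of the last assertion forces $S_j\neq\emptyset$ for $1\le j\le k-1$.

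To prove $S_2=\frac{S_0}{c^2}\cap P\mathbb{H}^+$ it suffices to show $S_1\subseteq P\mathbb{H}^+$: then $f$ coincides with $g$ on $S_1$, so $S_0=f(S_1)=cS_1$, i.e. $S_1=\frac{S_0}{c}$, whence $S_2=\frac{S_1}{c}\cap P\mathbb{H}^+=\frac{S_0}{c^2}\cap P\mathbb{H}^+$. Now $S_1$ has the line $\PFL$ as one of its four sides, so it lies in one of the two closed half-planes bounded by $\PFL$; and the corner $\ell_1$ of $S_1$ (the common endpoint of $[m_0,\ell_1]$ and $[\ell_1,m_1]$) satisfies $\Im(c\ell_1)=\Im(\ell_0)=\frac{2(1-\alpha)}{|c|^2-1}>-1$, the strict inequality being exactly the estimate $|c|^2-2\alpha+1>0$ proved inside Lemma~\ref{Im(cz0)}. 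Hence $\ell_1\in\Int(P\mathbb{H}^+)$ and therefore $S_1\subseteq P\mathbb{H}^+$. I note for later that this also gives $S_0=cS_1\subseteq c\,P\mathbb{H}^+=\mathbb{H}^+$ (using $P\mathbb{H}^+=\{z:\Im(cz)\ge-1\}$).

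For the last assertion I would induct on $k$, the base case $k=2$ being what was just shown. Suppose $S_j=\frac{S_0}{c^j}\cap P\mathbb{H}^+$ and $\gamma_0\notin S_j$ for some $j$ with $2\le j\le k-1$. Then
\[
S_{j+1}=\frac{S_j}{c}\cap P\mathbb{H}^+=\frac{S_0}{c^{j+1}}\cap\frac{P\mathbb{H}^+}{c}\cap P\mathbb{H}^+,
\]
so I must show the factor $\frac{P\mathbb{H}^+}{c}$ is redundant, that is $\frac{S_0}{c^{j+1}}\cap P\mathbb{H}^+\subseteq\frac{P\mathbb{H}^+}{c}$. Multiplying through by $c$ and using $c\,P\mathbb{H}^+=\mathbb{H}^+$ and $c\cdot\frac{P\mathbb{H}^+}{c}=P\mathbb{H}^+$, this reduces to
\[
\frac{S_0}{c^{j}}\cap\mathbb{H}^+\subseteq P\mathbb{H}^+ .
\]
Here is where the geometry enters. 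Since $P\mathbb{H}^+\cap P\mathbb{H}^-=\PFL$ and $\FL\cap\PFL=\{\gamma_0\}$, the failure of this inclusion means $\frac{S_0}{c^j}$ meets the open ``wedge'' $\mathbb{H}^+\cap\Int(P\mathbb{H}^-)$ with apex $\gamma_0$, and (using $S_0\subseteq\mathbb{H}^+$) any such intersection is confined to the bounded region cut off by the three lines $\FL$, $\PFL$, $\frac{\FL}{c^j}$. I would then use that $\frac{S_0}{c^j}$ is connected (a scaled copy of $S_0=f(S_1)$, the continuous image of the connected set $S_1$), together with a description of how $S_0$ lies with respect to $\FL$ and $\PFL$ — one side of $\partial S_0$ is $f(\PFL)=\FL$, another is $f(\frac{\PFL}{c})=\PFL$, and $\gamma_0=f(\gamma_1)$ since $\gamma_1=\gamma_0/c\in\PFL\subseteq P\mathbb{H}^+$ (with $\Im(c\gamma_1)=\Im(\gamma_0)=-1$) — to conclude by a plane-separation argument that $\frac{S_0}{c^j}$ can reach this convex wedge only if it contains $\gamma_0$; since $\gamma_0\in P\mathbb{H}^+$ always, that would give $\gamma_0\in S_j$, a contradiction. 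This establishes the displayed inclusion and carries the induction from $j$ to $j+1$, so $S_k=\frac{S_0}{c^k}\cap P\mathbb{H}^+$ follows.

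The main obstacle is this last geometric step: showing that the intermediate intersections with $P\mathbb{H}^+$ are redundant exactly until the preimages of $\FL$ ``wrap around'' onto $S_j$, an event detected precisely by $\gamma_0\in S_j$. Turning the slogan ``a connected copy of $S_0$ can enter the wedge at $\gamma_0$ only through $\gamma_0$'' into a proof requires careful bookkeeping of which edges of $S_0$ sit on which side of $\FL$ and $\PFL$ (leaning on $S_0\subseteq\mathbb{H}^+$ and on the position of $S_1$ relative to $\frac{\PFL}{c}$) plus a planar-topology argument; throughout one assumes $c\notin\mathbb{R}$, since $\gamma_0$ is only defined then.
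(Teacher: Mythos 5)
Your handling of the two easy assertions is fine, and your reduction of the ``Furthermore'' clause to the single inclusion $\frac{S_0}{c^j}\cap\mathbb{H}^+\subseteq P\mathbb{H}^+$ is exactly the right way to isolate what must be proved. But the proposal stops precisely where the lemma lives: you state the needed fact as a slogan (``a connected copy of $S_0$ can enter the wedge $\mathbb{H}^+\cap\Int(P\mathbb{H}^-)$ only through $\gamma_0$'') and then explicitly defer the ``careful bookkeeping \ldots plus a planar-topology argument'' that would establish it. That deferred step is the whole content of the statement, and the slogan is not a topological truism: a connected set can meet the wedge without containing its apex --- for instance by entering across the wedge's $\FL$-edge from below, or by crossing $\PFL$ at a point with $\Im<-1$ and re-entering $\mathbb{H}^+$ while staying inside $\Int(P\mathbb{H}^-)$ --- so connectivity of $\frac{S_0}{c^j}$ plus ``$\gamma_0\in P\mathbb{H}^+$'' cannot by themselves produce the contradiction $\gamma_0\in S_j$. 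What is required is positional information about where $\frac{S_0}{c^j}$ (whose boundary lies on the rotating lines $\FL/c^j$, $\PFL/c^j$, not on $\FL$ or $\PFL$) sits relative to $\FL$ and $\PFL$; the hypothesis $\gamma_0\notin\bigcup_{n=2}^{k-1}S_n$ is exactly what is supposed to encode this, and the proposal never closes that loop. The paper's proof, terse as it is, does supply the missing localization (with Figure~\ref{fig:S_k_and_S_0}): from $\gamma_0\notin S_{k-1}$ it places $A=\PFL\cap S_{k-1}$ strictly below $\FL$, so that $\frac{A}{c}\subset P\mathbb{H}^-\setminus\PFL$ contributes nothing to $S_k$. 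Your sketch contains no analogue of this step, so the induction is not carried out.

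A secondary leap: in the $S_2$ part, ``$S_1$ has the line $\PFL$ as one of its sides, so it lies in one of the two closed half-planes bounded by $\PFL$'' is not a valid inference --- a region with a boundary ray on a line can still straddle that line --- so $S_1\subseteq P\mathbb{H}^+$ needs its own short check, e.g.\ that the other sides stay in $P\mathbb{H}^+$: for $[\ell_1,m_1]$ this follows from $\Im(\ell_0)>-1$ together with $\Im(m_0)=-\alpha/|c|^2\ge-1$, and one must also rule out the $\PFL/c$-ray from $m_1$ reaching $\gamma_1$. The paper glosses this by writing $S_0=cS_1$ ``by definition,'' so you are no worse off there; the substantive gap is the wedge argument above.
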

\begin{proof}
(See Figure~\ref{fig:S_k_and_S_0})  By definition $S_0=c S_1$.  Thus $S_2=\frac{S_1}{c}\cap P\mathbb{H}^+ = \frac{S_0}{c^2}\cap P\mathbb{H}^+$.  Clearly, $S_2 \neq \emptyset$ since $M_1 \subset S_2$ and $0<\theta<\pi$.  If $S_{k-1}=\emptyset$, then $S_k=\emptyset$ trivially.  Now fix an integer $k \geq 3$ and assume that $S_{k-1}=\frac{S_0}{c^{k-1}}\cap P\mathbb{H}^+\neq \emptyset$ and that $\gamma_0 \notin \underset{n=2}{\overset{k-1}{\bigcup}}S_n$.  Let $A=\PFL \cap S_{k-1}$.  Since $\gamma_0 \notin S_{k-1}$ then $A\subset \mathbb{H}^-\setminus \FL.$  (Here, $A$ may be the empty set.)  Thus $\frac{A}{c}\subset P\mathbb{H}^-\setminus \PFL$ and so $\frac{A}{c}\cap S_k=\emptyset$.  Thus $S_k=\frac{S_{k-1}}{c}\cap P\mathbb{H}^+=\frac{S_0}{c^{k}}\cap P\mathbb{H}^+.$
\end{proof}

\begin{figure}[htbp]
	\centering
		\includegraphics[width=1.0\textwidth]{./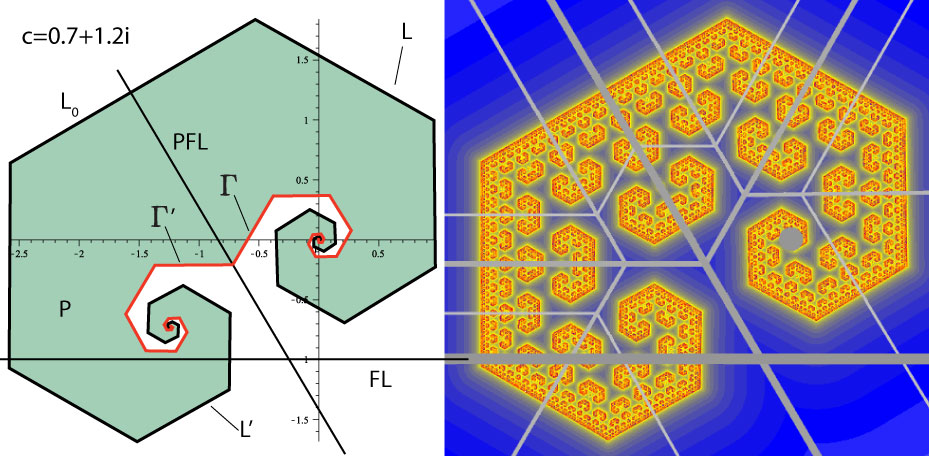}
	\caption[An example where $P$ resembles a ram's head]{An example where $P$ resembles a ram's head whose horns spiral and never self-intersect.}
	\label{fig:rams_head}
\end{figure}

We now wish to define the ray $M_0$ which intuitively is the ray perpendicular to $L_0$ that starts at $m_0$ and ``radiates outward'' from $P$.  

\begin{Def}
We define $M_0=(S_0 \cap S_1)$.  Also, for all integers $k>0$, we define $M_k=\frac{M_0}{c^k}$ and $M_{-k}=M_k '$.
\end{Def}

\begin{thm}\label{thm:noZeta}
If $\zeta$ does not exist, then:
\begin{enumerate}
	\item $P$ is a ram's head bounded by inner-most and outer-most boundaries of $P$,
	\item $\gamma_0 \notin P$,
	\item $\gamma_0 \notin K$,
	\item $K \subsetneq P.$
\end{enumerate}
\end{thm}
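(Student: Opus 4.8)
The plan is to establish the four assertions in the order they are stated, deriving everything from the single hypothesis that $\zeta$ does not exist. First I would record what that hypothesis says: by Definition~\ref{symbol:zeta} it means $L_n\cap\PFL=\emptyset$ for every $n\ge1$, and since $L_k=L_0/c^{\,k}$ for $k\ge0$ while $\PFL=\FL/c$, this is equivalent to
\[
L_k\cap\FL=\emptyset\qquad\text{for all }k\ge0 .
\]
From this and the contrapositive of Proposition~\ref{LcapFL}, the spiral $L$ has no self-intersection, and because $L_0\cap L=\{\ell_1\}$ the arc $L_0\cup L$ is simple. By Lemma~\ref{PFL is perp to L0} the segment $L_0$ meets $\PFL$ only at its midpoint $m_0$; since $\ell_1$ lies in $\Int(P\mathbb{H}^+)$ (as $c\ell_1=\ell_0$ and $\Im(\ell_0)>-1$) the connected set $L$ misses $\PFL$ and hence stays in $\Int(P\mathbb{H}^+)$, while its mirror image $L'$ stays in $\Int(P\mathbb{H}^-)$; so $L,L'$ are disjoint and $L'\cup L_0\cup L$ is a simple arc (a double spiral). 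The same argument, applied to the curve $\Gamma$ (where $\Gamma_k=\Gamma_0/c^{\,k}$ and $\Gamma_0\subset\PFL$), shows $\Gamma\cup\Gamma'$ is a simple arc lying strictly inside the loop of the outer spiral.

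Next I would prove~(1) by identifying $P=(\Int(S\cup S'))^c$ with the closed ram's-head region trapped between the outer spiral $L'\cup L_0\cup L$ and the inner spiral $\Gamma\cup\Gamma'$. Unwinding Definition~\ref{symbol:S}, $S_1$ is the unbounded strip between the consecutive perpendicular bisectors $\PFL$ and $\PFL/c$, cut off near $\ell_1$ by half of $L_0$ and half of $L_1$, and $S_k=S_{k-1}/c\cap P\mathbb{H}^+$. Because no arm of $L$ meets $\FL$ and consecutive arms never cross, Lemma~\ref{S_k_and_S_0} gives $S_k=S_0/c^{\,k}\cap P\mathbb{H}^+$ for every $k\ge2$, so that $S$ is exactly the exterior of the outer spiral's loop together with the spiralling hole bounded by $\Gamma$; symmetrising in $\PFL$ and taking complements then yields part~(1) (Figure~\ref{fig:rams_head} is the picture to keep in mind).

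With (1) in hand, part~(2) follows quickly. If $\alpha<-1$, Proposition~\ref{Real part of gamma_0} gives $\Re(\gamma_0)<\Re(\ell_0)$, while Proposition~\ref{P is right of ell_0} gives $\Re(z)\ge\Re(\ell_0)$ for every $z\in P$, so $\gamma_0\notin P$. If $\alpha\ge-1$, then by Proposition~\ref{Real part of gamma_0} and Corollary~\ref{Re(gamma_0)=Re(ell_0)} the $\PFL$ meets $f(L_0)=[m,\ell_0]$ on or below $\FL$ (compare Lemma~\ref{Re(gamma_0)<Re(ell_0)}), so $\gamma_0$---the outermost ``virtual vertex'' $\Gamma_0\cap\FL$ of the inner spiral, which is not itself a point of $\Gamma=\bigcup_{k\ge1}\Gamma_k$---sits in the open hole of the ram's head described above, that is, in $\Int(S\cup S')=P^c$. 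In both cases $\gamma_0\notin P$, which is~(2). Part~(3) is then immediate from Theorem~\ref{K subset P}, since $\gamma_0\notin P\supseteq K$. For~(4), take $\gamma_1=\gamma_0/c\in\Gamma_1$: by (1) the inner-most boundary $\Gamma\cup\Gamma'$ lies in $\Bd(P)\subseteq P$, so $\gamma_1\in P$; but $\gamma_1\in\PFL\subset P\mathbb{H}^+$, hence $f(\gamma_1)=c\gamma_1=\gamma_0\notin K$, and since $K$ is forward invariant $\gamma_1\notin K$. Thus $\gamma_1\in P\setminus K$, so $K\subsetneq P$.

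The hard part is the middle step: converting the recursive definition of the $S_k$ into the clean geometric statement that $P$ is exactly the region between the two spirals. This is precisely where the hypothesis ``$\zeta$ does not exist'' is used---it is what prevents an arm $L_k$ from reaching $\FL$, and a single such crossing would introduce a fold, truncate the corresponding $S_k$, collapse the hole, and replace the spiral outer boundary by a finite polygonal one, destroying the picture. A secondary technical point is proving that $\Gamma$ is genuinely simple and interior to the outer loop (the analogue of Proposition~\ref{LcapFL} for $\Gamma$); once that geometry is secured, parts (2)--(4) are as above.
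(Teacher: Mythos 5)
Your reformulation of the hypothesis ($L_k\cap\FL=\emptyset$ for all $k\ge 0$), your use of Proposition~\ref{LcapFL} to exclude self-intersections of $L$, and your deductions of (3) and (4) from (1)--(2) are fine; in particular your route to (4) via $\gamma_1\in\Gamma_1\subset\Bd(P)$ together with forward invariance of $K$ is essentially the paper's closing argument, slightly streamlined. The genuine gap sits exactly at the step you yourself flag as ``the hard part.'' Your appeal to Lemma~\ref{S_k_and_S_0} to conclude $S_k=\frac{S_0}{c^{k}}\cap P\mathbb{H}^+$ for \emph{every} $k\ge 2$ is not justified: the lemma requires $\gamma_0\notin\bigcup_{n=2}^{k-1}S_n$, and this hypothesis must eventually fail --- indeed that failure is the whole content of the theorem. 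Since the sets $\frac{S_0}{c^{n}}$, $n\ge 1$, cover $\mathbb{C}\setminus\{0\}$, there is a smallest $k$ with $\gamma_0\in\frac{S_0}{c^{k}}$; Lemma~\ref{S_k_and_S_0} may be applied only up to that $k$, giving $\gamma_0\in S_k$, and from then on the recursion changes to $S_{k+n}=\frac{S_k\cap\mathbb{H}^+}{c^{n}}$, which is precisely what makes the channel bounded and $P$ a ram's head. Your claimed formula for all $k$ would instead make the channel run forever between the turns of $L$, which is neither what the lemma yields nor what happens.

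Correspondingly, your proof of (2) in the main case $\alpha\ge-1$ is an assertion rather than an argument: you say $\gamma_0$ ``sits in the open hole \dots\ that is, in $\Int(S\cup S')$,'' but nothing you have established shows this. Note that when $\zeta$ exists with $\Im(\zeta)\le-1$ one has $\gamma_0\in K\subset P$ (Theorem~\ref{thm:zetaLow}), so $\gamma_0\in P^{c}$ is a delicate consequence of the hypothesis, not a generic feature of the picture. The paper closes this in two steps: first $\gamma_0\in S_k$ by the covering/minimality argument above; second, $\gamma_0$ is shown to be an \emph{interior} point of $S_k\cup S_k'\cup S_{k+1}\cup S_{k+1}'$, because $\gamma_0$ is not an endpoint of the segment $(S_k\cup S_{k+1})\cap\PFL$ (if $S_k\cap\FL=\{\gamma_0\}$ then $\gamma_0\in S_{k-1}$, contradicting the minimality of $k$). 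Only then does $\gamma_0\in\Int(S\cup S')=P^{c}$ follow; a mere boundary point of $S\cup S'$ would still lie in $P$, since $P=(\Int(S\cup S'))^{c}$. Your subcase $\alpha<-1$, handled via Propositions~\ref{Real part of gamma_0} and~\ref{P is right of ell_0}, is correct but peripheral. To repair the proposal you must supply the minimal-$k$ argument (or an equivalent) and the interiority step, rather than citing Lemma~\ref{S_k_and_S_0} uniformly in $k$ and reading $\gamma_0\notin P$ off the picture.
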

\begin{proof}
Since $|c| >1$, then $L$ spirals in toward the origin.  Furthermore, since $\zeta$ does not exist, then $L \subset P\mathbb{H}^+$.  Since $L_n$ intersects $S_n$ non-trivially, then $S_n \neq \emptyset$, for $n=1,2,...$.  Now the collection of sets $\frac{S_0}{c^n}$, $n=1,2,...$ cover $\mathbb{C} \setminus \{0\}$.  Thus $\gamma_0 \in \frac{S_0}{c^{n}}$ for some integer $n>0$.  In fact, there may be infinitely many such positive values for $n$ and we will denote the smallest of these values by $k$.  By the choice of $k$, $\gamma_0 \notin \underset{n=2}{\overset{k-1}{\bigcup}}\frac{S_0}{c^n}$ and so $\gamma_0 \notin \underset{n=2}{\overset{k-1}{\bigcup}}S_n$.  As previously argued, $S_{k-1} \neq \emptyset$.  Then by Lemma~\ref{S_k_and_S_0} $\gamma_0 \in \frac{S_0}{c^k}\cap P\mathbb{H}^+=S_k$.  Since $\gamma_0 \in S_k$ then $\Im(M_k\cap \PFL)\geq -1$.  Then $M_{k-1}\cap \FL \neq \emptyset$.  Thus $S_k \cap \mathbb{H}^+$ is nonempty and bounded.  It follows immediately that $S_{k+n}=\frac{S_k \cap \mathbb{H}^+}{c^n}$ for $n>0$ and so $\underset{n=1}{\overset{\infty}{\bigcup}}S_n$ spirals in toward the origin.  It is now easy to see that $P$ resembles a ram's head as shown if Figure~\ref{fig:rams_head}.

Now if $S_k \cap \FL = \{\gamma_0\}$ then $\gamma_0 \in S_{k-1}$ contradicting our choice of $k$.  Letting $A=(S_k\cup S_{k+1}) \cap \PFL$ we see that $\gamma_0\in A$ and that $\gamma_0$ is not an endpoint of the line segment $A$.  Then $\gamma_0 \in \Int(S_k \cup S_k' \cup S_{k+1} \cup S_{k+1}')$.  Thus, $\gamma_0 \notin P$.  Since $K \subset P$, then $\gamma_0 \notin K$.  Lastly, since $\gamma_0 \notin P$ and since $P$ is a closed set, then there is a neighborhood $U\ni \gamma_0$ such that $U\cap P = \emptyset$.  Now $\gamma_1 \in P$ and every neighborhood of $\gamma_1$ gets mapped into a neighborhood of $\gamma_0$.  Thus, there is a small enough neighborhood $V \ni \gamma_1$ such that $f(V) \subset U$ implying that $V \not \subset K$ and so $K \subsetneq P.$  
\end{proof}

\begin{figure}[htbp]
	\centering
		\includegraphics[width=1.0\textwidth]{./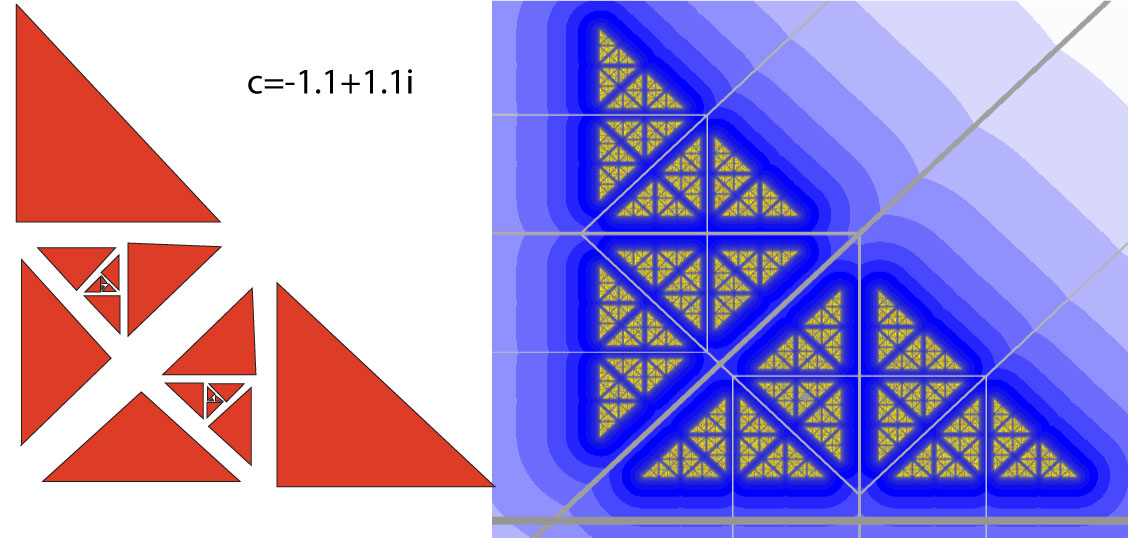}
	\caption{An example of $P$ when $\zeta$ does not exist and $K$ is totally disconnected.}
	\label{fig:totallyDisconnected}
\end{figure}

\begin{thm}\label{thm:zetaLow}
If $\zeta$ exists and $\Im(\zeta)\leq -1$, then: 
\begin{enumerate}
  \item $P$ is a polygon bounded by the outer-most boundary of $P$,
  %\item $\gamma_0 \in P$
  \item $P=K$,
  \item $\gamma_0 \in K.$
\end{enumerate}
\end{thm}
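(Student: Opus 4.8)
The plan is to first extract the constraints $\alpha\ge-1$ and $|c|\le\sqrt2$, then prove parts (1)--(3) in that order, with part (2) doing the real work. Since $\zeta$ exists with $\Im(\zeta)\le-1$, Proposition~\ref{Im(zeta) less than -1 implies alpha geq -1 and |c| leq sqrt(2)} gives $\alpha\ge-1$ and $|c|\le\sqrt2$; hence $L_0\subset K$ by Theorem~\ref{L_0 is a subset of K}, so $[m,\ell_0]=f(L_0)\subset K\subset P$, and $\Im(\ell_{-1})\le-1$ by Proposition~\ref{Im(ell_{-1})}, so (as in the proof of Theorem~\ref{L_0 is a subset of K}) $m\in L_{-1}$. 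Let $n$ be the index with $\{\zeta\}=L_n\cap\PFL$ and let $Q$ be the closed region bounded by the outer-most boundary of $P$; this boundary is a simple closed polygon, so $Q$ is compact, simply connected, and (by Corollary~\ref{P is symmetric with respect to the PFL}) symmetric about $\PFL$. \emph{For part (1)} I would prove $P=Q$ by analysing the wedge-sets $S_k$ of Definition~\ref{symbol:S}: the sets $S_1,\dots,S_{n-1}$ and their $\PFL$-reflections are the exterior wedges abutting the sides $L_1,\dots,L_{n-1}$ and $L_{-1},\dots,L_{-(n-1)}$ of $Q$, while $S_n,S_{-n}$ abut the remaining sides $[\ell_n,\zeta]$ and $[\ell_{-(n-1)},\zeta]$; the point is that $\Im(\zeta)\le-1$ severs the spiral, so the recursion $S_k=\frac{S_{k-1}}{c}\cap P\mathbb{H}^+$ produces nothing that winds around again (contrast Theorem~\ref{thm:noZeta}, where $\gamma_0\in S_k$ with $\Im(M_k\cap\PFL)\ge-1$ and the horn curls). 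Using Lemma~\ref{S_k_and_S_0} and the position of $\gamma_0$ on $\FL$ one concludes $\Int(S\cup S')$ is exactly the open exterior of $Q$, so $P=(\Int(S\cup S'))^c=Q$.

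\emph{For part (2),} by Theorem~\ref{K subset P} we have $K\subset P$, so it suffices to prove $P=Q$ is forward invariant: every orbit through the compact set $Q$ then stays bounded, giving $Q\subset K$. Since $Q$ is symmetric about $\PFL$ and $f$ is multiplication by $c$ on $P\mathbb{H}^+$ while $f(z)=c\,\Reflect(z,\PFL)$ on $P\mathbb{H}^-$, one gets $f(Q)=c\,(Q\cap P\mathbb{H}^+)$. The region $Q\cap P\mathbb{H}^+$ is the polygon with vertices $m_0,\ell_1,\dots,\ell_n,\zeta$ together with the edge $[\zeta,m_0]\subset\PFL$; applying $c$ and using $c\ell_k=\ell_{k-1}$, $cm_0=m$, $c\PFL=\FL$, and $c\zeta\in L_{n-1}\cap\FL$, the image $f(Q)$ is the polygon with vertices $m,\ell_0,\dots,\ell_{n-1},c\zeta$ and edge $[c\zeta,m]\subset\FL$. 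Every edge of $f(Q)$ other than $[c\zeta,m]$ lies on a side of $Q$ (here $m\in L_{-1}$ is used), so $\partial f(Q)\subset Q$ as soon as $[c\zeta,m]\subset Q$; and $\Im(\zeta)\le-1$ forces the part of $\partial Q$ lying below $\FL$ to be a single arc, so $Q\cap\FL$ is a single chord containing $[c\zeta,m]$. Then $\partial f(Q)\subset Q$, together with the boundedness of $f(Q)$ and simple connectivity of $Q$, forces $f(Q)\subset Q$. Hence $Q\subset K$, and with $K\subset P=Q$ we get $P=K$.

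\emph{For part (3):} the only points where $\partial Q$ meets $\PFL$ are $m_0$ and $\zeta$, so $Q\cap\PFL=[m_0,\zeta]$. Since $\PFL$ is not horizontal ($\beta>0$), $\Im$ is strictly monotone along $\PFL$; a short computation gives $\Im(m_0)\ge-1$ for every admissible $c$ (it uses only $\alpha\le|c|^2$, which always holds when $|c|>1$), while $\Im(\zeta)\le-1$ by hypothesis, so $\gamma_0$ --- the unique point of $\PFL$ with imaginary part $-1$ --- lies on $[m_0,\zeta]$. Therefore $\gamma_0\in Q=P=K$.

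The main obstacle is part (1): making precise that $\Im(\zeta)\le-1$ terminates the wedge recursion with $\Int(S\cup S')$ equal to the exterior of $Q$ (no leftover spiral, no intrusion into $\Int Q$), together with the companion fact used in part (2) that $Q\cap\FL$ is a single chord. Both are transparent from the pictures but, as in Proposition~\ref{Im(zeta) less than -1 implies alpha geq -1 and |c| leq sqrt(2)}, seem to require a case analysis on the number of sides of the outer-most boundary, with Lemma~\ref{3or5orInfinity} disposing of $\alpha\le0$ and the cases $\alpha>0$ handled by explicit modulus estimates on the $\ell_k$.
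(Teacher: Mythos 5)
Your skeleton matches the paper's: extract $\alpha\ge-1$ and $|c|\le\sqrt2$ from Proposition~\ref{Im(zeta) less than -1 implies alpha geq -1 and |c| leq sqrt(2)}, use Theorem~\ref{L_0 is a subset of K} to place $m=f(m_0)$ in $P$, prove forward invariance of the polygon by pushing the boundary of $P\cap P\mathbb{H}^+$ forward, and conclude $P\subset K$, hence $P=K$ and $\gamma_0\in K$. The problem is that part (1), which you yourself flag as ``the main obstacle,'' is never actually proved. The assertion that $\Im(\zeta)\le-1$ ``severs the spiral,'' so that $\Int(S\cup S')$ is exactly the exterior of the outer-most polygon, \emph{is} statement (1); invoking Lemma~\ref{S_k_and_S_0} plus a prospective case analysis on the number of sides (in the style of Proposition~\ref{Im(zeta) less than -1 implies alpha geq -1 and |c| leq sqrt(2)}) is a plan, not an argument, and it is also not the route that is needed. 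The paper closes this in a few lines with no case analysis: since $\zeta\in L\cap P\mathbb{H}^+\subset\bigcup_{k\ge1}S_k$, choose $k$ with $\zeta\in S_k$; if $S_k\not\subset\mathbb{H}^-$, then the midpoint $m_{k-1}\in S_{k-1}\cap S_k$ would lie in $\mathbb{H}^+\setminus\FL$, which forces $m_k\in P\mathbb{H}^+\setminus\PFL$ and contradicts $\zeta\in S_k$; hence $S_k\subset\mathbb{H}^-$, so $S_{k+1}$ is either empty or contained in $\PFL$, every later wedge has empty interior, and $P$ is simply connected, i.e.\ it is the polygon bounded by the outer-most boundary, which contains $\gamma_0$ because $\Im(\zeta)\le-1$.

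The same omission leaks into part (2): your invariance argument needs $[c\zeta,m]\subset Q$, which you reduce to the claim that $Q\cap\FL$ is a single chord, and that claim is again deferred to the unproved ``obstacle'' paragraph (the paper obtains $f([\zeta,m_0])=[m,c\zeta]\subset P$ directly from $m\in P$ and $\Im(\zeta)\le-1$ once (1) is in hand; your topological step that $\partial f(Q)\subset Q$ plus boundedness forces $f(Q)\subset Q$ is fine and is essentially the paper's reduction to $f(\Bd(A))\subset P$ with $A=P\cap P\mathbb{H}^+$). Part (3) as you state it is correct and slightly more explicit than the paper's remark --- indeed $\Im(m_0)=-\alpha/|c|^2\ge-1$ always, and $\Im$ is strictly monotone along $\PFL$ since $\beta>0$ --- but it presupposes $P=Q=K$. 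So the proposal has the right architecture, with the load-bearing step (1) missing.
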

\begin{proof}
Since $\zeta$ exists, then $\zeta \in L \cap P\mathbb{H}^+ \subset \underset{n=1}{\overset{\infty}{\bigcup}}S_k$.  Therefore $\zeta \in S_k$ for some $k>0$.  Since $\Im(\zeta) \leq -1$, then $S_k \subset \mathbb{H}^-$, for if not, then $m_{k-1}\in (S_k \cap S_{k-1})$ would be in $\mathbb{H}^+\setminus \FL$.  Then $m_k \in (P\mathbb{H}^+ \setminus \PFL)$ contradicting the assumption that $\zeta \in S_k$.  Thus, $S_k \subset \mathbb{H}^-$ and so either $S_{k+1}=\emptyset$ or $S_{k+1}\subset \PFL$.  In both cases it is easy to see that $S_{k+n} \subset P$ for $n>0$.  This means that $P$ is simply connected.  Thus, $P$ is a polygon bounded by the outer-most boundary of $P$ which must contain $\gamma_0$ since $\Im(\zeta)\leq -1$.  (See Figure~\ref{fig:m})

\begin{figure}[htbp]
	\centering
		\includegraphics[width=1.00\textwidth]{./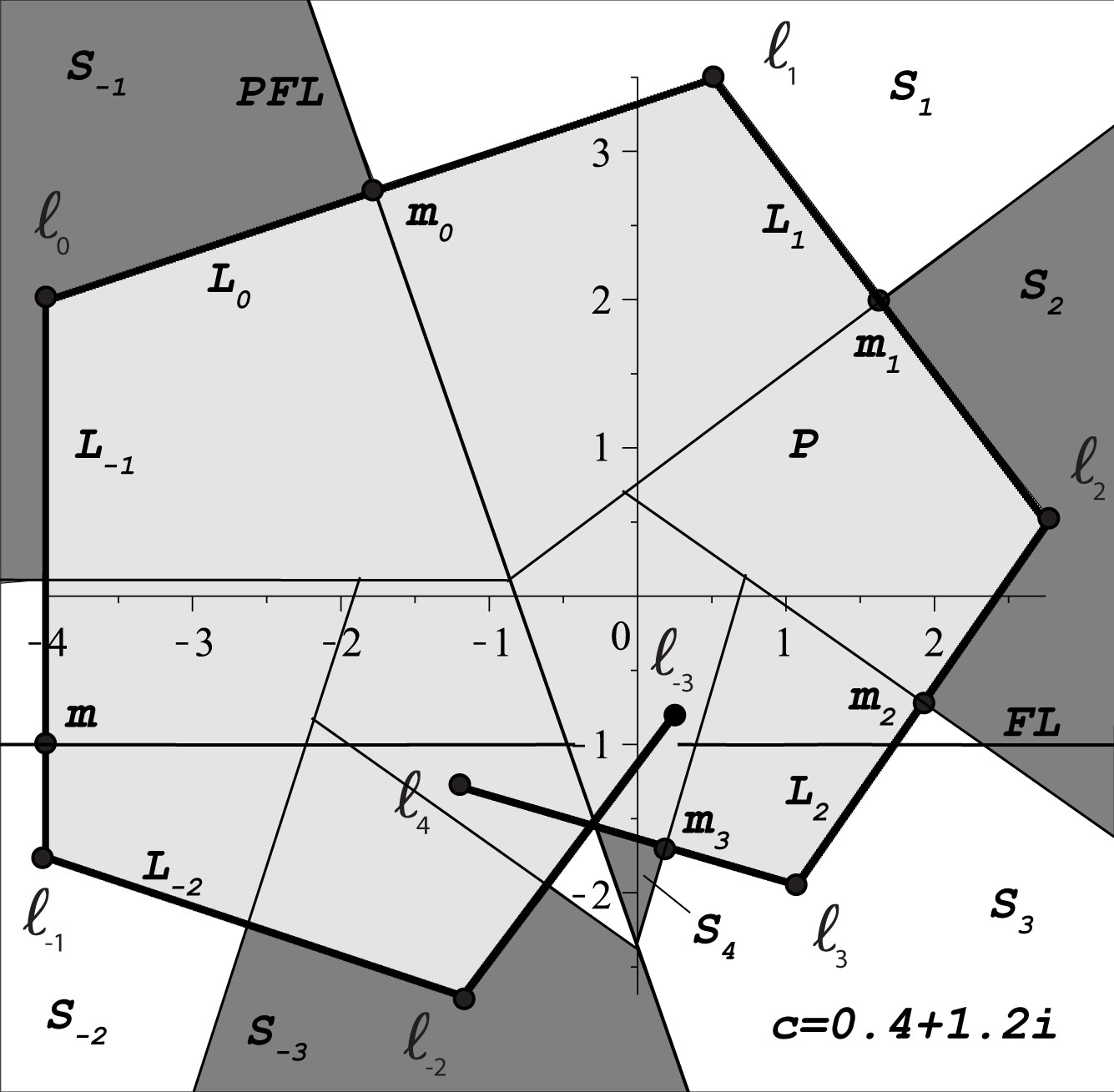}
	\caption{An illustration showing the locations of $m_k,k=0,1,2,3$. }
	\label{fig:m}
\end{figure}

Let $A=P\cap P\mathbb{H}^+$.  It is easy to see that $f(P)=f(A)=cA$ and so $\Bd(f(P))=f(\Bd(A)).$  Thus, to show that $f(P) \subset P$ we only need to show that $f(\Bd(A))\subset P$.  Now $\Bd(A)\subset \left(\underset{n=1}{\overset{k}{\bigcup}}L_n \right)\cup[\zeta,m_0]\cup[m_0,\ell_1]$.  By the construction of $L$ we have that $f(\underset{n=1}{\overset{k}{\bigcup}}L_n) \subset P$.  By Proposition~\ref{Im(zeta) less than -1 implies alpha geq -1 and |c| leq sqrt(2)} and Theorem~\ref{L_0 is a subset of K} we have that $L_0\subset K$.  This implies that $f(L_0)\subset K$.  Since $K \subset P$ then $f(L_0) \subset P$.  Since $m \in f(L_0)$ then $m \in P$.  Since $\Im(\zeta)\leq -1$ and since $m \in P$ then $f([\zeta,m_0])=[m,c\zeta]\subset P$.  Thus, $f(\Bd(A))\subset P$ and so $f(P) \subset P$ and $P \subset K$.  By Theorem~\ref{K subset P} $K \subset P$ and so $P=K$.  Since $\gamma_0 \in P$ then $\gamma_0 \in K$. (See Figure~\ref{fig:P_into_P})
\end{proof}

\begin{figure}[htbp]
	\centering
		\includegraphics[width=1.00\textwidth]{./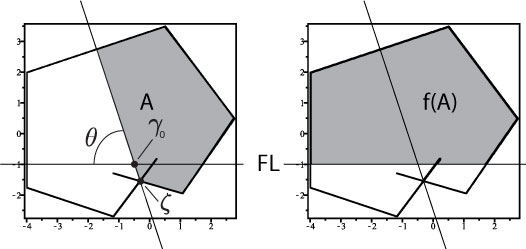}
	\caption{An example where $f(P) \subset P$.}
	\label{fig:P_into_P}
\end{figure}

\begin{thm}\label{thm:zetaHigh}
If $\zeta$ exists and $\Im(\zeta) > -1$ then: 
\begin{enumerate}
  %\item $\gamma_0 \notin P$
  \item $\gamma_0 \notin K.$
  \item $P$ is not the polygon bounded by the outer-most boundary of $P$.
  \item $K \subsetneq P$
\end{enumerate}
\end{thm}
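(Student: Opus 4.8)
The plan is to treat this last case in parallel with Theorems~\ref{thm:noZeta} and~\ref{thm:zetaLow}: the condition $\Im(\zeta)>-1$ is exactly what keeps the chain $S_1,S_2,S_3,\dots$ from terminating, so that $P^{c}=\Int(S\cup S')$ carries a genuine spiral of arms, and one then locates $\gamma_0$ inside one of those arms. To begin, since $\zeta$ exists let $n$ be the least positive integer with $\zeta\in L_n\cap\PFL$; arguing exactly as in the opening lines of the proof of Theorem~\ref{thm:zetaLow}, $\zeta\in L\cap P\mathbb{H}^+\subset\bigcup_{j\ge 1}S_j$, so $\zeta\in S_k$ for some $k>0$.

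\textbf{The spiral persists (towards statement (2)).} Because $\Im(\zeta)>-1$, the point $\zeta$ lies in $\mathbb{H}^+\setminus\FL$, so $S_k\not\subset\mathbb{H}^-$ --- the opposite of the situation in Theorem~\ref{thm:zetaLow}, where $\Im(\zeta)\le -1$ forces $S_k\subset\mathbb{H}^-$ and the chain collapses. Moreover, since membership in $P\mathbb{H}^+$ is the condition $\Im(cz)\ge -1$ and here $\Im\!\left(c(\zeta/c)\right)=\Im(\zeta)>-1$, we get $\zeta/c\in P\mathbb{H}^+$, hence $\zeta/c\in (S_k/c)\cap P\mathbb{H}^+=S_{k+1}$; in fact a whole neighbourhood of $\zeta/c$ lies in $S_{k+1}$. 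Iterating this observation at each later stage (the relevant part of $S_j$ always crossing $\PFL$ strictly above $\FL$) I would show that $S_j\ne\emptyset$ with nonempty interior for every $j$, and that $\bigcup_j S_j$ spirals in toward the origin precisely as in the proof of Theorem~\ref{thm:noZeta}. These persistent arms $S_{k+1},S_{k+2},\dots$ (and their reflections) contribute interior points to $S\cup S'$ that are absent in the truncated case, so $P=(\Int(S\cup S'))^{c}$ cannot coincide with the polygon bounded by the outer-most boundary of $P$; this is statement~(2).

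\textbf{Locating $\gamma_0$ and finishing.} Recall from the proof of Theorem~\ref{thm:noZeta} that the sets $S_0/c^{j}$, $j\ge 1$, cover $\mathbb{C}\setminus\{0\}$; let $m>0$ be least with $\gamma_0\in S_0/c^{m}$. By the previous step $S_{m-1}\ne\emptyset$, and by minimality $\gamma_0\notin\bigcup_{j=2}^{m-1}S_j$, so Lemma~\ref{S_k_and_S_0} gives $S_m=(S_0/c^{m})\cap P\mathbb{H}^+$; since $\gamma_0\in\PFL\subset P\mathbb{H}^+$ we obtain $\gamma_0\in S_m$, and, exactly as in Theorem~\ref{thm:noZeta}, $\gamma_0$ is a non-endpoint of the segment $(S_m\cup S_{m+1})\cap\PFL$, whence $\gamma_0\in\Int(S_m\cup S_m'\cup S_{m+1}\cup S_{m+1}')\subset P^{c}$. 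Thus $\gamma_0\notin P$, and $K\subset P$ (Theorem~\ref{K subset P}) yields $\gamma_0\notin K$, which is statement~(1). For statement~(3), $P$ is compact (Lemma~\ref{P is compact}), so fix an open $U\ni\gamma_0$ with $U\cap P=\emptyset$; since $\gamma_1=\gamma_0/c\in\PFL$ and $f(\gamma_1)=c\gamma_1=\gamma_0$, continuity gives an open $V\ni\gamma_1$ with $f(V)\subset U\subset P^{c}$, and as $P^{c}$ is forward invariant (Proposition~\ref{P^c is invariant}) the entire orbit of each point of $V$ stays in $P^{c}$, so $V\cap K=\emptyset$ by Theorem~\ref{K subset P}. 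Since $\gamma_1\in P$ (arguing as in Theorem~\ref{thm:noZeta}, or directly: $\gamma_1\in\PFL\cap(\PFL/c)$ and the symmetry of $P$ about $\PFL$ keep $\gamma_1$ off the interior of every arm), $V$ witnesses $K\subsetneq P$.

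The main obstacle is the first step: showing rigorously that $\Im(\zeta)>-1$ forces the arms $S_j$ to be nonempty with nonempty interior for all $j$. This is the mirror image of the truncation analysis in Theorem~\ref{thm:zetaLow}, run in the opposite direction, and the supporting bookkeeping --- how the persistent arms sit relative to the region bounded by the outer-most boundary (needed to make statement~(2) precise), and the endpoint/non-endpoint positions of $\gamma_0$ and $\gamma_1$ on $\PFL$ --- must be handled carefully. The claim $\gamma_1\in P$ is the other delicate point, since the inner-most boundary of $P$ is not available when $\zeta$ exists.
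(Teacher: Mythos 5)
Your framework is the paper's (the arms $S_j$, $K\subset P$ from Theorem~\ref{K subset P}, forward invariance of $P^c$ from Proposition~\ref{P^c is invariant}), but the step carrying statements (1) and (2) in your write-up is not merely unfinished; it is false in general. You claim that $\Im(\zeta)>-1$ forces every $S_j$ to be nonempty with nonempty interior, so that $\bigcup_j S_j$ spirals into the origin as in Theorem~\ref{thm:noZeta}. One step of that is right: with $k$ the least index such that $\zeta\in L_k$ and $A$ the closed triangle bounded by $\FL$, $\PFL$ and $L_k$, one has $\Int(S_{k+1})=\Int(A/c)\neq\emptyset$. But the next arm $S_{k+2}=(S_{k+1}/c)\cap P\mathbb{H}^+$ is (up to boundary) $(A/c^{2})\cap P\mathbb{H}^+$, which is empty whenever $A/c$ lies strictly below $\FL$; the chain can terminate exactly as in the proof of Theorem~\ref{thm:zetaLow}. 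This is why the paper's proof only says $P$ is the outer polygon minus \emph{at most countably many} open sets, and why the right-hand image of Figure~\ref{fig:zetaHighProof} shows a $P$ with $\Im(\zeta)>-1$ and only finitely many open sets removed. Your proof of (1) inherits the problem: you need $S_{m-1}\neq\emptyset$ for a possibly large $m$ in order to invoke Lemma~\ref{S_k_and_S_0}, and you obtain it only from the persistence claim. The paper's argument is purely local at stage $k$: since $L_k$ meets both $\FL$ and $\PFL$ and $\Im(\zeta)>-1$, the triangle $A$ has nonempty interior, $\gamma_0\in\Int(S_k\cup S_k')$ directly (hence (1) via Theorem~\ref{K subset P}), and $\Int(S_{k+1})=\Int(A/c)\neq\emptyset$; no later arm is needed.

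The two points you explicitly defer are exactly where the remaining content of (2) and (3) lies, so the proposal does not close them. For (2), since $P\subset X$ (the polygon bounded by the outer-most boundary) automatically, producing a nonempty removed open set is not enough: you must show it meets $X$. The paper does this by combining Proposition~\ref{P is right of ell_0} with the fact that $f(L_0)$ is the vertical segment from $\ell_0$ down to $\FL$, concluding $\Int(A/c)\cap X\neq\emptyset$. For (3), your route needs $\gamma_1\in P$, and the symmetry remark you offer is not an argument: a priori $\gamma_1$ could sit in the interior of some arm $S_j\cup S_j'$ just as $\gamma_0$ does. The paper avoids the issue with the auxiliary set $B=h^{-1}(A)/c$: it checks $\Int(B)\cap P\neq\emptyset$ and $f^2(B)=A$, and since part of $A$ lies outside $P$, points of $P$ inside $\Int(B)$ escape $P$ under two iterations and so lie in $P\setminus K$. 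Either adopt these local arguments or supply correct substitutes; as written, (1) and (2) rest on a claim that fails in general, and (3) rests on an unproved assertion.
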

\begin{proof}
Since $|c| >1$, then $L$ spirals in toward the origin.  Since $\zeta$ exists, then $\zeta \in L_k$ for some $k>0$.  We may assume that $k$ is the smallest positive integer such that $\zeta \in L_k$.  Since $\Im(\zeta)>-1$, (and by our choice of $k$) then $\gamma_0 \in \Int(S_k \cup S_k')$.  Thus, $\gamma_0 \notin P$ and so $\gamma_0 \notin K$.  Since $L_k \cap \PFL \neq \emptyset$ then $L_{k-1} \cap \FL \neq \emptyset$.  By our choice of $k$ we see also that $L_k \cap \FL \neq \emptyset$.  Since $L_k$ intersects both $\FL$ and $\PFL$, then we can let $A$ be the closed triangle bounded by $ \FL$, $\PFL$, and $L_k$.  It is clear that $A$ has nonempty interior since $\Im(\zeta) >-1$ (see Figure \ref{fig:zetaHighProof}).

It follows immediately that $\Int(S_{k+1})=\Int(\frac{A}{c})\neq \emptyset$.  Now let $X$ be the polygon bounded by outer-most boundary of $P$.  Then $P \subset X$ and $A \not \subset X$.  By definition $P=X \setminus \underset{n=k+1}{\overset{\infty}{\bigcup}}S_n\subset X\setminus \Int(\frac{A}{c})$.  Since $\Int(A) \neq \emptyset$ and by Proposition \ref{P is right of ell_0}, there are points in $\Int(A)$ with real part greater than $\Re(\ell_0)$.  Since $f(L_0)$ is the vertical line segment from $\ell_0$ to $\FL$, then is easy to see that $\Int(A/c)\cap X \neq \emptyset$.  This means that $P$ is not the polygon bounded by the outer-most boundary of $P$ and is instead this polygon minus at most countably many open sets.  Each of these open sets are bounded on two sides by preimages of $\FL$.  Now (when $\zeta$ exists) the outer-most boundary of $P$ must have at least 3 sides.  Let $B=\frac{h^{-1}(A)}{c}$ (see Figure \ref{fig:zetaHighProof}).  Then $\Int(B)\cap P \neq \emptyset$ and $f^2(B)=A$.  Thus, $K \subset P$, $P \neq K$ and so $K \subsetneq P$.
\end{proof}

\begin{figure}[htbp]
	\centering
		\includegraphics[width=1.00\textwidth]{./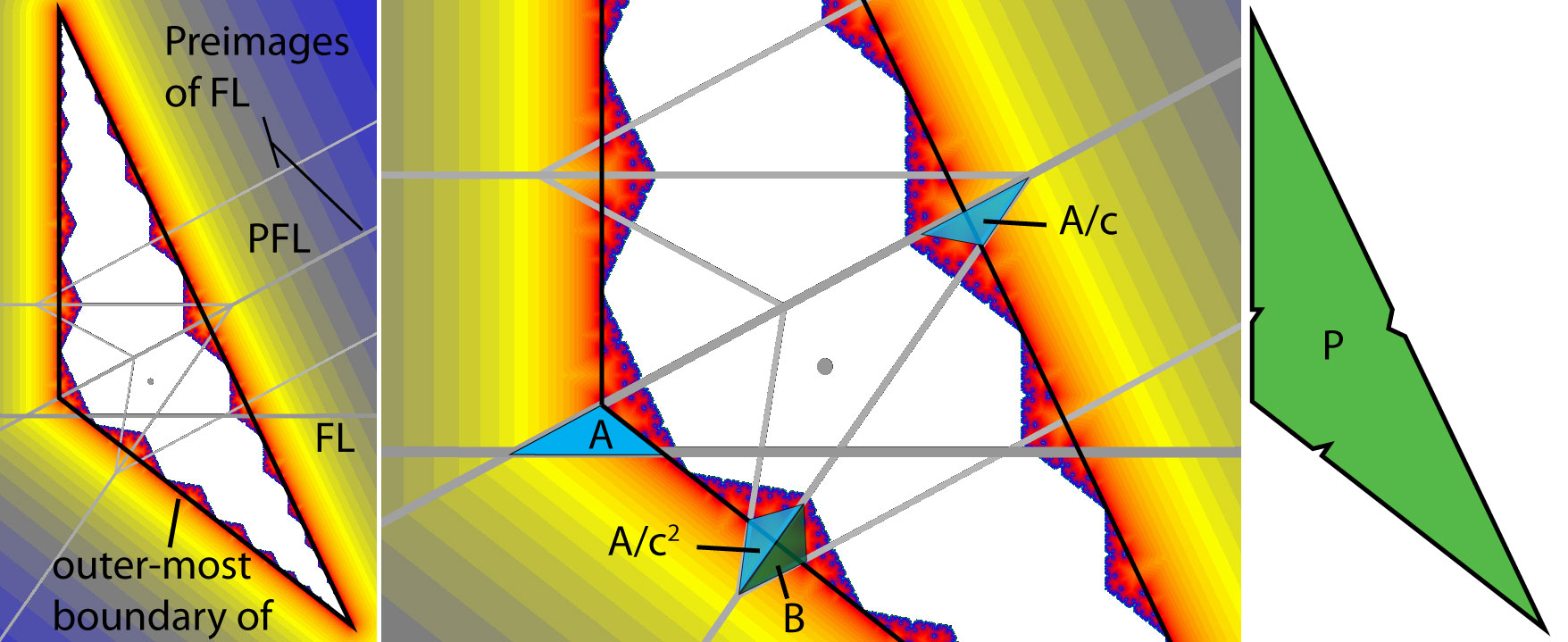}
	\caption[$P$ when $\Im(\zeta)>-1$]{The left-most image is $K(c)$ surrounded by the outer-most boundary of $P$.  The middle image shows the construction of the sets $A,B$ used in Theorem \ref{thm:zetaHigh}.  The right-most image is the corresponding perimeter set $P$.}
	\label{fig:zetaHighProof}
\end{figure}

Another example of $P$ when $\Im(\zeta)>-1$ is shown in Figure \ref{fig:P_alpha_dot9}.  In this example, $P$ is a polygon with countably many open sets removed.  Compare this to Figure \ref{fig:zetaHighProof} which shows on the far right a $P$ which is the polygon bounded by the outer-most boundary of $P$ with finitely many open sets removed.

\begin{figure}[htbp]
	\centering
		\includegraphics[width=1.00\textwidth]{./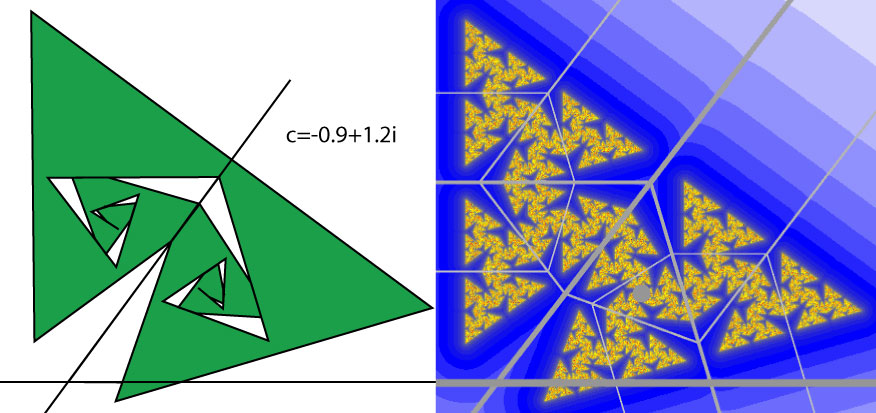}
	\caption{An example of $P$ and $K$ when $\zeta$ is above $\FL$.}
	\label{fig:P_alpha_dot9}
\end{figure}
\end{section}

\begin{section}{Hungry Sets and Crises}\label{Hungry Sets and Crises}

Proposition~\ref{One attracting periodic point} states that there are no attracting periodic orbits (other than the point at infinity if we are on the Riemann sphere.)  However, we can still have sets that attract in some sense.  

\begin{Def}
A compact set $A$ with positive 2-dimensional Lebesgue measure, and with the property that $f(A)=A$ will be called a \textbf{hungry set} if every point whose
orbit contains a subsequence that converges to $A$ eventually lands inside $A$ (gets eaten).
\end{Def}

Hungry sets often ``attract'' sets of positive Lebesgue measure.  For this reason, we will borrow (and possibly modify) some familiar terminology.

\begin{Def}
The basin of attraction (or consumption, if you like) of a hungry set $A$ is definined as $\underset{j=0}{\overset{\infty}{\bigcup}}f^{-j}(A)$.
\end{Def}

Note that under this definition, a basin of attraction is not neccessarily open.

\begin{lem}\label{KisHungry}
If $K$ has nonempty interior, then $K$ is a hungry set.
\end{lem}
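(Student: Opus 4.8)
The plan is to verify the three requirements in the definition of a hungry set for $A=K$ --- compactness, positive two-dimensional Lebesgue measure, and $f(K)=K$ --- and then to prove the ``eating'' property. It is worth noting in advance that the hypothesis that $K$ has nonempty interior enters only through the measure condition; the eating property will hold for $K$ in any case.

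For compactness: by Theorem~\ref{K subset P}, $K\subset P$, and $P$ is bounded by Lemma~\ref{P is compact}, so $K\subset\overline{B(0,R)}$ for some $R>0$. Since $K$ is forward invariant we may write $K=\bigcap_{n\ge 0}f^{-n}(\overline{B(0,R)})$, an intersection of closed sets, hence $K$ is closed and so compact. Positive Lebesgue measure is immediate, since by hypothesis $K$ contains an open disk. That $f(K)=K$ is Lemma~\ref{K is completely invariant}.

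The heart of the matter is the escape dichotomy: \emph{if $z\notin K$, then $f^n(z)\to\infty$}. If $z\notin K$, the orbit of $z$ cannot stay in $P$ for all $n$, for $P$ is bounded and an orbit confined to $P$ would be bounded, forcing $z\in K$; so $f^N(z)\in P^c$ for some $N$. By Proposition~\ref{P^c is invariant}, $f^n(z)\in P^c$ for all $n\ge N$, and by Lemma~\ref{Expansion outside of P} with induction, $d(f^{N+k}(z),P)\ge |c|^k\,d(f^N(z),P)$ for every $k\ge 0$. As $|c|>1$ and $d(f^N(z),P)>0$, the right-hand side diverges, and since $P$ is bounded this forces $|f^{N+k}(z)|\to\infty$. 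Consequently the forward orbit of any $z\notin K$ has no bounded subsequence, and since $K$ is bounded, no subsequence of that orbit can converge to $K$.

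From this the eating property follows at once: if the orbit of $z$ has a subsequence converging to $K$, then $z\in K$, and then $z$ is already (and forever) in $K$, so it is trivially ``eaten''. Hence $K$ is a hungry set. The only delicate point is the meaning of ``a subsequence converges to $K$'', which I read as $d(f^{n_j}(z),K)\to 0$; under this reading the argument above applies because $d(f^{n_j}(z),K)\ge|f^{n_j}(z)|-R$ whenever $K\subset\overline{B(0,R)}$. I do not expect a real obstacle --- everything reduces to the expansion estimate outside $P$ (Theorem~\ref{K subset P}, Proposition~\ref{P^c is invariant}, Lemma~\ref{Expansion outside of P}) --- the one small insight being that this dichotomy, rather than any finer property of $\Int(K)$, is exactly what the definition of a hungry set demands.
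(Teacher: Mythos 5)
Your proposal is correct and follows essentially the same route as the paper's own proof: check compactness, positive measure (the only place the nonempty-interior hypothesis is used), and $f(K)=K$, then note that any point outside $K$ escapes, so the eating condition holds vacuously because an orbit with a subsequence approaching $K$ must start in $K$. The paper merely asserts the escape statement (``if $z \notin K$ then it spends finite time near $K$''), while you justify it via Theorem~\ref{K subset P}, Proposition~\ref{P^c is invariant}, and Lemma~\ref{Expansion outside of P}, which is exactly the mechanism the paper itself uses elsewhere, so this is the same argument with the implicit step made explicit.
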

\begin{proof}
$K$ is a compact set with positive Lebesgue measure and with the property that $f(K)=K$.  Furthermore, if $z \notin K$ then it spends finite time near $K$ and thus, there is a positive lower bound to how close the orbit of $z$ comes to $K$.
\end{proof}

It is important to note that a hungry set is not necessarily contained in an open subset of its basin of attraction.  In fact, Lemma~\ref{KisHungry} implies that no $K$ with positive 2-dimensional Lebesgue measure can be contained in an open forward invariant set.

\begin{lem}\label{A subset K}
If $A$ is a hungry set then $A \subset K\cap \mathbb{H}^+$.
\end{lem}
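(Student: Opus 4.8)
The plan is to establish the two containments $A\subseteq K$ and $A\subseteq\mathbb{H}^+$ separately, each as a direct consequence of the defining relation $f(A)=A$; neither the positive-measure hypothesis nor the ``eating'' property of a hungry set is needed for this particular statement.

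First I would prove $A\subseteq K$. Since $f(A)=A$, in particular $f(A)\subseteq A$, so $A$ is forward invariant: for every $z\in A$ the whole forward orbit $\{f^n(z):n\geq 0\}$ stays in $A$. Because $A$ is compact it is bounded, hence every point of $A$ has bounded trajectory under iteration of $f$, which is exactly the condition defining the filled-in Julia set. Therefore $A\subseteq K$.

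Next I would prove $A\subseteq\mathbb{H}^+$. The key fact is that $f(\mathbb{C})\subseteq\mathbb{H}^+$, i.e. the image of every point lies in $\mathbb{H}^+$; this was already observed in the proof of Theorem~\ref{mod c equals 1} and used repeatedly afterward, and it follows at once from the definition of $f$ together with the canonical choice $\FL=\{z:\Im(z)=-1\}$: if $\Im(cz)\geq -1$ then $f(z)=cz\in\mathbb{H}^+$, while if $\Im(cz)<-1$ then $\Im(f(z))=\Im(\overline{cz}-2i)=-\Im(cz)-2>-1$, so again $f(z)\in\mathbb{H}^+$. Consequently $A=f(A)\subseteq f(\mathbb{C})\subseteq\mathbb{H}^+$. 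Combining this with the previous paragraph yields $A\subseteq K\cap\mathbb{H}^+$, as claimed.

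There is essentially no obstacle here; the only point that deserves to be stated with care is the elementary observation $f(\mathbb{C})\subseteq\mathbb{H}^+$, everything else being formal manipulation of the relation $f(A)=A$ and compactness of $A$.
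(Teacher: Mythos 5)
Your proposal is correct and follows essentially the same route as the paper: compactness plus forward invariance ($f(A)=A$) forces bounded orbits, hence $A\subseteq K$, and then $A=f(A)\subseteq f(\mathbb{C})\subseteq\mathbb{H}^+$ gives the second containment. The only difference is that you spell out the elementary verification that $f(\mathbb{C})\subseteq\mathbb{H}^+$, which the paper takes as already known.
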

\begin{proof}
Since every point outside of $K$ diverges, it is easily seen that no compact forward invariant sets exist outside of $K$.  Also, $f(A)\subset f(K)\subset K\cap \mathbb{H}^+$ and we are done.
\end{proof}

\begin{prop}
If $A$ is a hungry set with a nontrivial component (more than one point) $C$, then there exists an $n>0$ such that $f^n(C) \cap \FL \neq \emptyset$.
\end{prop}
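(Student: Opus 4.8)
The plan is to argue by contradiction, exploiting that $f$ expands all distances by exactly the factor $|c|>1$ away from the pre-folding line $\PFL$, while the set $A$ (and hence every forward image of $C$) stays bounded. First I would collect the standing facts: since $C$ is a connected component of the compact set $A$ and consists of more than one point, it is compact with $d:=\Diam(C)>0$; and since $f(A)=A$ we have $f^n(C)\subseteq f^n(A)=A$ for every $n\geq 0$, so each $f^n(C)$ is connected and, being a subset of the bounded set $A$, satisfies $\Diam(f^n(C))\leq \Diam(A)<\infty$.

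Next I would translate the hypothesis from $\FL$ to $\PFL$. Recall that $\PFL=f^{-1}(\FL)$, and that for $w\in\PFL$ we have $\Im(cw)=-1$, so $f(w)=cw$; thus $f$ maps $\PFL$ bijectively onto $\FL$ by $w\mapsto cw$. Consequently, if $f^n(C)\cap\PFL\neq\emptyset$ for some $n\geq 0$, then the image of such a point lies in $f^{n+1}(C)\cap\FL$. So, assuming toward a contradiction that $f^n(C)\cap\FL=\emptyset$ for all $n>0$, we deduce that $f^n(C)\cap\PFL=\emptyset$ for every $n\geq 0$. Now $\PFL$ splits $\mathbb{C}$ into the two open half-planes $\Int(P\mathbb{H}^+)$ and $\Int(P\mathbb{H}^-)$, and on each of these $f$ is a similarity of ratio $|c|$ (it is $z\mapsto cz$ on the first and $z\mapsto\overline{cz}-2i$ on the second). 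Since a connected set disjoint from $\PFL$ lies entirely in one of these two open half-planes, we get $\Diam(f^{n+1}(C))=|c|\,\Diam(f^n(C))$ for all $n\geq 0$, hence inductively $\Diam(f^n(C))=|c|^n d$. As $|c|>1$, this tends to infinity, contradicting $\Diam(f^n(C))\leq\Diam(A)$. Therefore some $f^n(C)$ with $n>0$ must meet $\FL$.

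I do not expect a serious obstacle here; the argument is the same diameter-expansion idea that underlies Proposition~\ref{noConvergence} and Lemma~\ref{Prefolding lines are dense in K}. The only points needing a little care are the bookkeeping that the $\FL$-hypothesis for all $n>0$ already forces the $\PFL$-hypothesis for $n=0$ as well (immediate from $f(\PFL)=\FL$), and verifying that the distance-scaling factor is exactly $|c|$ on both folding branches — both routine. It is worth noting that the "hungry" properties of $A$ beyond compactness and forward invariance are not actually used in this particular statement.
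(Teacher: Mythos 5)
Your proof is correct and follows essentially the same diameter-expansion argument as the paper: if the images of $C$ never meet $\FL$, they stay off $\PFL$, so each iterate multiplies $\Diam$ by $|c|>1$, contradicting boundedness. The only cosmetic difference is that you bound the diameters using compactness of $A$ directly, whereas the paper invokes $C\subset K$ via Lemma~\ref{A subset K}; both work, and your spelled-out justification of the scaling step (a connected set disjoint from $\PFL$ lies in one half-plane where $f$ is a similarity of ratio $|c|$) is exactly the detail the paper leaves implicit.
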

\begin{proof}
Since $C$ is a nontrivial component, then it is connected and $\Diam(C)=d>0$.  By Lemma~\ref{A subset K} $C \subset K$ and so the sequence $(\Diam(f^n(C)))$ is bounded.  But $|c|>1$ and if $f(C) \cap \FL = \emptyset$ then $\Diam(f(C))=|c|\Diam(C)>\Diam(C)$.  The result follows easily.
\end{proof}

Note that the union of two hungry sets is a hungry set.  For this reason, we need the following definition.

\begin{Def}
We say that a hungry set is \textbf{reducible} if it contains a proper subset which is a hungry set.  Otherwise, a hungry set is said to be \textbf{irreducible}.
\end{Def}

\begin{Def}
A hungry set $A$ is called \textbf{greedy} if it is equal to the closure of its basin of attraction.  (That is, $A$ is greedy if it has already eaten everything it possibly can.)
\end{Def}

\begin{lem}\label{hungry and greedy}
A hungry set is greedy if and only if it is fully invariant.
\end{lem}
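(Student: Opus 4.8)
The plan is to unwind both notions to set-theoretic inclusions involving $f^{-1}(A)$. Recall that for a hungry set we already have $f(A)=A$, so ``fully invariant'' contributes exactly the extra condition $f^{-1}(A)=A$. Observe first that one inclusion is automatic: if $a\in A$ then $f(a)\in f(A)=A$, so $a\in f^{-1}(A)$; thus $A\subseteq f^{-1}(A)$ for every hungry set. Hence proving full invariance amounts to proving the reverse inclusion $f^{-1}(A)\subseteq A$, and the whole statement reduces to showing: $A$ is greedy $\iff$ $f^{-1}(A)\subseteq A$.

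For the forward implication, I would suppose $A$ is greedy, i.e.\ $A=\overline{\bigcup_{j=0}^{\infty}f^{-j}(A)}$. Since a set is contained in its closure, $\bigcup_{j=0}^{\infty}f^{-j}(A)\subseteq A$; in particular the $j=1$ term gives $f^{-1}(A)\subseteq A$, which combined with the automatic inclusion above yields $f^{-1}(A)=A$.

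For the converse, suppose $f^{-1}(A)=A$. A trivial induction on $j$ (base case $f^{-0}(A)=A$; step $f^{-(j+1)}(A)=f^{-1}(f^{-j}(A))=f^{-1}(A)=A$) shows $f^{-j}(A)=A$ for every $j\geq 0$, so $\bigcup_{j=0}^{\infty}f^{-j}(A)=A$. Since $A$ is compact, hence closed, $\overline{\bigcup_{j=0}^{\infty}f^{-j}(A)}=\overline{A}=A$, so $A$ equals the closure of its basin of attraction and is therefore greedy.

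There is no substantial obstacle here; the only points requiring a moment's care are the bookkeeping conventions — that the basin $\bigcup_{j\geq 0}f^{-j}(A)$ includes the $j=0$ term $A$ itself, and that ``fully invariant'' for a hungry set means precisely $f^{-1}(A)=A$ in the presence of the standing hypothesis $f(A)=A$.
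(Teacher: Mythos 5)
Your proof is correct and is essentially the paper's argument: the paper simply states that the lemma ``follows immediately from the definitions,'' and your write-up is exactly that unwinding (greedy gives $f^{-1}(A)\subseteq A$ from the closure of the basin, the automatic inclusion $A\subseteq f^{-1}(A)$ from $f(A)=A$ gives equality, and conversely full invariance collapses the basin to $A$, which is closed by compactness). No gaps; nothing further needed.
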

\begin{proof}
This follows immediately from the definitions.
\end{proof}

\begin{prop}
Let $A$ be a greedy hungry set.  Then $f(\Bd(A))\subset \Bd(A)$.
\end{prop}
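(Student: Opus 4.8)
The plan is to prove the inclusion by contradiction, exploiting two facts: that $f$ is continuous and that a greedy hungry set is fully invariant. First I would record that $f$ is continuous on all of $\mathbb{C}$, since the two branches in the definition of $f$ agree on $\FL$: if $\Im(cz)=-1$ then $cz$ has the form (real)$-i$, and $\overline{cz}-2i$ equals the same value, so the piecewise formula glues continuously. Second, I would invoke Lemma~\ref{hungry and greedy}: since $A$ is greedy it is fully invariant, so $f^{-1}(A)=A$; in particular $f(A)\subset A$, and since $A$ is compact it is closed, so $\Bd(A)\subset A$.

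With these in hand the argument is short. Pick $z\in\Bd(A)$; then $z\in A$ and hence $f(z)\in f(A)\subset A$. Suppose for contradiction that $f(z)\notin\Bd(A)$, i.e.\ $f(z)\in\Int(A)$. Then there is an open set $V$ with $f(z)\in V\subset A$, so $f^{-1}(V)$ is an open neighborhood of $z$ contained in $f^{-1}(A)=A$; this forces $z\in\Int(A)$, contradicting $z\in\Bd(A)$. Hence $f(z)\in\Bd(A)$, and since $z\in\Bd(A)$ was arbitrary, $f(\Bd(A))\subset\Bd(A)$.

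I do not anticipate a genuine obstacle: the only step requiring a moment's attention is the continuity of $f$ (needed so that preimages of open sets are open), and the rest is a routine topological argument resting on Lemma~\ref{hungry and greedy}. One could alternatively avoid citing full invariance by working directly with the definition of greedy as $A$ being the closure of its basin $\underset{j=0}{\overset{\infty}{\bigcup}}f^{-j}(A)$, but routing through Lemma~\ref{hungry and greedy} is the cleanest presentation.
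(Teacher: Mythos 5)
Your proposal is correct and is essentially the paper's own argument: pick $z\in\Bd(A)$, suppose $f(z)\in\Int(A)$, pull back an open neighborhood $V\subset\Int(A)$ of $f(z)$ by continuity, and use full invariance ($f^{-1}(A)=A$) to conclude $z\in\Int(A)$, a contradiction. The only differences are cosmetic: you additionally verify the continuity of $f$ and the inclusions $\Bd(A)\subset A$ and $f(A)\subset A$, which the paper leaves implicit.
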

\begin{proof}
This is consequence of full-invariance.  Let $z \in \Bd(A)$ and suppose $f(z)\in A \setminus \Bd(A)$.  Then there is an open set $V \subset \Int(A)$ where $f(z)\in V$.  Since $f$ is continuous and $A$ is fully invariant, then $U=f^{-1}(V)$ is an open subset of $A$ containing $z$.  This contradicts the assumption that $z \in \Bd(A)$.
\end{proof}

\begin{Def}
We will denote the closure of a set $X$ by $\Cl(X)$.\label{symbol:Cl}
\end{Def}

\begin{Def}
Recall that the \textbf{omega-limit set of $z$} is \label{symbol:omega}
\begin{equation*}
\omega(z,f)= \underset{n \in \mathbb{N}}{\bigcap} \Cl(\{f^k(z):k>n\}).
\end{equation*}
\end{Def}

\begin{Def}
We will call the boundary of any set that is significant dynamically a \textbf{dynamical boundary}.  A continuous change in parameter can cause dynamical boundaries to come together, meet, and then cross.  We will adapt the term \textbf{boundary crisis} to mean the meeting of dynamical boundaries.
\end{Def}

Some examples of dynamical boundaries include the outer-most boundary of $P$, the inner-most boundary of $P$, the boundary of hungry sets, $\FL$, and the boundary of any periodic sets.

We now discuss some experimental results giving each type of result its own subsection.  Let $A$ be a hungry set.  We will give examples where the following seem to occur.

\begin{enumerate}
	\item An example where the coded-coloring of $K$ shows periodic structures in $K$.
	\item An example where $A$ is a topological annulus.
	\item Renormalization.
	\item Examples where boundary crises cause sudden changes in dynamics.
	\item An example where $A$ has multiple components bounded by a topological annulus.
	\item Examples where an increase in the modulus of $c$ causes the components of $A$ to swell.  This swelling leads to boundary crises causing sudden changes in dynamics.
	\item An example where $K$ contains multiple disjoint hungry sets.
\end{enumerate}

\subsection{The Coded-Coloring of $K$ Shows Periodic Structures in $K$}

A traditional way to make pictures when studying the dynamics in one complex variable, is to color a pixel based on how quickly the trajectory of the corresponding point in the plane leaves a region known to contain $K$.  This region is usually a ball whose radius is called the \textbf{bailout value}.  This coloring method is known as the escape time algorithm.  If $K$ has nonempty interior, then using this escape time algorithm produces colorful pictures like the first one in Figure~\ref{fig:TBcoloring}, where there is a large black region (which is $K$) surrounded by color.  But the escape time algorithm alone does not give any information as to what is happening on the inside of $K$.  In Figure~\ref{fig:TBcoloring} the last three pictures use the escape time algorithm but also use a new coloring method we call the \textbf{coded-coloring}.  Informally, for TTM's, the method of coded-coloring is an escape time algorithm for how long it takes for the orbit of a point to be in $P\mathbb{H}^+$ $N$ times, where $N$ is the bailout value.  We now give a more formal and general definition of coded-colorings.
	
\begin{Def}\label{symbol:eta}
Let $X$ be a topological space and let $f$ be a map from $X$ to $X$.  Let $\varphi$ be a map from $X$ to $\mathbb{R}$ and for every $x \in X$ we call the sequence $S(x)=(\varphi(f^n(x)))=(x_n), n=0,1,2,...$ the \textbf{itinerary of x.} We will abuse notation and will let $\eta^n(x)=\underset{j=0}{\overset{n}{\Sigma}}x_j.$  Then a \textbf{coded-coloring of $X$} is any escape-time coloring under iteration of $\eta$.
\end{Def}

\begin{figure}[htbp]
	\centering
		\includegraphics[width=1.00\textwidth]{./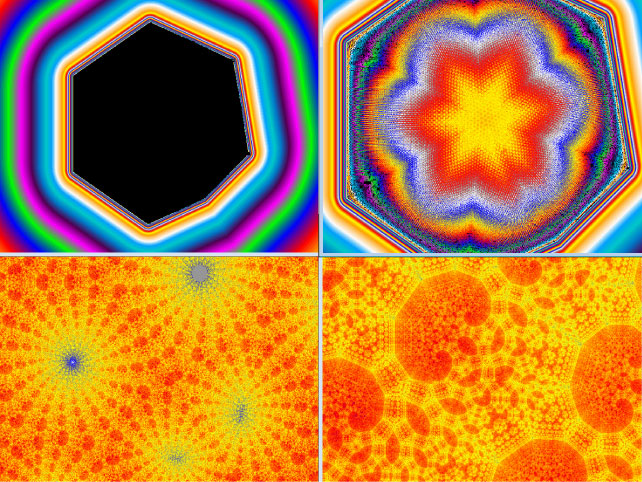}
	\caption[The coded-coloring of $K$]{This shows first the traditional escape time picture of the filled-in Julia set followed by the three progressively closer looks inside $K$ when using the coded-coloring.  A close look at the coded-coloring reveals that there is structure inside of $K$ and gives clues as to what that structure is.  In each picture $c=0.5567+0.8471i$.}
	\label{fig:TBcoloring}
\end{figure}

Most of our pictures use the fastest coloring between the traditional escape time coloring and the coded-colorings.  For our coded-colorings we typically use a bailout value between 80 and 400 and have $\varphi$ defined by

\begin{equation}
\varphi(z)=
\begin{cases}
1 & \text{if } z \in P\mathbb{H}^+, \\ 
0 & \text{if } z\in P\mathbb{H}^-.
\end{cases}
\end{equation}

The common refinement of a partition is a new partition defined by intersection of preimages of a partition.  The coded-coloring is a way to color the common refinement of a partition in a way that is sometimes useful.  However, the bailout value used in the coded-coloring needs to be chosen appropriately based on the size of the structures for which one is looking. Larger structures most easily seen with a coded-coloring bailout value that is low, but can become very hard to see with much larger values.  Obviously, to see very small structures, a very fine partition is needed and this is achieved by using a large bailout value.

Their are two known reasons why the coded-coloring shows the internal structure of $K$.  First, points that stay close together for a long time will tend to be colored similarly.  Second, the boundaries of the structures that appear in the coded-coloring are repelling periodic structures.  Thus, patterns of color accumulate on the repelling structures.  This is comparable to the Theorem in Rational Complex Dynamics that the preimages of almost every point will accumulate on the Julia set.
	
It is worth noting that a coded-coloring can be used to see structure in the filled-in Julia sets of quadratic complex polynomials.  Attracting periodic points cause large groups of points to be colored the same way.  Also, since the Julia set is the closure of the set of periodic repelling points, then patterns of color accumulate only on the Julia set.  Figure~\ref{fig:julia} shows the filled-in Julia set for the map $f(z)=z^2-0.6$.  Because the parameter is real, then $K$ is symmetric about the real axis.  To have a changes in color occur when crossing a preimage of the real axis, we used:

\begin{equation}
\varphi(z)=\left\{
\begin{array}{lrlr} 1 &, \Im(z)\geq 0 \\ 
0 &, \Im(z)<0.
\end{array}\right.
\end{equation}

\begin{figure}[htbp]
	\centering
		\includegraphics[width=0.70\textwidth]{./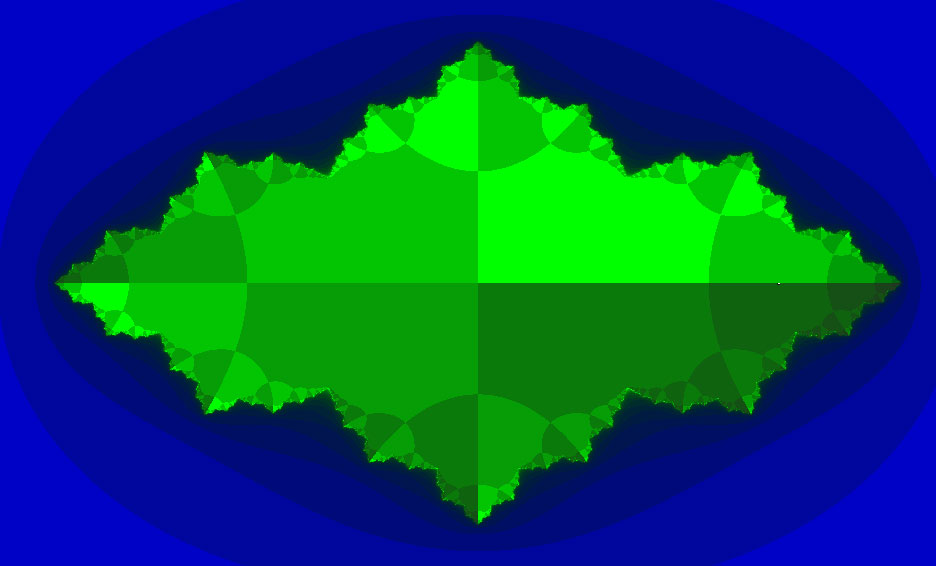}
	\caption{The coded-coloring of the filled-in Julia set of the map $f(z)=z^2-0.6$.}
	\label{fig:julia}
\end{figure}

\subsection{A Hungry Set that is a Topological Annulus}
	Figure~\ref{fig:annulus} shows an example of where $A$ is expected to be a topological annulus bounded inside and out by the images of $ \FL \cap A$.  This figure shows the trajectory of a point in $A$.  It seems likely then that $A=\omega(z,f)$ for some $z \in A$.
	
\begin{figure}[htbp]
	\centering
		\includegraphics[width=0.62\textwidth]{./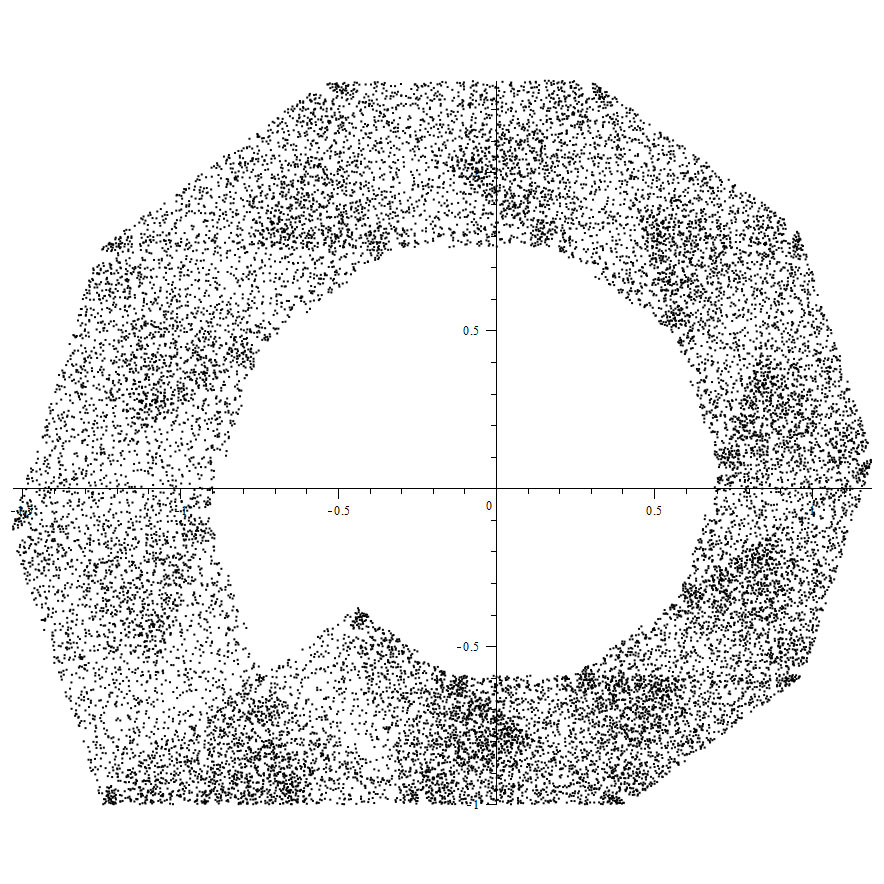}
	\caption{An example where $A$ is a topological annulus.}
	\label{fig:annulus}
\end{figure}

	\subsection{Renormalization}

Renormalization is an important topic that is studied in papers like \cite{Milb} and \cite{Hub}.  The following examples seem to suggest that the tools of renormalization could be used to study TTM's.

%Informally, a map, say $f_c$, is renormalizable if there are sets $U$ and $V$ and an integer $n>0$ such that $f_c ^n|_U$ is essentially the same as another map, say $f_{c'}|_V$, and where $f_c|_U$ and $f_{c'}|_V$ belong to a common family.  
%
%Now suppose $f_c$ is the function 
%If $f_1, f_2$ belong to the family of TTM's (not necessarily with the canonical choice of $\FL$), then this situation arises when the coded-coloring of $K(f_1)$ shows an embedded copy of $K(f_2)$, which will be called a sub-K.  If $V$ is such a sub-K, then there exists an $n>0$ such that $f_1 ^n|_V$ is conjugate via an affine transformation to $f_2|_{K(f_2)}$.

Letting $c=1.05i$ we have $c^4=1.21550625\in \mathbb{R}$.  Then by Theorem~\ref{K for real c} $K(c^4)=[-2i/c,0]\approx[-1.6454i,0]$.  It is easily seen that for all $z \in K(c^4)$, $f_c ^4(z)=f_{c^4}(z)$.  In particular, this means that $K(c^4)$ is embedded by the identity map into $K(c)$.  The embedded structure of $K(c^4)$ and its preimages under $f_c$ can be seen in the coded-coloring of $K(c)$ given in Figure~\ref{fig:rectangles}.  A separate example is given in Figure~\ref{fig:embedding}, which shows an overlay of $K(c^3)$ and $K(c)$ for $c=-0.6+0.9i$.
		
\begin{figure}[htbp]
	\centering
		\includegraphics[width=0.60\textwidth]{./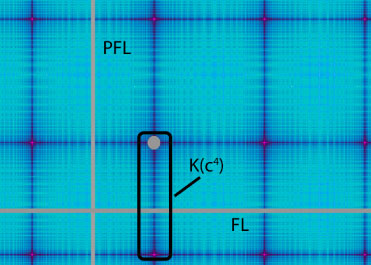}
	\caption[Simple structures found in a coded-coloring of $K$]{The coded-coloring near the origin of $K(1.05i)$.  The origin is marked and $K((1.05)^4)$ and its preimages under $f_c$ are seen as straight line segments.}
	\label{fig:rectangles}
\end{figure}

\begin{figure}[htbp]
	\centering
	\includegraphics[width=0.80\textwidth]{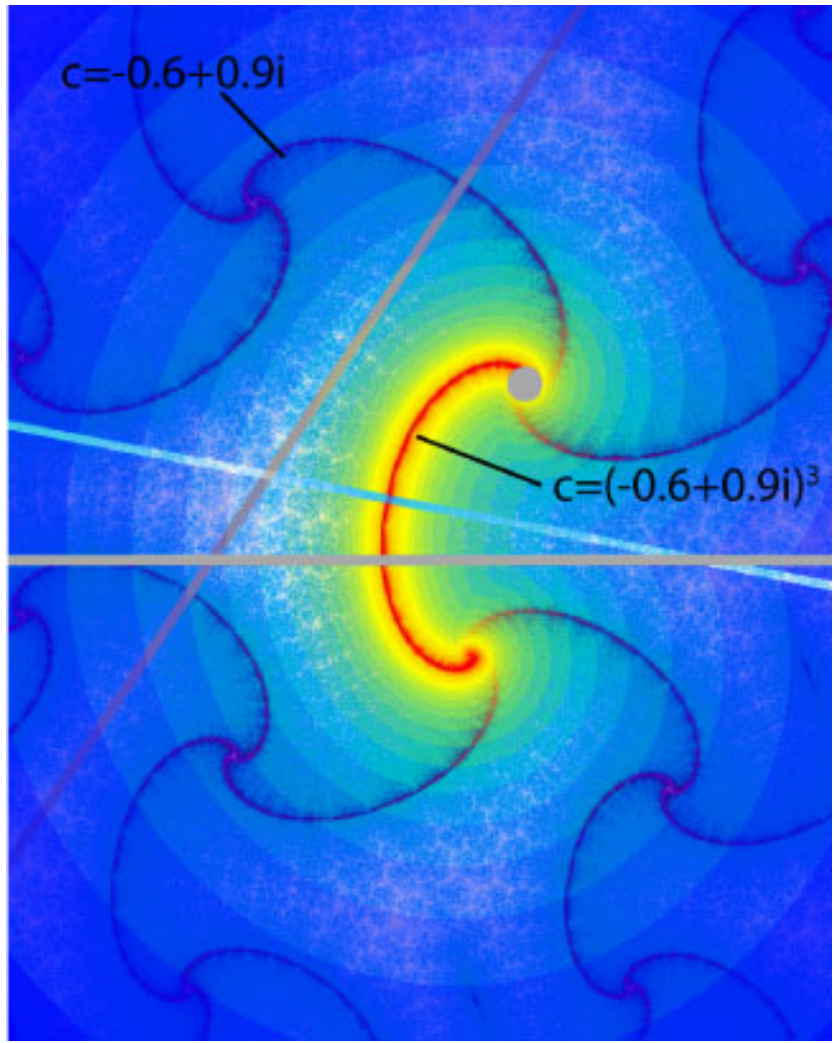}
	\caption{The coded-coloring of $K(c)$ overlaid with $K(c^3)$.}
	\label{fig:embedding}
\end{figure}

\begin{prop}
Let $c=-0.65+0.88i$.  There is an affine transformation $\varphi$ such that for every $z \in K(|c|^2c)$, we have $\varphi(f_{|c|^2c}(z))=f_c^3(\varphi(z))$.  Furthermore, $\varphi(K(|c|^2c))$ is a period 3 component of a hungry set in $K(c)$.
\end{prop}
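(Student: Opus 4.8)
The plan is to realize $f_c^3$, restricted to a suitable neighbourhood $N$ of the period-3 component, as an affine copy of the twisted tent map $f_{c'}$ with $c'=|c|^2c$, and then to transport $K(c')$ through that conjugacy. First I would locate the connected set $C_0$ (the structure visible in the coded-coloring of $K(c)$) together with its $f_c$-orbit $C_1=f_c(C_0)$, $C_2=f_c(C_1)$, and read off the symbolic itinerary of $C_0,C_1,C_2$ relative to $\PFL$. The claim to be checked by direct estimates on the (explicit, polygonal) location of these three sets is that $C_0$ and $C_1$ each lie entirely on one side of $\PFL$ — so that $f_c$ acts on each of them by one of the two affine branches $g(z)=cz$, $h(z)=\overline{cz}-2i$ \emph{without folding} — while $C_2$ straddles $\PFL$, so that $f_c$ folds $C_2$ onto $C_0$. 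Consequently, on a neighbourhood $N$ of $C_0$ the map $f_c^3$ is a two-piece affine map: one branch is $g$ (resp.\ $h$) composed with the two unfolded legs, the other branch is the complementary choice composed with the same two legs.

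The key computation is to identify the linear parts of these two branches. Tabulating the eight threefold compositions of $g$ and $h$ and their orientations, one finds that for the itinerary realised here, $f_c^3|_N$ consists of one orientation-preserving branch $z\mapsto |c|^2c\,z+\text{const}$ and one orientation-reversing branch $z\mapsto |c|^2\overline c\,\overline z+\text{const}$ — exactly the linear parts of the two branches $w\mapsto c'w$ and $w\mapsto\overline{c'w}-2i$ of $f_{c'}$ with $c'=|c|^2c$, since $\overline{c'}=|c|^2\overline c$ and $|c'|=|c|^3>1$; note $c'$ has positive imaginary part, in keeping with our standing assumptions (otherwise one composes with the conjugacy of Proposition~\ref{We only need consider beta>0}). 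I would then write down $\varphi$ directly: take $\varphi$ affine with $\varphi(0_{c'})$ equal to the fixed point of the orientation-preserving branch of $f_c^3|_N$ and $\varphi(\ell_0(c'))$ equal to the fixed point of the orientation-reversing branch, where $0_{c'}$, $\ell_0(c')$ are the two fixed points of $f_{c'}$. This pins $\varphi$ down uniquely, and a short computation — the matching of the constant terms reducing to an algebraic identity in $c$, which can also just be verified numerically at $c=-0.65+0.88i$ — shows $\varphi\circ f_{c'}=f_c^3\circ\varphi$ on a neighbourhood of $K(c')$. The one delicate point, and the main obstacle, is to confirm that $\varphi$ carries the folding line $\{\Im w=-1\}$ of $f_{c'}$ precisely onto the preimage-of-$\FL$ curve that cuts $C_0$; this is where the straddling of $\PFL$ by $C_2$ is used, and where the explicit coordinates of $C_0$ must be controlled carefully enough (e.g.\ by interval estimates) to rule out any other fold line of $f_c^3$ entering $N$.

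With the conjugacy in hand I would finish as follows. Define $C_0:=\varphi(K(|c|^2c))$; since $\varphi$ is an affine homeomorphism and, on $N$, $f_c^3$ is intertwined with $f_{c'}$, and since $K(c')$ is exactly the $f_{c'}$-bounded set (equivalently the set whose orbit never leaves a fixed large disk, by Proposition~\ref{One attracting periodic point} applied to $f_{c'}$), the set $C_0$ is $f_c^3$-invariant and equals the set of points of $N$ whose $f_c^3$-orbit stays in $N$. Then $A:=C_0\cup f_c(C_0)\cup f_c^2(C_0)$ is compact with $f_c(A)=A$. To see $A$ is a hungry set: $K(c')$ has nonempty interior, because $\Re(c')=|c|^2\alpha\ge -1$ and $|c'|=|c|^3\le\sqrt 2$, so $L_0\subset K(c')$ by Theorem~\ref{L_0 is a subset of K}, and in fact $\zeta(c')$ exists with $\Im\zeta(c')\le -1$, whence $K(c')=P(c')$ is a polygon by Theorem~\ref{thm:zetaLow}; hence $C_0$ has nonempty interior and $A$ has positive planar measure. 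The hungry property follows as in Lemma~\ref{KisHungry}: a point near $A$ but outside $A$ lies, after pulling back by $\varphi$, near but outside $K(c')=P(c')$, and Lemma~\ref{Expansion outside of P} (via Theorem~\ref{K subset P}) shows its $f_{c'}$-orbit moves a definite distance away from $K(c')$, so through $\varphi$ its $f_c^3$-orbit cannot accumulate on $C_0$ without eventually entering it. Finally $C_0,C_1,C_2$ are distinct by the itinerary (they sit on prescribed sides of $\PFL$, which $C_0$ does not), so $C_0$ is a component of $A$ of period exactly $3$, which is the assertion.
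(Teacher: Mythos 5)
Your proposal follows essentially the same route as the paper: the paper likewise builds an explicit affine conjugacy $\varphi(z)=(\Im(z_0)+1)z+z_0$ anchored at the period-3 point $z_0=\frac{2i(1-\overline{c})}{|c|^2c-1}$ (the fixed point of $h\circ h\circ g$, i.e.\ of your orientation-preserving branch) and verifies the branch itinerary by checking that $\varphi(B)$, for a ball $B\supset K(|c|^2c)$ of radius $5.85$, lies in the region where $f_c^2=h\circ g$. Your fixed-point normalization of $\varphi$ and your interval estimates play the same role as the paper's explicit formula and ball containment, and your more detailed treatment of the hungry-set conclusion only adds rigor where the paper simply asserts that the conjugacy implies it.
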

\begin{proof}
	Figure~\ref{fig:conjugacyP3} shows an example where the embedding map is an affine transformation.  Let $c=-0.65+0.88i$, $g(z)=cz$, and $h(z)=\overline{cz}-2i$.  It is easily checked that if $z_0=\frac{2i(1-\overline{c})}{|c|^2 c-1}$, then $f_c ^3(z_0)=h(h(g(z_0)))=z_0$.  Let $\varphi(z)=(\Im(z_0)+1)z+z_0$.  Let $B$ be a ball of radius 5.85 centered at $0+i$.  The left-most picture in Figure~\ref{fig:conjugacyproof} shows that $K(|c|^2c)\subset B$.  The right-most picture in Figure~\ref{fig:conjugacyproof} shows that $\varphi(B)$ is contained in a non-shaded region, which is the set of points where $f_c ^2(z)=h(g(z))$.  It is then a straightforward computation to show that for every $z \in K(|c|^2c)$, we have $\varphi(f_{|c|^2c}(z))=f_c^3(\varphi(z))$.	This conjugacy also implies that $\varphi(K(|c|^2c))$ is a period 3 component of a hungry set in $K(c)$.
\end{proof}
	
\begin{figure}[htbp]
	\centering
		\includegraphics[width=1.00\textwidth]{./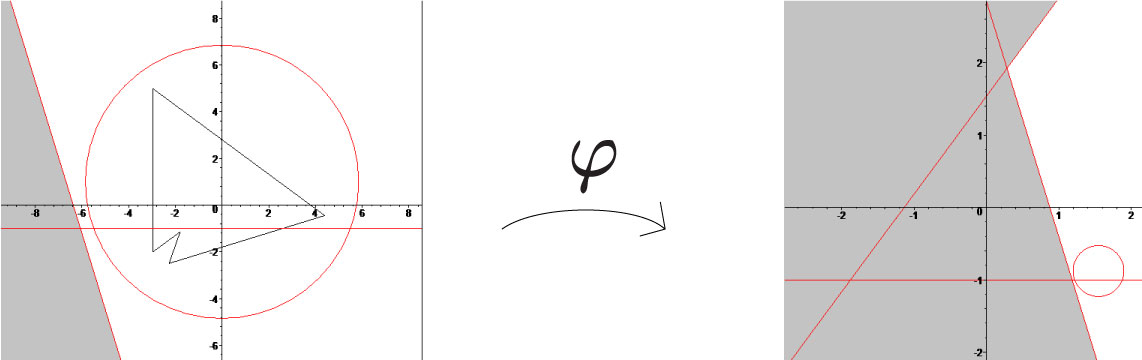}
	\caption[The image of $K(|c|^2c)$ under a conjugating map]{An illustration showing $K(|c|^2c)$ is contained in a ball whose image under $\varphi$ is contained in the non-shaded region, which is the set of points where $f_c ^2(z)=h(g(z))$.}
	\label{fig:conjugacyproof}
\end{figure}

\begin{figure}[htbp]
	\centering
		\includegraphics[width=1.00\textwidth]{./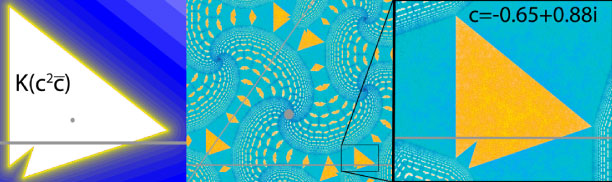}
	\caption[$K(|c|^2 c)$ embeds into $K(c)$ by an affine transformation]{An example of where $K(|c|^2 c)$ embeds into $K(c)$ by an affine transformation.}
	\label{fig:conjugacyP3}
\end{figure}
	
Figures~\ref{fig:Decomp1} and~\ref{fig:Decomp2} show pairs of images grouped vertically.  For each pair, the top image is $K(c)$ and the bottom image is an overlay of up to three of $K(c),K(c^2),$ and $cK(c^2)$.  These pictures suggest some of the more complicated $K(c)$ can often be seen as the result of piecing together several other filled-in Julia sets.

\begin{figure}[htbp]
	\centering	
		\includegraphics[width=1.0\textwidth]{./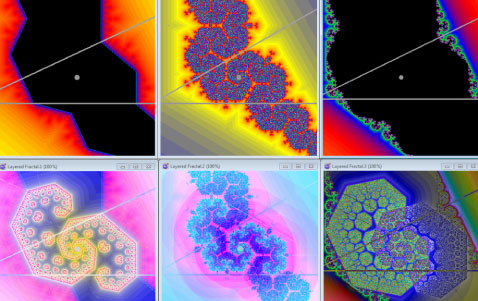}
		\caption{$K(c)$ can sometimes be decomposed into smaller filled-in Julia sets.}
	\label{fig:Decomp1}
\end{figure}

\begin{figure}[htbp]
	\centering
		\includegraphics[width=1.0\textwidth]{./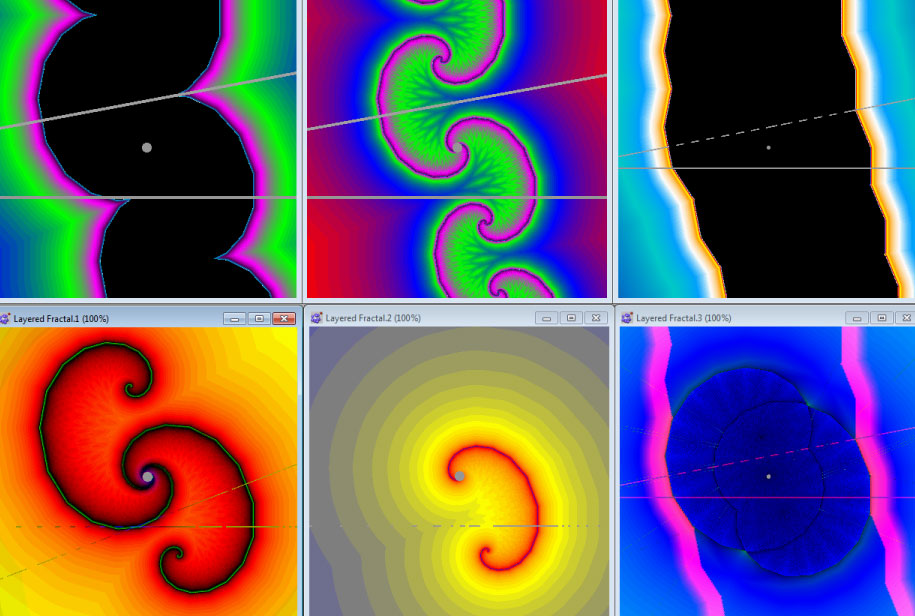}
		\caption[Composite dynamics]{$K(c)$ can have nonempty interior when areas within it are bounded by structures locally conjugate to other filled-in Julia sets.}
	\label{fig:Decomp2}
\end{figure}	

	\begin{Def}
	The embedded image of one filled-in Julia set into another will be called a \textbf{sub-K}.
	\end{Def}

	\subsection{Boundary Crises and Sudden Changes in Dynamics}

By perturbing the parameter so that $c=-0.04+1.05i$ we find that $c^4=1.20492481+0.18495120i\notin \mathbb{R}$, and yet $K(c^4)$ is still embedded into $K(c)$ by the identity map.  This is seen in the coded-coloring of $K(c)$ given in Figure~\ref{fig:axes}.

\begin{figure}[htbp]
	\centering
		\includegraphics[width=0.90\textwidth]{./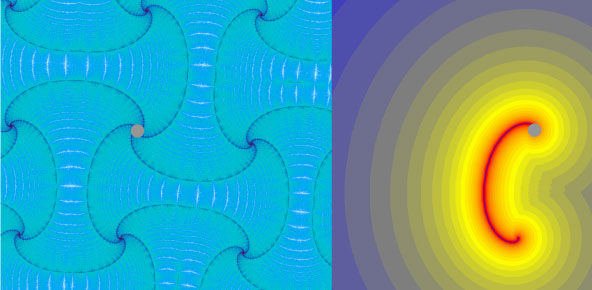}
	\caption[An embedding of $K(c^4)$]{On the left is the coded-coloring near the origin of $K(c)$ where $c=-0.04+1.05i$.  The origin is marked and $K(c^4)$ and its preimages under $f_c$ are seen.  On the right is $K(c^4)$.}
	\label{fig:axes}
\end{figure}

In both of the cases given by Figures~\ref{fig:rectangles} and~\ref{fig:axes} applying $f_c$ four times was equivalent to applying $f_{c^4}$ once.  This is due to no point getting folded more than once every four iterations.  

The coded-coloring becomes more interesting when a continuous change in parameter brings$ \FL$ and a preimage of $K(c^4)$ together causing a boundary crisis.  A small perturbation of parameter can cause these dynamical boundaries to cross.  This leads to the creation of islands as shown in Figure~\ref{fig:branchingOut} and is the same mechanism as was discussed in reference to Figure~\ref{fig:KcontinuousChange}.

\begin{figure}[htbp]
	\centering
		\includegraphics[width=1.0\textwidth]{./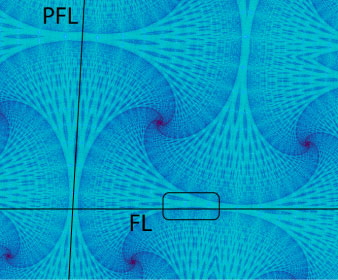}
	\caption[The creation of ``islands'']{The coded-coloring near the origin of $K$ when $c=-0.05+1.05i$.  Comparing this to the left image in Figure~\ref{fig:axes},  we see that islands are created when a structure is introduced into $\mathbb{H}^+$ from below.  The source of the islands is indicated.}
	\label{fig:branchingOut}
\end{figure}

Another boundary crisis occurs when $c$ is perturbed until $K(c^4)$ touches $\PFL$.  If $K(c^4)$ crosses $\PFL$, then there is a loss of the fixed point $\ell_0$ of $f_{c^4}$.  This in turn leads to a loss of structure.  An example of this is given in Figure~\ref{fig:washed}.  

\begin{figure}[htbp]
	\centering
		\includegraphics[width=1.0\textwidth]{./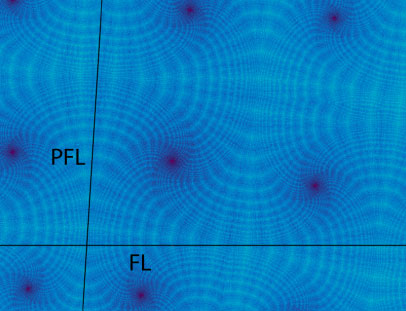}
	\caption[Loss of a periodic point]{The coded-coloring near the origin of $K$ when $c=-0.06+1.05i$.  Compared to Figure~\ref{fig:axes} you can only see faint (or transient) structure.  This is because $K(c^4)$ is no longer invariant under $f^4$.}
	\label{fig:washed}
\end{figure}

	\subsection{Hungry Set with Multiple Components in an Annulus}

	In Figure~\ref{fig:lossOfStructure} shows the coded-colorings of Figures~\ref{fig:branchingOut} and~\ref{fig:washed} with 10,000 points in the orbit of $0.1-i$ overlayed.  This suggests that the boundary crisis can cause a hungry set with multiple components to diffuse into a larger forward invariant set.  Often, the larger forward invariant set is a topological annulus.
	
\begin{figure}[htbp]
	\centering
		\includegraphics[width=1.00\textwidth]{./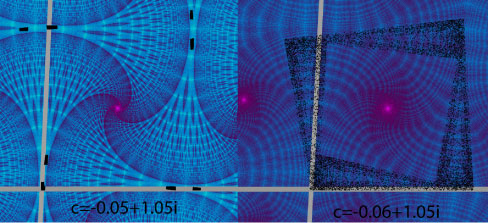}
	\caption[A change in parameter causing a hungry set to leak]{A hungry set with periodic components becoming unstable and  filling out a topological annulus  bounded inside and out by images of $ \FL$.}
	\label{fig:lossOfStructure}
\end{figure}

\begin{figure}[htbp]
	\centering
		\includegraphics[width=1.0\textwidth]{./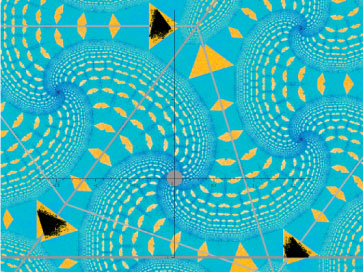}
	\caption[A hungry set with multiple components]{An example where a hungry set has multiple components.  The orbit of a point in $A$ is marked for the a few thousand iterations.}
	\label{fig:multiComponentAnnulus}
\end{figure}

\subsection{Swelling the Components of an Hungry Set}

Figure~\ref{fig:Swell} shows multiple examples of where a hungry set $A$ consists of multiple components bounded by a topological annulus.  Each of these components is contained in a periodic region of period 5.  Let $A_1$ be one of these components.  Then $f^5(A_1)=A_1$ and $f^5(B_1)=B_1$ where $B_1$ is the basin of attraction of $A_1$.  The boundary of $B_1$ consists of images of $ \FL$ and embedded sub-K's.  As the modulus of $c$ is increases, the size of each of these components increases until the boundary of one component (actually, each simultaneously) crosses into the basin of attraction of another component of $A$.  This results in the orbit of a point under $f^5$ staying in one section for a long time before moving into an adjacent section where the process is repeated.	Once the orbit of points begin to hop components, then $A$ is a topological annulus.  Further increasing the modulus of $c$ will eventually lead to the annulus swelling until it contains the origin, at which point $A$ is a topological disk.
	
	\begin{figure}[htbp]	
	\centering
		\includegraphics[width=1.00\textwidth]{./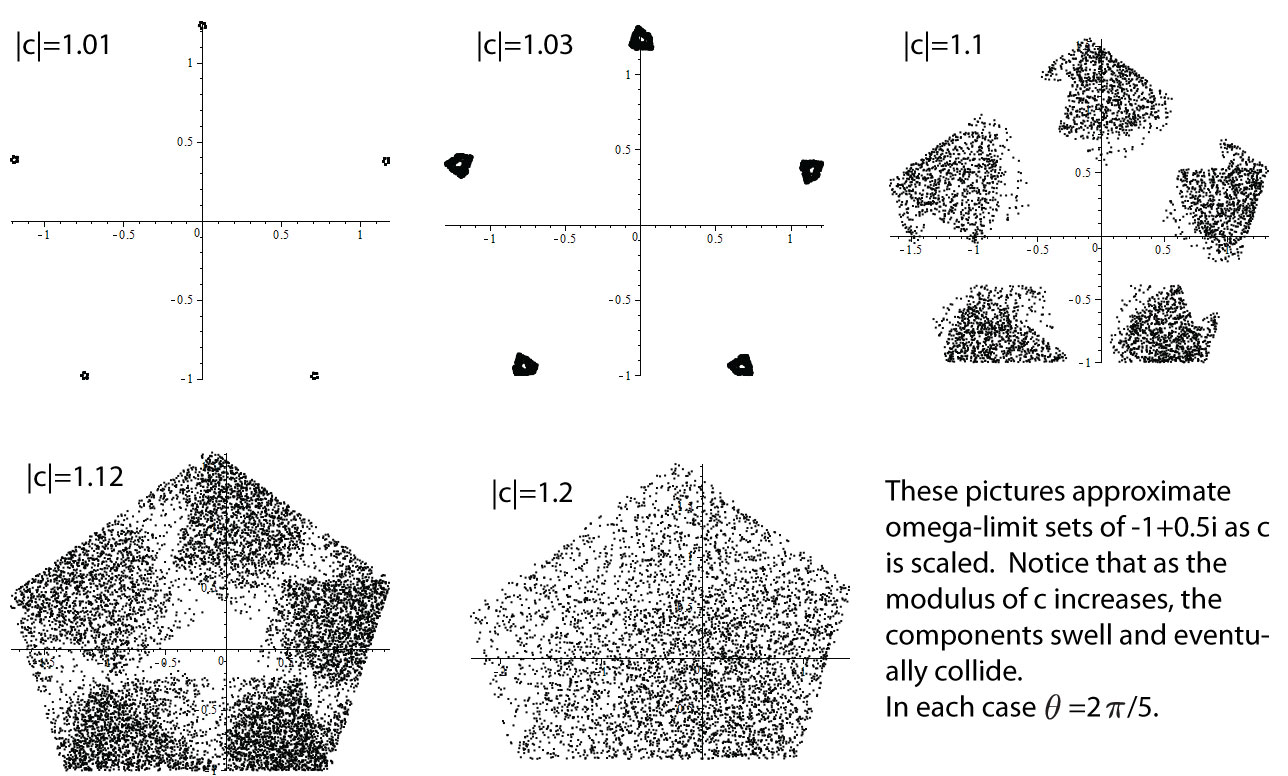}
	\caption[Each hungry set must intersect $ \FL$]{Each hungry set is a subset of $K$ and therefore is bounded.  Thus, each hungry set must intersect $ \FL$.  Also, as the modulus of $c$ increases, then $|cz|$ increases for every $z \in \FL$.  This is why the components of the hungry set swell as the modulus of $c$ increases.}
	\label{fig:Swell}
\end{figure}

	\subsection{Coexisting Disjoint Hungry Sets}

By Lemma~\ref{KisHungry}, any sub-K with nonempty interior is a hungry set.  Figure~\ref{fig:multipleHungrySets} shows an example of a $K$ where there is a hungry set near the origin and also a period 4 sub-K near the boundary.

\begin{figure}[htbp]
	\centering
		\includegraphics[width=1.00\textwidth]{./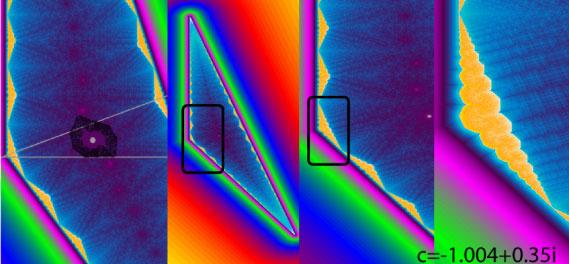}
	\caption[Using the omega-limit set of a point to find a hungry set]{The first image shows 8,000 points in the orbit of a point near the origin revealing a hungry set there.  The rest of the images show 3 progressively closer views of a component of a period 4 hungry set on the boundary of $K(-1.004+0.35i)$.}
	\label{fig:multipleHungrySets}
\end{figure}
\end{section}

\begin{section}{Partitioning the Parameter Space}\label{Partitioning the Parameter Space}

There are several different ways to draw pictures of the parameter space.  Each way has advantages and disadvantages.  We begin with the definition and picture of the polygonal locus.

\begin{Def}
The \textbf{polygonal locus} is the set of points in the parameter space for which $K$ is a polygon.
\end{Def}

By Theorems~\ref{thm:noZeta},~\ref{thm:zetaLow}, and~\ref{thm:zetaHigh}, $K$ is a polygon exactly when $\gamma_0 \in K$.  Thus, we can make an escape-time picture of the polygonal locus by using $\gamma_0$ as our test point.  Figure~\ref{fig:polygonalLocus} is the result.  Notice the vertical line at $\alpha=-1$.

\begin{figure}[htbp]
	\centering
		\includegraphics[width=0.70\textwidth]{./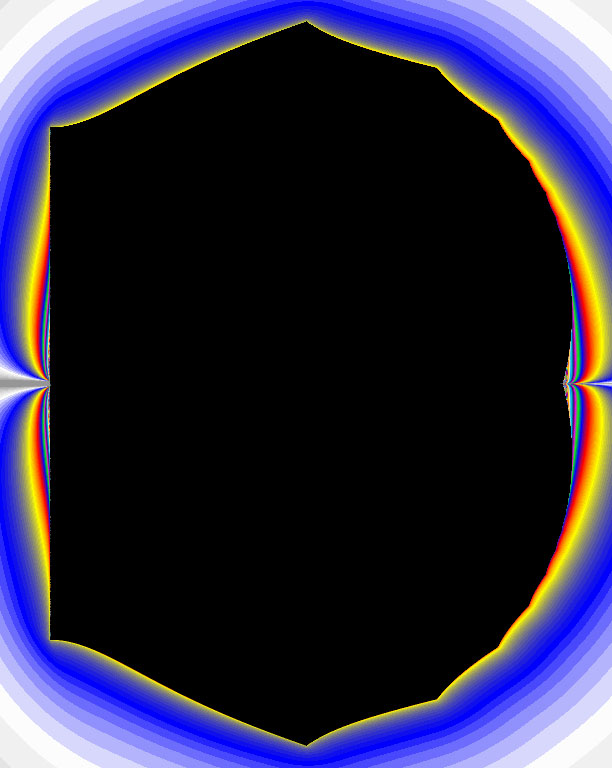}
	\caption{The polygonal locus.}
	\label{fig:polygonalLocus}
\end{figure}

The boundary of the polygonal locus consists of the union of many curves.  Every parameter on a curve has an associated $K(c)$ and for all $c$ on that curve, $K(c)$ will have the same number of sides.  The equations of these curves are implicitly defined by $c^n~\gamma_0=(1-t)\ell_0+t\ell_0/c$, that is, $f^n(\gamma_0)\in L_1$.

Figure~\ref{fig:bubbles} shows the unit disk centered at the origin and parameters where there is a smallest positive integer $n$ with $\ell_n \in \mathbb{H}^-.$  If $n$ is any integer, then the corresponding picture is Figure \ref{fig:ComplementOfCantorLocus2}.

\begin{figure}[htbp]
	\centering
	\includegraphics[width=0.80\textwidth]{./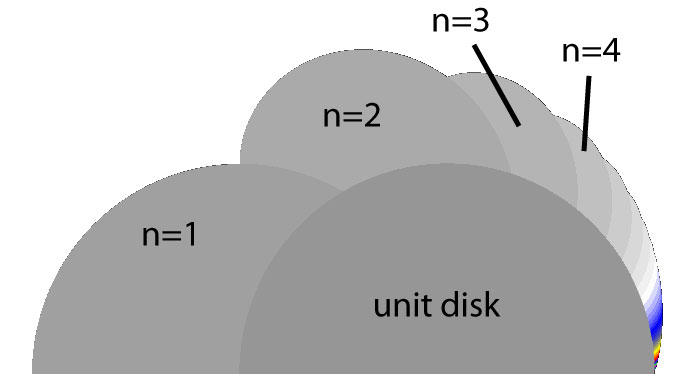}
	\caption[Points in the parameter plane where $\ell_n \in \mathbb{H}^-$]{Points in the parameter plane where $\ell_n$ is the first to be in the lower half plane.}
	\label{fig:bubbles}
\end{figure}

Another useful picture of the parameter plane is given in black and white in Figure~\ref{fig:Partitioning_the_parameter_space_BW} and in color in Figure~\ref{fig:Layered_Fractal}.  Each of these pictures mark the circles of radii $\sqrt{2}$ and 1, the polygonal locus, and the regions marked in Figure~\ref{fig:bubbles}.

\begin{figure}[htbp]
	\centering	\includegraphics[width=0.70\textwidth]{./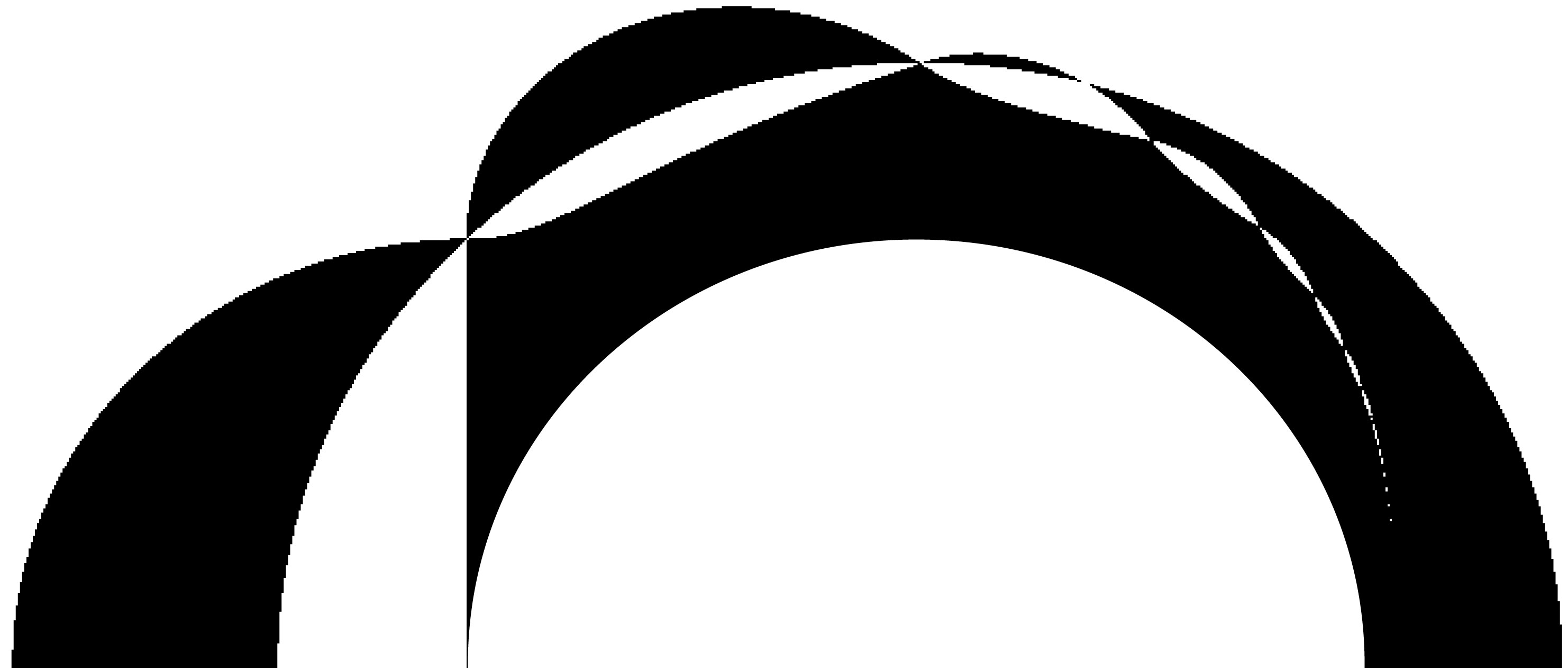}
	\caption[A black and white partition of the parameter plane]{A pretty partition of the portion of the parameter plane with positive imaginary part.}
	\label{fig:Partitioning_the_parameter_space_BW}
\end{figure}

\begin{figure}[htbp]
	\centering	\includegraphics[width=0.70\textwidth]{./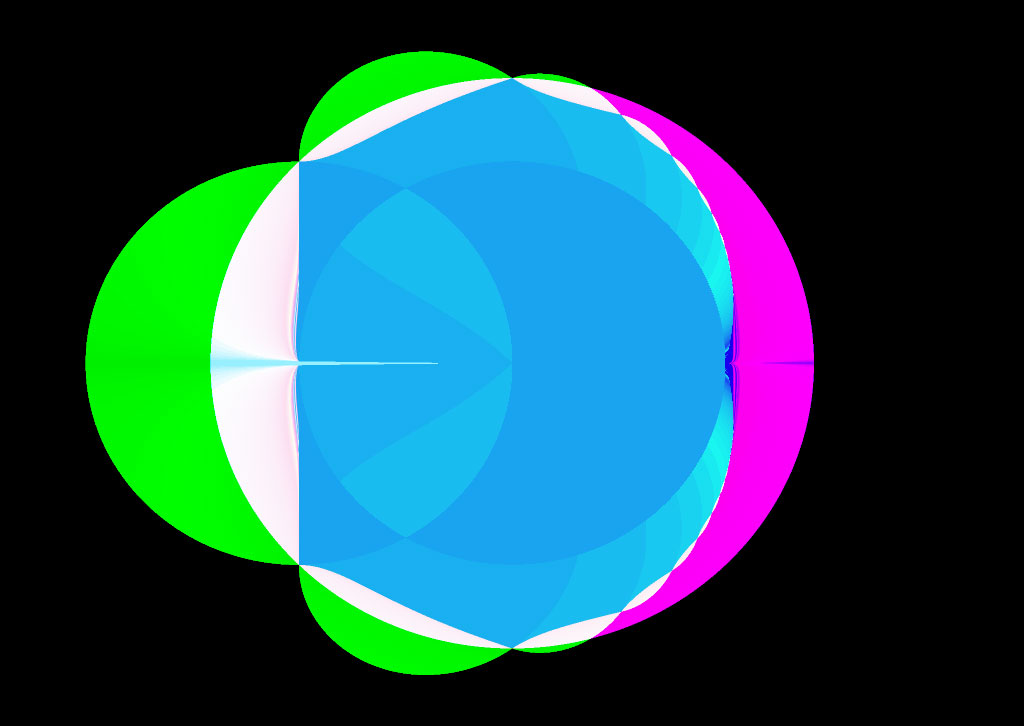}
	\caption{A colorful partition of the parameter plane.}
	\label{fig:Layered_Fractal}
\end{figure}

\begin{figure}
	\centering	\includegraphics[width=1.00\textwidth]{./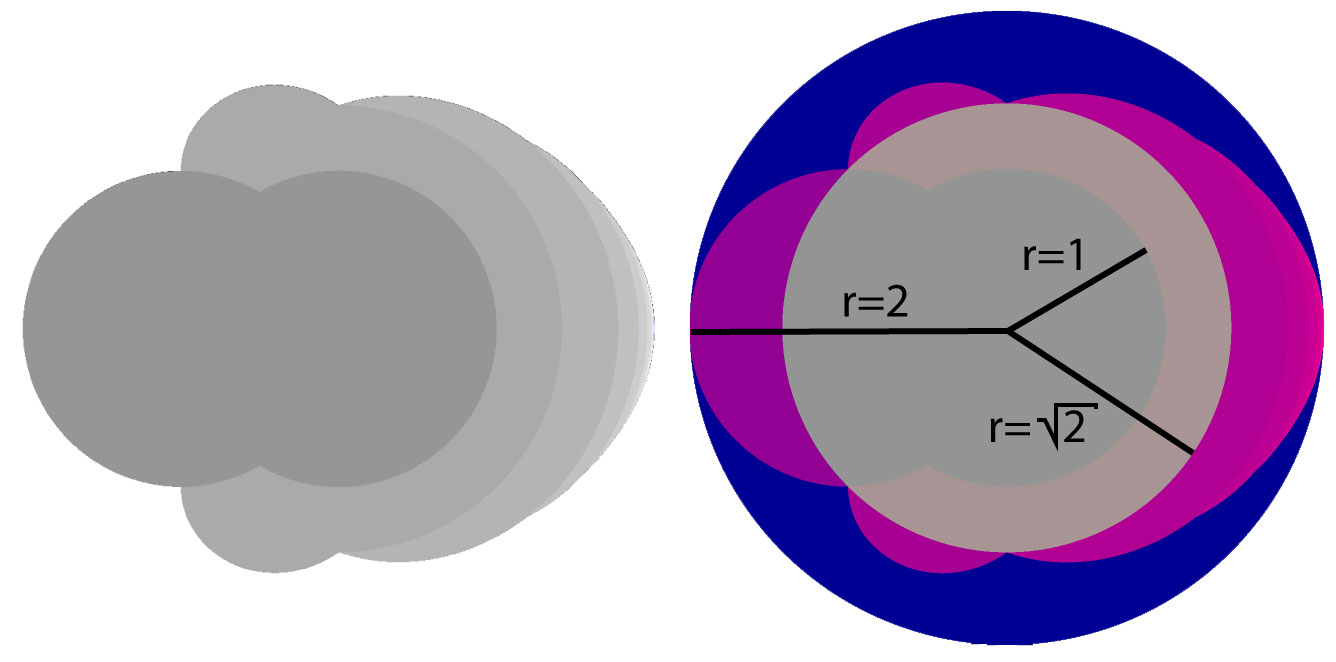}
	\caption[The conjectured complement of the Cantor Locus]{On the left is the set of parameters $c$ in the parameter plane such that $K(c)$ has at least one point in $\mathbb{H}^-$.  On the right is the same set along with 3 disks centered at the origin with labeled radii for reference.}
	\label{fig:ComplementOfCantorLocus2}
\end{figure}

The escape-time method of coloring the parameter plane provides visual representations of the external structures of $K(c)$ near a test point when $K(c)$ has empty interior.
Close inspection of Figure~\ref{fig:MandelbrotOutside} leads to Conjecture~\ref{K_Atlas_in_parameter_plane}.  Experiments suggest that the closer the test point is to the origin, the more ``little filled-in Julia sets'' appear along the outside of the polygonal locus.

\begin{conj}\label{K_Atlas_in_parameter_plane}
Drawing an escape-time picture of the parameter plane using a test point very close to 0 produces an atlas of $K$'s with empty interior all around the outside of the polygonal locus.  
\end{conj}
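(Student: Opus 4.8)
The plan is to establish the twisted-tent-map analogue of Tan Lei's theorem on the asymptotic similarity of the Mandelbrot set and Julia sets at Misiurewicz points. The mechanism is this: take the test point $z_0$ very close to $0$. No folding occurs while the orbit has modulus less than $1$ (if $|w|<1$ then $|\Im(w)|<1$, so $w\in\mathbb{H}^+$ and $f$ acts as multiplication by $c$), so $f_c^n(z_0)=c^nz_0$ for every $n$ up to the largest index $N=N(z_0,c)$ with $|c^Nz_0|<1$; writing $w_0=f_c^N(z_0)=c^Nz_0$, which lies in the fundamental annulus $\{1/|c|\le|w|<1\}$, forward invariance of $K$ gives the tautology $z_0\in K(c)\iff w_0\in K(c)$. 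The point is that $\Phi_{z_0}\colon c\mapsto c^Nz_0$ is \emph{holomorphic} in $c$ on each region where $N$ is constant, with $|\Phi_{z_0}'(c)|=N|w_0|/|c|\asymp N$; hence, near a fixed $c_0$ with $|c_0|>1$, it is a near-affine conformal bijection that magnifies a parameter window of size $\asymp\varepsilon/N$ onto a dynamical-plane window of size $\asymp\varepsilon$ around $w_0(c_0)$, and $N\to\infty$ as $z_0\to0$.

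First I would set this up carefully. Fix $c_0$ in the complement of the polygonal locus, so that by Theorems~\ref{thm:noZeta} and~\ref{thm:zetaHigh} $K(c_0)$ has empty interior, and pick a point $w^*\in K(c_0)$ lying in the fundamental annulus $\{1/|c_0|\le|w|<1\}$; this annulus always meets $K(c_0)$, since the orbit-spiral $\ell_k=\ell_0/c_0^{\,k}$ lies in $K(c_0)$ and converges to $0$. Choose a sequence $z_0\to 0$ along which $N(z_0)\to\infty$, $w_0(c_0)=c_0^{\,N(z_0)}z_0\to w^*$ (possible because $|w_0(c_0)|$ and $\arg w_0(c_0)$ are essentially equidistributed in this annulus), and $c_0$ lies well inside its constant-$N$ annulus $\{(1/|z_0|)^{1/(N+1)}\le|c|<(1/|z_0|)^{1/N}\}$. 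On the disk $B(c_0,\delta_{z_0})$ with $\delta_{z_0}\asymp\varepsilon/N$, the integer $N$ is constant, $\Phi_{z_0}$ is a near-affine conformal bijection onto a region $\approx B(w^*,\varepsilon)$, and the escape-time count of $z_0$ under $f_c$ equals $N$ plus the escape-time count of $\Phi_{z_0}(c)$ under $f_c$. Since $c\to c_0$ throughout this disk, continuity of $c\mapsto(K(c),\ \text{escape-time function})$ forces the escape-time picture of $f_c$ near $w^*$ to be $o(1)$-close to that of $f_{c_0}$; combining this with the previous paragraph, the escape-time picture of the parameter plane restricted to $B(c_0,\delta_{z_0})$, pushed forward by $\Phi_{z_0}$ and with its colormap shifted by the constant $N$, converges in the Hausdorff sense as $z_0\to0$ to the escape-time picture of the dynamical plane of $f_{c_0}$ near $w^*$ --- i.e.\ to a distorted miniature copy of $K(c_0)$. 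Letting $c_0$ range over the complement of the polygonal locus and $w^*$ over $K(c_0)$ produces the claimed atlas. The reason the copies are visible only outside the polygonal locus is that \emph{on} the polygonal locus $K(c_0)=P$ is a polygon containing $0$ in its interior, so by lower semicontinuity of $K$ there one has $0\in\Int K(c)$ for all nearby $c$; every small test point then lies in $K(c)$ and the escape-time picture is a uniformly black blob with no resolved structure, whereas the resolved ``little filled-in Julia sets'' appear precisely where $K(c_0)$ has empty interior.

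The hard part will be the continuity of $c\mapsto K(c)$ that is invoked above. Upper semicontinuity is easy --- an escaping orbit stays escaping under a small perturbation of $c$, since each $f_c$ is continuous (even across $\FL$) and only finitely many iterates are needed to leave a large ball --- and already yields that the Hausdorff limit of the rescaled pictures is \emph{contained} in the dynamical picture of $f_{c_0}$. The reverse inclusion requires lower semicontinuity, which genuinely fails at the bifurcation parameters responsible for the creation and deletion of isolated components of $K$ (cf.\ the discussion around Figure~\ref{fig:KcontinuousChange}). So I would first prove the clean statement for $c_0$ in the open set where $K$ moves continuously --- which includes all $|c_0|>2$, where $f|_K$ is conjugate to the full one-sided $2$-shift and structural stability should hold --- and the genuine content of the conjecture then becomes: (a) this set is dense in the empty-interior region of the parameter plane, and (b) the remaining, non-rigid parameters can be handled, perhaps by working with whatever pieces of $K(c_0)$ survive perturbation. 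A secondary technicality is the piecewise-constant dependence of $N$ on $c$ (it jumps across the thin annuli $|c|^{N+1}|z_0|=1$), which is dealt with by choosing $z_0$ so that $c_0$ sits near the center of its constant-$N$ annulus, equivalently by working at parameter scale $o(1/N)$; and when $1<|c_0|\le 3$ one replaces the threshold $1$ by a larger radius exceeding $\Diam K(c_0)$, at the cost of finitely many extra folds that turn $\Phi_{z_0}$ into an anti-conformal (mirror-reversed) magnification without otherwise affecting the argument.
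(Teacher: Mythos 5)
The statement you are addressing is an open conjecture: the paper offers no proof of Conjecture~\ref{K_Atlas_in_parameter_plane}, only experimental evidence (Figure~\ref{fig:MandelbrotOutside} and the surrounding discussion), so there is no argument of the author's to compare yours against. Your magnification mechanism --- $\Phi_{z_0}(c)=c^Nz_0$ is holomorphic where $N$ is locally constant, with derivative of size $\asymp N$, so a parameter window of size $\asymp 1/N$ is blown up conformally onto a dynamical window in the fundamental annulus, and the escape time of $z_0$ under $f_c$ is $N$ plus the escape time of $\Phi_{z_0}(c)$ --- is sound as far as it goes and is surely the right heuristic (it is the Tan Lei/Douady--Hubbard similarity picture, and it is consistent with the paper's observation that smaller test points produce more ``little filled-in Julia sets''). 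But as a proof it has genuine gaps beyond the one you flag. First, the continuity step is not merely ``the hard part'': the paper itself exhibits discontinuous creation of isolated points and components of $K$ under arbitrarily small parameter changes (Figure~\ref{fig:KcontinuousChange} and Figure~\ref{fig:isolatedPoints}), so lower semicontinuity of $c\mapsto K(c)$ fails on a set you have not controlled, and your proposed safe region rests on the paper's own unproven conjecture that $|c|>2$ forces $K$ to be a Cantor set (only total disconnectedness is proved there); likewise your explanation of why nothing is resolved on the polygonal locus invokes exactly this lower semicontinuity, plus the unverified claim that $0\in\Int K(c_0)$ whenever $K(c_0)$ is a polygon.

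Second, even granting stability at $c_0$, your construction does not yet yield an ``atlas of $K$'s.'' By the definition of $N$, the image of the constant-$N$ parameter region under $\Phi_{z_0}$ is confined to the annulus $\{1/|c|\le|w|<1\}$, so what converges is a local window onto $K(c_0)\cap\{1/|c_0|\le|w|<1\}$ near a chosen $w^*$, not a miniature copy of $K(c_0)$ as a whole; enlarging the parameter window until its $\Phi$-image has diameter comparable to $\Diam K(c_0)$ costs you both the constancy of $N$ and the near-affinity of $\Phi$ (the argument wraps by a full turn across an angular sector of width $2\pi/N$). To get whole copies you need the dynamical-plane half of a Tan Lei--type theorem --- asymptotic self-similarity of $K(c_0)$ near $0$, so that the visible piece in the fundamental annulus already reproduces the global picture --- and that is nowhere established for TTM's. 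Finally, the reduction to ``finitely many extra folds'' when $1<|c_0|\le 3$ is delicate: a fold is only piecewise anti-conformal and is $2$-to-$1$ across $\FL$, and near $|c|=1$ the paper shows $\Diam K$ is unbounded, so the bookkeeping there is not a routine afterthought. In short: a reasonable research program, correctly identifying the magnification mechanism, but the conjecture remains open after your outline, with the stability of $K(c)$ and the self-similarity of $K$ near $0$ as the two missing theorems.
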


Using the coded-coloring instead of just the escape-time algorithm we can get a preview of the coded-coloring inside of $K(c)$ near the test point.  Figure \ref{fig:MandelbrotInside} shows that the coded-coloring of the parameter plane can help to find parameters $c$ such that $K(c)$ contains sub-K's.  

\begin{conj}
In a neighborhood of $c$ in the parameter plane, the coded-coloring using a test point very close to 0 produces an atlas of sub-K's which can be found in the coded-coloring of $K(c)$ near the test point.
\end{conj}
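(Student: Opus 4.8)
The conjecture as phrased is informal, so the first move is to replace it by a precise asymptotic-similarity statement, in the spirit of the known similarity between the Mandelbrot set and Julia sets at Misiurewicz parameters: fix $c_0$ (satisfying the Standing Assumptions) and let $z_0$ be a test point near $0$; the claim should be that, as $|z_0|\to 0$, the coded-coloring of the parameter plane on a suitably rescaled neighborhood of $c_0$ converges to the coded-coloring of $K(c_0)$ on a suitably rescaled neighborhood of $z_0$, so that every sub-$K$ visible in the latter has an arbitrarily-small-distortion copy in the former, and conversely. The plan is to prove this in three steps, then to identify where the genuine difficulty lies.

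First I would show that a sub-$K$ near $z_0$ in $K(c_0)$ is an open and continuously varying feature of the parameter. By the examples of Section~\ref{Hungry Sets and Crises}, such a sub-$K$ is $\psi(K(c_0'))$ for an affine map $\psi$, a period $p$, and a renormalized parameter $c_0'$ (in the worked cases $c_0'=c_0^p$ or $c_0'=|c_0|^2c_0$), with $f_{c_0}^p\circ\psi=\psi\circ f_{c_0'}$ on a neighborhood $V$ of $\psi(K(c_0'))$. Its existence is equivalent to finitely many conditions of the form ``the $j$-th iterate of a point of $\psi(\Bd(V))$ lies in the interior of the prescribed $P\mathbb{H}^{\pm}$'' for $0\le j<p$; these are open in $c$, so they persist on a whole neighborhood $W\ni c_0$, on which $(\psi,p,c'(c))$ vary continuously with $c'(c)$ given by the same explicit formula. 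Hence the sub-$K$'s seen near $z_0$ in $K(c)$ deform continuously, and persist, as $c$ ranges over $W$.

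Second, since $0$ is a fixed point lying in the interior of $\mathbb{H}^+$ for every $c$, for each compact neighborhood of $c_0$ there is an integer $N=N(|z_0|)$ with $N\to\infty$ as $|z_0|\to 0$ such that $f_c^k(z_0)=c^kz_0$ for $0\le k\le N$ and all $c$ in that neighborhood --- no folding occurs while $c^kz_0$ stays in a small ball around $0$. Thus the first $N$ symbols of the itinerary $S(z_0)$ (with $\varphi$ the indicator of $P\mathbb{H}^+$) depend on $c$ only through the $\mathbb{C}$-linear family $c\mapsto(c^kz_0)_{0\le k\le N}$, and likewise, at the parameter $c_0$, the itinerary of a nearby point $w$ depends on $w$ only through $w\mapsto(c_0^kw)_{0\le k\le N}$. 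Third, both coded-colorings are then escape-time pictures of the \emph{same} coarse partition $\{P\mathbb{H}^+,P\mathbb{H}^-\}$, pulled back by these two linear families; comparing the families shows the parameter picture near $c_0$ is a homeomorphic copy of the dynamical picture near $z_0$ whose deviation from a genuine similarity is $O(|z_0|)$ uniformly over any fixed number of iterates. Combining this with the first step transports the atlas of sub-$K$'s from $K(c_0)$ near $z_0$ to the parameter plane near $c_0$, and back.

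The main obstacle is reconciling the first and third steps: one needs the renormalization data to depend on $c$ not merely continuously but with enough control that the induced correspondence between the two pictures stays close to the linear model on \emph{all} scales at once --- in effect a holomorphic-motion-type dependence of the renormalized parameter $c'(c)$ and the conjugating map $\psi$ on $c$. This requires a structure theory of renormalization for twisted tent maps (existence, essential uniqueness of the renormalized parameter, and regularity of the dependence on $c$) which the present paper only illustrates by example; the route above is the natural one to pursue once that theory is available, which is why the statement is recorded here as a conjecture rather than a theorem.
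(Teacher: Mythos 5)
The statement you are addressing is recorded in the paper as a conjecture: the paper offers no proof, only experimental evidence (the coded-colorings of the parameter plane and of $K(c)$ in the figures accompanying Chapter~\ref{Partitioning the Parameter Space}). Your proposal is likewise not a proof but a program, and you say so yourself; so the honest verdict is that a genuine gap remains, and it is worth being precise about where your sketch falls short even granting the renormalization theory you flag as missing. First, your second step controls only the iterates during which the orbit of the test point $z_0$ stays near $0$. But on that time range every itinerary symbol is identically $1$: a point close to $0$ lies in $P\mathbb{H}^+$ for every nearby parameter, since $\Im(cz)>-1$ whenever $|cz|<1$. Hence the first $N(|z_0|)$ symbols carry no information at all, and all of the structure visible in either coded-coloring is produced only after the orbit leaves the neighborhood of $0$ --- exactly the regime your linear model $c\mapsto(c^kz_0)_k$ versus $w\mapsto(c_0^kw)_k$ does not reach. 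Moreover these two families are not related by an approximate similarity in any uniform sense: $\partial_c(c^kz_0)=kc^{k-1}z_0$ while $\partial_w(c_0^kw)=c_0^k$, so the natural change of variables between the parameter ball and the dynamical ball depends on $k$.

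Second, your third step claims a homeomorphism with distortion $O(|z_0|)$ ``uniformly over any fixed number of iterates,'' but the sub-$K$'s one wants to transport are resolved only at iterate counts that grow without bound as $|z_0|\to 0$; a comparison that is uniform only over finitely many iterates cannot move them from one picture to the other. What is actually needed is the analogue of the Tan Lei similarity between the Mandelbrot set and Julia sets at Misiurewicz parameters: simultaneous control of the derivative of $c\mapsto f_c^n(z_0)$ and of $w\mapsto f_{c_0}^n(w)$ as $n\to\infty$, i.e.\ a transversality and bounded-distortion statement along the orbit. That machinery relies on holomorphy (or at least smooth hyperbolicity), and a twisted tent map is not holomorphic and not even smooth across $\PFL$ and its preimages, so it cannot be imported wholesale; in addition the partition $\{P\mathbb{H}^+,P\mathbb{H}^-\}$ used by the coded-coloring itself moves with $c$, which your step three quietly treats as fixed. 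Finally, your first step presumes that every sub-$K$ near the test point arises from an affine renormalization with an explicitly parameter-dependent $c'(c)$; the paper only exhibits this in isolated examples, so persistence and continuous dependence of the renormalization data is itself conjectural. In short, your outline identifies a reasonable strategy and correctly locates one obstacle, but steps two and three do not yet establish the asymptotic similarity even conditionally, which is consistent with the paper leaving the statement as a conjecture.
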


Figure~\ref{fig:SimilarSubKs} inspired the following question.
\begin{openq}
For which parameters are all the internal structures of $K$ near 0 the same?
\end{openq}

\begin{figure}
	\centering
		\includegraphics[width=1.00\textwidth]{./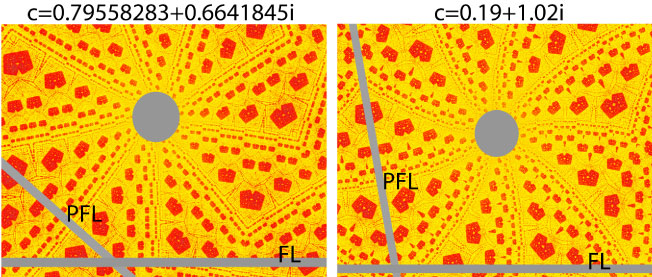}
	\caption{Very different $c$'s with similar structures inside of $K$.}
	\label{fig:SimilarSubKs}
\end{figure}

\begin{conj}
When making coded-colorings of the parameter plane, the smaller the modulus of the non-zero test point, the closer the similarity between the structure in $K(c)$ near the test point and the structure seen in the coded-coloring of the parameter plane near $c$.  Any sub-K's found in this way have sides that approach uniform length as $|c|$ goes to 1.
\end{conj}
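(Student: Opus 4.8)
The statement is informal, so the first task is to give it a precise form. For the similarity assertion I would fix a non-zero test point $z_0$ and, for each bailout value $N$, view the coded-coloring of $K(c)$ inside the ball $B(z_0,|z_0|)$ and the coded-coloring of the parameter plane inside a ball $B(c,\rho)$ as two finite arrangements of arcs: the arcs of the higher pre-folding curves $f_c^{-m}(\PFL)$, $0\le m\le N$, in the dynamical plane, and the arcs of the loci $\{c' : f_{c'}^{m}(z_0)\in\PFL_{c'}\}$, $0\le m\le N$, in the parameter plane. The precise claim is then that, after the affine rescaling carrying $B(c,\rho)$ onto $B(z_0,|z_0|)$, the Hausdorff distance between these two arrangements tends to $0$ as $|z_0|\to 0$ (with $N$ allowed to grow like $\log(1/|z_0|)/\log|c|$). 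The uniform-side assertion I would read as: if $S$ is a sub-$K$ appearing near $0$, then the ratio of the longest to the shortest side of $S$ tends to $1$ as $|c|\searrow 1$.

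For the similarity part the mechanism is a comparison of two explicit maps. Since $0$ is a repelling fixed point and $f_c(z)=cz$ near it, for small $|z_0|$ the first $N\approx\log(1/|z_0|)/\log|c|$ iterates of $z_0$ avoid $\FL$; hence $f_c^{m}(z_0)=c^m z_0$ and simultaneously $f_{c'}^{m}(z_0)=(c')^m z_0$ for all $c'$ near $c$. Thus the dynamical iterate map $z\mapsto f_c^{m}(z)$ near $z_0$ and the parameter orbit map $c'\mapsto f_{c'}^{m}(z_0)$ near $c$ are affine to leading order, with derivatives $c^m$ and $m c^{m-1}z_0$; matching them identifies a displacement $\varepsilon=c'-c$ with a displacement $\delta=\tfrac{m}{c}z_0\,\varepsilon$ in the dynamical plane. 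One then has to check that each coloring-boundary arc, and the rotating line $\PFL_{c'}$ itself, is transported correctly under this correspondence up to an error that is $o(|z_0|)$ uniformly in $m\le N$; granting that, the two arrangements coincide in the limit.

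The main obstacle is exactly the factor $m$ in $\delta=\tfrac{m}{c}z_0\,\varepsilon$: distinct iteration depths are magnified by distinct amounts, which a priori prevents any single affine correspondence from working for all $m$ at once. The way I would attack this is a self-similarity argument of Tan Lei type, parallel to the quadratic case: near the repelling fixed point $0$, $K(c)$ should be asymptotically self-similar under $z\mapsto z/c$ (the unfolded branch of $f_c^{-1}$), and the polygonal / coded-coloring locus in parameter space should be asymptotically self-similar near $c$ under a matching contraction, so that choosing the test point along a geometric sequence $z_0=z_\ast c^{-k}$ and rescaling by $c^{k}$ turns the factor $m$ into a bounded, essentially logarithmic correction that disappears as $k\to\infty$. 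Proving this two-sided self-similarity, while controlling the folding that does eventually occur to the orbit of $z_0$, is the heart of the matter and is presumably why the statement remains conjectural; the remaining bookkeeping of arcs is lengthy but routine.

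For the uniform-side assertion I would use the renormalization picture of Section~\ref{Hungry Sets and Crises}: a sub-$K$ near $0$ is, up to an affine change of coordinates, an embedded copy of $K(w)$ for a parameter $w=w(c)$ obtained from $c$ by iterating the unfolded map (for instance $w=c^n$, or $w=|c|^2 c$ as in the worked example), and in each such construction $|w(c)|\to 1$ as $|c|\searrow 1$. By Theorem~\ref{mod c equals 1}, a polygonal $K(w)$ with $|w|=1$ is a regular polygon and so has all sides equal; and since the boundary of a polygonal $K$ is given explicitly by the vertices $\ell_k$ and $\gamma_0$ of Section~\ref{The Perimeter Set $P$}, which depend continuously (indeed real-analytically) on the parameter away from the bifurcation curves of the polygonal locus, the side lengths of $K(w)$ depend continuously on $w$. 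Hence they converge to the common value of the limiting regular polygon as $|w|\to 1$, and the sides of the sub-$K$ approach uniform length. The gap to be filled here is a rigorous form of the renormalization claim — that the embedding is asymptotically affine and that $|w(c)|\to 1$ — which at present is supported only by the experiments of Section~\ref{Hungry Sets and Crises}.
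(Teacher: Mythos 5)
This statement is one of the paper's open conjectures: the paper offers no proof at all, only the experimental evidence of Figures~\ref{fig:MandelbrotInside}, \ref{fig:paramPlane} and \ref{fig:closeUpParamPlane}, so there is no ``paper proof'' for your argument to be compared against. What you have written is a reasonable formalization and a program, but it is not a proof, and the two places where you yourself flag a gap are exactly the places where the argument is missing rather than merely unfinished. For the similarity claim, the whole content is the two-sided asymptotic self-similarity: self-similarity of the coded-coloring of $K(c)$ near the repelling fixed point $0$ under $z\mapsto z/c$, and a matching self-similarity of the parameter-plane picture near $c$. The first is plausible but still requires controlling when the orbit of the test point first meets $\FL$ (the visible structure near $0$ consists precisely of high preimages of $\FL$, so the ``no folding for $m\le N$'' regime is exactly the regime in which nothing is seen); the second has no analogue of Tan Lei's theorem available, since $c'\mapsto f_{c'}(z)$ is not holomorphic in the parameter (the branch $\overline{c'z}-2i$ is anti-holomorphic, and the partition by $\PFL_{c'}$ moves with $c'$), so the quadratic-family machinery you invoke does not transfer and the factor $m$ mismatch you identify is left unresolved.

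For the uniform-side claim the proposed continuity argument would fail as stated. The side lengths of a polygonal $K(w)$ do not converge to those of the fixed regular polygon of Theorem~\ref{mod c equals 1} as $|w|\searrow 1$: by Lemma~\ref{unbounded Escape radius}, $\Diam(K(w))\to\infty$ in that limit, so $K(w)$ does not converge to any bounded polygon and one can at best hope for a statement about ratios after normalization. Even the ratio statement is not immediate: when $K=P$ is a polygon its sides are the segments $L_k=[\ell_k,\ell_{k+1}]$ (and their reflections), consecutive sides have length ratio $|w|$, but the number of sides of the outer-most boundary grows as $|w|\searrow 1$, so the longest-to-shortest ratio is of the form $|w|^{n(w)}$ with $n(w)\to\infty$, an indeterminate limit that needs a quantitative estimate on $n(w)$ versus $|w|-1$. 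Finally, the input you rely on --- that a sub-K near $0$ is an affine copy of some $K(w(c))$ with $|w(c)|\to 1$ --- is itself only supported by the experiments of Section~\ref{Hungry Sets and Crises} (the one rigorous instance there is a single parameter value). So the statement remains a conjecture; your write-up is best read as a precise reformulation plus a list of the lemmas one would need, not as a proof.
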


The coded-coloring of the parameter plane in Figure~\ref{fig:paramPlane} reveals what resemble sun flares coming out of the unit disk.  We will refer to these as \textbf{flares}.  

\begin{figure}[htbp]
	\centering	\includegraphics[width=1.00\textwidth]{./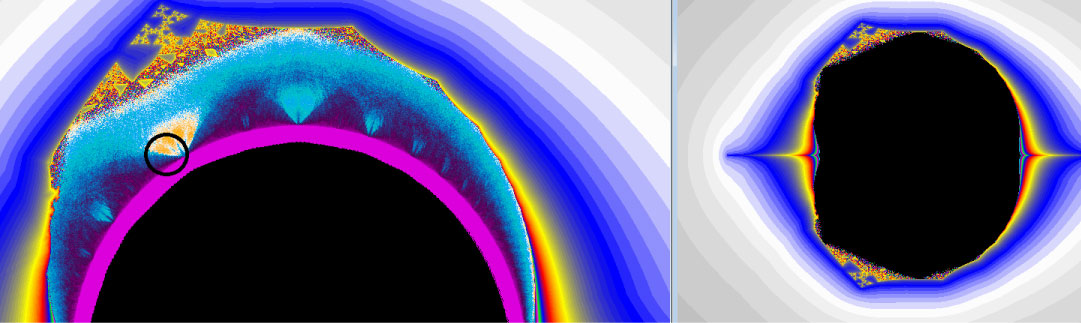}
	\caption[Escape-time and coded-colorings of the parameter plane]{Two pictures of the escape-time coloring of the parameter plane using $-i$ as the test point.  The picture on the left also uses the coded-coloring.}
	\label{fig:paramPlane}
\end{figure}

In Figure~\ref{fig:paramPlane} one of the flares is circled and Figure~\ref{fig:closeUpParamPlane} shows a close-up of this flare along with the coded-colorings of various $K(c)$ for six different choices of $c$ near this flare.  Experiments suggest that a flare is a set of parameters where there is an $n$ such that $K(c^n)$ is embedded in the dynamical structure of $K(c)$.  The larger the flare, and the sharper the image in the flare, the more ``stable'' the embedding.

\begin{figure}[htbp]
	\centering	\includegraphics[width=1.00\textwidth]{./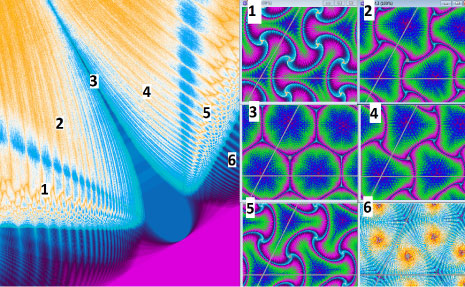}
	\caption[Around a flare in 80 days]{On the left is a close up of a flare in the coded-coloring of the parameter plane.  There are six locations marked on the left corresponding the the approximate parameters used in coded-colorings of the six $K(c)$'s on the right.}
	\label{fig:closeUpParamPlane}
\end{figure}

Now each flare has a central curve.  Figure~\ref{fig:closeUpParamPlane} shows that the closer a parameter is to this central curve (near location 3), the straighter $K(c^n)$ tends to be.  This suggest that the central curves of flares meet the unit circle at points whose argument is a rational multiple of $\pi$.  

\begin{conj}
Flares in the parameter plane meet the centered unit disk at points whose argument is a rational multiple of $\pi$.
\end{conj}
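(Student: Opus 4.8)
The plan is to exploit one elementary fact: a power $c^{m}$ lies in $\mathbb R$ exactly when $\arg(c)\in\pi\mathbb Q$ — indeed $c=re^{i\pi p/q}$ (with $p/q$ in lowest terms) gives $c^{2q}=r^{2q}\in\mathbb R_{>0}$, whereas if $\arg(c)/\pi$ is irrational then $c^{m}\notin\mathbb R$ for every $m\geq1$. Together with Theorem~\ref{K for real c}, which says that for real $\hat c$ the set $K(\hat c)$ is a line segment (for $1<|\hat c|\le2$) or a Cantor set (for $|\hat c|>2$), and with the renormalization phenomenon of Section~\ref{Hungry Sets and Crises} — where for an appropriate period $n$ one has $f_c^{\,n}=f_{c^{n}}$ on a forward invariant region, so that $K(c^{n})$ (up to an affine change of coordinates) appears inside $K(c)$ as a period-$n$ sub-$K$ — this pins down the flares as exactly the parameter curves along which some fixed power of $c$ stays real. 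Accordingly, I would first adopt the working definition suggested by the experiments: a \emph{flare of index $n$} is a maximal connected set $\mathcal F$ of parameters carrying a continuous family of affine maps $\varphi_{c}$ with $\varphi_{c}\circ f_{c^{n}}=f_{c}^{\,n}\circ\varphi_{c}$ on $K(c^{n})$ and $\varphi_{c}(K(c^{n}))$ a period-$n$ sub-$K$ of $K(c)$, and its \emph{central curve} is the locus in $\mathcal F$ where $K(c^{n})$ is straightest, i.e.\ where $c^{n}\in\mathbb R_{>0}$ (equivalently, by Theorem~\ref{K for real c}, where $\varphi_{c}(K(c^{n}))$ is an honest segment or a Cantor subset of a line). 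With these definitions, every $c$ on the central curve satisfies $n\arg(c)\in2\pi\mathbb Z$, hence $\arg(c)\in\pi\mathbb Q$, so the accumulation point of the central curve on the unit circle automatically has rational-multiple-of-$\pi$ argument; what remains is to show (a) that every flare really is organized around such a real locus reaching the unit circle, and (b) that no flare is rooted at an irrational boundary point.

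\medskip
Part (b) is where the genuine difficulty lies, and I expect it to be the main obstacle. I would argue by contradiction. Suppose the central curve of a flare $\mathcal F$ of index $n$ accumulates at $c_{0}$ with $|c_{0}|=1$ and $\arg(c_{0})=\theta_{0}$ with $\theta_{0}/\pi$ irrational; then $n\theta_{0}/\pi$ is also irrational, so $c_{0}^{\,n}$ (of modulus $1$) falls under case~4 of Theorem~\ref{mod c equals 1}. A key point, already present in the proof of that theorem, is that for $|c_{0}^{\,n}|=1$ the map $f_{c_{0}^{\,n}}$ is nonexpansive: $|f_{c_{0}^{\,n}}(z)-f_{c_{0}^{\,n}}(w)|\le|z-w|$ for all $z,w$; in particular it has no repelling periodic orbit of any period. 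On the other hand, the boundary of a sub-$K$ appearing in a flare is a repelling periodic structure of $f_{c}^{\,n}$ (this is the mechanism, described in Section~\ref{Hungry Sets and Crises}, by which the coded-coloring renders sub-$K$'s visible), and along the central curve these structures move continuously with $c$; a compactness argument should show that as $c\to c_{0}$ they converge to a repelling periodic structure of $f_{c_{0}^{\,n}}$, contradicting nonexpansiveness. Proposition~\ref{One attracting periodic point} closes off the only alternative stabilising mechanism, an attracting orbit. The two delicate ingredients are a rigorous notion of a \emph{stable} embedded sub-$K$ that makes ``the repelling structure persists in the limit'' into a theorem, and quantitative control of how $K(c^{n})$ and its embedding degenerate as $|c^{n}|\searrow1$ — by Lemma~\ref{unbounded Escape radius} the diameter of $K(c^{n})$ blows up there, so the argument must be run in rescaled coordinates that track the repelling boundary rather than the whole set.

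\medskip
For part (a) it then suffices to exhibit, near each rational point $e^{i\pi p/q}$ of the unit circle, an actual flare whose central curve reaches that point, so that the conjecture is non-vacuous. Take $c=re^{i\pi p/q}$ with $r\searrow1$; then $\hat c:=c^{2q}=r^{2q}$ is real with $\hat c\searrow1$, and for $r$ close enough to $1$ Theorem~\ref{K for real c} gives $K(\hat c)=[-2i/\hat c,\,0]$, a line segment on the imaginary axis (a Cantor set on that axis for larger $r$). As in the renormalization examples of Section~\ref{Hungry Sets and Crises} — the case $c=1.05i$, where $c^{4}=(1.05)^{4}\in\mathbb R$ and $f_{c}^{\,4}=f_{c^{4}}$ near the relevant segment, is the model — one checks that no point of this segment's neighbourhood is folded more than once every $2q$ iterations of $f_{c}$, so $f_{c}^{\,2q}$ coincides with $f_{\hat c}$ there and $K(\hat c)$ embeds by the identity; the resulting family of parameters is a curve along which $c^{2q}\in\mathbb R_{>0}$, terminating at $e^{i\pi p/q}$, i.e.\ the central curve of a flare of index $2q$. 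To summarise: the definition-setting, the existence construction of part (a), and the trivial arithmetic of the central curve are bookkeeping on top of Theorems~\ref{mod c equals 1} and~\ref{K for real c} and the renormalization machinery already in the paper; the real work is the contradiction argument of part (b), which needs a precise definition of a stable sub-$K$ and a persistence statement for repelling periodic structures in the limit $|c|\searrow1$.
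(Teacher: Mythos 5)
There is no proof in the paper to compare against: the statement you are addressing is stated there as a conjecture, supported only by the experimental pictures of the coded-coloring of the parameter plane, and your text is likewise a strategy rather than a proof. As it stands it has a genuine gap, which you yourself locate correctly: part (b), the exclusion of flares rooted at points $e^{i\theta_0}$ with $\theta_0/\pi$ irrational, is exactly the content of the conjecture, and the proposal offers only the hope that ``a compactness argument should show'' that repelling periodic structures persist in the limit $|c|\searrow 1$. Nothing in the paper supplies that persistence statement; worse, the assertion that the boundaries of sub-$K$'s visible in the coded-coloring are repelling periodic structures is itself only an informal heuristic in the paper (one of the ``two known reasons'' the coloring works), not a theorem, so the contradiction with nonexpansiveness of $f_{c_0^n}$ at $|c_0^n|=1$ rests on an unproved premise. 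The degeneration issue you flag via Lemma~\ref{unbounded Escape radius} compounds this: since $\Diam(K(c^n))\to\infty$ as $|c|\searrow 1$, any limit of the embedded structures must be taken in rescaled coordinates, and no candidate rescaling or compactness framework is specified.

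A second, more structural problem is that you have replaced the object of the conjecture. The paper's ``flares'' are features of a picture: regions of the coded-coloring of the parameter plane, with ``central curves'' identified by eye. Your working definition --- maximal connected parameter sets carrying a continuous family of affine conjugacies $\varphi_c\circ f_{c^n}=f_c^{\,n}\circ\varphi_c$ on $K(c^n)$, with central curve the locus $c^n\in\mathbb{R}_{>0}$ --- makes the rational-argument conclusion essentially automatic (as your arithmetic shows), but it builds the desired answer into the definition. Proving the statement for this redefined notion would not establish the paper's conjecture unless you also prove that every flare in the coded-coloring sense arises from such a conjugacy family and that its central curve coincides with the real locus of $c^n$; that identification is precisely what the experiments suggest and what remains open. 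Your part (a), the existence of flares at each $e^{i\pi p/q}$ modeled on the $c=1.05i$ renormalization example, is plausible and would be worth writing out (the ``no point is folded more than once per $2q$ iterates'' claim still needs a proof for $r$ near $1$), but it addresses non-vacuousness, not the conjecture itself. So: correct identification of where the difficulty lies, but the difficult step is missing, and the reduction to your affine-conjugacy framework is itself an unproved equivalence.
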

\end{section}

\begin{section}{Entropy}\label{Entropy}
In this chapter, we will calculate topological entropy as defined by Adler, Konheim and McAndrew.  We will follow closely what is done in \cite[page 188]{ALM}.

\begin{Def}
We will denote the cardinality of a set $A$ by $\#(A)$.
\end{Def}

The following Lemma is Corollary 2.2 in \cite[pg 829]{M}.

\begin{lem}\label{Entropy and Hausdorff dimension}
Let $X$ be a nonempty compact metric space and $f:X\to X$ a Lipschitz continuous map with Lipschitz constant $L$.  Then the Hausdorff dimension of $X$ is larger than or equal to $\frac{h(f)}{\log L}$.
\end{lem}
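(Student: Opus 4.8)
The statement is Corollary 2.2 of \cite{M}; here is the line of argument I would follow to prove it from first principles. Write $L$ for the Lipschitz constant and assume $L>1$ (if $L\le 1$ then $f$ is non-expanding, so $d_n(x,y):=\max_{0\le k<n}d(f^k(x),f^k(y))$ equals $d$ for all $n$, hence $h(f)=0$ and the inequality is vacuous). Recall that, on a compact metric space, the topological entropy in the sense of Adler--Konheim--McAndrew coincides with the Bowen entropy, which may be computed as $h(f)=\lim_{\varepsilon\to 0}\limsup_{n\to\infty}\frac1n\log r_n(\varepsilon)$, where $r_n(\varepsilon)$ is the least number of subsets of $X$ of $d_n$-diameter less than $\varepsilon$ needed to cover $X$. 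Set $d=\dim_H(X)$ and fix any $s>d$; it suffices to prove $h(f)\le s\log L$, since letting $s\searrow d$ and dividing by $\log L>0$ then gives $\dim_H(X)\ge h(f)/\log L$.

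The plan is to manufacture efficient dynamical covers out of efficient static covers. First I would use that $s>\dim_H(X)$ forces $\mathcal H^s(X)=0$: so for every $\varepsilon>0$ there is a finite cover $\{A_1,\dots,A_m\}$ of $X$ with $\Diam(A_j)<\varepsilon$ and $\sum_{j}\Diam(A_j)^s<\varepsilon^s$. For each $j$ let $q_j$ be the largest integer with $L^{q_j-1}\Diam(A_j)<\varepsilon$; then $q_j\ge 1$, and maximality gives $L^{q_j}\Diam(A_j)\ge\varepsilon$, so $L^{-sq_j}\le\varepsilon^{-s}\Diam(A_j)^s$ and hence $\rho:=\sum_j L^{-sq_j}\le\varepsilon^{-s}\sum_j\Diam(A_j)^s<1$. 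For a finite word $\mathbf i=(i_0,\dots,i_{r-1})$ over $\{1,\dots,m\}$ put $T(\mathbf i)=q_{i_0}+\cdots+q_{i_{r-1}}$ and
\begin{equation*}
W_{\mathbf i}=A_{i_0}\cap f^{-q_{i_0}}(A_{i_1})\cap f^{-(q_{i_0}+q_{i_1})}(A_{i_2})\cap\cdots\cap f^{-(q_{i_0}+\cdots+q_{i_{r-2}})}(A_{i_{r-1}}).
\end{equation*}
Using $\Diam f^k(A_j)\le L^k\Diam(A_j)$, a short computation shows that if $T(\mathbf i)\ge n$ then $W_{\mathbf i}$ has $d_n$-diameter $<\varepsilon$: for $0\le k<n$, $k$ falls into a unique block $[q_{i_0}+\cdots+q_{i_{t-1}},\,q_{i_0}+\cdots+q_{i_t})$, and on that block $f^k(W_{\mathbf i})$ lands inside $f^{k'}(A_{i_t})$ with $k'\le q_{i_t}-1$. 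Moreover, letting $M_n$ be the set of ``first-passage'' words, those with $T(\mathbf i)\ge n$ but $T$ of the prefix $<n$, the sets $\{W_{\mathbf i}:\mathbf i\in M_n\}$ cover $X$: from $y\in X$ choose $i_0$ with $y\in A_{i_0}$, then $i_1$ with $f^{q_{i_0}}(y)\in A_{i_1}$, and so on, stopping the first time the accumulated length reaches $n$ (this terminates since each $q_j\ge 1$).

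It then remains to count $M_n$, since $r_n(\varepsilon)\le\#M_n$. I would assign each word the weight $w(\mathbf i)=L^{-sT(\mathbf i)}=\prod_t L^{-sq_{i_t}}$, so the weights of the children of $\mathbf i$ sum to $\rho\,w(\mathbf i)\le w(\mathbf i)$. Since every infinite word crosses the threshold $T\ge n$ exactly once, $M_n$ is a cut of the word-tree; a bottom-up induction (replacing a maximal-depth sibling family of $M_n$ by its common parent only increases the total weight) yields $\sum_{\mathbf i\in M_n}w(\mathbf i)\le w(\emptyset)=1$. Because $T(\mathbf i)\ge n$ on $M_n$ we have $1\le L^{sn}w(\mathbf i)$, so
\begin{equation*}
r_n(\varepsilon)\le\#M_n\le L^{sn}\sum_{\mathbf i\in M_n}w(\mathbf i)\le L^{sn}.
\end{equation*}
Hence $\limsup_n\frac1n\log r_n(\varepsilon)\le s\log L$ for every $\varepsilon>0$, and letting $\varepsilon\to0$ gives $h(f)\le s\log L$, which is what was needed.

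The hard part will be the combinatorial counting of $M_n$: one must pick the right number of iterates $q_j\approx\log_L(\varepsilon/\Diam A_j)$ for which a cover element stays $\varepsilon$-small, and then recognize that the generating-function/cut inequality for the weights $L^{-sT(\mathbf i)}$ is precisely the device that converts the smallness of $\mathcal H^s(X)$ into the exponential bound $L^{sn}$ (the ``box-dimension'' version, using a fixed scale $\varepsilon/L^{n-1}$, is much easier but only yields $\overline{\dim}_B$, which is weaker here). A secondary, purely bookkeeping, issue is passing between the Adler--Konheim--McAndrew open-cover entropy emphasized elsewhere in this paper and the Bowen spanning-set formulation used above; if one prefers to stay with open covers, the same argument runs after slightly enlarging each $A_j$ to an open set while keeping $\sum\Diam(A_j)^s<\varepsilon^s$, and invoking a Lebesgue-number estimate for $\bigvee_{k=0}^{n-1}f^{-k}\mathcal U$.
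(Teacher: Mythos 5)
Your argument is correct, but it does considerably more than the paper does: the paper offers no proof of this lemma at all, simply citing it as Corollary 2.2 of \cite{M}, whereas you reconstruct a self-contained proof of that cited result. Your route --- take $s>\dim_H(X)$, use $\mathcal{H}^s(X)=0$ to produce a cover with $\sum_j\Diam(A_j)^s<\varepsilon^s$, attach to each $A_j$ the maximal time $q_j$ during which it stays $\varepsilon$-small under iteration, form first-passage words, and bound the size of the resulting cover by sets of small $d_n$-diameter via the tree-cut inequality for the weights $L^{-sT(\mathbf i)}$ --- is essentially the standard dimension-theoretic argument behind Bowen's definition of entropy, and the pieces check out: the block decomposition showing $W_{\mathbf i}$ has $d_n$-diameter less than $\varepsilon$ when $T(\mathbf i)\ge n$, the covering property of $M_n$, the estimate $\sum_{\mathbf i\in M_n}L^{-sT(\mathbf i)}\le 1$ via replacing maximal-depth sibling families by their parent, and the reduction to $L>1$. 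What the citation buys the paper is brevity; what your proof buys is transparency about exactly where compactness (to pass from Adler--Konheim--McAndrew entropy to the $d_n$-cover formulation and to extract a finite cover), the Lipschitz bound, and the choice $s>\dim_H(X)$ enter. Two cosmetic points if you were to write this up in full: obtaining a \emph{finite} cover with $\sum_j\Diam(A_j)^s<\varepsilon^s$ requires the open-enlargement-plus-compactness step you only mention at the end, and any $A_j$ of zero diameter must be fattened or discarded so that $q_j$ is finite; neither affects the argument.
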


\begin{cor}\label{cor Entropy and Hausdorff dimension}
We have $h(f|_K)\leq \log(|c|^2).$
\end{cor}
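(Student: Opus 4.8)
The plan is to apply Lemma~\ref{Entropy and Hausdorff dimension} directly with $X=K$ and $f|_K$, using the sharp Lipschitz constant $L=|c|$, and then to invoke the trivial bound $\dim_H(K)\le 2$ that comes from $K\subset\mathbb{C}$. So the whole corollary is essentially a two-line deduction once the hypotheses of the lemma are checked.

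First I would verify those hypotheses. The set $K$ is nonempty since it contains the fixed point $0$; it is bounded because, by the argument in Proposition~\ref{One attracting periodic point}, every point of sufficiently large modulus has an orbit diverging to infinity; and it is closed by definition. Hence $K$ is a nonempty compact metric space. By Lemma~\ref{K is completely invariant} $K$ is forward invariant, so $f|_K\colon K\to K$ is a well-defined self-map. Next I would observe that $f$ is Lipschitz with constant $|c|$: as recorded after Definition~\ref{Pre-half planes}, $f$ equals multiplication by $c$ composed with the ``fold'' map that fixes $P\mathbb{H}^+$ and sends each $z\in P\mathbb{H}^-$ to $\Reflect(z,\PFL)$. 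The fold map is $1$-Lipschitz (a folding across a line never increases distances), and multiplication by $c$ is a similarity with ratio $|c|$, so $|f(z)-f(w)|\le |c|\,|z-w|$ for all $z,w\in\mathbb{C}$, and a fortiori for $f|_K$.

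With this, Lemma~\ref{Entropy and Hausdorff dimension} applied to $X=K$, $L=|c|$ yields $\dim_H(K)\ge h(f|_K)/\log|c|$; since the standing assumption $|c|>1$ makes $\log|c|>0$, this rearranges to $h(f|_K)\le \dim_H(K)\,\log|c|$. Finally, because $K$ is a subset of the plane $\mathbb{R}^2$, one has $\dim_H(K)\le 2$, and therefore $h(f|_K)\le 2\log|c|=\log(|c|^2)$, which is the claim. There is no substantial obstacle: the only point that deserves a careful word is pinning down the sharp Lipschitz constant $|c|$ (not merely $2|c|$), which is exactly what the fold-then-multiply description of $f$ provides.
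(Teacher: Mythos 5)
Your proposal is correct and follows essentially the same route as the paper: apply Lemma~\ref{Entropy and Hausdorff dimension} to $X=K$ with Lipschitz constant $L=|c|$ and combine it with $\dim_H(K)\le 2$; in fact you verify the hypotheses (nonemptiness, boundedness, invariance, and the sharp Lipschitz constant via the fold-then-multiply description) more carefully than the paper does, whose proof even invokes the unnecessary assumption that $K$ is a polygon. The only glib phrase is ``closed by definition'': closedness of $K$ is not literally definitional but follows from the standard escape argument (by the estimate in Proposition~\ref{One attracting periodic point}, escaping is an open condition), so this is a cosmetic, not a substantive, gap.
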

\begin{proof}
The map $f$ is a Lipschitz continuous map with Lipschitz constant $|c|$.  Since $K$ is a polygon, then $K\subset \mathbb{C}$ is a nonempty compact metric space.  Since $K \subset \mathbb{C}$, then clearly the Hausdorff dimension of $K$ is less than or equal to $2$.  We then get our result by applying Lemma~\ref{Entropy and Hausdorff dimension}.
\end{proof}

\begin{Def}
Let $\mathcal{A},$ $\mathcal{B}$ be open covers of a space $X$ and let $f:X\rightarrow X$ be a continuous map.  The common refinement of $\mathcal{A}$ and $\mathcal{B}$ is $\mathcal{A}~\vee~\mathcal{B}=\{A\cap~B:A\in~\mathcal{A},B\in~\mathcal{B}\}.$  Let $f^{-n}(\mathcal{A})=\{f^{-n}(A):A\in \mathcal{A}\}$.  For every positive integer $n$, we define the $n^{th}$ common refinement of $\mathcal{A}$ by $\mathcal{A}^n=\mathcal{A}\vee~f^{-1}(\mathcal{A})\vee~...~\vee~f^{n-1}(\mathcal{A})$.
\end{Def}

It will be important to note that partitions are covers.

\begin{lem}\label{bound from below for h}
Assume that $K$ is a polygon and let $\mathcal{A}=\{P\mathbb{H}^+\cap K, (P\mathbb{H}^- \setminus \PFL)\cap K\}$.  Then $h(f|_K,\mathcal{A})\geq \log(|c|^2)$.
\end{lem}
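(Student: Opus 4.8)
The plan is to deduce this lower bound from a two-dimensional Lebesgue measure estimate on the atoms of the common refinements. Write $\mu$ for planar Lebesgue measure, so that $\mu(K)>0$ because $K$ is assumed to be a polygon. Set $\mathcal{A}^n=\mathcal{A}\vee f^{-1}(\mathcal{A})\vee\cdots\vee f^{-(n-1)}(\mathcal{A})$; each atom of $\mathcal{A}^n$ is a subset of $K$ obtained by prescribing, for $j=0,1,\dots,n-1$, whether $f^j(z)$ lies in $P\mathbb{H}^+$ or in $P\mathbb{H}^-\setminus\PFL$.

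The first and main step is to bound the measure of a single atom $A$ of $\mathcal{A}^n$. By the definition of the pre-half planes (Definition~\ref{Pre-half planes}), on a step of the first kind $f$ acts as $w\mapsto cw$ and on a step of the second kind as $w\mapsto\overline{cw}-2i$; hence for every $z\in A$ one has $f^n(z)=\Phi(z)$, where $\Phi$ is the composition of the $n$ affine maps determined by the itinerary of $A$. A direct computation shows that $w\mapsto cw$ has real Jacobian determinant $|c|^2$ and $w\mapsto\overline{cw}-2i$ has real Jacobian determinant $-|c|^2$, so each factor is a nonsingular affine map of the plane with Jacobian of modulus $|c|^2$; consequently $\Phi$ is a nonsingular affine map with $|\det\Phi|=|c|^{2n}$, in particular injective. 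Since $A$ is Borel ($P\mathbb{H}^+$ is closed, $P\mathbb{H}^-\setminus\PFL$ is locally closed, and $f$ is continuous), the change-of-variables formula gives $\mu(f^n(A))=\mu(\Phi(A))=|c|^{2n}\mu(A)$. On the other hand $A\subset K$ and $K$ is forward invariant (Lemma~\ref{K is completely invariant}), so $f^n(A)\subset K$ and $\mu(f^n(A))\le\mu(K)$. Hence $\mu(A)\le|c|^{-2n}\mu(K)$.

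The second step is to sum over a subcover. If $\{A_1,\dots,A_m\}\subset\mathcal{A}^n$ covers $K$, then $\mu(K)\le\sum_{i=1}^m\mu(A_i)\le m\,|c|^{-2n}\mu(K)$, and as $\mu(K)>0$ this forces $m\ge|c|^{2n}$. Thus the minimal cardinality $N(\mathcal{A}^n)$ of a subcover of $\mathcal{A}^n$ satisfies $N(\mathcal{A}^n)\ge|c|^{2n}$, and therefore
\[
h(f|_K,\mathcal{A})=\lim_{n\to\infty}\frac1n\log N(\mathcal{A}^n)\ \ge\ \lim_{n\to\infty}\frac1n\log\bigl(|c|^{2n}\bigr)=\log(|c|^2).
\]

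I do not expect a real obstacle here; the only points that need attention are confirming that $f^n$ restricted to a single atom is genuinely one affine map — so that the area formula yields the equality $\mu(f^n(A))=|c|^{2n}\mu(A)$, not merely an inequality — and the routine measurability bookkeeping for the atoms and their images. This lower bound will combine with Corollary~\ref{cor Entropy and Hausdorff dimension} (together with the fact that the atoms of $\mathcal{A}^n$ have diameters tending to $0$, by Lemma~\ref{Prefolding lines are dense in K} and the expansion $|c|>1$) to control $h(f|_K)$ in the polygonal case.
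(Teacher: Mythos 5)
Your proposal is correct and follows essentially the same route as the paper's proof: both bound the Lebesgue measure of each element $A$ of $\mathcal{A}^n$ by $\lambda(K)/(|c|^2)^n$, using that $f^n$ is injective on $A$ and multiplies area by $(|c|^2)^n$ while $f^n(A)\subset K$, and then count the atoms covering $K$ to get $\#(\mathcal{A}^n)\geq (|c|^2)^n$ and hence $h(f|_K,\mathcal{A})\geq \log(|c|^2)$. Your explicit observation that $f^n$ on an atom is a single affine composition of the maps $w\mapsto cw$ and $w\mapsto\overline{cw}-2i$, each with Jacobian of modulus $|c|^2$, merely spells out the paper's assertion that $f^n$ is one-to-one on $A$ with $\lambda(f^n(A))=(|c|^2)^n\lambda(A)$.
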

\begin{proof}
Let $A\in \mathcal{A}^n$ and let $\lambda$ be the 2-dimensional Lebesgue measure.  Then $f^n$ is one-to-one on $A$ and so $\lambda(f^n(A))=(|c|^2)^n \lambda(A)$.  Also, $f^n(A)\subset f(K) \subset K$.  Thus $\lambda(K) \geq \lambda(f^n(A))=(|c|^2)^n \lambda(A)$.  This gives an upper estimate for $\lambda(A)$, namely:
\begin{equation}
\begin{aligned}
\lambda(A) \leq \frac{\lambda(K)}{(|c|^2)^n}.\\
\end{aligned}
\end{equation}
  
Also, $\underset{A \in \mathcal{A}^n}{\bigcup}A=K$.  And so $\underset{A\in \mathcal{A}^n}{\Sigma}\lambda(A)=\lambda(K).$  Using our upper estimate for $\lambda(A)$ we get $\#(\mathcal{A}^n)\frac{\lambda(K)}{(|c|^2)^n}\geq \lambda(K)$.  Since $\lambda(K)>0$ then dividing both sides by $\lambda(K)$ results in:
\begin{equation}
\begin{aligned}
\#(\mathcal{A}^n)\geq (|c|^2)^n. \\
\end{aligned}
\end{equation}

Taking the logarithm of both sides and dividing by $n$ we get: 
\begin{equation}
\begin{aligned}
\frac{1}{n}\log(\#(\mathcal{A}^n))\geq \frac{1}{n}\log((|c|^2)^n).\\
\end{aligned}
\end{equation}   

Taking the limit as $n$ goes to infinity we get $h(f,\mathcal{A}) \geq \log(|c|^2)$.  That is, the topological entropy of $f$ with respect to the partition $\mathcal{A}$ is greater than or equal to $\log(|c|^2).$
\end{proof}

\begin{lem}\label{valency of a partition}
Assume that $K$ is a polygon.  Let $\mathcal{A}=\{P\mathbb{H}^+\cap K, (P\mathbb{H}^- \setminus \PFL)\cap K\}$.  Then for any $z \in K$, $\#(\{A\in \mathcal{A}^n: z \in \Bd(A)\})\leq 4n+6$.
\end{lem}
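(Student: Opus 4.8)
The plan is to reduce the valency bound to a local count, near $z$, of the ``cut locus'' that separates the elements of $\mathcal A^{n}$, and then to control how many branches this cut locus can have at $z$ by following the forward orbit of $z$. The first step is to record that for every $A\in\mathcal A^{n}$ one has
\[
\Bd(A)\ \subseteq\ \Bd(K)\ \cup\ \bigcup_{j=0}^{n-1} f^{-j}(\PFL).
\]
This follows because the relative boundary inside $K$ of either piece of $\mathcal A$ is contained in $\PFL\cap K$, and, using $\Bd(X\cap Y)\subseteq\Bd X\cup\Bd Y$ together with $\Bd\bigl(f^{-j}(X)\bigr)\subseteq f^{-j}(\Bd X)$ (valid for any continuous map) and complete invariance of $K$, the relative boundary of an element of $\mathcal A^{n}$ lies in $\bigcup_{j}\bigl(f^{-j}(\PFL)\cap K\bigr)$; adding $\Bd K$ accounts for the ambient boundary, and since $K$ is a polygon $\Bd K$ is a simple polygonal curve. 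In particular, $z\in\Bd(A)$ for some $A\in\mathcal A^{n}$ forces $z\in\Bd K$ or $f^{j}(z)\in\PFL$ for some $0\le j\le n-1$; set $J=\{\,0\le j\le n-1: f^{j}(z)\in\PFL\,\}$.

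Next I would localize. Pick a disk $D$ about $z$ so small that $D$ meets $f^{-j}(\PFL)$ only for $j\in J$ and that each of $D\cap\Bd K$ and $D\cap f^{-j}(\PFL)$ ($j\in J$) is a finite union of straight segments issuing from $z$. Travelling once around $z$ inside $\overline{D}\cap K$, the $\mathcal A^{n}$-code of a point changes only when one of these segments is crossed, so $\#\{A\in\mathcal A^{n}:z\in\Bd A\}$ is at most the number of distinct codes encountered on this loop, which is bounded by the total number of segment-branches of the cut locus at $z$, plus a bounded correction for lower-dimensional elements of $\mathcal A^{n}$ through $z$ and for the element containing $z$ itself. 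It therefore suffices to bound, linearly in $n$ with the correct constant, the number of branches of $\Bd K$ at $z$ (at most $2$, since $K$ is a polygon) together with, for each $j\in J$, the number of branches of $f^{-j}(\PFL)$ at $z$.

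To count the branches of $f^{-j}(\PFL)$ at $z$ I would pull the line $\PFL$ backward along the orbit $z,f(z),\dots,f^{j}(z)$. At a time $i$ with $f^{i}(z)\notin\PFL$ the map $f$ is a local homeomorphism near $f^{i}(z)$ (it is locally one of the affine maps $g$ or $h$), so the branch count is unchanged. At a time $i\in J$ with $i<j$ the map folds near $f^{i}(z)\in\PFL$; since $f^{i+1}(z)\in\FL$ and the image of $f$ is $\mathbb H^{+}$, only the portion of the transported curve lying locally in $\mathbb H^{+}$ has preimages, and each such branch pulls back to two branches symmetric across $\PFL$. The folding times along this piece of the orbit are exactly the elements of $J$ that are $<j$, so the number of folds is itself part of the budget $|J|\le n$; accounting for how each fold affects the branch count, summing over $j\in J$, and adding the $\Bd K$ contribution should give a bound linear in $n$, and a careful bookkeeping of the constants (using that at most one fold occurs per orbit point, and that two consecutive folds force passage through $\gamma_{0}=\FL\cap\PFL$) is what produces the explicit value $4n+6$.

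The main obstacle is precisely the fold step: a priori, pulling a curve back through a fold can split each of its branches into two, which would only yield an exponential bound, so the crux is to show that the curves that actually occur here --- finitely folded preimages of the line $\PFL$, intersected with the polygon $K$ --- cross the relevant preimages of $\FL$ transversally and from one side, so that each fold contributes only additively (by a bounded amount) rather than multiplicatively to the branch count at $z$. This is where the structural results of Chapter~\ref{The Perimeter Set $P$} about $\PFL$, $\FL$, $\gamma_{0}$, and the segments $L_{j}$, as well as the standing hypothesis that $K$ is a polygon, come into play; once additivity at folds is established the remaining estimates are elementary.
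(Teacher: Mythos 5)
Your reduction is sound and matches the paper's setup: the boundaries of elements of $\mathcal{A}^n$ lie in $\Bd(K)\cup\bigcup_{j=0}^{n-1}f^{-j}(\PFL)$, and the count at $z$ comes down to how many rays of this set emanate from $z$ (plus a bounded correction for $\Bd(K)$, which contributes at most two more directions since $K$ is a polygon). But the step you yourself flag as ``the crux'' --- that each fold contributes only additively to the branch count when you pull $\PFL$ back along the orbit of $z$ --- is exactly the content of the lemma, and your sketch gives no mechanism for it. Appealing to transversality, one-sidedness, or ``the structural results of Chapter~4'' does not close it: the preimage $f^{-j}(\PFL)$ really can consist of exponentially many segments, and branch-by-branch bookkeeping through folds genuinely doubles at each fold time, so per-orbit tracking of branches does not by itself yield a linear bound. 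This is a genuine gap, not a routine omission.

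What actually makes the bound linear --- and what the paper uses --- is not the number of preimage segments but the number of their \emph{directions}. Every component of $f^{-j}(\PFL)$ is a straight segment or ray, and pulling back by $g^{-1}$ or $h^{-1}$ either adds $\theta$ to the direction or reflects and then adds $\theta$. Hence the set $W^j$ of possible directions satisfies $W^{j+1}=(\theta-W^j)\cup(\theta+W^j)$, and since $W^j$ stays a block of consecutive integer multiples of $\theta$, one gets $\#(W^j)=2j+1$ by induction, not $2^j$. Since $m$ lines of distinct directions through $z$ cut a small disk about $z$ into at most $2m$ sectors, an interior point meets at most $2(2n+1)=4n+2$ elements of $\mathcal{A}^n$ in its closure, and a point of $\Bd(K)$ at most $4n+6$. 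Your argument could be repaired by replacing the fold-additivity claim with this direction count (track directions, not branches, along the pullback), but as written the decisive estimate is missing.
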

\begin{proof}
Let $z \in K$ be fixed and let $N(z,n)=\#(\{A\in \mathcal{A}^n: z \in \Cl(A)\})$.  If $z$ is in the interior of $A \in \mathcal{A}^n$, then $N(z,n)=1$ and we are done.  It is clear that $\mathcal{A}^n$ is a finite set.  Thus, for a small enough neighborhood $U$ of $z$, if $A \in \mathcal{A}^n$ and $A \cap U\neq \emptyset$ then $z\in \Cl(A)$.

If $z \notin \Bd(K)$ then $N(z,n)$ is equal to twice the number of the first $n$ preimages of $\FL$ that are not collinear and meet at $z$.  Obviously, the number of preimages of $\FL$ that can meet at a point are bounded from above by the number of distinct angles a preimage of $ \FL$ can take.  We now show that there cannot be more than $2n+1$ distinct angles a preimage of $ \FL$ can take.

Recall that $c=|c|e^{i\theta}$.  \textbf{To simplify the calculations, in this proof we will measure angles relative to the negative real axis in a clockwise manner.}  Note that under this temporary convention, the angle of $\PFL$ relative to the real axis is $\theta$, and division by $c$ adds $\theta$ to the argument of every point.

Now $f^{-2}( \FL)=f^{-1}(\PFL)$ consists of two rays which are constructed by removing the open lower half-plane, unfolding a copy of the upper half-plane onto the lower half-plane, and then dividing by $c$.  Thus, if $W^n=\{w_1,w_2,...,w_k\}$ is the set of all angles achieved in the first $n$ preimages of $\PFL$, then $W^{n+1}=((-W^n)\cup W^n)+\theta=(\theta-W^n)\cup (\theta+W^n)\cup W^n$.  We now list the first few angle sets.
\begin{equation}
\begin{aligned}
W^0 &=\{\theta\}\\
W^1 &=\{0,\theta,2\theta\}\\
W^2 &=\{-\theta,0,\theta,2\theta,3\theta\}\\
W^3 &=\{-2\theta,-\theta,0,\theta,2\theta,3\theta,4\theta\}\\
.&\\
.&\\
.&\\
W^{n+1} &=(\theta-W^n)\cup (\theta+W^n)\\
\end{aligned}
\end{equation}

Note that $\#(W^n)=2(n)+1$ for these first few sets.  We now show that this growth is true for all $n\geq 0$.  Now suppose that $w=k\theta,k \in \mathbb{Z}$ and $W^n=\{(x-k)\theta:x=0,1...,n+1+k\}$.  Letting $v=w+\theta$ we have:

\begin{equation}
\begin{aligned}
W^{n+1}&=(\theta-W^n)\cup (\theta+W^n)\\
&=\{\theta-(w+2\theta),\theta-w,...,\theta+w-2\theta,\theta+w\}\\
&~~~\cup \{\theta-w,\theta-w+2\theta,...,\theta+w,\theta+w+2\theta\}\\
&=\{-w-\theta,-w+\theta,-w+3\theta,...,w+\theta,w+3\theta\}\\
&=\{-v,-v+2\theta,v+4\theta,...,v,v+2\theta\}.\\
\end{aligned}
\end{equation}
Thus by induction, $\#(W^{n+1})=\#(W^n)+2$ and we have $\#(W^n)=2n+1.$  Therefore, $N(z,n)\leq 2(\#(W^n))=2(2n+1)=4n+2.$

Lastly, if $z\in \Bd(K)$ then at most two $A \in \mathcal{A}^n$ have a common boundary with $K$ in the small neighborhood $U$ of $z$.  Since the sides of the polygon $K$ do not need to be parallel to a preimage of $ \FL$, then this may add at most two more angles to consider.  Thus, for all $z \in K$ we have $N(z,n) \leq 2(2n+3)=4n+6$.
\end{proof}

\begin{lem}\label{open cover for h}
There exists an open cover $\mathcal{B}$ of $\mathcal{A}^n$ such that each element of $\mathcal{B}$ intersects at most $k(n)$ elements of $\mathcal{A}^n$ where $k(n)$ grows linearly.
\end{lem}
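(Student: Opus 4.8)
The plan is to manufacture $\mathcal{B}$ one point at a time, using the very neighborhoods that already appeared in the proof of Lemma~\ref{valency of a partition}. First observe that, since $\PFL\subset P\mathbb{H}^+$, the two members $P\mathbb{H}^+\cap K$ and $(P\mathbb{H}^-\setminus\PFL)\cap K$ of $\mathcal{A}$ are disjoint, so $\mathcal{A}$ is genuinely a partition of $K$; hence each $\mathcal{A}^n$ is a \emph{finite} partition of $K$ into pairwise disjoint pieces with $\bigcup_{A\in\mathcal{A}^n}A=K$. Note that the members of $\mathcal{A}^n$ are in general neither open nor closed, which is the one technical nuisance the argument must work around.

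Fix $n$, and let $k(n)$ be the linear bound supplied by Lemma~\ref{valency of a partition}, so that for every $z\in K$ the family $\mathcal{N}_z=\{A\in\mathcal{A}^n : z\in\Cl(A)\}$ has at most $k(n)$ members. For a fixed $z\in K$, since $\mathcal{A}^n$ is finite the set
\begin{equation*}
V_z=\mathbb{C}\setminus\bigcup\bigl\{\Cl(A) : A\in\mathcal{A}^n,\ z\notin\Cl(A)\bigr\}
\end{equation*}
is the complement of a finite union of closed sets, none of which contains $z$, so $V_z$ is an open neighborhood of $z$. I would then pick an open ball $U_z$ with $z\in U_z\subset V_z$. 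By construction $U_z$ is disjoint from every $A\in\mathcal{A}^n$ with $z\notin\Cl(A)$, hence $U_z$ can meet only the members of $\mathcal{N}_z$; that is, $U_z$ intersects at most $k(n)$ elements of $\mathcal{A}^n$.

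Taking $\mathcal{B}=\{U_z : z\in K\}$ finishes the argument: every $z\in K$ lies in its own $U_z$, so $\mathcal{B}$ is an open cover of $K=\bigcup_{A\in\mathcal{A}^n}A$, and each element of $\mathcal{B}$ intersects at most $k(n)$ elements of $\mathcal{A}^n$, with $k(n)$ growing linearly in $n$. The argument is soft and I do not anticipate a genuine obstacle; the only point needing care is that one cannot simply say ``$U_z$ meets only the $A$ that contains $z$,'' because the pieces of $\mathcal{A}^n$ are not closed — intersections must instead be controlled through closures, which is legitimate precisely because $\mathcal{A}^n$ is finite and the valency of each point is bounded by Lemma~\ref{valency of a partition}. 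In effect, the real content of this step was already established when that linear valency bound was proved.
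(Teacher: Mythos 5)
Your proof is correct, and it takes a genuinely different (and softer) route than the paper. The paper builds an explicit finite cover out of three kinds of sets: the interiors $\Int(A)$ of the pieces, the sets $\Int(\Cl(A_j\cup A_k))$ for pairs of pieces (to cover the interiors of shared edges), and finitely many small disjoint open neighborhoods of the isolated points where more than two pieces of $\mathcal{A}^n$ meet; the valency bound of Lemma~\ref{valency of a partition} is invoked only for those exceptional neighborhoods. You instead work pointwise: for each $z\in K$ you take a ball avoiding the closures of all pieces whose closure misses $z$, so that the ball can only meet pieces adjacent to $z$, and the valency lemma bounds that number uniformly. What your version buys is a cleaner argument with no case analysis and no need for the paper's claim that the multi-piece meeting points are isolated; what it costs is that $\mathcal{B}$ is an infinite cover, which is harmless here because the entropy of $f$ with respect to an open cover is computed from minimal subcovers, which are finite by compactness of $K$. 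One small point to tidy up: Lemma~\ref{valency of a partition} as stated counts the pieces $A$ with $z\in\Bd(A)$, while you use $z\in\Cl(A)$; since the pieces of $\mathcal{A}^n$ are pairwise disjoint, at most one of them can contain $z$ in its interior, so your count exceeds the stated one by at most $1$ (e.g.\ take $k(n)=4n+7$), which is still linear, and in fact the paper's own proof of that lemma bounds the closure count directly.
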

\begin{proof}
Let $B_1=\{\Int(A):A\in \mathcal{A}^n\}$ and let $B_2=\{\Int(\Cl(A_j \cup A_k)):A_j,A_k\in~\mathcal{A}^n\}$.  Then $B_1 \cup B_2$ is a cover of $K$ everywhere but at the locations where more than 2 elements of $\mathcal{A}^n$ meet.  Since $\mathcal{A}^n$ is a finite set, then these locations are isolated and so can be covered by disjoint open sets.  By Lemma~\ref{valency of a partition} each of these open set intersects at most $4n+6$ elements of $\mathcal{A}^n$ which grows linearly.
\end{proof}

\begin{thm}
If $K$ is a polygon then $h(f|_K)=\log(|c|^2)$.
\end{thm}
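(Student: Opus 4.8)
The plan is to pin down $h(f|_K)$ by a two-sided estimate, with both bounds equal to $\log(|c|^2)$. The lower bound is essentially already in hand: the finite partition $\mathcal{A}=\{P\mathbb{H}^+\cap K,\ (P\mathbb{H}^-\setminus\PFL)\cap K\}$ satisfies $h(f|_K,\mathcal{A})\geq\log(|c|^2)$ by Lemma~\ref{bound from below for h}, and by the definition of topological entropy $h(f|_K)\geq h(f|_K,\mathcal{A})$, so $h(f|_K)\geq\log(|c|^2)$. Thus the whole content of the theorem is the matching upper bound.

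For the upper bound I would appeal directly to Corollary~\ref{cor Entropy and Hausdorff dimension}: the map $f$ is Lipschitz with constant $|c|$, and when $K$ is a polygon it is a nonempty compact subset of $\mathbb{C}$, so its Hausdorff dimension is at most $2$; Lemma~\ref{Entropy and Hausdorff dimension} then gives $h(f|_K)\leq 2\log|c|=\log(|c|^2)$. Combined with the previous paragraph this yields $h(f|_K)=\log(|c|^2)$, which is the statement. Note that the polygon hypothesis is used both here and in Lemma~\ref{bound from below for h}, the latter because it guarantees that $K$ has positive planar Lebesgue measure, so that the injectivity of $f^n$ on cells of $\mathcal{A}^n$ can be exploited.

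One can also record why the finite partition $\mathcal{A}$ already captures all the entropy, i.e.\ $h(f|_K)=h(f|_K,\mathcal{A})$, which is the point of Lemmas~\ref{valency of a partition} and~\ref{open cover for h}. The key observation is that $\mathcal{A}$ is generating: if $A\in\mathcal{A}^n$ then all points of $A$ share the same itinerary of length $n$, so no folding occurs along the first $n$ iterates of $A$ and $f^n|_A$ is a similarity of ratio $|c|^n$ (both $g$ and $h$ scale every distance by exactly $|c|$); hence $\Diam(A)\leq|c|^{-n}\Diam(K)$ and $\Diam(\mathcal{A}^n)\to 0$. Consequently the open covers $\mathcal{B}_n$ manufactured from $\mathcal{A}^n$ in Lemma~\ref{open cover for h} also have mesh tending to $0$, so $h(f|_K)=\sup_n h(f|_K,\mathcal{B}_n)$; and because each element of $\mathcal{B}_n$ meets at most $4n+6$ cells of $\mathcal{A}^n$ (Lemma~\ref{valency of a partition}), choosing $n'$ large enough that $\mathcal{A}^{n'}$ refines $\mathcal{B}_n$ and bounding the minimal subcover cardinality of $\mathcal{B}_n^m$ by $\#(\mathcal{A}^{n'+m-1})$ gives $h(f|_K,\mathcal{B}_n)\leq h(f|_K,\mathcal{A})$ for all $n$, so $h(f|_K)=h(f|_K,\mathcal{A})$. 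The genuinely delicate point in the whole argument is the upper bound $h(f|_K)\leq\log(|c|^2)$ itself: the crude semiconjugacy of $f|_K$ with the one-sided $2$-shift only yields $h(f|_K)\leq\log 2$, which is not the sharp constant (recall $K$ being a polygon forces $|c|\leq\sqrt{2}$, so $\log(|c|^2)\leq\log 2$), and obtaining the correct value appears to need either the Hausdorff-dimension estimate of Lemma~\ref{Entropy and Hausdorff dimension} or a sub-exponential control on $\#(\mathcal{A}^n)$, for which the bounded-valency count of Lemma~\ref{valency of a partition} and the careful replacement of the non-open partition $\mathcal{A}^n$ by the open cover $\mathcal{B}_n$ of Lemma~\ref{open cover for h} are the essential ingredients.
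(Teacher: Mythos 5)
Your upper bound and your use of Lemma~\ref{bound from below for h} match the paper, but there is a genuine gap in your lower bound: the inequality $h(f|_K)\geq h(f|_K,\mathcal{A})$ is not ``by the definition of topological entropy.'' The Adler--Konheim--McAndrew entropy is a supremum of $h(f,\mathcal{U})$ over \emph{open} covers $\mathcal{U}$, and $\mathcal{A}$ is not an open cover: the element $P\mathbb{H}^+\cap K$ is closed in $K$, and distinct cells of $\mathcal{A}^n$ share boundaries along the preimages of $\PFL$, so a single member of an open cover (or a single small ball) can meet many cells at once. Consequently the cell count $\#(\mathcal{A}^n)$ does not directly minorize any minimal-subcover count or $(n,\varepsilon)$-separated set, and transferring the Lebesgue-measure estimate $h(f,\mathcal{A})\geq\log(|c|^2)$ to $h(f|_K)$ requires an argument. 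That argument is exactly what Lemmas~\ref{valency of a partition} and~\ref{open cover for h} are for in the paper: one builds an open cover $\mathcal{B}$ each of whose members meets at most $k(n)=4n+6$ cells of $\mathcal{A}^n$, deduces $\#\bigl((\mathcal{A}^n)^m_{f^n}\bigr)\leq \mathcal{N}\bigl(\mathcal{B}^m_{f^n}\bigr)\,k(n)^m$, hence $h(f^n,\mathcal{A}^n)\leq h(f^n)+\log k(n)$, and then uses $h(f^n,\mathcal{A}^n)=n\,h(f,\mathcal{A})$ together with the sub-exponential growth of $k(n)$ to conclude $h(f,\mathcal{A})\leq h(f|_K)$.

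In your third paragraph you do invoke these two lemmas, but you use them to prove the opposite inequality $h(f|_K)\leq h(f|_K,\mathcal{A})$ (via covers $\mathcal{B}_n$ of mesh tending to zero refined by $\mathcal{A}^{n'}$). That inequality is plausible but is not needed for the theorem: the sharp upper bound $h(f|_K)\leq\log(|c|^2)$ comes, both in your proposal and in the paper, from the Hausdorff-dimension estimate of Corollary~\ref{cor Entropy and Hausdorff dimension}. So the logical role of Lemmas~\ref{valency of a partition} and~\ref{open cover for h} is inverted in your write-up: they exist precisely to justify the step you declared definitional, namely $h(f,\mathcal{A})\leq h(f|_K)$, and without that step your lower bound $h(f|_K)\geq\log(|c|^2)$ is unproven. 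Restoring the paper's comparison argument (or a substitute, e.g.\ extracting from $\mathcal{A}^n$ an $(n,\varepsilon)$-separated set of comparable cardinality, which again needs the valency bound on how many cells can meet near a point) closes the gap; the rest of your proposal is consistent with the paper's proof.
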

\begin{proof}
Assume that $K$ is a polygon.  
We define a partition of $K$ by $\mathcal{A}=\{P\mathbb{H}^+\cap K, (P\mathbb{H}^- \setminus \PFL)\cap K\}$.  Then $\mathcal{A}_f^n=\mathcal{A} \vee f^{-1}(\mathcal{A})\vee...\vee f^{-(n-1)}(\mathcal{A})$ is the nth-common refinement of $\mathcal{A}$ with respect to $f$.  By Lemma~\ref{open cover for h} there exists an open cover $\mathcal{B}$ such that each element of $\mathcal{B}$ intersects at most $k(n)$ elements of $\mathcal{A}^n$.  Let $\mathcal{C}$ be a minimal subcover of $\mathcal{B}^m.$  Each element of $\mathcal{C}$ intersects at most $(k(n))^m$ elements of $(\mathcal{A}_f^n)_{f^n}^m$.  The total number of elements of $(\mathcal{A}_f^n)_{f^{n}}^m$ is less than or equal to $(\#\mathcal{C})(k(n))^m$.  We get:
\begin{equation}
\begin{aligned}
\mathcal{N}((\mathcal{A}_{f^n}^n)_{f^n} ^m)&=(\#\mathcal{A}_{f^n})_{f^n}^m\\
& \leq \mathcal{N}(\mathcal{B}_{f^n}^m)(k(n))^m\\
\end{aligned}
\end{equation}
Taking the logarithm of both sides and the limit as $n$ goes to infinity we get:
\begin{equation*}
h(f^n,\mathcal{A}^n) \leq h(f^n, \mathcal{B})+\log k(n)\leq h(f^n)+\log k(n).\\
\end{equation*}

Using the identity that for any partition $\mathcal{A}$ we have $h(f^n, \mathcal{A}^n)=nh(f,\mathcal{A})$ and dividing through by $n$ we get $h(f,\mathcal{A}) \leq h(f)+\frac{1}{n}\log k(n).$  As $n$ goes to infinity, $\frac{1}{n}\log k(n)$ goes to 0 and so $h(f,\mathcal{A}) \leq h(f)$.  Then by Lemma~\ref{bound from below for h} $\log(|c|^2)\leq h(f,\mathcal{A}) \leq h(f)$.  By Corollary~\ref{cor Entropy and Hausdorff dimension} $h(f)\leq \log(|c|^2).$  Thus $h(f)=\log(|c|^2).$
\end{proof}

\begin{prop}\label{entropy upper bound}
We have $h(f|_K)\leq \log2$.
\end{prop}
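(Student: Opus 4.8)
The plan is to exhibit the finite partition
$\mathcal{A}=\{K\cap P\mathbb{H}^+,\ K\cap(P\mathbb{H}^-\setminus\PFL)\}$
as a topological generator for $f|_K$ whose $n$-th refinement has at most $2^n$ atoms, and to conclude $h(f|_K)\le\log 2$ from this. First I would record the two facts needed. (i) $K$ is a nonempty compact subset of $\mathbb{C}$: it is closed because the escape set is open, and bounded because $|f(z)|\ge|c||z|-2>|z|$ once $|z|$ is large (cf.\ Proposition~\ref{One attracting periodic point}); hence $D:=\Diam(K)<\infty$. (ii) On each atom of $\mathcal{A}$ the map $f$ is a similarity of ratio $|c|>1$: indeed $f(z)=cz$ on $K\cap P\mathbb{H}^+$ and $f(z)=\overline{cz}-2i$ on $K\cap(P\mathbb{H}^-\setminus\PFL)$, and in both cases $|f(z)-f(w)|=|c|\,|z-w|$ for $z,w$ in the same atom. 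Note $\mathcal{A}$ really is a partition of $K$, since $P\mathbb{H}^+\cup P\mathbb{H}^-=\mathbb{C}$ and $P\mathbb{H}^+\cap P\mathbb{H}^-=\PFL$.

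From (i) and (ii) I would deduce the key estimate on $\mathcal{A}^m=\bigvee_{j=0}^{m-1}f^{-j}(\mathcal{A})$: if $z,w$ lie in a common atom of $\mathcal{A}^m$, then $f^j(z)$ and $f^j(w)$ lie in a common atom of $\mathcal{A}$ for $j=0,\dots,m-1$, so iterating (ii) gives $|c|^{\,m-1}|z-w|=|f^{m-1}(z)-f^{m-1}(w)|\le D$, whence $\Diam(A)\le D|c|^{-(m-1)}$ for every $A\in\mathcal{A}^m$. Thus the atoms of $\mathcal{A}^m$ shrink uniformly to points as $m\to\infty$, which is exactly what makes $\mathcal{A}$ generating.

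Now let $\mathcal{C}$ be an arbitrary open cover of $K$ and let $\delta>0$ be a Lebesgue number for $\mathcal{C}$. Choosing $m_0$ with $D|c|^{-(m_0-1)}<\delta$, every atom of $\mathcal{A}^{m_0}$ lies inside some member of $\mathcal{C}$, so $\mathcal{A}^{m_0}$ refines $\mathcal{C}$. Since refinement is preserved by $f^{-j}$ and by common refinement, $\mathcal{C}^n=\bigvee_{j=0}^{n-1}f^{-j}(\mathcal{C})$ is refined by $\bigvee_{j=0}^{n-1}f^{-j}(\mathcal{A}^{m_0})=\mathcal{A}^{\,n+m_0-1}$, and therefore
\[
\mathcal{N}(\mathcal{C}^n)\ \le\ \#(\mathcal{A}^{\,n+m_0-1})\ \le\ 2^{\,n+m_0-1}.
\]
Taking $\tfrac1n\log(\cdot)$ and letting $n\to\infty$ gives $h(f|_K,\mathcal{C})\le\log 2$; since $\mathcal{C}$ was arbitrary, $h(f|_K)=\sup_{\mathcal{C}}h(f|_K,\mathcal{C})\le\log 2$. (Equivalently one can run the estimate with Bowen's $(n,\varepsilon)$-separated sets, which by the same expansion bound inject into the set of length-$(n+m_0)$ $\mathcal{A}$-itineraries.)

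The argument is uniform in $c$ with $|c|>1$; when $K$ is a polygon, Theorem~\ref{thm:zetaLow} and Proposition~\ref{Im(zeta) less than -1 implies alpha geq -1 and |c| leq sqrt(2)} force $|c|\le\sqrt2$, so there the bound is already implied by $h(f|_K)\le\log|c|^2$, and the real content is the case $|c|>\sqrt2$. I do not expect a serious obstacle: the only points requiring care are checking that $K$ is genuinely compact for every $|c|>1$ and that the identity $|f(z)-f(w)|=|c|\,|z-w|$ is \emph{exact} (not merely an inequality) on the folded atom $K\cap(P\mathbb{H}^-\setminus\PFL)$, since that exactness is what forces the atoms of $\mathcal{A}^m$ to shrink; the rest is standard entropy bookkeeping already used in the preceding theorem.
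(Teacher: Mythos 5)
Your argument is correct, but it takes a genuinely different route from the paper. The paper stays measure-theoretic: it asserts that $\mathcal{A}=\{P\mathbb{H}^+\cap K,\ (P\mathbb{H}^-\setminus\PFL)\cap K\}$ is a one-sided generator with two elements, deduces $h_\mu(f)=h_\mu(f,\mathcal{A})\le\log 2$ for every $f$-invariant probability measure $\mu$, and then invokes the variational principle $h(f)\le\sup_\mu h_\mu(f)$. You instead work directly with the Adler--Konheim--McAndrew definition: the exact expansion $|f(z)-f(w)|=|c|\,|z-w|$ on each atom forces $\Diam(A)\le \Diam(K)\,|c|^{-(m-1)}$ for every atom $A$ of $\mathcal{A}^m$, so by a Lebesgue-number argument $\mathcal{A}^{m_0}$ refines any given open cover $\mathcal{C}$, whence $\mathcal{N}(\mathcal{C}^n)\le\#(\mathcal{A}^{n+m_0-1})\le 2^{\,n+m_0-1}$ and $h(f|_K,\mathcal{C})\le\log 2$. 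What your version buys is self-containedness: the shrinking-atoms estimate is precisely the content behind the paper's unproved assertion that $\mathcal{A}$ is a generator, and you avoid both measure-theoretic entropy and the variational principle; the cost is slightly more bookkeeping with covers versus the paper's two-line deduction. One small remark on your closing aside: the case where $K$ is a polygon is not really a separate case here (and Corollary~\ref{cor Entropy and Hausdorff dimension} is what gives $\log|c|^2$ there); your cover argument as written is uniform in $|c|>1$ and needs no such case split, provided you record (as you do) that $K$ is nonempty, compact, and forward invariant.
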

\begin{proof}
Let $\mathcal{A}=\{P\mathbb{H}^+\cap K, (P\mathbb{H}^- \setminus \PFL)\cap K\}$ and let $\mu$ be any $f$-invariant probability measure on $K$.  Then $\mathcal{A}$ is a one sided generator with two elements.  Thus, $h_\mu (f) =h(f,\mathcal{A})\leq \log2$.  Now the variational principle states that $h(f)\leq \sup\{h_\mu (f)\}$ where the supremum is taken over all $f$-invariant probability measures on $K$.  We have $h(f)\leq \log2$.
\end{proof}

\begin{thm}\label{strict entropy}
We have $h(f|_K) \leq \log \min(2,|c|^2)$.  Furthermore, the inequality is sometimes strict.
\end{thm}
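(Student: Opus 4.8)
The plan is to obtain the inequality by combining two estimates already available and to extract strictness from the real-parameter examples of Chapter~\ref{The Filled-in Julia Set}.

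First I would show $h(f|_K)\le\log(|c|^2)$ for every $c$ with $|c|>1$. Corollary~\ref{cor Entropy and Hausdorff dimension} records exactly this, but since it is phrased under the hypothesis that $K$ is a polygon, I would point out that its proof uses only that $K$ is a nonempty compact subset of $\mathbb{C}$ on which $f$ is Lipschitz with constant $|c|$, and both facts hold in general: $0\in K$, so $K\neq\emptyset$; $K$ is closed and bounded (by Theorem~\ref{K subset P} and Lemma~\ref{P is compact}, $K\subset P$ with $P$ compact), hence compact; and $f$ is $|c|$-Lipschitz since folding across $\PFL$ is distance-nonincreasing and multiplication by $c$ scales distances by $|c|$. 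Lemma~\ref{Entropy and Hausdorff dimension} then gives $\dim_H(K)\ge h(f|_K)/\log|c|$, and $\dim_H(K)\le 2$ because $K\subset\mathbb{C}$, so $h(f|_K)\le 2\log|c|=\log(|c|^2)$. Next I would quote Proposition~\ref{entropy upper bound}, which gives $h(f|_K)\le\log 2$ with no polygonality hypothesis (a two-element one-sided generator together with the variational principle). Since $t\mapsto\log t$ is increasing, the two bounds combine to $h(f|_K)\le\min\{\log(|c|^2),\log 2\}=\log\min(2,|c|^2)$.

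For strictness I would exhibit a whole one-parameter family of witnesses: real $c$ with $1<c<2$. By Theorem~\ref{K for real c}, $K=[-2i/c,0]$ is a line segment, and by Lemma~\ref{conjugate to real tent map} the restriction $f|_K$ is topologically conjugate to the real tent map $\mathcal{G}$ of slope $c$ restricted to $[0,1]$. The topological entropy of the slope-$c$ tent map for $c\in(1,2]$ equals $\log c$: the bound $h\le\log c$ is the Lipschitz estimate, while the reverse bound follows because on the invariant interval the total variation of $\mathcal{G}^n$ equals $c^n$ times its length, forcing at least $c^n$ laps (see the lap-number machinery in \cite{ALM}), and the entropy is carried by that invariant interval. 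Hence $h(f|_K)=\log c$. Now if $1<c<\sqrt2$ then $\min(2,|c|^2)=c^2$ and $\log\min(2,|c|^2)=2\log c>\log c$, while if $\sqrt2\le c<2$ then $\min(2,|c|^2)=2$ and $\log\min(2,|c|^2)=\log 2>\log c$. Either way the inequality is strict, which completes the proof.

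I do not anticipate a serious obstacle; the real work was carried out in the earlier lemmas. The only points that need a word of care are the remark that the $\log(|c|^2)$ bound does not actually require $K$ to be a polygon --- only compactness of $K$, which always holds under the standing assumption $|c|>1$ --- and the (standard, citation-level) computation that the slope-$c$ tent map has entropy $\log c$. As a sanity check one may observe that equality $h(f|_K)=\log(|c|^2)$ genuinely occurs: by the preceding theorem it holds whenever $K$ is a two-dimensional polygon, and that case forces $|c|\le\sqrt2$ by Proposition~\ref{Im(zeta) less than -1 implies alpha geq -1 and |c| leq sqrt(2)}, so there $\min(2,|c|^2)=|c|^2$; thus the strict cases are precisely the degenerate ones, of which the segments above are the simplest.
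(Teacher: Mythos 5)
Your proposal is correct and takes essentially the same route as the paper: the inequality is assembled from Proposition~\ref{entropy upper bound} together with the Hausdorff-dimension bound of Corollary~\ref{cor Entropy and Hausdorff dimension}, and strictness is witnessed exactly as in the paper by real $c\in(1,2)$, where $f|_K$ is conjugate to the slope-$c$ tent map of entropy $\log c<\log\min(2,c^2)$. Your side remark that the corollary's polygon hypothesis is superfluous --- its proof needs only that $K$ is a nonempty compact set on which $f$ is $|c|$-Lipschitz, which always holds since $0\in K\subset P$ with $P$ compact (Theorem~\ref{K subset P}, Lemma~\ref{P is compact}) --- is a useful patch of a point the paper leaves implicit.
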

\begin{proof}
Let $c \in \mathbb{R}$ where $1<c<2$.  Then by Theorem~\ref{K for real c}, $K$ is a compact subset of the imaginary axis.  By Lemma~\ref{conjugate to real tent map}, $f$ restricted to $K$ is conjugate to a real tent map $\tau$\label{symbol:tau} from $[0,1]$ to itself, where the absolute value of the slope is $|c|$.  Thus, $h(f)=h(\tau)$.  It is well known that if $1<|c|<2$, then $h(\tau)= \log|c|$.  Since $|c|< \min(2,|c|^2)$, then $h(f|_K) \leq h(f)=h(\tau)=\log|c| < \log \min(2,|c|^2)$ and we are done.
\end{proof}
  
\end{section}


\begin{thebibliography}{9}

\bibitem{ALM}
   Lluis Alseda, Jaume Llibre, and Michal Misiurewicz,
   \emph{Combinatorial Dynamics and Entropy in Dimension One}.
   World Scientific, New Jersey,
   1st Edition,
   1993.
   
\bibitem{polymodials}
	Alexander Blokh, Chris Cleveland, and Michal Misiurewicz,
	\emph{Julia sets of expanding polymodials}.
	Ergod. Th. \& Dynam. Sys., 1691--1718, 2005.

\bibitem{Koch}
	Michael Cantrell and Judith Palagallo,
	\emph{Self-intersection Points of Generalized Koch Curves}.
	Fractals, Vol. 19, No. 2,
	(2011) 213--220.
	
\bibitem{antennae}
	Nathan Cohen and Robert G. Hohlfeld,
	\emph{Self-Similarity and the Geometric Requirements for Frequency Independence in Antennae}.
	Fractals, Vol. 7, No. 1,
	(1999), 79--84
	
\bibitem{SlowFast}
	Mathieu Desroches, Bernd Krauskopf, Hinke M. Osinga,
 	\emph{Numerical continuation of canard orbits in slow-fast dynamical systems}. 
	Nonlinearity 23(3): 739--765, 
	2010.	   
	
\bibitem{Hub}
	 A. Douady and J. H. Hubbard, \emph{On the dynamics of polynomial-like mappings}, Ann. Sci. $\acute{\text{E}}$cole Norm. Sup. Paris \textbf{18}, 1985, 287--343.   
		 
\bibitem{Mil}
	 John Milnor
	 \emph{Dynamics in One Complex Variable}.
	 Annals of Mathematics Studies, New Jersey,
	 3rd Edition,
	 2006.
	 
\bibitem{Milb}
	 John Milnor 
	 \emph{Local connectivity of Julia sets:  Expository lectures}, in ``The Mandelbrot Set, Theme and Variations,'' Edit. Tan Lei, Cambridge U. Press, Cambridge, UK, 2000, 67--116.

\bibitem{M}
	 Michal Misiurewicz,
   \emph{On Bowen's definition of topological entropy}.
   Discrete Continuous Dynamical Systems
   Ser. A, 10,
   (2004), 827--833.
   
\bibitem{BoundaryCrisis}
	Hinke M. Osinga and James Rankin,
	\emph{Two-Parameter Locus of Boundary Crisis:
Mind the Gaps!}, preprint, 2010.
	
\bibitem{Arnold}
	R. Szalai and H. M. Osinga,
	 \emph{Arnol'd tongues arising from a grazing-sliding bifurcation}.
	SIAM J. Appl. Dyn. Syst., vol. 8, no. 4,
	pp. 1434--1461, 2009.
	\end{thebibliography}
\end{document}